\newcommand{\cB}{{\cal B}}
\newcommand{\cC}{{\cal C}}
\newcommand{\cG}{{\cal G}}
\newcommand{\cL}{{\cal L}}
\newcommand{\cO}{{\cal O}}
\newcommand{\cQ}{{\cal Q}}
\newcommand{\Prob}[1]{\mathbb{P}\left( #1 \right)}
\newcommand{\E}{\mathbb{E}}
\newcommand{\EE}[1]{\mathbb{E}\left[ #1 \right]}
\newcommand{\EEC}[1]{\mathbb{E}_{\cC}\left[ #1 \right]}
\newcommand{\EEh}[1]{\mathbb{E}_{h}\left[ #1 \right]}
\newcommand{\algname}[1]{{\color{black}\small \sf #1}}
\newcommand{\R}{\mathbb{R}}
\newcommand{\eqdef}{\stackrel{\mathrm{def}}{=}}
\newcommand{\hx}{\widehat{x}}
\newcommand{\hdelta}{\widehat{\Delta}}
\newcommand{\parens}[1]{\left( #1 \right)}
\newcommand{\brac}[1]{\left\{ #1 \right\}}
\newcommand{\norm}[1]{\left\| #1 \right\|}
\newcommand{\vertiii}[1]{{\left\vert\kern-0.25ex\left\vert\kern-0.25ex\left\vert #1 
    \right\vert\kern-0.25ex\right\vert\kern-0.25ex\right\vert}}
\newcommand{\Z}{\mathbb{Z}}
\definecolor{PineGreen}{RGB}{0,110,51}
\definecolor{BrickRed}{RGB}{143,20,2}
\definecolor{bgcolor}{rgb}{0.8,1,1}
\definecolor{bgcolor2}{rgb}{0.8,1,0.8}
\definecolor{niceblue}{rgb}{0.0,0.19,0.56}
\definecolor{mydarkgreen}{RGB}{39,130,67}
\definecolor{mydarkred}{RGB}{192,47,25}
\newcommand{\argmin}{\mathop{\arg\!\min}}
\newtheorem{lemma}{Lemma}[section]
\newtheorem{theorem}{Theorem}[section]
\newtheorem{definition}{Definition}[section]
\newtheorem{assumption}{Assumption}[section]
\newtheorem{corollary}{Corollary}[section]
\begin{document}

%
\runningtitle{Communication Compression for Byzantine Robust Learning}

%

\twocolumn[

\aistatstitle{Communication Compression for Byzantine Robust Learning: \\ New Efficient Algorithms and Improved Rates}

\aistatsauthor{Ahmad Rammal\And Kaja Gruntkowska \And  Nikita Fedin \And Eduard Gorbunov \And Peter Richt\'arik }

\aistatsaddress{KAUST \\ \'Ecole Polytechnique \And  KAUST \And MIPT \And MBZUAI \And KAUST} ]

\begin{abstract}
    Byzantine robustness is an essential feature of algorithms for certain distributed optimization problems, typically encountered in collaborative/federated learning.  These problems are usually huge-scale, implying that communication compression is also imperative for their resolution. These factors have spurred recent algorithmic and theoretical developments in the literature of Byzantine-robust learning with compression. In this paper, we contribute to this research area in two main directions. First, we propose a new Byzantine-robust method with compression -- \algname{Byz-DASHA-PAGE} -- and prove that the new method has better convergence rate (for non-convex and Polyak-{\L}ojasiewicz smooth optimization problems), smaller neighborhood size in the heterogeneous case, and tolerates more Byzantine workers under over-parametrization than the previous method with SOTA theoretical convergence guarantees (\algname{Byz-VR-MARINA}). Secondly, we develop the first Byzantine-robust method with communication compression and error feedback -- \algname{Byz-EF21} -- along with its bidirectional compression version -- \algname{Byz-EF21-BC} -- and derive the convergence rates for these methods for non-convex and Polyak-{\L}ojasiewicz smooth case. We test the proposed methods and illustrate our theoretical findings in the numerical experiments.

\end{abstract}

\section{INTRODUCTION}

Contemporary machine learning and deep learning pose a number of challenges, with the ability to train increasingly complex models using datasets of enormous sizes becoming  one of the most pressing issues \citep{gpt4}. Training such models on a single machine within a reasonable timeframe is no longer feasible \citep{gpt3costlambda}. In response to this challenge, distributed algorithms have emerged as indispensable tools, effectively sharing the computational load across multiple machines, and hence significantly speeding up the training process. Such methods also prove invaluable when data is inherently distributed across multiple sources or locations. When this is the case, the adoption of distributed methods is not only a natural choice, but often an imperative one \citep{FEDLEARN,kairouz2021advances}.

While distributed learning comes with a number of benefits, it also introduces some risks. In collaborative and federated learning scenarios, these include the potential presence of \emph{Byzantine workers}\footnote{Following the standard terminology \citep{lamport1982byzantine, su2016fault}, we call a worker \emph{Byzantine} if it can (maliciously or not) send incorrect information to other workers/server. Such workers are assumed to be omniscient, i.e., they have access to the vectors that other workers send, know the aggregation rule on the server, and can coordinate their actions with one another.}. Standard methods such as Parallel Stochastic Gradient Descent (\algname{SGD}) \citep{zinkevich2010parallelized}, which rely on averaging vectors received from workers, are highly vulnerable to Byzantine attacks. Consequently, a critical need has emerged for the development and investigation of specialized methods designed to demonstrate robustness when Byzantine participants are involved, giving Byzantine-robustness significant attention in recent years \citep{lyu2020privacy}.

Another important aspect of distributed learning is managing communication costs. Indeed, communication between the nodes typically constitutes a significant portion of the time and resource consumption of the training process. Consequently, as the field of machine learning continues to leverage larger and more complex models trained on extensive datasets, the need to efficiently exchange information between nodes becomes paramount. Communication compression techniques such as quantization \citep{alistarh2017qsgd} and sparsification \citep{suresh2017distributed, stich2018sparsified}, play a pivotal role in addressing this challenge.

While both Byzantine robustness and communication compression are individually highly significant topics, their simultaneous exploration is relatively rare in the exisitng literature. To date, only five papers have tackled both of these challenges concurrently: \citet{bernstein2018signsgd} study \algname{signSGD} with majority vote, \citet{ghosh2020distributed, ghosh2021communication} propose methods utilising aggregation rules that select the update vectors based on their norms, and \citet{zhu2021broadcast, gorbunov2023variance} develop variance-reduced methods. The current state-of-the-art theoretical results in this area are derived by \citet{gorbunov2023variance}, who propose \algname{Byz-VR-MARINA} -- an algorithm with provably robust aggregation \citep{karimireddy2021learning, karimireddy2020byzantine} based on the \algname{SARAH}-type variance reduction \citep{nguyen2017sarah}, employing unbiased compression of the stochastic gradient differences \citep{gorbunov2021marina}.

The existing results have certain limitations. In particular, although \algname{Byz-VR-MARINA} achieves state-of-the-art convergence rates, it requires occasional communication of uncompressed messages. Further, it has inferior theoretical guarantees for optimization error in the heterogeneous case, and tolerates less Byzantine workers in the heterogeneous over-parameterized regime compared to some other existing methods, such as Byzantine-Robust \algname{SGD} with momentum (\algname{BR-SGDm}) \citep{karimireddy2020byzantine}. Moreover, existing Byzantine-robust methods only support the use of unbiased compressors. Meanwhile, it is known that employing (typically biased) contractive compressors combined with error feedback -- a powerful technique introduced in the communication compression literature \citep{seide20141, richtarik2021ef21} -- often yields superior empirical performance. \emph{Our work comprehensively addresses all these limitations}.

\subsection{Technical Preliminaries}

In this paper, we consider a standard distributed optimization problem in which both the objective function $f$ and the functions $f_i$ stored on the nodes have a finite-sum structure:
\begin{align}\label{eq:main_problem}
	\small &\min\limits_{x\in \R^d} \left\{ f(x) = \frac{1}{G}\sum\limits_{i\in \cG} f_i(x)\right\},\\
    &\text{where}\quad f_i(x) = \frac{1}{m}\sum\limits_{j=1}^m f_{i,j}(x) \quad \forall i \in \cG, \nonumber
\end{align}
where $\cG$ is the set of \emph{good/regular/non-Byzantine} clients, $|\cG| = G$, $f_i(x)$ corresponds to the loss of the model $x$ on the data stored on worker $i$, and $f_{i,j}(x)$ is the loss on the $j$-th example from the local dataset of worker $i$. In addition to regular workers, there is a set of \emph{bad/malicious/Byzantine} workers, denoted as $\cB$, also participating in the training process. For notational convenience, we assume that $\cG \cup \cB = [n] = \{1,2,\ldots, n\}$. We refrain from making any assumptions on the behaviour of the Byzantine workers, but we do impose a constraint on their number, requiring them to constitute less than half of all clients.

\paragraph{Robust aggregation.} One of the key ingredients of our methods is a $(\delta, c)$-Robust Aggregator, initially introduced by \citet{karimireddy2021learning, karimireddy2020byzantine}. We use the generalized version from \citet{gorbunov2023variance}.

\begin{definition}[$(\delta, c)$-Robust Aggregator]\label{def:RAgg_def}
	Assume that $\{x_1,x_2,\ldots,x_n\}$ is such that there exists a subset $\cG \subseteq [n]$ of size $|\cG| = G \geq (1-\delta)n$ with $\delta < 0.5$ and there exists $\sigma \geq 0$ such that $\frac{1}{G(G-1)}\sum_{i,l\in \cG}\EE{\norm{x_i-x_l}^2} \leq \sigma^2$, where the expectation is taken w.r.t.\ the randomness of $\{x_i\}_{i\in \cG}$. We say that the quantity $\hx$ is a $(\delta, c)$-Robust Aggregator ($(\delta, c)$-\texttt{RAgg}) and write $\hx = \texttt{RAgg}(x_1,\ldots,x_n)$ for some $c> 0$, if the following inequality holds:
	\begin{equation}
		\EE{\|\hx - \overline{x}\|^2} \leq c\delta \sigma^2, \label{eq:RAgg_def}
	\end{equation}
	where $\overline{x} = \tfrac{1}{|\cG|}\sum_{i\in \cG} x_i$. If additionally $\hx$ is computed without the knowledge of $\sigma^2$, we say that $\hx$ is a $(\delta, c)$-Agnostic Robust Aggregator ($(\delta, c)$-\texttt{ARAgg}) and write $\hx = \texttt{ARAgg}(x_1,\ldots,x_n)$.
\end{definition}

In essence, an aggregator is regarded as \emph{robust} if it closely approximates the average of regular vectors. Specifically, the upper bound on the expected squared distance between the two quantities should be proportional to the pairwise variance of the non-malicious vectors, and the upper bound on the proportion of Byzantine workers. In terms of this criterion, this definition is tight \citep{karimireddy2021learning}. Some examples of $(\delta, c)$-robust aggregators are provided in Appendix~\ref{appendix:robust_aggr_and_compr}.

\paragraph{Communication compression.} We focus on two main classes of compression operators: \emph{unbiased} and \emph{biased} (contractive) compressors.

\begin{definition}[Unbiased compressor]\label{def:quantization}
	A stochastic mapping $\cQ:\R^d \to \R^d$ is called an \emph{unbiased compressor/compression operator} if there exists  $\omega \geq 0$ such that for any $x\in\R^d$
	\begin{equation}
		\EE{\cQ(x)} = x,\quad \EE{\|\cQ(x) - x\|^2} \leq \omega\|x\|^2. \label{eq:quantization_def}
	\end{equation}
\end{definition}

This definition encompasses a wide range of well-known compression techniques, including Rand$K$ sparsification \citep{stich2018sparsified}, random dithering \citep{goodall1951television, roberts1962picture} and natural compression \citep{horvath2019natural}. However, it does not cover another important class of compression operators, called contractive compressors, which are usually biased.

\begin{definition}[Contractive compressor]\label{def:contractive_compr}
	A stochastic mapping $\cC:\R^d \to \R^d$ is called a \emph{contractive compressor/compression operator} if there exists  $\alpha \in [0,1)$ such that for any $x\in\R^d$
	\begin{equation}
		\EE{\|\cC(x) - x\|^2} \leq (1-\alpha)\|x\|^2. \label{eq:contractive_compr}
	\end{equation}
\end{definition}

One of the most popular examples of contractive compressors is the Top$K$ sparsification \citep{alistarh2018convergence}.

We shall denote the families of compressors satisfying Definitions \ref{def:quantization} and \ref{def:contractive_compr} by $\mathbb{U}(\omega)$ and $\mathbb{B}(\alpha)$ respectively. Notably, it can easily be verified that if $\cC \in \mathbb{U}(\omega),$ then $(\omega + 1)^{-1} \cC \in \mathbb{B}\left((\omega + 1)^{-1}\right)$ \citep{beznosikov2020biased}, so the family of biased compressors is wider.

Some examples of such mappings are provided in Appendix~\ref{appendix:robust_aggr_and_compr}. For a comprehensive overview of biased and unbiased compressors, we refer to the summary in \citep{beznosikov2020biased}.

\paragraph{Assumptions.} We start with formulating the standard smoothness assumption.

\begin{assumption}\label{as:smoothness}
The function $f:\R^d \to \R$ is $L$-smooth, meaning that for any $x,y \in \R^d$ it satisfies $\|\nabla f(x) - \nabla f(y)\| \leq L\|x - y\|$. In addition, we assume that $f_* = \inf_{x\in\R^d}f(x) > -\infty$.
\end{assumption}

Our analysis also relies on a special notion of smoothness of loss functions stored on regular workers.

\begin{assumption}[Global Hessian variance \citep{szlendak2021permutation}]\label{as:hessian_variance}
	There exists $L_{\pm} \ge 0$ such that for all $x,y \in \R^d$
	\begin{align} \label{eq:hessian_variance}
		\small \frac{1}{G}\sum\limits_{i\in \cG} \|\nabla f_i(x) - \nabla f_i(y)\|^2 -& \|\nabla f(x) - \nabla f(y)\|^2 \nonumber \\
        &\leq L_{\pm}^2\|x - y\|^2.
	\end{align}
\end{assumption}
It can be verified that the above assumption is always valid for some $L_{\pm} \geq 0$ whenever $f_i$ is $L_i$-smooth for all $i\in \cG$. More precisely, if this is the case, then there exists $L_{\pm}$ satisfying the above assumption such that $L_{\text{avg}}^2 - L^2 \leq L_{\pm}^2 \leq L_{\text{avg}}^2$, where $L_{\text{avg}}^2 = \frac{1}{G}\sum_{i\in \cG} L_i^2$ \citep{szlendak2021permutation}. Moreover, there exist problems with heterogeneous data such that \eqref{eq:hessian_variance} holds with $L_{\pm} = 0$, while $L_{\text{avg}} > 0$ \citep{szlendak2021permutation}.

The vast majority of existing works on Byzantine-robustness focus solely on the standard uniform sampling. To be able to consider a wider range of samplings, we also employ an assumption on the (expected) Lipschitzness for samplings of stochastic gradients.

\begin{assumption}[Local Hessian variance \citep{gorbunov2023variance}]\label{as:hessian_variance_local}
	There exists $\cL_{\pm} \ge 0$ such that for all $x,y \in \R^d$ the unbiased mini-batched estimator $\widehat{\Delta}_i(x,y)$ of $\Delta_i(x,y) = \nabla f_i(x) - \nabla f_i(y)$ with batch size $b$ satisfies
	\begin{equation}
		\frac{1}{G}\sum\limits_{i\in \cG} \EE{\|\widehat{\Delta}_i(x,y) - \Delta_i(x,y)\|^2} \leq \frac{\cL_{\pm}^2}{b}\|x - y\|^2. \label{eq:hessian_variance_local}
	\end{equation}
\end{assumption}

As shown in \citep[Appendix E.1]{gorbunov2023variance}, the above assumption is quite general, encompassing scenarios such as uniform and importance sampling of stochastic gradients.

Finally, to ensure provable Byzantine-robustness, it is necessary to introduce an assumption on heterogeneity among regular workers. Otherwise, Byzantine workers can transmit arbitrary vectors, pretending to have access to non-representative data, thus becoming undetectable.

\begin{assumption}[$(B,\zeta^2)$-heterogeneity]\label{as:bounded_heterogeneity}
    There exist $B \geq 0$ and $\zeta \geq 0$ such that for all $x \in \R^d$ the local loss functions of the good workers satisfy
	\begin{equation}
		\frac{1}{G}\sum\limits_{i\in \cG} \|\nabla f_i(x) - \nabla f(x)\|^2 \leq B\|\nabla f(x)\|^2 + \zeta^2.
    \label{eq:bounded_heterogeneity}	
	\end{equation}
\end{assumption}
This assumption is the most general one of its kind in the literature on Byzantine-robustness. In particular, when $B = 0$, it reduces to the standard bounded gradient dissimilarity assumption, and when $\zeta = 0$, it implies that all stationary points of $f$ are also stationary points of each function $f_i$. The latter typically occurs in over-parameterized models \citep{vaswani2019fast}. Finally, it is worth mentioning that even the homogeneous case ($B = 0$, $\zeta = 0$) where all workers $i\in \cG$ have access to the same local function $f_i = f$ is relevant, especially in collaborative learning scenarios, when data is openly shared and the aim is to speed up the training process \citep{diskin2021distributed, kijsipongse2018hybrid}.

\subsection{Our Contributions}
\begin{table*}[t]
    \centering
    \caption{\small Summary of the derived complexity bounds in the general non-convex case and a comparison with the complexity of \algname{Byz-VR-MARINA}. Columns: ``Rounds'' = the number of communication rounds required to find $x$ such that $\EE{\norm{\nabla f(x)}^2} \leq \varepsilon^2$; ``$\varepsilon \leq$'' = the lower bound for the best achievable accuracy $\varepsilon$; ``$\delta <$'' = the maximal ratio of Byzantine workers that the method can provably tolerate. Only dependencies with respect to the following variables are shown: $\omega =$ unbiased compression parameter, $n =$ number of workers, $m =$ number of local functions, $b =$ batch size, $G =$ number of regular workers, $c =$ aggregation constant, $\delta =$ percentage of Byzantine workers, $\alpha_D, \alpha_P =$ contractive compression parameters for uplink and downlink compression, respectively. The results derived in this paper are highlighted in light blue color; red color indicates terms in the complexity bound/lower bound for $\varepsilon$/upper bound for $\delta$ for \algname{Byz-VR-MARINA} that we improve in our work.}
    \label{tab:comparison_of_rates_minimization}
    \begin{threeparttable}
    \small
        \begin{tabular}{|c c c c|}
        \hline
        Method & Rounds & $\varepsilon \leq $ & $\delta < $\\
        \hline\hline
        \begin{tabular}{c}
              \algname{Byz-VR-MARINA}\tnote{\color{blue}(1)}\\
              \citep{gorbunov2023variance} 
        \end{tabular}
        & $\frac{1}{\varepsilon^2}\!\parens{\!1+{\color{BrickRed}\sqrt{\max \{(1+\omega)^2, \frac{m(1+\omega)}{b}\}}}\!\parens{\!\sqrt{\frac{1}{G}}\!+\!{\sqrt{c\delta\color{BrickRed}\max \{\omega, \frac{m}{b}\}}}}\!}$ & $\frac{c\delta\zeta^2}{{\color{BrickRed}p} - c\delta B}$ & $\frac{{\color{BrickRed}p}}{cB}$ \\
        \cellcolor{bgcolor} \begin{tabular}{c}
             \algname{Byz-VR-MARINA 2.0} \tnote{\color{blue}(1)}
        \end{tabular}  & \cellcolor{bgcolor}$\frac{1}{\varepsilon^2}\parens{1+{\color{BrickRed}\sqrt{\max \{(1+\omega)^2, \frac{m(1+\omega)}{b}\}}}\parens{\sqrt{\frac{1}{G}} + \sqrt{c\delta}}}$ &\cellcolor{bgcolor} $\frac{c\delta\zeta^2}{1 - c\delta B}$&\cellcolor{bgcolor} $\frac{1}{(c+\sqrt{c})B}$ \\
        \cellcolor{bgcolor} \begin{tabular}{c}
             \algname{Byz-DASHA-PAGE} \tnote{\color{blue}(1)}
        \end{tabular}  & \cellcolor{bgcolor}$\frac{1}{\varepsilon^2}\parens{1+\parens{\omega + \frac{\sqrt{m}}{b}}\parens{\sqrt{\frac{1}{G}} + \sqrt{c\delta}}}$ &\cellcolor{bgcolor} $\frac{c\delta\zeta^2}{1 - c\delta B}$&\cellcolor{bgcolor} $\frac{1}{(c+\sqrt{c})B}$ \\
        \hline\hline
        \cellcolor{bgcolor} \begin{tabular}{c}
             \algname{Byz-EF21} \tnote{\color{blue}(2)}
        \end{tabular}  & \cellcolor{bgcolor}$\frac{(1+\sqrt{c\delta})}{\alpha_D\varepsilon^2}$ &\cellcolor{bgcolor} $\frac{(c\delta+\sqrt{c\delta})\zeta^2}{1 - B(c\delta+\sqrt{c\delta})}$&\cellcolor{bgcolor} $\frac{1}{c(B+B^2)}$ \\
        \cellcolor{bgcolor} \begin{tabular}{c}
             \algname{Byz-EF21-BC} \tnote{\color{blue}(2)}
        \end{tabular}  & \cellcolor{bgcolor}$\frac{(1+\sqrt{c\delta})}{\alpha_D\alpha_P\varepsilon^2}$ &\cellcolor{bgcolor} $\frac{(c\delta+\sqrt{c\delta})\zeta^2}{1 - B(c\delta+\sqrt{c\delta})}$&\cellcolor{bgcolor} $\frac{1}{c(B+B^2)}$ \\
        \hline
    \end{tabular}
    \begin{tablenotes}
        \item[{\color{blue} (1)}] These methods use unbiased compression and work with stochastic gradients. For \algname{Byz-VR-MARINA} and \algname{Byz-VR-MARINA 2.0} $p = \min\{\nicefrac{1}{1+\omega}, \nicefrac{b}{m}\}$, for \algname{Byz-DASHA-PAGE} $p = \nicefrac{b}{m}$.
        \item[{\color{blue} (2)}] These methods compute full gradients on regular workers and use (biased) contractive compression. To enable easier comparison with the methods employing unbiased compression, one can assume that the biased compressors arise from unbiased ones through scaling and the following relations hold: $\nicefrac{1}{\alpha_P} = \omega_P + 1$ and $\nicefrac{1}{\alpha_D} = \omega_D + 1$.
    \end{tablenotes}     
    \end{threeparttable}
\end{table*}

Below we summarize our main contributions.

\textbf{$\diamond$ Improved complexity bounds.} We propose two new Byzantine-robust methods incorporating \emph{unbiased} compression: \algname{Byz-VR-MARINA 2.0} and \algname{Byz-DASHA-PAGE}, and prove their complexity bounds for general smooth non-convex functions under quite general assumptions about sampling and stochasticity of the gradients.
The derived complexity bounds for \algname{Byz-VR-MARINA 2.0} and \algname{Byz-DASHA-PAGE} improve the existing theoretical results of the current state-of-the-art \algname{Byz-VR-MARINA} method, outperforming it by factors of $\sqrt{\max\{1+\omega, \nicefrac{m}{b}\}}$ and $\sqrt{\max\{(1+\omega)^3, \nicefrac{m^2(1+\omega)}{b^2}\}}$ in the leading term, respectively, as shown in Table~\ref{tab:comparison_of_rates_minimization}. These are significant improvements since $\omega$ (compression parameter) and $m$ (size of the local dataset) are usually large. Moreover, we prove that both algorithms converge linearly under the Polyak-{\L}ojasiewicz condition (see Appendix~\ref{appendix:PL}).

\textbf{$\diamond$ Smaller size of the neighborhood.} Under the $(B,\zeta^2)$-heterogeneity assumption, \algname{Byz-VR-MARINA 2.0} and \algname{Byz-DASHA-PAGE} converge to a smaller neighborhood of the solution than their competitors. Furthermore, when $B=0$, our methods can achieve $\mathbb{E}[\|\nabla f(x)\|^2] = \cO(c\delta)$, matching the lower bound by \citet{karimireddy2020byzantine}, while for \algname{Byz-VR-MARINA} one can only prove $\mathbb{E}[\|\nabla f(x)\|^2] = \cO(\nicefrac{c\delta}{p})$, which is worse by a large factor of $\nicefrac{1}{p} \sim \max\{\omega, \nicefrac{m}{b}\}$.

\textbf{$\diamond$ Higher tolerance to Byzantine workers.} When the $(B,\zeta^2)$-heterogeneity assumption holds with $B > 0$, the results derived for \algname{Byz-VR-MARINA 2.0} and \algname{Byz-DASHA-PAGE} guarantee convergence in the presence of $\nicefrac{1}{p}$ times more Byzantine workers than in the case of \algname{Byz-VR-MARINA}.

\textbf{$\diamond$ The first Byzantine-robust methods with error feedback.} Finally, we propose and analyze two new Byzantine-robust methods employing any (in general, \emph{biased}) contractive compressors -- \algname{Byz-EF21} and \algname{Byz-EF21-BC}. Both are based on modern error feedback -- the \algname{EF21} algorithm of \citet{richtarik2021ef21}, and are the first provably Byzantine-robust methods utilising error feedback. The \algname{Byz-EF21-BC} algorithm, in addition to workers-to-server compression, also compresses messages sent from the server to workers, hence being the first provably Byzantine-robust algorithm using bidirectional compression.

\subsection{Related Work on Byzantine Robustness}

The first approaches to designing Byzantine-robust distributed optimization methods\footnote{We defer the discussion of related work on communication compression to Appendix~\ref{appendix:extra_related_work}.} primarily concentrate on the aggregation aspect, employing conventional \algname{Parallel-SGD} as the algorithm's foundation \citep{blanchard2017machine, chen2017distributed, yin2018byzantine, damaskinos2019aggregathor, guerraoui2018hidden, pillutla2022robust}. Nonetheless, it has become evident that these classical approaches are susceptible to specialized attacks \citep{baruch2019little, xie2020fall}. To address this issue, \citet{karimireddy2021learning} develop a formal definition of robust aggregation and propose a provably Byzantine-robust algorithm based on the aggregation of client momentums. \citet{karimireddy2020byzantine} further extend these results to the heterogeneous setup. An alternative formalism of robust aggregation, along with new examples of robust aggregation rules and Byzantine-robust methods, are proposed and analyzed by \citet{allouah2023fixing}.
The application of variance reduction to achieve Byzantine robustness is first explored by \citet{wu2020federated}, who use \algname{SAGA}-type variance reduction \citep{defazio2014saga} on regular workers. Subsequently, \citet{zhu2021broadcast} enhance this approach by incorporating unbiased communication compression. \citet{gorbunov2023variance} improve the results from \citet{wu2020federated, zhu2021broadcast} and derive the current theoretical state-of-the-art convergence results. Numerous other approaches have also been proposed, including the banning of Byzantine workers \citep{alistarh2018byzantine, allen2020byzantine}, random checks of computations \citep{gorbunov2021secure}, computation redundancy \citep{chen2018draco, rajput2019detox}, and reputation scores \citep{rodriguez2020dynamic, regatti2020bygars, xu2020towards}. We refer to \cite{lyu2020privacy} for a comprehensive survey.

\section{METHODS WITH UNBIASED COMPRESSION}

In this section, we introduce our main results on methods employing unbiased compression.

\subsection{Warm-up: \algname{Byz-VR-MARINA 2.0}}

\begin{algorithm}[t]
    \caption{\algname{Byz-VR-MARINA 2.0}}\label{alg:marina2}
    \begin{algorithmic}[1]
    \State \textbf{Input:} starting point $x_0 \in \mathbb{R}^d$, stepsize $\gamma > 0$, probability $p \in (0, 1]$ , number of iterations $T \geq 1$, a collection of unbiased compressors $\{\cQ_i\}_{i\in\cG}$
    \Statex
    \For{$t=0, 1, \dots, T-1$}
    \State Let $c^{t+1} = \begin{cases}
          1,& \textnormal{with probability $p$} \\
          0, & \textnormal{with probability $1 - p$} 
        \end{cases}$
    \State Broadcast $g^{t}$ to all nodes
    \For{$i\in \cG$ in parallel}
    \State $x^{t+1} = x^t - \gamma g^t$
    \If{$c^{t+1}=1$}
    \State $g_i^{t+1} =\nabla f_i(x^{t+1})$
    \State Send $\nabla f_i(x^{t+1})$ to the server.
    \Else
    \State $m_i^{t+1} = \cQ_i(\hdelta_{i}(x^{t+1}, x^t))$
    \State\label{line:marina_change} $g_i^{t+1} = g_i^t + m_i^{t+1}$
    \State Send $m_i^{t+1}$ to the server
    \EndIf
    \EndFor
    \State $g^{t+1} = \text{ARAgg}\parens{g_1^{t+1};\ldots;g_n^{t+1}}$
    \EndFor
    \end{algorithmic}
\end{algorithm}
    
\begin{algorithm}[t]
    \caption{\algname{Byz-DASHA-PAGE}}
    \label{alg:main_algorithm}
    \begin{algorithmic}[1]
    \State \textbf{Input:} starting point $x^0 \in \R^d$, stepsize $\gamma > 0$, momentum $a \in (0, 1]$, probability $p \in (0, 1]$ , number of iterations $T \geq 1$, a collection of unbiased compressors $\{\cQ_i\}_{i\in\cG}$
    
    \For{$t = 0, 1, \dots, T - 1$} 
    \State Let $c^{t+1} = \begin{cases}
      1,& \textnormal{with probability $p$} \\
      0, & \textnormal{with probability $1 - p$} 
    \end{cases}$
    \State Broadcast $g^t$ to all nodes
    \For{$i \in \cG$ in parallel}
    \State $x^{t+1} = x^t - \gamma g^t$
    \If{$c^{t+1}=1$}
    \State $h^{t+1}_i = \nabla f_{i}(x^{t+1})$ 
    \Else
    \State $h^{t+1}_i = h^{t}_i + \hdelta_i(x^{t+1}, x^{t})$
    \EndIf
    \State $m^{t+1}_i = \cQ_i\!(h^{t+1}_i - h^{t}_i - a \parens{g^t_i - h^{t}_i}\!)$ 
    \State $g^{t+1}_i = g_i^t+ m^{t+1}_i$
    \State Send $m_i^{t+1}$ to the server
    \EndFor
    \State $g^{t+1} = \text{ARAgg}(g_1^{t+1};\ldots;g_n^{t+1})$
    \EndFor
    \end{algorithmic}
\end{algorithm}

We begin by presenting \algname{Byz-VR-MARINA 2.0} (Algorithm~\ref{alg:marina2}) -- a modification of \algname{Byz-VR-MARINA} by \cite{gorbunov2023variance} that uses local vectors $g_i^t$ instead of $g^t$ in the update of $g_i^{t+1}$. In other words, line~\ref{line:marina_change} of the original \algname{Byz-VR-MARINA} method is $g_i^{t+1} = g^t + m_i^{t+1}$. The key idea behind both versions remains the same: to adapt \algname{VR-MARINA} \citep{gorbunov2021marina} by replacing the standard averaging of gradient estimators with a $(\delta,c)$-agnostic robust aggregator. 
It is worth mentioning that even without this modification and with no Byzantine workers, the gradient estimator in \algname{VR-MARINA} is \emph{conditionally biased}, i.e., $\E[g_i^{t+1}\mid x^{t+1}, x^t] \neq \nabla f_i(x^{t+1})$. With this insight in mind, and noting that in the homogeneous scenario, the variance of vectors received from regular workers converges to zero, robust aggregation naturally integrates with the algorithm, enabling provable tolerance to Byzantine workers.

While the algorithmic difference between \algname{Byz-VR-MARINA} and \algname{Byz-VR-MARINA 2.0} is almost negligible, it provides significant theoretical improvements, as demonstrated in Table~\ref{tab:comparison_of_rates_minimization}. This advancement has an intuitive explanation: $g^t$ contains the information received from Byzantine workers in the previous steps, so the vectors $\{g_i^{t+1}\}_{i\in\cG}$ in \algname{Byz-VR-MARINA} are more influenced by malicious messages than the vectors $\{g_i^{t+1}\}_{i\in \cG}$ in \algname{Byz-VR-MARINA 2.0}. Moreover, it can also be shown that in \algname{Byz-VR-MARINA 2.0} the local gradient estimators are unbiased, so that $\E[g_i^{t+1}] = \E[\nabla f_i(x^{t+1})]$ for all $i\in\cG$. This is not the case in \algname{Byz-VR-MARINA}. Despite the apparent similarity of our algorithm to Byz-VR-MARINA, the underlying intuition significantly differ. We refer to \Cref{appendix:proof_sketch} for further details.

The following theorem formalizes the main convergence result for \algname{Byz-VR-MARINA 2.0}.


\begin{restatable}{theorem}{BYZVRM}\label{thm:main_result_Byz-VR-MARINA_2}
	Let Assumptions~\ref{as:smoothness}, \ref{as:hessian_variance}, \ref{as:hessian_variance_local} and \ref{as:bounded_heterogeneity} hold. Assume that $0 < \gamma \leq (L + \sqrt{\eta})^{-1}$, $\delta < \parens{(8c + 4\sqrt{c})B}^{-1}$ and initialize $g_i^0 = \nabla f_i(x^0)$ for all $i\in \cG$, where
	$\eta = \frac{1-p}{p} \parens{\omega\parens{\frac{\cL_{\pm}^2}{b}+ L_{\pm}^2 + L^2} +\frac{\cL_{\pm}^2}{b}}\parens{\sqrt{\frac{1}{G}} + \sqrt{8c\delta}}^2$. Then for all $T \geq 0$ the iterates produced by \algname{Byz-VR-MARINA 2.0} satisfy
    \begin{align*}
        \EE{\norm{\nabla f(\hx^T)}^2}\leq \frac{1}{A}\parens{\frac{2\delta^0}{\gamma T} + \parens{8c\delta + \sqrt{\frac{8c\delta}{G}}}\zeta^2},
    \end{align*}
	where $\delta^0=f(x^0) - f^*$, $A = 1 - \parens{8c\delta+\sqrt{\nicefrac{8c\delta}{G}}}B$ and $\hx^T$ is chosen uniformly at random from $x^0, x^1, \ldots, x^{T-1}$.
\end{restatable}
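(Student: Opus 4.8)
The plan is to follow the standard descent-lemma-plus-potential-function template used for \algname{MARINA}-type methods, adapted to account for the robust aggregation error. First I would start from $L$-smoothness of $f$ applied to the step $x^{t+1} = x^t - \gamma g^t$, giving
\begin{align*}
f(x^{t+1}) \leq f(x^t) - \gamma\langle \nabla f(x^t), g^t\rangle + \tfrac{L\gamma^2}{2}\norm{g^t}^2.
\end{align*}
I would then write $g^t = \overline{g}^t + (g^t - \overline{g}^t)$ where $\overline{g}^t = \tfrac1G\sum_{i\in\cG} g_i^t$, use the decomposition $-\langle\nabla f(x^t),\overline{g}^t\rangle = \tfrac12(\norm{\nabla f(x^t) - \overline{g}^t}^2 - \norm{\nabla f(x^t)}^2 - \norm{\overline{g}^t}^2)$, and take conditional expectations. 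The unbiasedness of the local estimators (established for \algname{Byz-VR-MARINA 2.0}) should give $\E[\overline{g}^{t+1}\mid x^{t+1}] $ related to $\nabla f(x^{t+1})$, so that $\E\norm{\nabla f(x^t) - \overline{g}^t}^2$ is controlled by a variance term $V^t \eqdef \tfrac1G\sum_{i\in\cG}\E\norm{g_i^t - \nabla f_i(x^t)}^2$. The robust aggregator error $\E\norm{g^t - \overline{g}^t}^2 \leq c\delta\sigma_t^2$ (Definition~\ref{def:RAgg_def}) must then be bounded by the pairwise variance of the $g_i^t$'s, which in turn is controlled by $V^t$ plus the heterogeneity term via Assumption~\ref{as:bounded_heterogeneity} (this is where the $\zeta^2$ and the $B\norm{\nabla f(x^t)}^2$ contributions enter, and where the condition $\delta < ((8c+4\sqrt c)B)^{-1}$ will be needed to keep the $B\norm{\nabla f}^2$ term absorbable).

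Next I would derive a recursion for the variance $V^{t+1}$ in terms of $V^t$. This is the heart of the argument: conditioning on whether $c^{t+1}=1$ (probability $p$, in which case $g_i^{t+1}$ is reset to the exact $\nabla f_i(x^{t+1})$ up to compression, killing the old variance) or $c^{t+1}=0$ (probability $1-p$, in which case the variance propagates but the compressed gradient-difference term adds fresh noise proportional to $\norm{x^{t+1}-x^t}^2 = \gamma^2\norm{g^t}^2$). Using Assumptions~\ref{as:hessian_variance}, \ref{as:hessian_variance_local} and the compression bound \eqref{eq:quantization_def}, I expect a recursion of the shape
\begin{align*}
\E V^{t+1} \leq (1-p)\E V^t + (1-p)\parens{\omega\parens{\tfrac{\cL_\pm^2}{b} + L_\pm^2 + L^2} + \tfrac{\cL_\pm^2}{b}}\gamma^2\,\E\norm{g^t}^2,
\end{align*}
with an additional factor $(\sqrt{1/G} + \sqrt{8c\delta})^2$ coming from how $g^t$ rather than $\overline g^t$ enters (because each worker updates using the broadcast $g^t$). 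Dividing through and telescoping, the quantity $\tfrac{(1-p)}{p}$ times the noise coefficient is exactly the $\eta$ in the theorem statement. I would then form the Lyapunov function $\Phi^t = f(x^t) - f^* + \tfrac{\gamma}{2}\cdot\tfrac{1}{p}\cdot(\text{coefficient})\cdot\gamma V^t$ (or a similar scaling) so that the $\gamma^2\norm{g^t}^2$ terms from the descent lemma and the variance recursion combine; the stepsize restriction $\gamma \leq (L+\sqrt\eta)^{-1}$ is precisely what makes the net coefficient of $\norm{g^t}^2$ nonpositive, so those terms drop.

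After that, summing the one-step inequality over $t = 0,\ldots,T-1$, using $V^0 = 0$ (from the initialization $g_i^0 = \nabla f_i(x^0)$ and that the first reset is exact), and dividing by $T$ gives $\tfrac{1}{T}\sum_t \E\norm{\nabla f(x^t)}^2 \leq \tfrac{2\delta^0}{\gamma T} + (8c\delta + \sqrt{8c\delta/G})\zeta^2 + (8c\delta + \sqrt{8c\delta/G})B\cdot\tfrac1T\sum_t\E\norm{\nabla f(x^t)}^2$. Moving the last term to the left-hand side produces the factor $A = 1 - (8c\delta + \sqrt{8c\delta/G})B$, which is positive by the assumed bound on $\delta$, and recognizing the left side as $\E\norm{\nabla f(\hx^T)}^2$ by the random choice of $\hx^T$ finishes the proof. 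The main obstacle I anticipate is the bookkeeping in the second step — correctly tracking how the aggregation error $c\delta\sigma_t^2$ feeds back into both $\norm{g^t}^2$ (changing its effective coefficient) and the variance recursion, and making sure all the constants line up so that the combined $\norm{g^t}^2$ coefficient is controlled by the stated $\gamma$ and the residual heterogeneity terms collapse to exactly $(8c\delta + \sqrt{8c\delta/G})(\zeta^2 + B\norm{\nabla f}^2)$; getting the numerical constant $8$ (rather than some other constant) right is the delicate part and likely requires a careful application of Young's inequality with a specifically tuned parameter when splitting $\sigma_t^2$.
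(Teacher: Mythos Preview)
Your high-level template (descent lemma $\to$ variance recursion $\to$ Lyapunov $\to$ telescoping $\to$ divide by $A$) matches the paper, and your final paragraph handling the $B\norm{\nabla f}^2$ term and the factor $A$ is exactly right. But there is a genuine gap in the middle that would prevent you from recovering the stated $\eta$.

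First, you misread the algorithm. In \algname{Byz-VR-MARINA 2.0} the local update is $g_i^{t+1} = g_i^t + m_i^{t+1}$, \emph{not} $g^t + m_i^{t+1}$; the broadcast $g^t$ enters only through $x^{t+1} = x^t - \gamma g^t$, i.e., only via $R^t = \norm{x^{t+1}-x^t}^2$. So the recursion for $V^t = H^t \eqdef \tfrac{1}{G}\sum_{i\in\cG}\norm{g_i^t-\nabla f_i(x^t)}^2$ is simply
\[
\EE{H^{t+1}} \leq (1-p)\Bigl(\EE{H^t} + \bigl(\omega(\tfrac{\cL_\pm^2}{b}+L_\pm^2+L^2)+\tfrac{\cL_\pm^2}{b}\bigr)\EE{R^t}\Bigr),
\]
with \emph{no} $(\sqrt{1/G}+\sqrt{8c\delta})^2$ factor. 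That factor does not live in the variance recursion at all.

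Second, and this is the actual missing idea: the paper tracks \emph{two} quantities in the Lyapunov, not one. Alongside $H^t$ it keeps $\norm{\overline g^t - \nabla f(x^t)}^2$, whose recursion carries an extra $1/G$ in front of the $R^t$ coefficient (from independence of compressors and stochastic estimators across workers). The Lyapunov is $\psi_t = C\norm{\overline g^t-\nabla f(x^t)}^2 + D H^t$ with $C = (1+s^{-1})/p$ and $D = 8c\delta(1+s)/p$; the coefficient of $R^t$ then becomes $(1-p)(\ldots)(\tfrac{C}{G}+D)$, and choosing $s = 1/\sqrt{8c\delta G}$ gives $\tfrac{C}{G}+D = \tfrac{1}{p}(\sqrt{1/G}+\sqrt{8c\delta})^2$ --- that is where the factor in $\eta$ comes from. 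If instead you bound $\norm{\overline g^t-\nabla f(x^t)}^2 \leq H^t$ by Jensen and carry a single term $V^t$ in the Lyapunov as you propose, the same optimization over $s$ yields $(1+\sqrt{8c\delta})^2$, a strictly larger $\eta'$, and hence a stricter stepsize restriction $\gamma \leq (L+\sqrt{\eta'})^{-1}$; your proof would then fail to cover all $\gamma$ permitted by the theorem.
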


The above upper bound consists of two terms: the first one decreases to zero at a rate $\cO(\nicefrac{1}{T})$, which is optimal for approximating first-order stationary points in the setup we consider \citep{fang2018spider, arjevani2023lower}, and the second one, corresponding to the size of the neighbourhood of the solution to which the method converges, is constant. This neighbourhood disappears when either $\zeta=0$ or $\delta=0$ (no malicious workers).
Furthermore, when $B = 0$, the second term is $\cO(\delta\zeta^2)$ (due to the fact that $\nicefrac{1}{G} = \cO(\delta)$ when $\delta > 0$), matching the lower bound form \citet{karimireddy2020byzantine} up to a numerical factor. Meanwhile, the corresponding term in the results derived for \algname{Byz-VR-MARINA} is $\cO(\nicefrac{\delta\zeta^2}{p})$, which is usually much larger than $\cO(\delta\zeta^2)$ since $p$ is typically small. When $B > 0$, the results for \algname{Byz-VR-MARINA 2.0} are valid when $\delta = \cO(\nicefrac{1}{B})$, while the existing guarantees for \algname{Byz-VR-MARINA} require $\delta = \cO(\nicefrac{p}{B})$, which is significantly smaller. Finally, as shown in Table~\ref{tab:comparison_of_rates_minimization}, the rate of convergence of \algname{Byz-VR-MARINA 2.0} is better than that of \algname{Byz-VR-MARINA} by a potentially large factor of $\sqrt{\max\{\omega, \nicefrac{m}{b}\}}$.

\subsection{\algname{Byz-DASHA-PAGE}}

Although \algname{Byz-VR-MARINA 2.0} enjoys notable theoretical improvements, it inherits some limitations of \algname{VR-MARINA}. The first one is of purely algorithmic nature: with probability $p$, regular workers send uncompressed vectors. These synchronization steps are necessary for the algorithm to converge, as they correct the error coming from compression and stochasticity in the gradients. However, their large communication cost can render the algorithm impractical.
To make the computation and communication cost of one round equal (on average, up to a constant factor) to the cost per round when workers compute stochastic gradients and send compressed vectors, the probability $p$ should be approximately $\min\{(1+\omega)^{-1}, \nicefrac{b}{m}\}$. If this is the case, a factor of $\sqrt{1+\omega}\sqrt{\max\{1+\omega, \nicefrac{m}{b}\}}$ appears in the complexity bounds of \algname{(Byz-)VR-MARINA (2.0)}, and the effects of stochasticity and communication compression are coupled.

To address the issues arising in \algname{VR-MARINA}, \citet{tyurin2022dasha} propose \algname{DASHA-PAGE}, which uses momentum variance reduction mechanisms \citep{cutkosky2019momentum, tran2022hybrid, liu2020optimal} to handle the noise resulting from communication compression (and a \emph{separate} \algname{Geom-SARAH}/\algname{PAGE}-type variance reduction technique to manage the stochasticity in the gradients). The derived complexity of \algname{DASHA-PAGE} matches that of \algname{VR-MARINA}, but with a better dependence on $\omega, m, b$: the factor $\sqrt{1+\omega}\sqrt{\max\{1+\omega, \nicefrac{m}{b}\}}$ appearing in the complexity bounds of \algname{VR-MARINA} is replaced with $\omega + \frac{\sqrt{m}}{b}$, which is always not greater than $\sqrt{1+\omega}\sqrt{\max\{1+\omega, \nicefrac{m}{b}\}}$ and strictly smaller than $\sqrt{1+\omega}\sqrt{\max\{1+\omega, \nicefrac{m}{b}\}}$ when $0<\omega < \nicefrac{m}{b}$.

Motivated by these developments, we introduce a Byzantine-robust variant of \algname{DASHA-PAGE} called \algname{Byz-DASHA-PAGE} (Algorithm~\ref{alg:main_algorithm}), aiming to enhance the convergence rates achieved by \algname{Byz-VR-MARINA 2.0}. We find that $(\delta,c)$-robust aggregation integrates seamlessly with \algname{DASHA-PAGE}, leading to the following result.


\begin{restatable}{theorem}{BYZDP}\label{thm:main_result_dp}
    Let Assumptions~\ref{as:smoothness}, \ref{as:hessian_variance}, \ref{as:hessian_variance_local} and \ref{as:bounded_heterogeneity} hold. Assume that $0 < \gamma \leq (L + \sqrt{\eta})^{-1}$, $\delta < \parens{(8c + 4\sqrt{c})B}^{-1}$ and initialize $g_i^0 = \nabla f_i(x^0)$ for all $i\in \cG$, where
	$\eta = \parens{8\omega(2\omega +1) \parens{L_{\pm}^2 + L^2} + \frac{1-p}{b} \parens{12\omega\parens{2\omega+1} + \frac{2}{p}}\cL_{\pm}^{2}} \allowbreak \times \parens{\sqrt{\frac{1}{G}} + \sqrt{8c\delta}}^2$. Then for all $T \geq 0$ the iterates produced by \algname{Byz-DASHA-PAGE} satisfy
    \begin{equation*}
        \EE{\norm{\nabla f(\hx^T)}^2}\leq \frac{1}{A}\parens{\frac{2\delta^0}{\gamma T} + \parens{8c\delta+\sqrt{\frac{8c\delta}{G}}}\zeta^2},
    \end{equation*}
	where $\delta^0=f(x^0) - f^*$, $A = 1 - \parens{8c\delta+\sqrt{\nicefrac{8c\delta}{G}}}B$ and $\hx^T$ is chosen uniformly at random from $x^0, x^1, \ldots, x^{T-1}$.
\end{restatable}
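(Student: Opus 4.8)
The plan is to adapt the descent-lemma argument used for \algname{Byz-VR-MARINA 2.0} to the \algname{DASHA-PAGE} update, so the structure of the proof mirrors Theorem~\ref{thm:main_result_Byz-VR-MARINA_2}. The central object is the aggregated estimator $g^{t+1} = \texttt{ARAgg}(g_1^{t+1},\ldots,g_n^{t+1})$ and its deviation from $\nabla f(x^{t+1})$. I would split this deviation as $\norm{g^{t+1} - \nabla f(x^{t+1})}^2 \le 2\norm{g^{t+1} - \overline{g}^{t+1}}^2 + 2\norm{\overline{g}^{t+1} - \nabla f(x^{t+1})}^2$, where $\overline{g}^{t+1} = \tfrac1G\sum_{i\in\cG}g_i^{t+1}$. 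For the first term I invoke the $(\delta,c)$-\texttt{RAgg} property \eqref{eq:RAgg_def}, which requires controlling the pairwise variance $\tfrac{1}{G(G-1)}\sum_{i,l\in\cG}\EE{\norm{g_i^{t+1} - g_l^{t+1}}^2}$; I would bound this by $\tfrac{2}{G}\sum_{i\in\cG}\EE{\norm{g_i^{t+1} - \overline g^{t+1}}^2}$ and then by $\tfrac{2}{G}\sum_{i\in\cG}\EE{\norm{g_i^{t+1} - \nabla f_i(x^{t+1})}^2}$ up to the heterogeneity term from Assumption~\ref{as:bounded_heterogeneity}. For the second term I use the fact — highlighted in the text — that the local estimators in the \algname{DASHA}-type scheme are unbiased, $\E[g_i^{t+1}\mid\text{past}] $ tracks $\nabla f_i(x^{t+1})$ appropriately, so $\overline g^{t+1}$ concentrates around $\nabla f(x^{t+1})$ with variance again governed by $\tfrac1G\sum_i\EE{\norm{g_i^{t+1}-\nabla f_i(x^{t+1})}^2}$.

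Next I would set up the key Lyapunov function. Following the \algname{DASHA-PAGE} analysis of \citet{tyurin2022dasha}, define $\Phi^t = f(x^t) - f^* + \tfrac{c_1\gamma}{?}\,G_t$ where $G_t \eqdef \tfrac1G\sum_{i\in\cG}\EE{\norm{g_i^t - \nabla f_i(x^t)}^2}$ is the average local estimator error, with the constant chosen so the recursion closes. The heart of the argument is a one-step inequality for $G_{t+1}$ in terms of $G_t$ and $\norm{x^{t+1}-x^t}^2 = \gamma^2\norm{g^t}^2$: I would expand $g_i^{t+1} - \nabla f_i(x^{t+1}) = (g_i^t - \nabla f_i(x^t)) + m_i^{t+1} - \Delta_i(x^{t+1},x^t)$, use the unbiasedness and bounded-variance property \eqref{eq:quantization_def} of $\cQ_i$ on the compressed momentum-corrected difference, the \algname{PAGE}/\algname{Geom-SARAH} branching (the $c^{t+1}$ coin) together with the local Hessian variance bound \eqref{eq:hessian_variance_local} to handle the stochastic gradient difference, and the global Hessian variance bound \eqref{eq:hessian_variance} plus $L$-smoothness to convert $\norm{\nabla f_i(x^{t+1}) - \nabla f_i(x^t)}^2$ into $(\mathrm{stuff})\cdot\norm{x^{t+1}-x^t}^2$. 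This is where the coefficients $8\omega(2\omega+1)(L_\pm^2+L^2)$ and $\tfrac{1-p}{b}(12\omega(2\omega+1) + \tfrac2p)\cL_\pm^2$ in $\eta$ emerge; the momentum parameter $a$ and its interaction with $\omega$ is exactly what replaces the $p$-dependence of \algname{Byz-VR-MARINA} by the better $a\sim\omega^{-1}$ scaling. The $(\sqrt{1/G}+\sqrt{8c\delta})^2$ factor in $\eta$ is inherited from feeding $G_{t+1}$ back through the robust-aggregation bound into $\EE{\norm{g^{t+1}-\nabla f(x^{t+1})}^2}$.

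Then I would combine everything via the standard descent lemma: from $L$-smoothness of $f$,
\begin{align*}
\EE{f(x^{t+1})} \le \EE{f(x^t)} - \tfrac{\gamma}{2}\EE{\norm{\nabla f(x^t)}^2} - \parens{\tfrac{\gamma}{2} - \tfrac{L\gamma^2}{2}}\EE{\norm{g^t}^2} + \tfrac{\gamma}{2}\EE{\norm{g^t - \nabla f(x^t)}^2},
\end{align*}
substitute the aggregation/variance decomposition of $\EE{\norm{g^t - \nabla f(x^t)}^2}$ (which contributes an $\eta\gamma^3$-type term against $\norm{g^{t-1}}^2$ through the recursion, plus the irreducible heterogeneity/Byzantine term $\propto (8c\delta + \sqrt{8c\delta/G})(B\norm{\nabla f(x^t)}^2 + \zeta^2)$), and choose $\gamma \le (L+\sqrt\eta)^{-1}$ so that all the $\norm{g^t}^2$ coefficients across the telescoped Lyapunov function are nonpositive. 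Telescoping from $0$ to $T-1$, using $g_i^0 = \nabla f_i(x^0)$ so $G_0 = 0$, dividing by $\gamma T$, and moving the $B\norm{\nabla f}^2$ term to the left (which produces the factor $A = 1 - (8c\delta + \sqrt{8c\delta/G})B$, nonnegative precisely under $\delta < ((8c+4\sqrt c)B)^{-1}$ since $\sqrt{8c\delta/G}\le\sqrt{8c\delta}$ wait — more carefully, one bounds $\sqrt{8c\delta/G}\le\sqrt{8c\delta}$ only if $G\ge1$, and the stated threshold is chosen to make $A>0$), gives the claimed bound with $\hx^T$ uniform over $x^0,\ldots,x^{T-1}$.

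The main obstacle I anticipate is the one-step recursion for $G_{t+1}$: unlike the plain \algname{DASHA-PAGE} analysis, here the estimators $g_i^{t+1}$ feed into a \emph{robust aggregator} rather than an exact average, and the aggregator's error bound is in turn a function of the very same quantity $G_{t+1}$ (through the pairwise variance $\sigma^2$), so the recursion is genuinely coupled — one must verify that the induced linear system in $(G_t, \norm{g^t}^2)$ is contractive under the stated stepsize, and that the momentum term $-a(g_i^t - h_i^t)$ inside $\cQ_i$ does not destabilize the bound. Carefully tracking which expectations are conditional (on $x^{t+1},x^t$ vs.\ full) when both the \algname{PAGE} coin $c^{t+1}$, the mini-batch sampling in $\hdelta_i$, and the compression $\cQ_i$ are active is the other delicate bookkeeping point; I would handle it by taking conditional expectations in the order compression, then sampling, then the coin, exactly as in \citet{tyurin2022dasha}, and only invoking the \texttt{ARAgg} bound at the outermost level.
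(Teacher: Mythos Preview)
Your proposal has a genuine structural gap: the single error term $G_t \eqdef \tfrac{1}{G}\sum_{i\in\cG}\EE{\norm{g_i^t - \nabla f_i(x^t)}^2}$ does \emph{not} admit a closed one-step recursion for the \algname{DASHA-PAGE} update, and this is precisely where the proof diverges from the \algname{Byz-VR-MARINA 2.0} argument you are trying to mirror.

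To see why, expand as you suggest and take the conditional expectation over the compressor. Since $m_i^{t+1} = \cQ_i\bigl(h_i^{t+1} - h_i^t - a(g_i^t - h_i^t)\bigr)$, the conditional mean of $g_i^{t+1} - \nabla f_i(x^{t+1})$ is
\[
(1-a)\bigl(g_i^t - h_i^t\bigr) + \bigl(h_i^{t+1} - \nabla f_i(x^{t+1})\bigr),
\]
and the compressor variance is $\omega\norm{h_i^{t+1} - h_i^t - a(g_i^t - h_i^t)}^2$. None of these quantities is controlled by $G_t$ alone: you need separate handles on $\norm{g_i^t - h_i^t}^2$ (compression error, contracting with factor roughly $(1-a)^2 + 2a^2\omega$) and on $\norm{h_i^t - \nabla f_i(x^t)}^2$ (\algname{PAGE} error, contracting with factor $1-p$). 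These two mechanisms contract at \emph{different} rates, and collapsing them into one Lyapunov term would force the worse rate everywhere --- exactly re-coupling the $\omega$- and $p$-dependence that \algname{DASHA-PAGE} is designed to separate, so you could not recover the stated $\eta$.

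The paper's proof therefore carries a \emph{four}-term Lyapunov function
\[
\psi_t = C\norm{\overline{g}^t - h^t}^2 + D\cdot\tfrac{1}{G}\sum_{i\in\cG}\norm{g_i^t - h_i^t}^2 + E\norm{h^t - \nabla f(x^t)}^2 + F\cdot\tfrac{1}{G}\sum_{i\in\cG}\norm{h_i^t - \nabla f_i(x^t)}^2,
\]
with averaged and per-worker versions of each error (the averaged ones feed the descent lemma with a $1/G$ gain; the per-worker ones feed the \texttt{ARAgg} pairwise-variance bound). Separate recursions for each of the four pieces are derived, and then a linear system in $(C,D,E,F)$ is solved so that $\psi_{t+1} \le \psi_t + \eta\,\EE{R^t}$ up to the heterogeneity term. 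Your sketch mentions the momentum parameter $a$ but never introduces $h_i^t$ into the Lyapunov function, so the step ``this is where the coefficients $\ldots$ in $\eta$ emerge'' cannot actually be carried out with the object you defined.
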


In terms of the size of the neighborhood and maximum number of Byzantine workers, the above result aligns closely with what we obtain for \algname{Byz-VR-MARINA 2.0}, thereby being superior to the existing guarantees for \algname{Byz-VR-MARINA}. However, the upper bound on the stepsize of \algname{Byz-DASHA-PAGE} has a better joint dependence on $\omega$ and $p$ compared to that in \algname{Byz-VR-MARINA 2.0}. The difference can be seen in the expression for $\eta$: in Theorem~\ref{thm:main_result_dp}, $\nicefrac{1}{p}$ and $\omega$ are decoupled, whereas $\eta$ from Theorem~\ref{thm:main_result_Byz-VR-MARINA_2} has a term proportional to $\nicefrac{1+\omega}{p}$, ultimately leading to the factor of $\sqrt{1+\omega}\sqrt{\max\{1+\omega, \nicefrac{m}{b}\}}$ in the complexity bounds of \algname{Byz-VR-MARINA 2.0}. A comparison of complexity results is given in Table~\ref{tab:comparison_of_rates_minimization}.

\section{METHODS WITH BIASED COMPRESSION AND ERROR FEEDBACK}

\begin{algorithm}[t]
    \caption{\algname{Byz-EF21}}
    \label{algorithm:byzef21}
    \begin{algorithmic}[1]
    \State \textbf{Input:} starting point $x^0 \in \R^d$, stepsize $\gamma > 0$, number of iterations $T \geq 1$, a collection of biased compressors $\{\cC_i\}_{i\in\cG}$
    \For{$t = 0, 1, \dots, T - 1$}      
        \State $x^{t+1} = x^t - \gamma g^t$ 
        \Statex
        \Statex
        \State {Broadcast $x^{t+1}$ to all workers}
        \For{$i \in \cG$ {\bf in parallel}}
            \Statex
            \State $c_i^{t} = \cC_i(\nabla f_i(x^{t+1}) - g_i^t)$ 
            \State $g_i^{t+1} = g_i^t + c_i^{t}$
            \State {Send message $c_i^{t}$ to the server}
        \EndFor
        \State $g^{t+1} = \texttt{ARAgg}(g_1^{t+1},\ldots,g_n^{t+1})$ 
    \EndFor
    \end{algorithmic}
\end{algorithm}

In this section, we transition from a discussion of methods using unbiased compressors to algorithms utilizing (typically biased) contractive compressors, which typically have better empirical performance than the unbiased alternatives \citep{seide20141}. Such compressors are usually employed together with error feedback mechanisms, since their naive use in distributed Gradient Descent can lead to divergence of the algorithm \citep{beznosikov2020biased}. In this work, we focus on the modern \algname{EF21} error feedback mechanism proposed by \citet{richtarik2021ef21}, as it offers better convergence guarantees compared to standard error feedback. The method is based on the idea of each worker compressing the difference between the current gradient and its estimate $g_i^t$ and using this compressed message to update the local gradient estimate in the next round. Since both $\nabla f_i(x^t)$ and $g_i^t$ converge to $\nabla f_i(x^*)$ (where $x^*$ is a stationary point to which the method converges), $\nabla f_i(x^t) - g_i^t$ tends to zero. Given that the compressor is contractive, it must be the case that the inaccuracy due to the compression of this difference also converges to zero. Importantly, from an algorithmic point of view, \algname{EF21} resembles \algname{MARINA}, which is known to work well with robust aggregation.

Motivated by the above considerations, we propose two new Byzantine-robust methods -- \algname{Byz-EF21} and \algname{Byz-EF21-BC} (Algorithms~\ref{algorithm:byzef21} and \ref{algorithm:byzef21bc}). \algname{Byz-EF21} is a modification of the \algname{EF21} mechanism employing $(\delta,c)$-robust aggregation to ensure Byzantine-robustness. \algname{Byz-EF21-BC} further enhances the method by adding bidirectional compression: following \citet{gruntkowska2023ef21}, we additionally apply the \algname{EF21} mechanism on the server's side to compress messages broadcast to workers. Note that when $\cC^{P}$ is the identity operator, the latter algorithm reduces to \algname{Byz-EF21}.
The intuition behind both algorithms mirrors that of \algname{Byz-VR-MARINA (2.0)}: the variance of the estimates $\{g_i^t\}_{i\in\cG}$ approaches $\cO(\zeta^2)$, leaving Byzantine workers progressively less room to ``hide in the noise''.

\begin{algorithm}[t]
    \caption{\algname{Byz-EF21-BC}}
    \label{algorithm:byzef21bc}
    \begin{algorithmic}[1]
    \State \textbf{Input:} starting point $x^0 \in \R^d$, stepsize $\gamma > 0$, number of iterations $T \geq 1$, a collection of biased compressors $\{\cC_i^D\}_{i\in\cG}$, $\cC^P$ 
    \For{$t = 0, 1, \dots, T - 1$}
        \State $x^{t+1} = x^t - \gamma g^t$ 
        \State $s^{t+1} = \cC^{P}\left(x^{t+1} - w^t\right)$ 
        \State $w^{t+1} = w^t + s^{t+1}$ 
        \State {Broadcast $s^{t+1}$ to all workers}
        \For{$i \in \cG$ {\bf in parallel}}
            \State $w^{t+1} = w^{t} + s^{t+1}$ 
            \State $c_i^{t} = \cC_i^D(\nabla f_i(w^{t+1}) - g_i^t)$ 
            \State $g_i^{t+1} = g_i^t + c_i^{t}$
            \State {Send message $c_i^{t}$ to the server}
        \EndFor
        \State $g^{t+1} = \text{ARAgg}(g_1^{t+1},\ldots,g_n^{t+1})$ 
    \EndFor
    \end{algorithmic}
\end{algorithm}

The following theorem presents the main result for \algname{Byz-EF21} and \algname{Byz-EF21-BC} in a unified manner.

\begin{restatable}{theorem}{BYZEFTW}\label{thm:Byz_EF21_BC}
    Let Assumptions~\ref{as:smoothness}, \ref{as:hessian_variance}, and \ref{as:bounded_heterogeneity} hold. Assume that $\cC_i^D \in \mathbb{B}(\alpha_D)$, $\cC^P \in \mathbb{B}(\alpha_P)$, $0 < \gamma \leq (L + \sqrt{\eta})^{-1}$ and $\delta < (8c(\sqrt{B}+B)^2)^{-1}$, where $\eta = \frac{32}{\alpha_D^2} \parens{1 + \frac{5}{\alpha_P^2}} \parens{1 + \sqrt{8c\delta}}^2 \parens{L_{\pm}^2 + L^2}$. Initialize $w^0 = x^0$, and $g_i^0 = \nabla f_i(x^0)$ for all $i\in\cG$. Then for all $T \geq 0$, the iterates produced by \algname{Byz-EF21}/\algname{Byz-EF21-BC} satisfy
    \begin{equation*}
        \EE{\norm{\nabla f(\hx^T)}^2}\leq \frac{1}{A} \left(\frac{2 \delta^0}{\gamma T}
            + \left(8 c\delta + \sqrt{8 c\delta} \right) \zeta^2\right),
    \end{equation*}
    where $\delta^0=f(x^0) - f^*$, $A = 1 - B\left(8c\delta + \sqrt{8c\delta}\right)$ and $\hx^T$ is chosen uniformly at random from $x^0, x^1, \ldots, x^{T-1}$.
\end{restatable}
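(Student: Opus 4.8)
The plan is to follow the standard Lyapunov/descent-lemma route used for \algname{EF21}-type methods, but with the averaging of regular gradient estimators replaced by the output of the $(\delta,c)$-robust aggregator, and to control the extra aggregation error through the pairwise variance of $\{g_i^t\}_{i\in\cG}$. I would treat \algname{Byz-EF21} as the special case $\cC^P = \mathrm{Id}$ (i.e.\ $\alpha_P = 1$) of \algname{Byz-EF21-BC} and carry out the argument once for the bidirectional method. The core objects to track are: (i) the function suboptimality $f(x^t) - f^*$; (ii) the uplink feedback error $\frac{1}{G}\sum_{i\in\cG}\|g_i^t - \nabla f_i(w^{t+1})\|^2$ (or $\nabla f_i(x^{t+1})$ when $\alpha_P=1$); (iii) the downlink error $\|x^{t+1} - w^{t+1}\|^2$; and (iv) the pairwise dispersion $\frac{1}{G(G-1)}\sum_{i,l\in\cG}\|g_i^t - g_l^t\|^2$, which is exactly the quantity $\sigma^2$ feeding into Definition~\ref{def:RAgg_def}.

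First I would write the descent step. Since $x^{t+1} = x^t - \gamma g^t$ and $f$ is $L$-smooth (Assumption~\ref{as:smoothness}), the standard inequality gives $f(x^{t+1}) \le f(x^t) - \frac{\gamma}{2}\|\nabla f(x^t)\|^2 - \frac{\gamma}{2}(1-\gamma L)\|g^t\|^2 + \frac{\gamma}{2}\|g^t - \nabla f(x^t)\|^2$. The term $\|g^t - \nabla f(x^t)\|^2$ is split via $\|g^t - \overline{g}^t\|^2 + \|\overline{g}^t - \nabla f(x^t)\|^2$ (up to a factor $2$), where $\overline{g}^t = \frac{1}{G}\sum_{i\in\cG}g_i^t$. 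The first piece is bounded by $c\delta\sigma_t^2$ using the $(\delta,c)$-\texttt{ARAgg} property, where $\sigma_t^2$ is the pairwise dispersion of the $g_i^t$'s. The second piece, $\|\overline{g}^t - \nabla f(x^t)\|^2$, is the genuine \algname{EF21} error: using $\nabla f(x^t) = \frac{1}{G}\sum_i \nabla f_i(x^t)$ (only over good workers — note $f$ is defined as the average over $\cG$) and Jensen, it is $\le \frac{1}{G}\sum_{i\in\cG}\|g_i^t - \nabla f_i(x^t)\|^2$, which after accounting for the downlink gap $w^{t+1}$ vs.\ $x^{t+1}$ is controlled by objects (ii) and (iii). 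Then I would establish the recursions for (ii) and (iii): for (ii), the contractive property of $\cC_i^D$ plus Young's inequality gives $\|g_i^{t+1} - \nabla f_i(w^{t+2})\|^2 \le (1-\alpha_D)(1+s)\|g_i^t - \nabla f_i(w^{t+1})\|^2 + (1+1/s)\|\nabla f_i(w^{t+1}) - \nabla f_i(w^{t+2})\|^2$, and the last increment is bounded by $L_i$-smoothness, hence after averaging over $\cG$ by Assumptions~\ref{as:hessian_variance} (to pass from $\frac{1}{G}\sum\|\nabla f_i(\cdot)-\nabla f_i(\cdot)\|^2$ to $L_\pm^2 + L^2$ times a squared step) and by the update rule $w^{t+2}-w^{t+1} = s^{t+2}$, itself contractive in $x^{t+2}-w^{t+1}$. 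The downlink recursion (iii) is the standard \algname{EF21} server-side contraction from \citet{gruntkowska2023ef21}: $\|x^{t+2}-w^{t+2}\|^2 \le (1-\alpha_P)(1+s')\|x^{t+1}-w^{t+1}\|^2 + (1+1/s')\gamma^2\|g^{t+1}-g^t\|^2$ or similar, with the drift $\|x^{t+2}-x^{t+1}\|^2 = \gamma^2\|g^{t+1}\|^2$.

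The crucial new ingredient is the recursion for the pairwise dispersion $\sigma_t^2$. Here I would use $g_i^{t+1} - g_l^{t+1} = (g_i^t - g_l^t) + (c_i^t - c_l^t)$ and expand; the differences $c_i^t - c_l^t = \cC_i^D(\nabla f_i(w^{t+1})-g_i^t) - \cC_l^D(\nabla f_l(w^{t+1})-g_l^t)$ are bounded (using boundedness of each $\cC$ output in terms of its argument, $\|\cC(y)\|\le (2-\alpha)\|y\|$ up to constants via triangle inequality with \eqref{eq:contractive_compr}) by the feedback errors (ii) plus the heterogeneity terms $\|\nabla f_i(w^{t+1}) - \nabla f_l(w^{t+1})\|^2$, which via Assumption~\ref{as:bounded_heterogeneity} contribute $B\|\nabla f(w^{t+1})\|^2 + \zeta^2$ after averaging. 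This is where the $\zeta^2$ term in the final bound, and the coupling $B\cdot c\delta$ that forces $\delta < (8c(\sqrt B + B)^2)^{-1}$ to keep $A>0$, both originate. The main obstacle is organizing all of this into a single Lyapunov function $\Phi^t = f(x^t) - f^* + a_1\cdot(\text{ii}) + a_2\cdot(\text{iii}) + a_3\cdot\sigma_t^2$ with carefully chosen positive constants $a_1,a_2,a_3$ and Young parameters $s,s'$ so that: the cross terms $\|g^t\|^2$, $\|g^{t+1}\|^2$, $\|\nabla f(w^{t+1})\|^2$ get absorbed (using $\|\nabla f(w^{t+1})\|^2 \lesssim \|\nabla f(x^{t+1})\|^2 + L^2\|x^{t+1}-w^{t+1}\|^2$), the contraction factors $(1-\alpha_D)(1+s)$ and $(1-\alpha_P)(1+s')$ stay below $1$, and the residual coefficient on $\|\nabla f(x^t)\|^2$ stays positive — this is precisely what pins down the stepsize constraint $\gamma \le (L+\sqrt\eta)^{-1}$ with the stated $\eta = \frac{32}{\alpha_D^2}(1+\frac{5}{\alpha_P^2})(1+\sqrt{8c\delta})^2(L_\pm^2+L^2)$. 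Once $\Phi^{t+1} \le \Phi^t - \frac{\gamma}{2}A\|\nabla f(x^t)\|^2 + \frac{\gamma}{2}(\text{const})\zeta^2$ holds, I would telescope from $0$ to $T-1$, use $\Phi^0 = \delta^0$ (which relies on the initialization $w^0 = x^0$ and $g_i^0 = \nabla f_i(x^0)$ making (ii), (iii), and the initial dispersion all start at their natural values), divide by $\frac{\gamma T}{2}A$, and identify the right-hand side with the claimed bound, reading off $\alpha_P = 1$ to recover \algname{Byz-EF21}.
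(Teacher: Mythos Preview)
Your overall scaffolding (smoothness descent step, split $\|g^t - \nabla f(x^t)\|^2$ into aggregation error plus EF21 error, contractive recursions for the uplink and downlink errors, telescoping) matches the paper. But the plan to carry the pairwise dispersion $\sigma_t^2 = \frac{1}{G(G-1)}\sum_{i,l\in\cG}\|g_i^t - g_l^t\|^2$ as a \emph{fourth} Lyapunov component with its own recursion is where things go wrong.

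Writing $g_i^{t+1}-g_l^{t+1} = (g_i^t-g_l^t) + (c_i^t - c_l^t)$ and expanding gives you, at best, an inequality of the form $\sigma_{t+1}^2 \le (1+s)\sigma_t^2 + (\text{feedback and heterogeneity terms})$ with coefficient strictly larger than~$1$: the contractive compressors shrink $g_i^t - \nabla f_i(w^{t+1})$, not $g_i^t - g_l^t$, so there is no mechanism producing a factor below~$1$ on $\sigma_t^2$ itself. That recursion does not close, and the Lyapunov argument with $a_3\sigma_t^2$ collapses. Moreover, even at $t=0$ the initialization $g_i^0 = \nabla f_i(x^0)$ gives $\sigma_0^2 = \frac{1}{G(G-1)}\sum_{i,l}\|\nabla f_i(x^0)-\nabla f_l(x^0)\|^2$, which is \emph{not} zero in the heterogeneous case, so $\Phi^0 \neq \delta^0$ and the final bound would acquire an extra unwanted term.

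The fix --- which is what the paper does --- is not to recurse on $\sigma_t^2$ at all, but to bound it \emph{statically} at every step. Writing $g_i^t - g_l^t = (g_i^t - \nabla f_i(x^t)) + (\nabla f_i(x^t) - \nabla f(x^t)) - (g_l^t - \nabla f_l(x^t)) - (\nabla f_l(x^t) - \nabla f(x^t))$ and applying Young plus Assumption~\ref{as:bounded_heterogeneity} gives
\[
\sigma_t^2 \;\le\; 8\,H^t \;+\; 8B\,\|\nabla f(x^t)\|^2 \;+\; 8\zeta^2,
\]
where $H^t = \frac{1}{G}\sum_{i\in\cG}\|g_i^t - \nabla f_i(x^t)\|^2$. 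This, combined with the robust-aggregator bound $\|g^t - \overline{g}^t\|^2 \le c\delta\,\sigma_t^2$ and the Jensen bound $\|\overline{g}^t - \nabla f(x^t)\|^2 \le H^t$, folds the entire aggregation error into the descent inequality as a multiple of $H^t$, a multiple of $\|\nabla f(x^t)\|^2$ (this is where the factor $A = 1 - B(8c\delta + \sqrt{8c\delta})$ and the constraint on $\delta$ come from), and the additive $\zeta^2$. The Lyapunov function then has only three pieces --- $f(x^t)-f^*$, $H^t$, and $G^t = \|x^t - w^t\|^2$ --- all of which do contract, and all of which vanish at $t=0$ under the stated initialization, yielding $\Psi^0 = \delta^0$ as claimed. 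Your recursions for $H^t$ and $G^t$ are essentially correct (the paper anchors $H^t$ at $\nabla f_i(x^t)$ rather than $\nabla f_i(w^{t+1})$, paying for the gap via $G^{t+1}$), so once you drop $\sigma_t^2$ from the Lyapunov function and use the static bound instead, the rest of your plan goes through.
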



Similar to the bounds we derived for \algname{Byz-VR-MARINA 2.0} and \algname{Byz-DASHA-PAGE}, the above upper bound for \algname{Byz-EF21}/\algname{Byz-EF21-BC} has two terms: the first decreases at a rate $\cO(\nicefrac{1}{T})$, and the second remains constant. The neighbourhood again vanishes when either $\zeta=0$ or $\delta=0$. In the scenario where $B = 0$ and $\sqrt{c\delta} = \cO(c\delta)$ (which occurs when there are many Byzantine workers), the second term is $\cO(\delta\zeta^2)$, matching the lower bound from \citet{karimireddy2020byzantine}. Furthermore, when $\delta = 0$ (no Byzantines), the rate aligns with the result for \algname{EF21-BC} \citep{fatkhullin2021ef21}, and when additionally $\alpha_P = 1$ (no downlink compression), the rate matches the one of \algname{EF21} \citep{richtarik2021ef21}. Finally, when $B > 0$, the above result holds whenever $\delta < \nicefrac{1}{8c(\sqrt{B}+B)^2}$. While this bound is worse than the one we derive for \algname{Byz-VR-MARINA 2.0} and \algname{Byz-DASHA-PAGE}, it is better than that of \algname{Byz-VR-MARINA} when $B < \nicefrac{1}{p}$ (which occurs, for example, when $\omega$ is large and $B$ is small).

\section{NUMERICAL EXPERIMENTS}

We conduct an empirical comparison\footnote{Our codes are available online: \url{https://github.com/Nikosimus/CC-for-BR-Learning}.} of \algname{Byz-VR-MARINA}, \algname{Byz-VR-MARINA 2.0} and \algname{Byz-DASHA-PAGE} in both homogeneous and heterogeneous settings. More details and additional experiments, including those on error feedback methods, are provided in Appendix~\ref{appendix:extra_exp_details}.

We solve a binary logistic regression problem with non-convex regularizer, using the \texttt{phishing} dataset from \texttt{LibSVM} \citep{chang2011libsvm}. The data is divided among $n = 16$ workers, out of which $3$ are Byzantine. As the aggregation rule, we use the Coordinate-wise Median (CM) aggregator \citep{yin2018byzantine} with bucketing \citep{karimireddy2020byzantine} (see Appendix~\ref{appendix:robust_aggr_and_compr}). We consider four different attacks performed by the Byzantine clients: \emph{Bit Flipping} (BF): change the sign of the update, \emph{Label Flipping} (LF): change the labels, i.e., $y_{i,j} \mapsto -y_{i,j}$, \emph{Inner Product Manipulation} (IPM): send $-\frac{z}{G} \sum_{i\in\cG} \nabla f_i(x)$, and \emph{A Little Is Enough} (ALIE): estimate the mean $\mu_{\cG}$ and standard deviation $\sigma_{\cG}$ of the regular updates and send $\mu_{\cG} - z \sigma_{\cG}$, where $z$ is a constant controlling the strength of the attacks.

\begin{figure}[t]
\centering
\begin{subfigure}{1.6in}
  \centering
  \includegraphics[width=\textwidth]{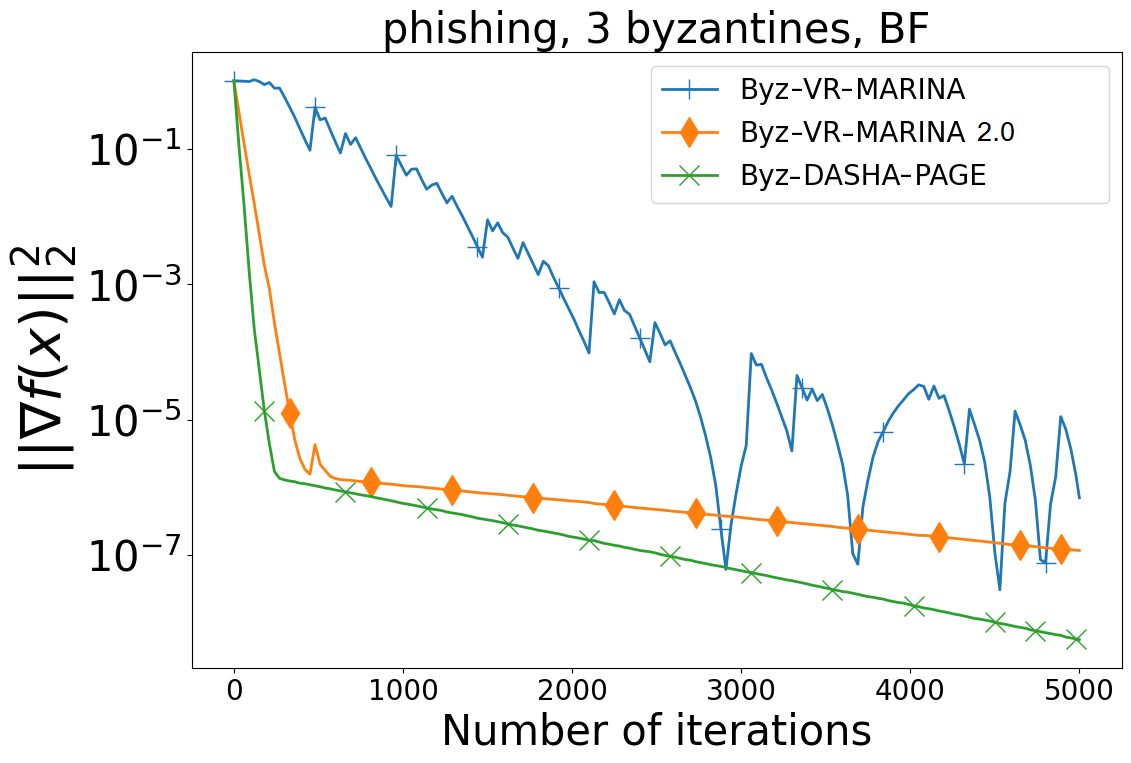}
  \caption{BF attack}
\end{subfigure}%
\begin{subfigure}{1.6in}
  \centering
  \includegraphics[width=\textwidth]{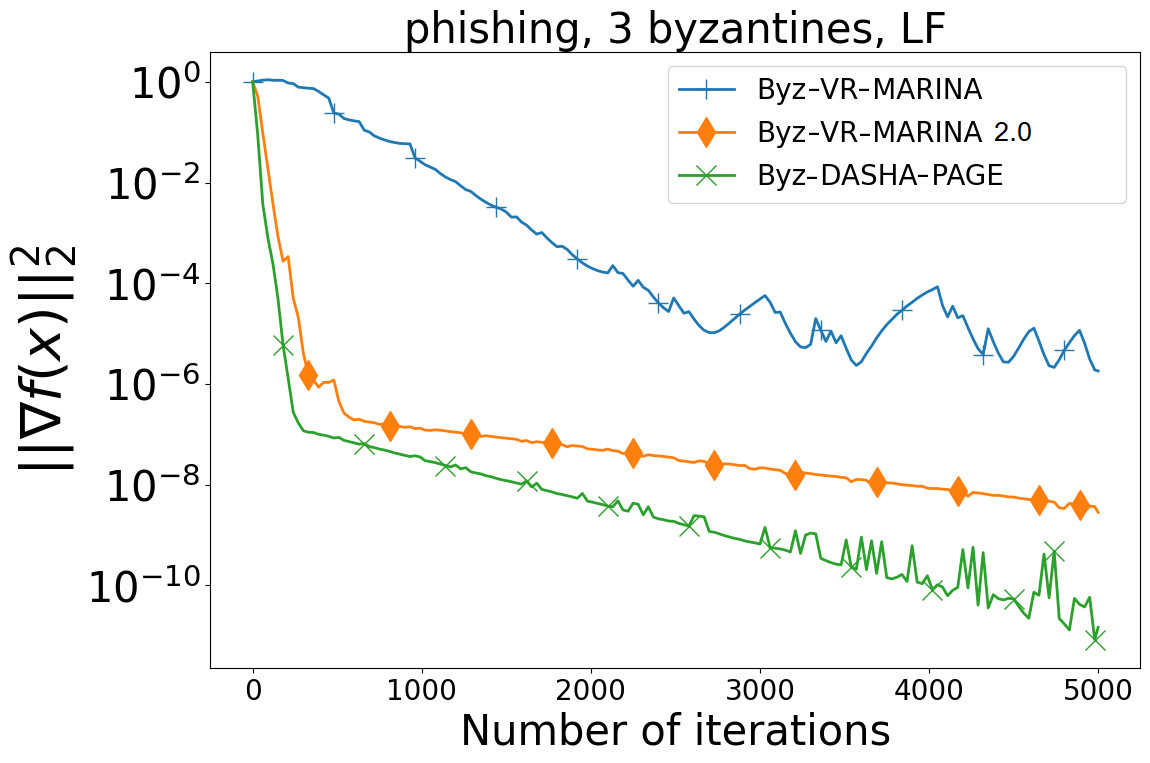}
  \caption{LF attack}
\end{subfigure}
\begin{subfigure}{1.6in}
  \centering
  \includegraphics[width=\textwidth]{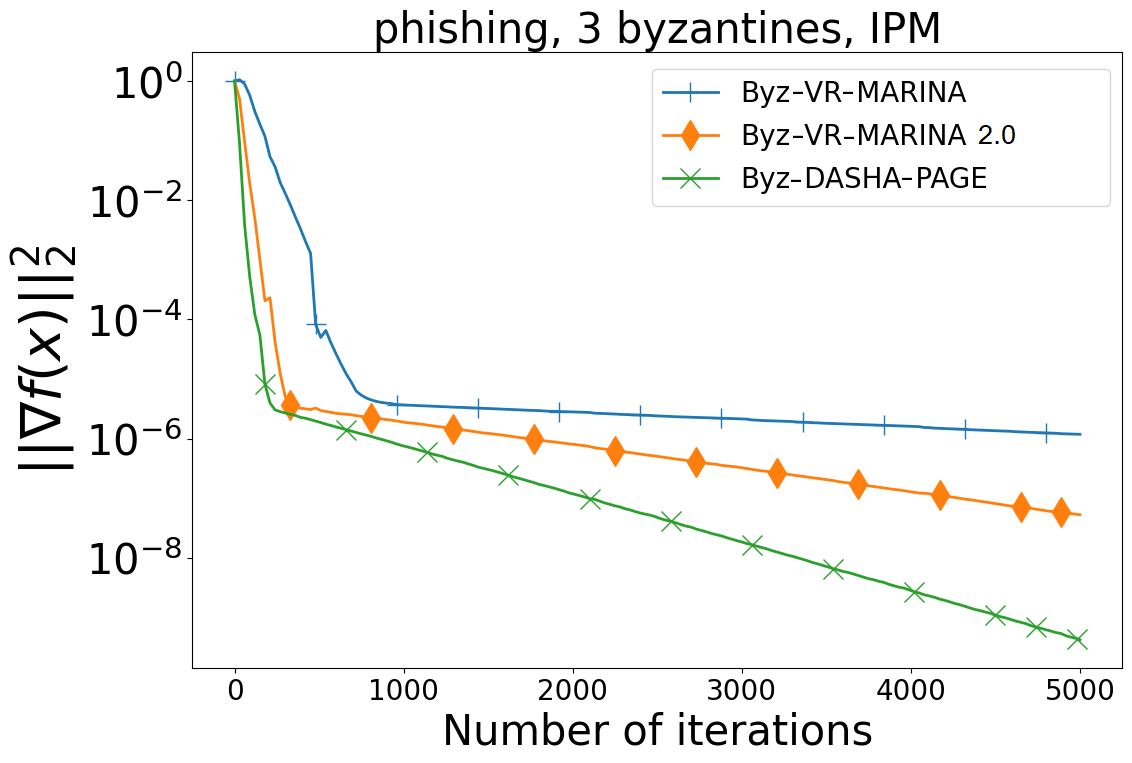}
  \caption{IPM attack}
\end{subfigure}
\begin{subfigure}{1.6in}
  \centering
  \includegraphics[width=\textwidth]{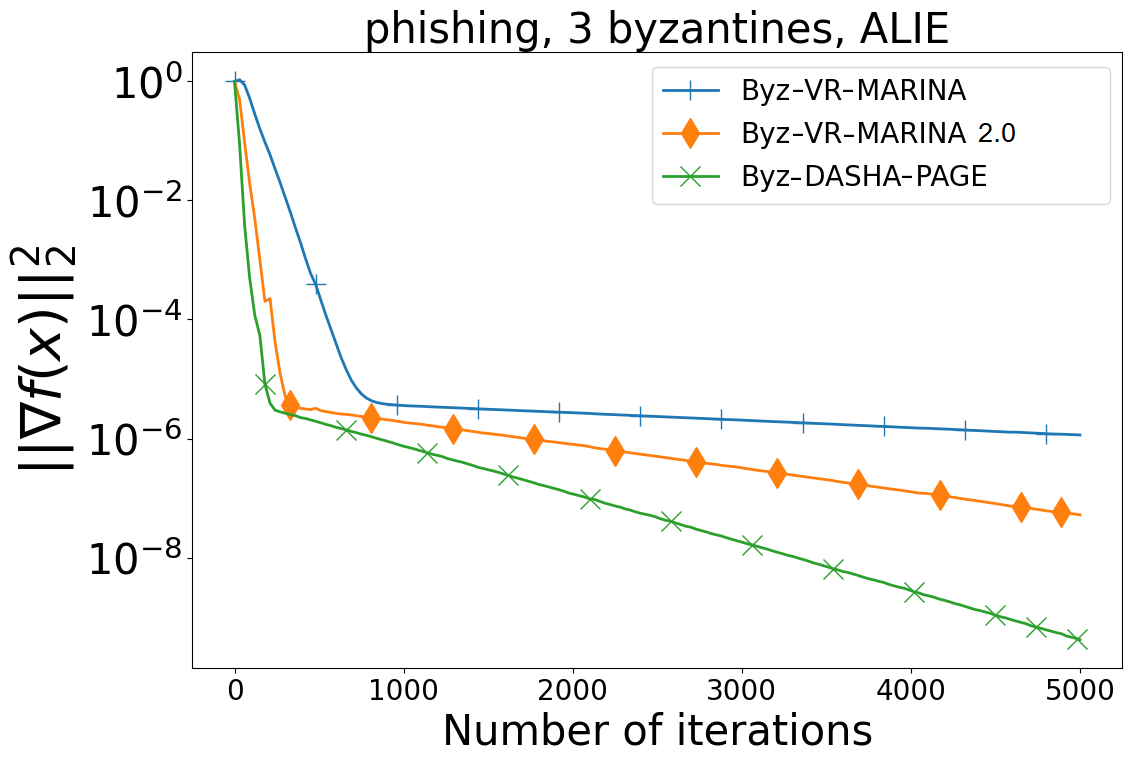}
  \caption{ALIE attack}
\end{subfigure}
\caption{Convergence in terms of the number of iterations in the homogeneous non-convex setting.}
\label{fig:NC_homogeneous}
\end{figure}

\begin{figure}[t]
\centering
\begin{subfigure}{1.6in}
  \centering
  \includegraphics[width=\textwidth]{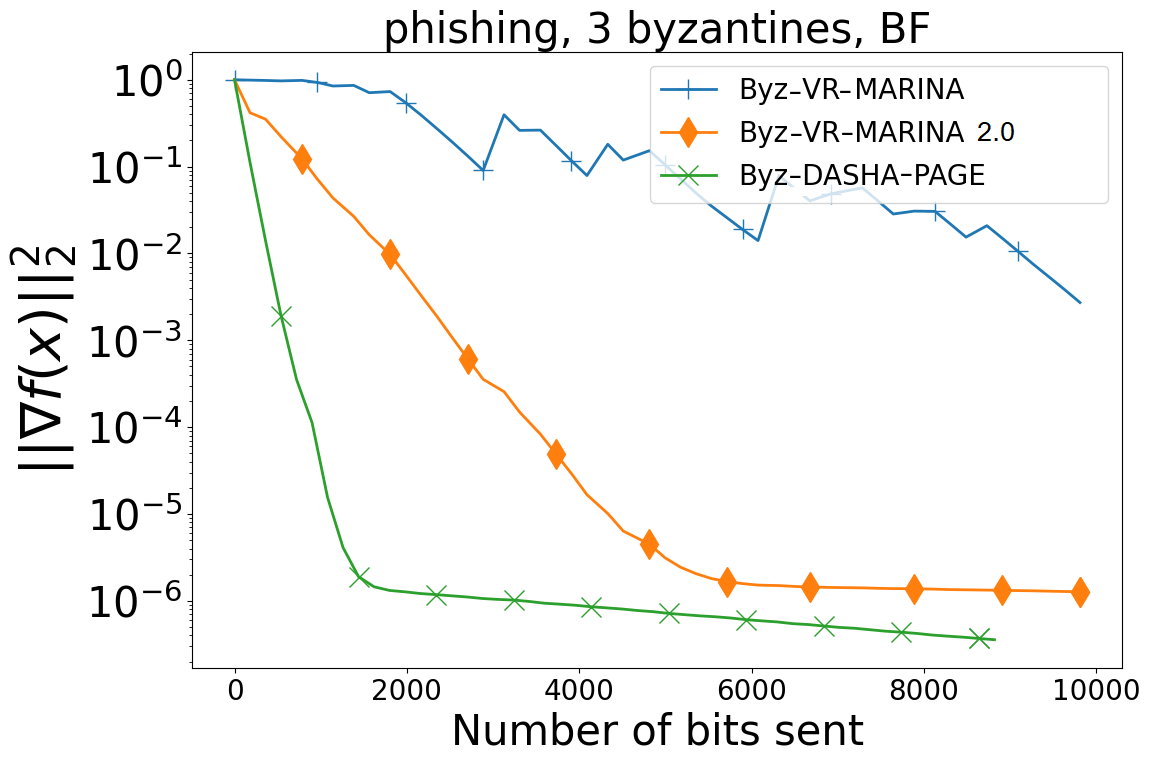}
  \caption{BF attack}
\end{subfigure}
\begin{subfigure}{1.6in}
  \centering
  \includegraphics[width=\textwidth]{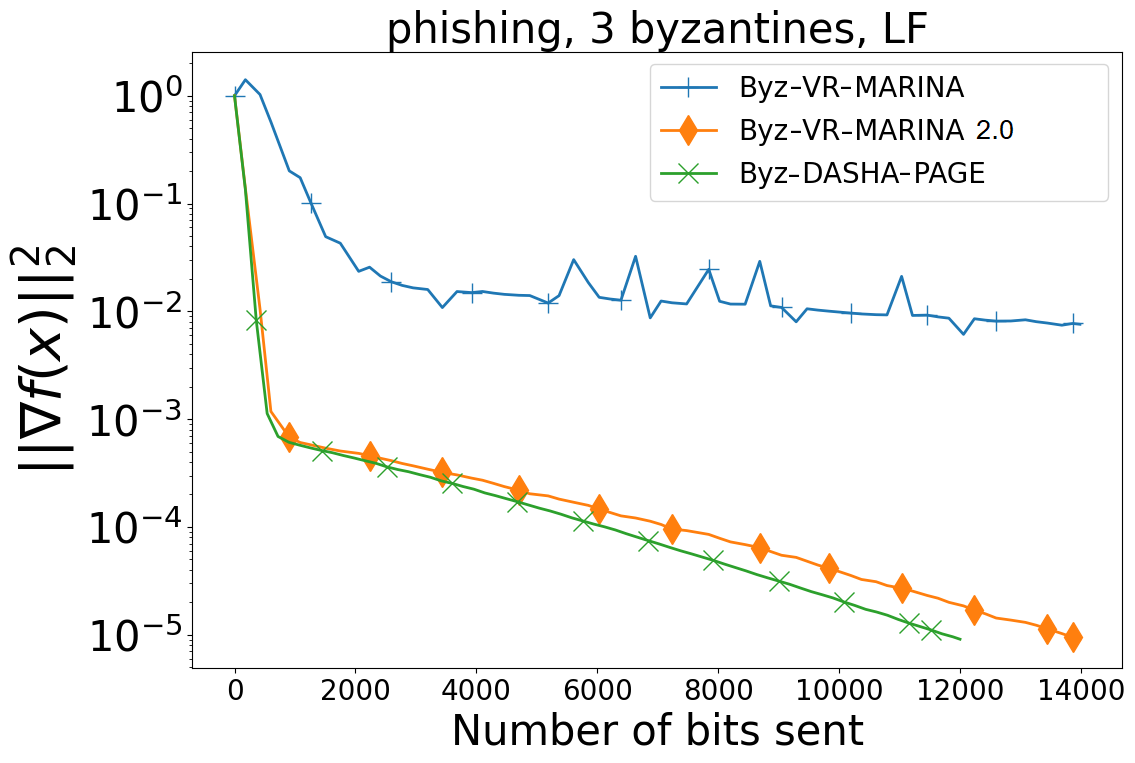}
  \caption{LF attack}
\end{subfigure}
\begin{subfigure}{1.6in}
  \centering
  \includegraphics[width=\textwidth]{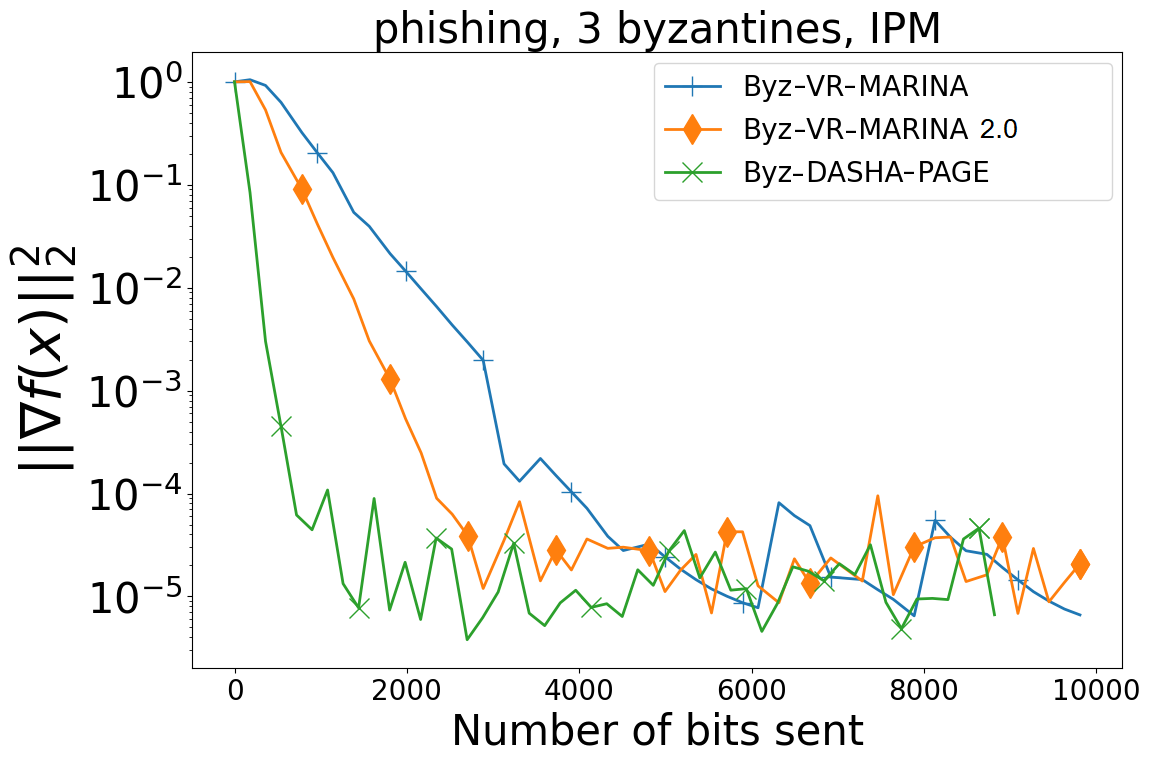}
  \caption{IPM attack}
\end{subfigure}
\begin{subfigure}{1.6in}
  \centering
  \includegraphics[width=\textwidth]{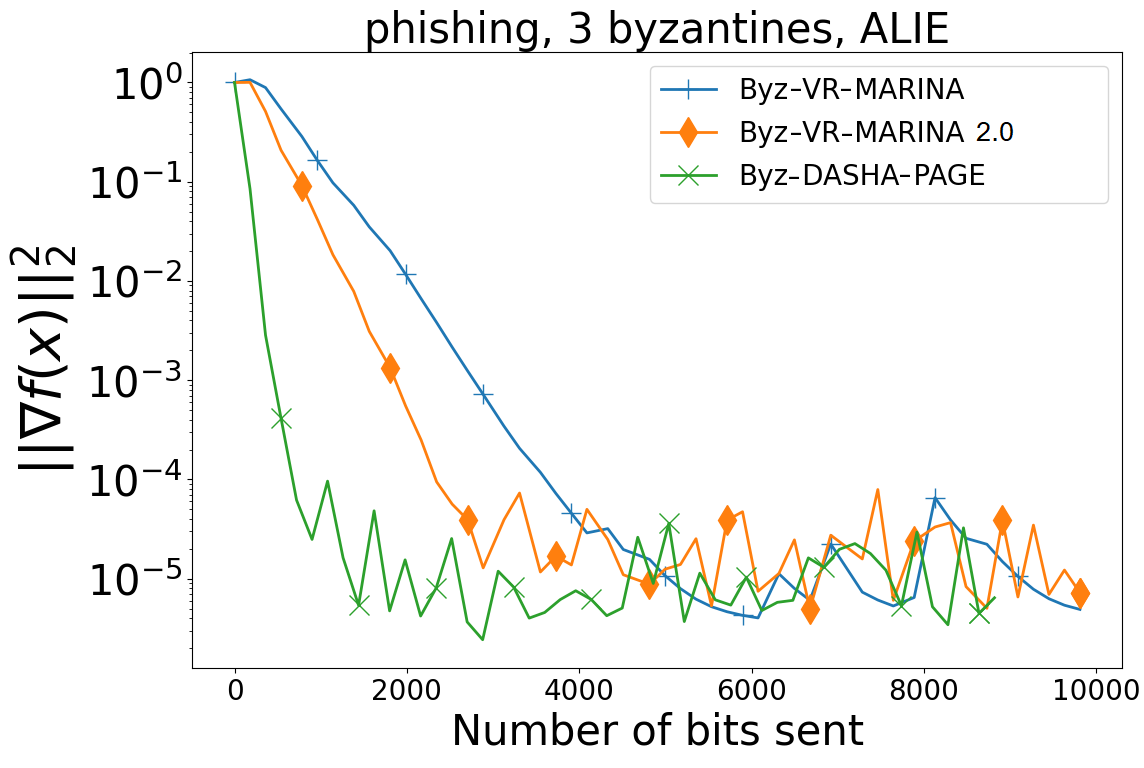}
  \caption{ALIE attack}
\end{subfigure}
\caption{Convergence in terms of the number of bits sent in the heterogeneous non-convex setting.}
\label{fig:NC_heterogeneous}
\end{figure}

\paragraph{Non-convex homogeneous setting.} In the first set of experiments (Figure~\ref{fig:NC_homogeneous}), we showcase the efficiency of our methods in the homogeneous setting ($B=\zeta=0$ in Assumption~\ref{as:bounded_heterogeneity}). 
Regardless of the attack type, \algname{Byz-VR-MARINA 2.0} and \algname{Byz-DASHA-PAGE} significantly outperform the \algname{Byz-VR-MARINA} baseline, both in terms of convergence speed and achieved accuracy, with \algname{Byz-DASHA-PAGE} taking the lead.

\paragraph{Non-convex heterogeneous setting.} As in the homogeneous scenario, both \algname{Byz-VR-MARINA 2.0} and \algname{Byz-DASHA-PAGE} converge faster than \algname{Byz-VR-MARINA} (Figure~\ref{fig:NC_heterogeneous}). The empirical superiority of our methods is especially apparent in the case of BF and LF attacks. Importantly, as suggested by theory, in the heterogeneous setting ($B,\zeta > 0$ in Assumption~\ref{as:bounded_heterogeneity}), the algorithms converge only to a certain neighborhood of the solution. The radius of this neighborhood is larger in the case of \algname{Byz-VR-MARINA} compared to our proposed alternatives. A noteworthy observation is the inherent stability of \algname{Byz-VR-MARINA 2.0} and \algname{Byz-DASHA-PAGE}, as they consistently exhibit less fluctuations compared to \algname{Byz-VR-MARINA}.

\subsubsection*{Acknowledgements}

The work of A. Rammal was performed during a research internship at KAUST led by P. Richt\'arik. A.
Rammal is a student at \'Ecole Polytechnique, France. The work of N. Fedin was supported by a grant for research centers in the field of artificial intelligence, provided by the Analytical Center for the Government of the Russian Federation in accordance with the subsidy agreement (agreement identifier 000000D730321P5Q0002) and the agreement with the Moscow Institute of Physics and Technology dated November 1, 2021 No. 70-2021-00138.

\bibliography{ref}

\section*{Checklist}

 \begin{enumerate}

 \item For all models and algorithms presented, check if you include:
 \begin{enumerate}
   \item A clear description of the mathematical setting, assumptions, algorithm, and/or model. [Yes]
   \item An analysis of the properties and complexity (time, space, sample size) of any algorithm. [Yes]
   \item (Optional) Anonymized source code, with specification of all dependencies, including external libraries. [Yes]
 \end{enumerate}

 \item For any theoretical claim, check if you include:
 \begin{enumerate}
   \item Statements of the full set of assumptions of all theoretical results. [Yes]
   \item Complete proofs of all theoretical results. [Yes]
   \item Clear explanations of any assumptions. [Yes]     
 \end{enumerate}

 \item For all figures and tables that present empirical results, check if you include:
 \begin{enumerate}
   \item The code, data, and instructions needed to reproduce the main experimental results (either in the supplemental material or as a URL). [Yes]
   \item All the training details (e.g., data splits, hyperparameters, how they were chosen). [Yes]
         \item A clear definition of the specific measure or statistics and error bars (e.g., with respect to the random seed after running experiments multiple times). [Yes]
         \item A description of the computing infrastructure used. (e.g., type of GPUs, internal cluster, or cloud provider). [Yes]
 \end{enumerate}

 \item If you are using existing assets (e.g., code, data, models) or curating/releasing new assets, check if you include:
 \begin{enumerate}
   \item Citations of the creator If your work uses existing assets. [Yes]
   \item The license information of the assets, if applicable. [Yes]
   \item New assets either in the supplemental material or as a URL, if applicable. [Not Applicable]
   \item Information about consent from data providers/curators. [Not Applicable]
   \item Discussion of sensible content if applicable, e.g., personally identifiable information or offensive content. [Not Applicable]
 \end{enumerate}

 \item If you used crowdsourcing or conducted research with human subjects, check if you include:
 \begin{enumerate}
   \item The full text of instructions given to participants and screenshots. [Not Applicable]
   \item Descriptions of potential participant risks, with links to Institutional Review Board (IRB) approvals if applicable. [Not Applicable]
   \item The estimated hourly wage paid to participants and the total amount spent on participant compensation. [Not Applicable]
 \end{enumerate}

 \end{enumerate}

\clearpage





\appendix
\onecolumn
\tableofcontents
\clearpage

\section{EXTRA RELATED WORK}\label{appendix:extra_related_work}

\paragraph{Unbiased compression.} The first convergence results for the methods using unbiased compression of (stochastic) gradients are established by \citet{alistarh2017qsgd, wen2017terngrad, khirirat2018distributed}. \citet{mishchenko2019distributed} propose an approach based on the compression of certain gradient differences, achieving convergence to arbitrary accuracy. In this method, workers compute full gradients and a constant stepsize is used. This approach is generalized and combined with variance reduction by \citet{horvath2019stochastic}. The methods achieving current state-of-the-art theoretical complexities in the non-convex case are developed by \citet{gorbunov2021marina} and \citet{tyurin2022dasha}. Recent advancements in the literature on methods utilising unbiased compression include adaptive compression~\citep{faghri2020adaptive}, acceleration~\citep{li2020acceleration, li2021canita, qian2021error}, decentralized communication~\citep{kovalev2021linearly}, local steps~\citep{basu2019qsparse, haddadpour2021federated, sadiev2022federated}, second-order methods~\citep{islamov2021distributed, safaryan2021fednl}, and methods for the saddle-point problems~\citep{beznosikov2021distributed, beznosikov2022stochastic}.

\paragraph{Biased compression and error feedback.} Methods with biased compression and error feedback are known to perform well in practice \citep{seide20141, vogels2019powersgd}. In the non-convex case, which is the primary focus of our work, standard error feedback is analyzed by \citet{karimireddy2019error, beznosikov2020biased, koloskova2019decentralized, sahu2021rethinking}. However, the existing complexity bounds for standard error feedback either have an explicit dependence on the heterogeneity parameter $\zeta^2$ or require boundedness of the gradients. These issues are resolved by \citet{richtarik2021ef21}, who propose a novel version of error feedback, named \algname{EF21}. This approach is further extended in various directions by \citet{fatkhullin2021ef21}.

\paragraph{Bidirectional compression.} In some applications, the communication cost of downloading information from the server is comparable to the cost of uploading it, as observed in data from Speedtest.net\footnote{\url{https://www.speedtest.net/global-index}}. Consequently, numerous studies not only address uplink (workers-to-server) compression costs, but also focus on downlink (server-to-workers) compression. \cite{philippenko2020bidirectional, gorbunov2020linearly} utilize unbiased compression in both uplink and downlink communication, and \citet{philippenko2021preserved} improve these results by employing the \algname{DIANA} mechanism \citep{mishchenko2019distributed} on both worker and server sides. There also exist several works considering biased compression. \citet{tang2019doublesqueeze} use standard error feedback, and \citet{fatkhullin2021ef21} apply \algname{EF21} on both workers and server sides. The latter approach is further refined by \cite{gruntkowska2023ef21, tyurin20232direction}.

\paragraph{Comparison with \citet{tao2023byzantine}.}
\citet{tao2023byzantine} consider a variant of \algname{Parallel SGD} with coordinate-wise Truncated Mean aggregation and coordinate-wise robust estimate of the gradient on regular workers to handle the heavy-tailed noise in the stochastic gradients. Additionally, the authors apply contractive compression to messages communicated by the clients. However, their approach assumes that all clients sample from the same distribution and have a finite number of samples, while we consider a setup with bounded heterogeneity without assuming statistical similarity between local datasets on each client. Moreover, \citet{tao2023byzantine} focus solely on strongly convex problems, deriving high-probability results. Thus, one cannot formally compare our theoretical guarantees. Importantly, \citet{tao2023byzantine} assume that the second moments of the stochastic gradients are bounded, which implies boundedness of the gradient of the objective function. This assumption is known to be quite restrictive and, for example, does not hold for globally strongly convex functions.  Finally, in the special case of homogeneous data, our analysis recovers convergence to any predefined accuracy, while theirs ensures convergence only to the (non-reducible by stepsize) neighborhood of the solution, with a radius proportional to the problem dimension, which can be huge.

\clearpage

\section{EXAMPLES OF ROBUST AGGREGATORS AND COMPRESSION OPERATORS} \label{appendix:robust_aggr_and_compr}

\subsection{Robust Aggregators}

In recent years, research efforts have given rise to various aggregation rules asserted to be Byzantine-robust under certain assumptions \citep{li2023experimental}. However, it turns out that these rules do not satisfy Definition \ref{def:RAgg_def}, and there exist practical scenarios where methods employing them fail to converge \citep{karimireddy2021learning}.

In this section, we present a few examples of such rules and consider a technique known as \textit{bucketing}, proposed by \cite{karimireddy2020byzantine}, which robustify them.
\\

\textbf{Geometric Median} \citep{chen2017distributed, pillutla2022robust}:
\texttt{GM}-estimator, also known as Robust Federated Averaging (\texttt{RFA}), is an aggregation rule based on geometric median:
\begin{align*}
    \texttt{GM}(x_1, \ldots, x_n)
        \eqdef \argmin_{x\in\R^d} \sum_{i=1}^n \norm{x-x_i}.
\end{align*}
An approximate solution to the above problem can be obtained through several iterations of the smoothed Weiszfeld algorithm, with $\cO(n)$ cost per iteration \citep{Weiszfeld1937, pillutla2022robust}.

\textbf{Coordinate-wise Median} \citep{yin2018byzantine}:
\texttt{CM}-estimator operates component-wise, assigning to each element in the output vector the median of the corresponding elements in the input vectors, i.e.,
\begin{align*}
    [\texttt{CM} (x_1, \ldots, x_n)]_j
    \eqdef \texttt{Median}([x_1]_j,\ldots,[x_n]_j),
\end{align*}
where $[x]_j$ is $j$-th coordinate of vector $x\in\R^d$. The method incurs a cost of $\cO(n)$.

\textbf{Krum} \citep{blanchard2017byzantine}:
\texttt{Krum} outputs a vector $x_i$ which is closest to the mean of the input vectors when the most extreme inputs are excluded. In mathematical terms, let $S_i \subseteq \{x_1, \ldots, x_n\}$ represent the subset of $n-|\cB|-2$ vectors closest to $x_i$. Then
\begin{align*}
    \texttt{Krum}(x_1, \ldots, x_n)
    \eqdef \argmin_{x_i\in\{x_1, \ldots, x_n\}} \sum_{j\in S_i} \norm{x_j-x_i}^2.
\end{align*}
This estimator has a serious limitation, namely the computational cost. Since it requires calculating all pairwise distances between vectors $x_1, \ldots, x_n$, this cost is $\cO(n^2)$.

\paragraph{Bucketing \citep{karimireddy2020byzantine}:}
The $s$-bucketing trick (Algorithm \ref{algorithm:bucketing}) first divides the $n$ inputs into $\lceil n/s \rceil$ buckets, so that each bucket contains at most $s$ elements. The vectors in each bucket are then averaged and employed as input of some aggregator $\texttt{Aggr}$.

\begin{algorithm}[H]\label{algorithm:bucketing}
    \caption{Robust Aggregation with Bucketing \citep{karimireddy2020byzantine}}
    \label{algorithm:bucketing}
    \begin{algorithmic}[1]
    \State \textbf{Input:} $\{x_1,\ldots,x_n\}$, bucket size $s\in\mathbb{N}$, aggregation rule \texttt{Aggr}
    \State Sample a random permutation $\pi = \parens{\pi(1), \ldots, \pi(n)}$ of $[n]$
    \State Set $y_i = \frac{1}{s} \sum_{k=s(i-1)+1}^{\min\{si, n\}} x_{\pi(k)}$ for $i=1,\ldots,\lceil n/s \rceil$
    \State \textbf{Return:} $\hat{x} = \texttt{Aggr}(y_1,\ldots,y_{\lceil n/s \rceil})$
    \end{algorithmic}
\end{algorithm}
\cite{gorbunov2023variance} show that \texttt{Krum}, \texttt{RFA} and \texttt{CM} with bucketing satisfy Definition \ref{def:RAgg_def}.
\\

\subsection{Compression Operators}

Below we provide several examples of compression operators belonging to the classes of unbiased (Definition \ref{def:quantization}) and contractive (Definition \ref{def:contractive_compr}) compressors. For a more thorough overview, we refer the reader to \cite{beznosikov2020biased}.

\begin{enumerate}
    \item \textbf{Rand$K$ sparsification} \citep{stich2018sparsified}:
    The Rand-$K \in \mathbb{U}(\frac{d}{K} - 1)$ compressor retains $K\in[d]$ random values of the input vector and scales it by $\frac{d}{K}$:
    \[
    \mathcal{C}(x) = \frac{d}{K} \sum_{i\in S} x_ie_i,
    \]
    where $S$ is a random subset of $[d]$ and $e_1, \ldots, e_d \in \R^d$ are the standard unit basis vectors. 
    \item \textbf{Natural compression} \citep{horvath2019natural}: The compressor is defined component-wise via
    $(\mathcal{C}(x))_i = \mathcal{C}(x_i)$. For $x_i\in\R$, we set
    \begin{align*}
        \mathcal{C}(0) \eqdef 0
    \end{align*}
    and for $x_i\neq 0$
    \begin{align*}
        \mathcal{C}(x_i) \eqdef
        \begin{cases}
        \text{sign}(x_i) \cdot 2^{\lfloor \log_2|x_i| \rfloor} & \text{with probability } p(x_i),\\
        \text{sign}(x_i) \cdot 2^{\lceil \log_2|x_i| \rceil} & \text{with probability } 1-p(x_i),
        \end{cases}
    \end{align*}
    where
    \[p(x_i) \eqdef \frac{2^{\lceil \log_2|x_i| \rceil} - |x_i|}{2^{\lfloor \log_2|x_i| \rfloor}}.\]
    It can be shown that $\mathcal{C} \in \mathbb{U}(\frac{1}{8})$.
    \item \textbf{Top$K$ sparsification} \citep{alistarh2018convergence}:
    The Top-$K \in \mathbb{B}(\frac{K}{d})$ sparsifier keeps the $K\in[d]$ largest coordinates in magnitude (ordered so that $|x_{(1)}|\leq\ldots\leq|x_{(d)}|$):
    \[
    \mathcal{C}(x) = \sum_{i=d-K+1}^d x_{(i)}e_{(i)}.
    \]
    \item \textbf{Biased random sparsification} \citep{beznosikov2020biased}: Let $S\subseteq[d]$ and $p_i = \Prob{i\in S}>0$ for $i\in[d]$ and define
    \[
    \mathcal{C}(x) = \sum_{i\in S} x_ie_i.
    \]
    The biased random sparsification operator belongs to $\mathbb{B}(\min_i p_i)$.
    \item \textbf{General biased rounding} \citep{beznosikov2020biased}: Let
    \begin{align*}
        (\mathcal{C}(x))_i = \text{sign}(x_i) \arg\min_{t\in(a_k)} |t-|x_i||, \qquad i\in[d],
    \end{align*}
    where $(a_k)_{k\in\Z}$ is an increasing sequence of positive numbers such that $\inf a_k = 0$ and $\sup a_k = \infty$. This operator belongs to $\mathbb{B}(\alpha)$, where $\alpha^{-1} = \sup_{k\in\mathbb{Z}} \frac{(a_k+a_{k+1})^2}{4a_k a_{k+1}}$.
\end{enumerate}

\clearpage

\section{USEFUL IDENTITIES AND INEQUALITIES}

For all $x, y \in \R^d$, $s>0$ and $\alpha\in(0,1]$, we have:
\begin{align}
    \label{eq:young}
    \norm{x+y}^2 &\leq (1+s) \norm{x}^2 + (1+s^{-1}) \norm{y}^2, \\
    \label{eq:ineq1}
    \left( 1 - \alpha \right)\left( 1 + \frac{\alpha}{2} \right) &\leq 1 - \frac{\alpha}{2}, \\
    \label{eq:ineq2}
    \left( 1 - \alpha \right)\left( 1 + \frac{2}{\alpha} \right) &\leq \frac{2}{\alpha}.
\end{align}
\textbf{Jensen's inequality:} If $f$ is a convex function and $X$ is a random variable, then
\begin{align}
\label{eq:jensen}
    \EE{f(X)} \geq f\left(\EE{X}\right).
\end{align}
\textbf{Variance decomposition:} For any random vector $X\in\R^d$ and any non-random vector $c\in\R^d$, we have
\begin{align}\label{eq:vardecomp}
   \EE{\norm{X-c}^2} = \EE{\norm{X - \EE{X}}^2} + \norm{\EE{X}-c}^2.
\end{align}
\textbf{Tower property:} For any random variables $X$ and $Y$, we have
\begin{align}
\label{eq:tower}
    \EE{\EE{X\,|\,Y}} = \EE{X}.
\end{align}
\begin{lemma}[Lemma $2$ of \citep{li2021page}]
\label{lemma:page}
Suppose that function $f$ is $L$-smooth and let $x^{t+1} = x^t - \gamma g^t$. Then for any $g^t\in\R^d$ and $\gamma>0$, we have:
\begin{align*}
    f(x^{t+1}) \leq f(x^t) - \frac{\gamma}{2} \norm{\nabla f(x^t)}^2 - \left( \frac{1}{2\gamma} - \frac{L}{2} \right) \norm{x^{t+1} - x^t}^2 + \frac{\gamma}{2}\norm{g^t - \nabla f(x^t)}^2.
\end{align*}
\end{lemma}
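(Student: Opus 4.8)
The plan is to derive the inequality directly from $L$-smoothness via the standard quadratic upper bound followed by an exact algebraic rewriting of the inner product. First I would invoke the descent lemma implied by Assumption~\ref{as:smoothness}: since $f$ is $L$-smooth, for any $x^t, x^{t+1} \in \R^d$,
\[
    f(x^{t+1}) \leq f(x^t) + \langle \nabla f(x^t), x^{t+1} - x^t\rangle + \frac{L}{2}\norm{x^{t+1} - x^t}^2.
\]
This is the only place where smoothness enters; everything afterwards is an exact identity, so the inequality direction is never in doubt.

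Next I would exploit the update rule $x^{t+1} = x^t - \gamma g^t$, equivalently $x^{t+1} - x^t = -\gamma g^t$, to rewrite the cross term as $\langle \nabla f(x^t), x^{t+1} - x^t\rangle = -\gamma\langle \nabla f(x^t), g^t\rangle$. Applying the polarization identity $\langle a,b\rangle = \tfrac{1}{2}\parens{\norm{a}^2 + \norm{b}^2 - \norm{a-b}^2}$ with $a = \nabla f(x^t)$ and $b = g^t$ then decomposes this into
\[
    -\gamma\langle \nabla f(x^t), g^t\rangle = -\frac{\gamma}{2}\norm{\nabla f(x^t)}^2 - \frac{\gamma}{2}\norm{g^t}^2 + \frac{\gamma}{2}\norm{g^t - \nabla f(x^t)}^2.
\]
The first summand already matches the target term $-\tfrac{\gamma}{2}\norm{\nabla f(x^t)}^2$, and the last matches the target variance term $\tfrac{\gamma}{2}\norm{g^t - \nabla f(x^t)}^2$ verbatim.

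Finally I would convert the leftover $-\tfrac{\gamma}{2}\norm{g^t}^2$ into the desired step-length form: from $x^{t+1} - x^t = -\gamma g^t$ we have $\norm{g^t}^2 = \gamma^{-2}\norm{x^{t+1} - x^t}^2$, hence $-\tfrac{\gamma}{2}\norm{g^t}^2 = -\tfrac{1}{2\gamma}\norm{x^{t+1}-x^t}^2$. Combining this with the $\tfrac{L}{2}\norm{x^{t+1}-x^t}^2$ term carried over from the smoothness bound collapses the two $\norm{x^{t+1}-x^t}^2$ contributions into the single coefficient $-\parens{\tfrac{1}{2\gamma} - \tfrac{L}{2}}$, which yields exactly the claimed inequality. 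Since every manipulation after the initial smoothness step is an equality, there is no substantive obstacle; the only care required is correct bookkeeping of the $\norm{x^{t+1}-x^t}^2$ coefficients and the sign convention in the polarization identity.
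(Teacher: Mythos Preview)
Your proof is correct. The paper does not supply its own proof of this lemma; it is quoted verbatim as Lemma~2 of \citet{li2021page} in the ``Useful Identities and Inequalities'' section and used as a black box. Your argument---smoothness quadratic upper bound, polarization identity on $-\gamma\langle \nabla f(x^t), g^t\rangle$, then the substitution $\norm{g^t}^2 = \gamma^{-2}\norm{x^{t+1}-x^t}^2$---is exactly the standard derivation of this descent inequality and matches what one finds in the cited source.
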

\begin{lemma}[Lemma $5$ of \citep{richtarik2021ef21}]
   \label{lemma:step_lemma}
   Let $a,b>0$. If $0\leq\gamma\leq\frac{1}{\sqrt{a}+b}$, then $a\gamma^2+b\gamma\leq 1$. 
   Moreover, the bound is tight up to the factor of $2$ since $\frac{1}{\sqrt{a}+b} \leq \min\left\{\frac{1}{\sqrt{a}}, \frac{1}{b}\right\} \leq \frac{2}{\sqrt{a}+b}$.
\end{lemma}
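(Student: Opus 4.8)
\textbf{Proof plan for Lemma~\ref{lemma:step_lemma}.}
The statement is a purely algebraic fact: for $a,b>0$ and $0\le\gamma\le\frac{1}{\sqrt{a}+b}$, we have $a\gamma^2+b\gamma\le 1$, together with a two-sided comparison of $\frac{1}{\sqrt{a}+b}$ against $\min\{\frac{1}{\sqrt a},\frac1b\}$. The plan is to verify each claim by elementary manipulation, since no probabilistic or analytic machinery is needed.

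First I would prove the main inequality. Since the quadratic $q(\gamma)=a\gamma^2+b\gamma$ has positive leading coefficient and $q(0)=0$, it is increasing on $[0,\infty)$, so it suffices to check the bound at the right endpoint $\gamma_*=\frac{1}{\sqrt a+b}$; monotonicity then gives $q(\gamma)\le q(\gamma_*)$ for all $\gamma\in[0,\gamma_*]$. At $\gamma_*$ one computes
\begin{equation*}
a\gamma_*^2+b\gamma_*=\frac{a}{(\sqrt a+b)^2}+\frac{b}{\sqrt a+b}=\frac{a+b(\sqrt a+b)}{(\sqrt a+b)^2}=\frac{a+b\sqrt a+b^2}{(\sqrt a+b)^2}.
\end{equation*}
The goal is to show this is at most $1$, i.e.\ that $a+b\sqrt a+b^2\le(\sqrt a+b)^2=a+2b\sqrt a+b^2$, which simplifies to $b\sqrt a\ge 0$; this holds since $a,b>0$. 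Hence $q(\gamma_*)\le 1$, completing the proof of the first part.

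For the ``moreover'' claim I would bound $\frac{1}{\sqrt a+b}$ from both sides. The upper bound $\frac{1}{\sqrt a+b}\le\min\{\frac{1}{\sqrt a},\frac1b\}$ is immediate because $\sqrt a+b\ge\sqrt a$ and $\sqrt a+b\ge b$ (both $a,b>0$), so the denominator dominates each of $\sqrt a$ and $b$. For the lower bound, write $m=\min\{\sqrt a,b\}$ and $M=\max\{\sqrt a,b\}$, so that $\min\{\frac{1}{\sqrt a},\frac1b\}=\frac1M$; then $\sqrt a+b=m+M\le 2M$, whence $\frac{1}{\sqrt a+b}\ge\frac{1}{2M}=\frac12\min\{\frac{1}{\sqrt a},\frac1b\}$, i.e.\ $\min\{\frac{1}{\sqrt a},\frac1b\}\le\frac{2}{\sqrt a+b}$. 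Chaining the two bounds yields $\frac{1}{\sqrt a+b}\le\min\{\frac{1}{\sqrt a},\frac1b\}\le\frac{2}{\sqrt a+b}$, as claimed.

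There is no genuine obstacle here: the only point requiring minor care is justifying the reduction to the endpoint via monotonicity of $q$ on $[0,\gamma_*]$, and keeping track of which of $\sqrt a,b$ is the maximum when establishing the factor-of-$2$ lower bound. Everything else is a direct algebraic simplification.
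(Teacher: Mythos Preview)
Your proof is correct and complete. The paper does not supply its own proof of this lemma; it merely quotes it from \citet{richtarik2021ef21}, so there is nothing to compare against beyond noting that your elementary argument---reducing to the endpoint by monotonicity of $q(\gamma)=a\gamma^2+b\gamma$ on $[0,\infty)$, and then handling the factor-of-$2$ bounds via $\sqrt a+b\le 2\max\{\sqrt a,b\}$---is exactly the standard verification one would expect.
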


\clearpage

\section{SKETCH OF THE PROOFS}\label{appendix:proof_sketch} 
The core intuition of our work centers around reducing the variance in stochastic gradients generated by reliable clients, a goal achieved through the following application of Young's inequality:

\[
\begin{aligned}
\frac{1}{G} \sum_{i \in \mathcal{G}} \mathbb{E}\left[\left\|g_i-\nabla f(x)\right\|^2\right] & \leq 2 \frac{1}{G} \sum_{i \in \mathcal{G}} \mathbb{E}\left[\left\|g_i-\nabla f_i(x)\right\|^2\right]+2 \frac{1}{G} \sum_{i \in \mathcal{G}} \mathbb{E}\left[\left\|\nabla f_i(x)-\nabla f(x)\right\|^2\right] \\
& \leq 2 \frac{1}{G} \sum_{i \in \mathcal{G}} \mathbb{E}\left[\left\|g_i-\nabla f_i(x)\right\|^2\right]+2 \zeta^2
\end{aligned}
\]

Effectively bounding the variance of stochastic gradients \(g_i\) requires constructing them in a way such that:

\[
\frac{1}{G} \sum_{i \in \mathcal{G}} \mathbb{E}\left[\left\|g_i-\nabla f_i(x)\right\|^2\right] \underset{x \rightarrow x^*}{\rightarrow} 0 \quad \text{where} \quad x^*=\arg \min _{x \in \mathbb{R}^d} f(x)
\]

Our algorithms are designed to ensure this convergence. In a neighborhood of \(x^*\), we establish:

\[
\frac{1}{G} \sum_{i \in \mathcal{G}} \mathbb{E}\left[\left\|g_i-\nabla f(x)\right\|^2\right] \leq 2 \zeta^2
\]

To achieve this, we introduce an unconventional Byzantine-related descent lemma for the first time in the literature of optimization with Byzantine workers (Lemmas~\ref{lemma:byz_BVMNS} and 
\ref{lemma:byz_descent_dp})
And then we build on the components of the desent lemma inequality, in order to get the convergence results. Full proofs are provided in the following sections.

\clearpage

\section{MISSING PROOFS FOR \algname{Byz-VR-MARINA 2.0}}\label{appendix:proofs_marina}

For the sake of clarity, we adopt the following notation: $\overline{g}^{t} \eqdef \tfrac{1}{G}\sum_{i\in\cG}g_i^{t}$, $R^t \eqdef \norm{x^{t+1} - x^t}^2$, $H_i^t \eqdef \norm{g_i^t - \nabla f_i(x^t)}^2$, $H^t \eqdef \frac{1}{G}\sum_{i\in \cG} H_i^t$.

\subsection{Technical Lemmas}

We first prove several lemmas needed to prove the main result. The first two of them are based on the definition of a robust aggregator and are not specific to \algname{Byz-VR-MARINA 2.0} only.

\begin{lemma}[Bound on the variance]
    \label{lemma:dissimilarity}
    Suppose that Assumption \ref{as:bounded_heterogeneity} holds. Then
    \begin{eqnarray*}
        \frac{1}{G(G-1)}\sum\limits_{i,l\in \cG}\EE{\norm{g_i^{t} - g_l^{t}}^2}
        \leq 8 \EE{H^t} + 8B\EE{\norm{\nabla f(x^{t})}^{2}} + 8\zeta^2.
    \end{eqnarray*}
\end{lemma}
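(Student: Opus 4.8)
This is a pure "decompose, expand, count" lemma: the inequality will come from a four-way split of $g_i^t-g_l^t$, one application of Young's inequality, the $(B,\zeta^2)$-heterogeneity assumption, and careful bookkeeping of combinatorial multiplicities. The first observation I would make is that the diagonal terms vanish, so $\sum_{i,l\in\cG}\norm{g_i^t-g_l^t}^2=\sum_{i\neq l}\norm{g_i^t-g_l^t}^2$; it therefore suffices to bound the sum over ordered pairs $i\neq l$, and — importantly — the Young step will be applied only to those off-diagonal pairs, which is what pins the final constant to exactly $8$ rather than $8G/(G-1)$.

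For fixed $i\neq l$ in $\cG$ I would write
\[
g_i^t-g_l^t=\bigl(g_i^t-\nabla f_i(x^t)\bigr)+\bigl(\nabla f_i(x^t)-\nabla f(x^t)\bigr)+\bigl(\nabla f(x^t)-\nabla f_l(x^t)\bigr)+\bigl(\nabla f_l(x^t)-g_l^t\bigr),
\]
and apply \eqref{eq:young} repeatedly (equivalently, $\norm{a_1+a_2+a_3+a_4}^2\le 4\sum_{k=1}^4\norm{a_k}^2$) to obtain
\[
\norm{g_i^t-g_l^t}^2\le 4H_i^t+4\norm{\nabla f_i(x^t)-\nabla f(x^t)}^2+4\norm{\nabla f_l(x^t)-\nabla f(x^t)}^2+4H_l^t.
\]
Next I would sum over all ordered pairs $i\neq l$ in $\cG$. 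By the $i\leftrightarrow l$ symmetry the four terms collapse into two, and since each index appears as the distinguished index in exactly $G-1$ pairs, $\sum_{i\neq l}H_i^t=(G-1)\sum_{i\in\cG}H_i^t=G(G-1)H^t$ and likewise $\sum_{i\neq l}\norm{\nabla f_i(x^t)-\nabla f(x^t)}^2=(G-1)\sum_{i\in\cG}\norm{\nabla f_i(x^t)-\nabla f(x^t)}^2$. Dividing by $G(G-1)$ then gives
\[
\frac{1}{G(G-1)}\sum_{i,l\in\cG}\norm{g_i^t-g_l^t}^2\le 8H^t+\frac{8}{G}\sum_{i\in\cG}\norm{\nabla f_i(x^t)-\nabla f(x^t)}^2.
\]
Finally I would apply Assumption~\ref{as:bounded_heterogeneity}, which bounds $\tfrac{1}{G}\sum_{i\in\cG}\norm{\nabla f_i(x^t)-\nabla f(x^t)}^2$ by $B\norm{\nabla f(x^t)}^2+\zeta^2$, and take expectations to conclude.

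\textbf{Main obstacle.} There is essentially no analytic difficulty; the only point requiring care is the combinatorial accounting — restricting the Young-inequality expansion to the off-diagonal pairs $i\neq l$ (valid because the diagonal contributes zero to the left-hand side) and tracking the multiplicity $G-1$ correctly so that the constants come out as the claimed $8$.
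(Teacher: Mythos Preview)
Your proposal is correct and follows essentially the same approach as the paper: the paper introduces $u_i = g_i^t - \nabla f_i(x^t)$ and $v_i = \nabla f_i(x^t) - \nabla f(x^t)$, writes $g_i^t - g_l^t = (u_i+v_i)-(u_l+v_l)$, applies the four-term Young inequality over $i\neq l$, collapses via the same $G-1$ multiplicity count to obtain $8\EE{H^t}+\tfrac{8}{G}\sum_{i\in\cG}\EE{\norm{\nabla f_i(x^t)-\nabla f(x^t)}^2}$, and finishes with Assumption~\ref{as:bounded_heterogeneity}. The only cosmetic difference is that the paper carries the expectation from the start rather than taking it at the end.
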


\begin{proof}
    Denoting $u_i \eqdef g_i^t - \nabla f_i(x^t)$ and $v_i \eqdef \nabla f_i(x^t) - \nabla f(x^t)$, we have
    \begin{align*}
        \frac{1}{G(G-1)}&\sum\limits_{i,l\in \cG}\EE{\|g_i^{t} - g_l^{t}\|^2}
        = \frac{1}{G(G-1)}\sum_{\substack{i,l\in \cG \\ i\neq l}} \EE{\|(u_i + v_i) - (u_l + v_l)\|^2}\\
        &\stackrel{\eqref{eq:young}}{\leq} \frac{4}{G(G-1)}\sum_{\substack{i,l\in \cG \\ i\neq l}} \left( \EE{\norm{u_i}^2} + \EE{\norm{v_i}^2} + \EE{\norm{u_l}^2} + \EE{\norm{v_l}^2} \right)\\
        &= \frac{8}{G}\sum\limits_{i\in \cG} \left( \EE{\norm{u_i}^2} + \EE{\norm{v_i}^2} \right) \\
        &= 8 \EE{H^t} + \frac{8}{G}\sum\limits_{i\in \cG}\EE{\norm{\nabla f_i(x^t) - \nabla f(x^t) }^2}\\
        &\stackrel{\eqref{as:bounded_heterogeneity}}{\leq} 8 \EE{H^t} + 8B\EE{\norm{\nabla f(x^{t})}^{2}} + 8\zeta^2
    \end{align*}
    as needed.
\end{proof}

\begin{lemma}[Descent Lemma]\label{lemma:byz_BVMNS}
    Suppose that Assumptions \ref{as:smoothness} and \ref{as:bounded_heterogeneity} hold. Then for all $s>0$ we have
    \begin{eqnarray*}
        \EE{f(x^{t + 1})} &\leq& \EE{f(x^t)} - \frac{\gamma\kappa}{2} \EE{\norm{\nabla f(x^{t})}^{2}}
        - \left(\frac{1}{2\gamma} - \frac{L}{2}\right)
        \EE{R^t}
        + 4\gamma c\delta (1+s) \EE{H^t}\\
        &&\qquad+ \frac{\gamma}{2}(1 + s^{-1})\EE{\norm{\overline{g}^t - \nabla f(x^t)}^2}
        + 4\gamma c\delta (1+s)\zeta^2,
    \end{eqnarray*}
    where $\kappa = 1 - 8 B c\delta (1+s)$.
\end{lemma}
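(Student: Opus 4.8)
The starting point is Lemma~\ref{lemma:page} applied to the update $x^{t+1} = x^t - \gamma g^t$, which gives
$f(x^{t+1}) \le f(x^t) - \frac{\gamma}{2}\norm{\nabla f(x^t)}^2 - (\frac{1}{2\gamma} - \frac{L}{2})R^t + \frac{\gamma}{2}\norm{g^t - \nabla f(x^t)}^2$.
Taking expectations, the only nontrivial term to control is $\EE{\norm{g^t - \nabla f(x^t)}^2}$, where $g^t = \texttt{ARAgg}(g_1^t,\dots,g_n^t)$. The natural move is to split $g^t - \nabla f(x^t) = (g^t - \overline{g}^t) + (\overline{g}^t - \nabla f(x^t))$ and apply Young's inequality~\eqref{eq:young} with parameter $s$: $\norm{g^t - \nabla f(x^t)}^2 \le (1+s)\norm{g^t - \overline{g}^t}^2 + (1+s^{-1})\norm{\overline{g}^t - \nabla f(x^t)}^2$. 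Here $\overline{g}^t = \frac{1}{G}\sum_{i\in\cG}g_i^t$ is exactly the average appearing in Definition~\ref{def:RAgg_def}.

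The key step is bounding $\EE{\norm{g^t - \overline{g}^t}^2}$ via the Robust Aggregator property~\eqref{eq:RAgg_def}, which yields $\EE{\norm{g^t - \overline{g}^t}^2} \le c\delta\sigma^2$ where $\sigma^2$ is any valid bound on the pairwise variance $\frac{1}{G(G-1)}\sum_{i,l\in\cG}\EE{\norm{g_i^t - g_l^t}^2}$. This is precisely what Lemma~\ref{lemma:dissimilarity} supplies: the pairwise variance is at most $8\EE{H^t} + 8B\EE{\norm{\nabla f(x^t)}^2} + 8\zeta^2$. Substituting, $\EE{\norm{g^t - \overline{g}^t}^2} \le c\delta(8\EE{H^t} + 8B\EE{\norm{\nabla f(x^t)}^2} + 8\zeta^2) = 8c\delta\EE{H^t} + 8Bc\delta\EE{\norm{\nabla f(x^t)}^2} + 8c\delta\zeta^2$. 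Plugging this into the Young split, the $\EE{\norm{\nabla f(x^t)}^2}$ contribution from the aggregation error is $\frac{\gamma}{2}(1+s)\cdot 8Bc\delta = 4\gamma Bc\delta(1+s)$, which combines with the $-\frac{\gamma}{2}$ coefficient already present to give $-\frac{\gamma}{2}(1 - 8Bc\delta(1+s)) = -\frac{\gamma\kappa}{2}$ with $\kappa = 1 - 8Bc\delta(1+s)$. The $H^t$ term becomes $\frac{\gamma}{2}(1+s)\cdot 8c\delta\EE{H^t} = 4\gamma c\delta(1+s)\EE{H^t}$, and the $\zeta^2$ term similarly becomes $4\gamma c\delta(1+s)\zeta^2$. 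The $(1+s^{-1})\EE{\norm{\overline{g}^t - \nabla f(x^t)}^2}$ term is carried through unchanged. Assembling these pieces reproduces the claimed inequality exactly.

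One technical point to check is that the conditional/unconditional expectations line up so that Definition~\ref{def:RAgg_def} applies: the $g_i^t$ for $i\in\cG$ are the regular workers' vectors and one must verify (or invoke from the algorithm's construction) that $|\cG| = G \ge (1-\delta)n$ and that conditioning is handled correctly via the tower property~\eqref{eq:tower} — the Robust Aggregator guarantee should be applied conditionally on the randomness up to the point where $\{g_i^t\}_{i\in\cG}$ are determined, then the tower property removes the conditioning. I do not expect a serious obstacle here: the bookkeeping is routine and the structure is a direct chaining of Lemma~\ref{lemma:page}, Young's inequality, the \texttt{RAgg} definition, and Lemma~\ref{lemma:dissimilarity}. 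The only mild subtlety is ensuring the variance bound from Lemma~\ref{lemma:dissimilarity} is a legitimate choice of $\sigma^2$ in the sense of Definition~\ref{def:RAgg_def}, i.e.\ that it is deterministic (it is, since $\EE{H^t}$, $\EE{\norm{\nabla f(x^t)}^2}$ and $\zeta^2$ are numbers, not random variables) — so the \texttt{RAgg} inequality can be applied with this $\sigma^2$ after taking expectations.
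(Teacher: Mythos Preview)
Your proposal is correct and follows essentially the same route as the paper: apply Lemma~\ref{lemma:page}, split $g^t-\nabla f(x^t)$ via Young's inequality with parameter $s$, bound the aggregation error $\EE{\norm{g^t-\overline g^t}^2}$ using the $(\delta,c)$-\texttt{RAgg} definition together with Lemma~\ref{lemma:dissimilarity}, and collect the $\norm{\nabla f(x^t)}^2$ coefficients to obtain $\kappa$. The paper presents the bound on $\EE{\norm{g^t-\nabla f(x^t)}^2}$ first and then invokes Lemma~\ref{lemma:page}, but the logic and the resulting constants are identical to yours.
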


\begin{proof}
    First, by Young's inequality, for any $s >0$ we have
    \begin{eqnarray*}
        \EE{\norm{g^{t} - \nabla f(x^t)}^2}
        &\stackrel{\eqref{eq:young}}{\leq}& (1 + s)\EE{\norm{g^{t} - \overline{g}^t}^2} + (1 + s^{-1})\EE{\norm{\overline{g}^t - \nabla f(x^t)}^2} \\
        &\stackrel{\eqref{eq:RAgg_def}}{\leq}& (1+s) \frac{c\delta}{G(G-1)}\sum\limits_{i,l\in \cG}\EE{\|g_i^{t} - g_l^{t}\|^2}
        + (1 + s^{-1})\EE{\norm{\overline{g}^t - \nabla f(x^t)}^2} \\
        &\stackrel{\eqref{lemma:dissimilarity}}{\leq}&  8(1+s)c\delta \EE{H^t} + 8B(1+s)c\delta \EE{\norm{\nabla f(x^{t})}^{2}} \\
        &&\qquad+ 8(1+s)c\delta \zeta^2
        + (1 + s^{-1})\EE{\norm{\overline{g}^t - \nabla f(x^t)}^2}.
    \end{eqnarray*}
    Taking expectation in Lemma \ref{lemma:page} and applying the above bound, we obtain
    \begin{eqnarray*}
        \EE{f(x^{t + 1})} &\stackrel{\eqref{lemma:page}}{\leq}& \EE{f(x^t)} - \frac{\gamma}{2}\EE{\norm{\nabla f(x^t)}^2} - \left(\frac{1}{2\gamma} - \frac{L}{2}\right)
        \EE{R^t} + \frac{\gamma}{2}\EE{\norm{g^{t} - \nabla f(x^t)}^2} \\
        &\leq& \EE{f(x^t)} - \frac{\gamma}{2}\EE{\norm{\nabla f(x^t)}^2} - \left(\frac{1}{2\gamma} - \frac{L}{2}\right)
        \EE{R^t}\\
        &&\qquad+ 4\gamma(1+s)c\delta \EE{H^t}
        + 4\gamma B(1+s)c\delta \EE{\norm{\nabla f(x^{t})}^{2}} \\
        &&\qquad+ 4\gamma (1+s)c\delta \zeta^2
        + \frac{\gamma}{2} (1 + s^{-1}) \EE{\norm{\overline{g}^t - \nabla f(x^t)}^2} \\
        &=& \EE{f(x^t)} - \parens{\frac{\gamma}{2} - 4\gamma B(1+s)c\delta} \EE{\norm{\nabla f(x^{t})}^{2}}
        - \left(\frac{1}{2\gamma} - \frac{L}{2}\right) \EE{R^t}\\
        &&\qquad+ 4\gamma(1+s)c\delta \EE{H^t}
        + 4\gamma (1+s)c\delta \zeta^2
        + \frac{\gamma}{2} (1 + s^{-1}) \EE{\norm{\overline{g}^t - \nabla f(x^t)}^2}.
    \end{eqnarray*}
    Letting $\kappa = 1 - 8 B c\delta (1+s)$, we get the result.
\end{proof}

We next provide recursive bounds on $\EE{\norm{\overline{g}^t - \nabla f(x^t)}^2}$ and $H^t = \frac{1}{G}\sum_{i\in \cG} \norm{g_i^t - \nabla f_i(x^t)}^2$.

\begin{lemma}\label{lemma:prel_1_MARINA}
Suppose that Assumptions \ref{as:smoothness}, \ref{as:hessian_variance} and \ref{as:hessian_variance_local} hold. Then
\begin{eqnarray*}
    \EE{\norm{\overline{g}^{t+1} - \nabla f(x^{t+1})}^2} \leq \parens{1-p} \parens{\EE{\norm{\overline{g}^t - \nabla f(x^t)}^2} + \parens{\frac{\omega}{G}\parens{\frac{\cL_{\pm}^2}{b}+ L_{\pm}^2 + L^2} +\frac{\cL_{\pm}^2}{Gb}}\EE{R^t}}.
\end{eqnarray*}
\end{lemma}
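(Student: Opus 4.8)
The plan is to condition on $x^{t+1}$ and $x^t$ (equivalently, on everything up to and including the choice of $x^{t+1}$) and split according to the value of the coin flip $c^{t+1}$. When $c^{t+1}=1$ (probability $p$), every regular worker sets $g_i^{t+1}=\nabla f_i(x^{t+1})$, so $\overline{g}^{t+1}=\nabla f(x^{t+1})$ and the term $\norm{\overline{g}^{t+1}-\nabla f(x^{t+1})}^2$ vanishes exactly; this is where the factor $(1-p)$ comes from. When $c^{t+1}=0$ (probability $1-p$), we have $g_i^{t+1}=g_i^t+\cQ_i(\hdelta_i(x^{t+1},x^t))$, hence $\overline{g}^{t+1}=\overline{g}^t+\frac1G\sum_{i\in\cG}\cQ_i(\hdelta_i(x^{t+1},x^t))$. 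Here I would use unbiasedness of $\cQ_i$ and of the minibatch estimator $\hdelta_i$ to see that, conditionally, $\E[\overline{g}^{t+1}] = \overline{g}^t + \frac1G\sum_{i\in\cG}\Delta_i(x^{t+1},x^t) = \overline{g}^t + \nabla f(x^{t+1}) - \nabla f(x^t)$, so the "center" of $\overline{g}^{t+1}-\nabla f(x^{t+1})$ is exactly $\overline{g}^t-\nabla f(x^t)$.

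The next step is to apply the variance decomposition \eqref{eq:vardecomp} with $X=\overline{g}^{t+1}-\nabla f(x^{t+1})$ and $c=0$ (conditionally on $x^{t+1},x^t$), which gives $\E[\norm{\overline{g}^{t+1}-\nabla f(x^{t+1})}^2] = \norm{\overline{g}^t-\nabla f(x^t)}^2 + \E[\norm{\overline{g}^{t+1}-\E\overline{g}^{t+1}}^2]$ on the event $c^{t+1}=0$. The fluctuation term equals $\frac{1}{G^2}\sum_{i\in\cG}\E[\norm{\cQ_i(\hdelta_i)-\Delta_i}^2]$ using independence of the compressors and of the sampling across workers (cross terms vanish). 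For each worker I would add and subtract $\hdelta_i$ inside the norm and use the tower property to split into a compression part and a sampling part: $\E\norm{\cQ_i(\hdelta_i)-\Delta_i}^2 = \E\norm{\cQ_i(\hdelta_i)-\hdelta_i}^2 + \E\norm{\hdelta_i-\Delta_i}^2 \le \omega\,\E\norm{\hdelta_i}^2 + \E\norm{\hdelta_i-\Delta_i}^2$ by \eqref{eq:quantization_def} (again the cross term is killed by conditional unbiasedness of $\cQ_i$). Then $\E\norm{\hdelta_i}^2 = \norm{\Delta_i}^2 + \E\norm{\hdelta_i-\Delta_i}^2$, so altogether the per-worker bound is $(1+\omega)\,\E\norm{\hdelta_i-\Delta_i}^2 + \omega\,\norm{\Delta_i}^2$. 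Averaging over $i\in\cG$ and invoking Assumption \ref{as:hessian_variance_local} for the first piece ($\frac1G\sum\E\norm{\hdelta_i-\Delta_i}^2 \le \frac{\cL_\pm^2}{b}R^t$) and Assumption \ref{as:hessian_variance} together with $L$-smoothness for the second piece ($\frac1G\sum\norm{\Delta_i}^2 \le \norm{\nabla f(x^{t+1})-\nabla f(x^t)}^2 + L_\pm^2 R^t \le (L^2+L_\pm^2)R^t$) yields the bracketed coefficient $\frac{\omega}{G}(\frac{\cL_\pm^2}{b}+L_\pm^2+L^2)+\frac{\cL_\pm^2}{Gb}$ multiplying $R^t$. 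Finally I would take total expectation over the coin flip and over $x^{t+1},x^t$: the $c^{t+1}=1$ branch contributes $0$, the $c^{t+1}=0$ branch contributes $(1-p)$ times the sum of $\E\norm{\overline{g}^t-\nabla f(x^t)}^2$ and the $R^t$ term, which is precisely the claimed recursion.

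The main obstacle is bookkeeping rather than any genuine difficulty: one must be careful about what is held fixed at each conditioning stage so that all the cross terms (compressor vs.\ its mean, minibatch vs.\ its mean, and across distinct workers) legitimately vanish, and one must correctly chain the two nested variance decompositions so that the $\omega$ and the $1+\omega$ land on the right pieces. The smoothness step $\norm{\nabla f(x^{t+1})-\nabla f(x^t)}^2\le L^2 R^t$ uses Assumption \ref{as:smoothness} on $f$, and the heterogeneity-of-Hessians step uses Assumption \ref{as:hessian_variance} with $y=x^t$, $x=x^{t+1}$; neither is hard, but both must be cited in exactly the right place to produce the stated constant.
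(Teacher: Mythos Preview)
Your proposal is correct and follows essentially the same route as the paper's proof: split on the coin flip to extract the $(1-p)$ factor, apply the variance decomposition \eqref{eq:vardecomp} to separate the deterministic center $\overline{g}^t-\nabla f(x^t)$ from the fluctuation $\frac{1}{G^2}\sum_{i\in\cG}\EE{\norm{\cQ_i(\hdelta_i)-\Delta_i}^2}$, then perform a second variance decomposition to isolate the compression and sampling contributions before invoking Assumptions~\ref{as:hessian_variance_local}, \ref{as:hessian_variance} and \ref{as:smoothness}. The only cosmetic difference is that the paper phrases the argument via the tower property \eqref{eq:tower} rather than explicit conditioning, but the computations and the resulting constants are identical.
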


\begin{proof}
First, we know that
\begin{align*}
    &\EE{\norm{\overline{g}^{t+1} - \nabla f(x^{t+1})}^{2}} \\
    & \qquad = \parens{1-p} \EE{\norm{\overline{g}^{t} + \frac{1}{G}\sum \limits_{i \in \cG} \parens{\cC_i\parens{\hdelta_i\parens{x^{t+1}, x^t}} - \nabla f_i\parens{x^{t+1}}}}^2} \\
    & \qquad = \parens{1-p}\EE{\norm{\overline{g}^{t} - \nabla f\parens{x^t} + \frac{1}{G}\sum \limits_{i \in \cG} \parens{\cC_i\parens{\hdelta_i\parens{x^{t+1}, x^t}} - \parens{\nabla f_i\parens{x^{t+1}} -\nabla f_i\parens{x^t}} }}^2}.
\end{align*}
Denoting by $\EEC{\cdot}$ the expectation taken with respect to the randomness of the compressor and using the tower property of conditional expectation, we have
\begin{eqnarray*}
\EE{\cC_i\parens{\hdelta_i\parens{x^{t+1}, x^t}}}
&\overset{\eqref{eq:tower}}{=}& \EE{\EEC{\cC_i\parens{\hdelta_i\parens{x^{t+1}, x^t}}}}\\
&=& \EE{\hdelta_i\parens{x^{t+1}, x^t}}\\
&=& \EE{\nabla f_i(x^{t+1}) - \nabla f_i(x^t)}.
\end{eqnarray*}
Hence, using the independence of compressors $\{\cC_i\}_{i \in \cG}$ and estimators $\{\hdelta_i\parens{x^{t+1}, x^t}\}_{i \in \cG}$ gives
\begin{align*}
    &\EE{\norm{\overline{g}^{t+1} - \nabla f(x^{t+1})}^{2}}\\
    &\overset{\eqref{eq:vardecomp}}{=} \parens{1-p} \parens{\EE{\norm{\overline{g}^t - \nabla f(x^t)}^2} + \frac{1}{G^2}\sum\limits_{i \in \cG} \EE{\norm{\cC_i\parens{\hdelta_i\parens{x^{t+1}, x^t}} - \parens{\nabla f_i\parens{x^{t+1}} -\nabla f_i\parens{x^t}}}^2}}\\
    &\overset{\eqref{eq:vardecomp}}{=} \parens{1-p} \left(\EE{\norm{\overline{g}^t - \nabla f(x^t)}^2} \right . \\
    &\quad\left .+ \frac{1}{G^2} \sum\limits_{i \in \cG} \parens{\EE{\norm{\cC_i\parens{\hdelta_i\parens{x^{t+1}, x^t}} - \hdelta_i\parens{x^{t+1}, x^t}}^2}  +\EE{\norm{\hdelta_i\parens{x^{t+1}, x^t} - \Delta_i\parens{x^{t+1}, x^t}}^2}}\right) \\
    &\overset{\eqref{as:hessian_variance_local}}{\leq} \parens{1-p} \parens{\EE{\norm{\overline{g}^t - \nabla f(x^t)}^2} + \frac{\omega}{G^2}\sum\limits_{i \in \cG} \EE{\norm{\hdelta_i\parens{x^{t+1}, x^t}}^2} +\frac{\cL_{\pm}^2}{Gb}\EE{\norm{x^{t+1}-x^t}^2}} \\ 
    &\overset{\eqref{eq:vardecomp}}{=} \parens{1-p} \EE{\norm{\overline{g}^t - \nabla f(x^t)}^2} \\
    &\quad+ \parens{1-p} \frac{\omega}{G^2}\sum\limits_{i \in \cG} \parens{\EE{\norm{\hdelta_i\parens{x^{t+1}, x^t} - \Delta_i\parens{x^{t+1}, x^t}}^2} + \EE{\norm{\Delta_i\parens{x^{t+1}, x^t}}^2}} \\
    &\quad + \parens{1-p} \frac{\cL_{\pm}^2}{Gb}\EE{\norm{x^{t+1}-x^t}^2} \\
    &\overset{\eqref{as:hessian_variance}, \eqref{as:hessian_variance_local}}{\leq} \parens{1-p} \EE{\norm{\overline{g}^t - \nabla f(x^t)}^2} + \parens{1-p} \frac{\omega}{G}\parens{\frac{\cL_{\pm}^2}{b}+ L_{\pm}^2 + L^2} \EE{\norm{x^{t+1}-x^t}^2} \\
    &\quad +\parens{1-p}\frac{\cL_{\pm}^2}{Gb}\EE{\norm{x^{t+1}-x^t}^2} \\
    &= \parens{1-p} \parens{\EE{\norm{\overline{g}^t - \nabla f(x^t)}^2} + \parens{\frac{\omega}{G}\parens{\frac{\cL_{\pm}^2}{b}+ L_{\pm}^2 + L^2} +\frac{\cL_{\pm}^2}{Gb}}\EE{\norm{x^{t+1}-x^t}^2}}. 
\end{align*}
\end{proof}

\begin{lemma}\label{lemma:prel_2_MARINA}
Suppose that Assumptions \ref{as:smoothness}, \ref{as:hessian_variance} and \ref{as:hessian_variance_local} hold. Then
\begin{eqnarray*}
    \EE{H^{t+1}} &\leq& \parens{1-p}\left(\EE{H^{t}} 
    + \parens{\omega\parens{\frac{\cL_{\pm}^2}{b}+ L_{\pm}^2 + L^2} +\frac{\cL_{\pm}^2}{b}}\EE{R^t}\right).
\end{eqnarray*}
\end{lemma}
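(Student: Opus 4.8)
The plan is to mirror the proof of Lemma~\ref{lemma:prel_1_MARINA}, the only difference being that here we bound the quantity $H^{t+1} = \frac{1}{G}\sum_{i\in\cG}\norm{g_i^{t+1}-\nabla f_i(x^{t+1})}^2$ rather than the averaged version, so the factor $\frac{1}{G^2}\sum_{i\in\cG}$ that appears when one exploits independence across workers is replaced by $\frac{1}{G}\sum_{i\in\cG}$. First I would split on the value of $c^{t+1}$. On the event $c^{t+1}=1$ (probability $p$) we have $g_i^{t+1}=\nabla f_i(x^{t+1})$, so $H_i^{t+1}=0$ and this branch contributes nothing. On the event $c^{t+1}=0$ (probability $1-p$) we have $g_i^{t+1}=g_i^t+\cQ_i(\hdelta_i(x^{t+1},x^t))$, so
\[
g_i^{t+1}-\nabla f_i(x^{t+1}) = \bigl(g_i^t-\nabla f_i(x^t)\bigr) + \bigl(\cQ_i(\hdelta_i(x^{t+1},x^t)) - \Delta_i(x^{t+1},x^t)\bigr),
\]
using $\nabla f_i(x^{t+1})-\nabla f_i(x^t)=\Delta_i(x^{t+1},x^t)$.

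Next I would condition on $x^{t+1},x^t$ and take expectation over the compressor and the mini-batch. Since $\EEC{\cQ_i(\hdelta_i(x^{t+1},x^t))}=\hdelta_i(x^{t+1},x^t)$ and $\EE{\hdelta_i(x^{t+1},x^t)}=\Delta_i(x^{t+1},x^t)$, the tower property gives that the cross term vanishes in expectation, so by the variance decomposition \eqref{eq:vardecomp}
\[
\EE{H_i^{t+1}} = H_i^t + \EE{\norm{\cQ_i(\hdelta_i(x^{t+1},x^t)) - \Delta_i(x^{t+1},x^t)}^2}
\]
on the $c^{t+1}=0$ branch. I would then peel off the compression noise using \eqref{eq:quantization_def}: $\EEC{\norm{\cQ_i(\hdelta_i) - \hdelta_i}^2}\le\omega\norm{\hdelta_i}^2$, followed by a further variance decomposition $\EE{\norm{\hdelta_i}^2} = \EE{\norm{\hdelta_i-\Delta_i}^2}+\EE{\norm{\Delta_i}^2}$ to separate the mini-batch noise $\EE{\norm{\hdelta_i-\Delta_i}^2}$ (bounded by Assumption~\ref{as:hessian_variance_local}) from $\EE{\norm{\Delta_i}^2}$, exactly as in Lemma~\ref{lemma:prel_1_MARINA}. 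Summing over $i\in\cG$ and dividing by $G$, the term $\frac{1}{G}\sum_{i\in\cG}\EE{\norm{\Delta_i(x^{t+1},x^t)}^2}$ gets bounded by $(L_{\pm}^2+L^2)\EE{R^t}$ via Assumption~\ref{as:hessian_variance} (together with $\norm{\nabla f(x^{t+1})-\nabla f(x^t)}^2\le L^2 R^t$ from Assumption~\ref{as:smoothness}), while $\frac{1}{G}\sum_{i\in\cG}\EE{\norm{\hdelta_i-\Delta_i}^2}\le\frac{\cL_{\pm}^2}{b}\EE{R^t}$. Collecting the $\omega\bigl(\frac{\cL_{\pm}^2}{b}+L_{\pm}^2+L^2\bigr)$ contribution from the compression term and the extra $\frac{\cL_{\pm}^2}{b}$ from the bare mini-batch noise, and multiplying by the factor $(1-p)$ coming from the $c^{t+1}=0$ event, yields the claimed bound.

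There is no real obstacle here — the proof is essentially a transcription of Lemma~\ref{lemma:prel_1_MARINA} with the bookkeeping $\frac{1}{G^2}\sum_{i\in\cG}\rightsquigarrow\frac{1}{G}\sum_{i\in\cG}$ throughout — but the one point that requires a little care is the order of the expectation/variance-decomposition splittings: one must first take the compressor expectation (to kill the cross term and invoke \eqref{eq:quantization_def}) and only then the mini-batch expectation, and one should be explicit that $\{\cQ_i\}_{i\in\cG}$ and $\{\hdelta_i\}_{i\in\cG}$ are mutually independent across workers and independent of the history, so that the cross terms between different workers also vanish. Once the splitting is organized correctly, the two applications of Assumptions~\ref{as:hessian_variance} and \ref{as:hessian_variance_local} are routine and the constants line up as stated.
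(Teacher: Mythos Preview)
Your proposal is correct and follows essentially the same route as the paper's proof: split on $c^{t+1}$, apply the variance decomposition to separate $g_i^t-\nabla f_i(x^t)$ from the zero-mean compression/sampling noise, then invoke \eqref{eq:quantization_def} and Assumptions~\ref{as:hessian_variance}, \ref{as:hessian_variance_local} after averaging over $i\in\cG$. One minor remark: since $H^{t+1}=\frac{1}{G}\sum_{i\in\cG}\|g_i^{t+1}-\nabla f_i(x^{t+1})\|^2$ is a sum of per-worker squared norms (not the square of a sum), cross-worker independence is not actually needed here---only the conditional unbiasedness of $\cQ_i(\hdelta_i)$ for each fixed $i$ is used.
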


\begin{proof}
Following the same reasoning as in the proof of the previous lemma gives
    \begin{align*}
        \EE{H_i^{t+1}} &=\EE{\norm{g_i^{t+1} - \nabla f_i(x^{t+1})}^2} \\
        &= \parens{1-p}\EE{\norm{g_i^t + \cC_i\parens{\hdelta_i\parens{x^{t+1}, x^t}} - \nabla f_i\parens{x^{t+1}}}^2} \\
        &= \parens{1-p}\EE{\norm{g_i^t - \nabla f_i(x^t) + \cC_i\parens{\hdelta_i\parens{x^{t+1}, x^t}} - \parens{\nabla f_i\parens{x^{t+1}}-\nabla f_i\parens{x^t}}}^2} \\
        &\overset{\eqref{eq:vardecomp}}{=} \parens{1-p}\parens{\EE{\norm{g_i^t - \nabla f_i(x^t)}^2} + \EE{\norm{\cC_i\parens{\hdelta_i\parens{x^{t+1}, x^t}} - \parens{\nabla f_i\parens{x^{t+1}}-\nabla f_i\parens{x^t}}}^2}}.
    \end{align*}
Averaging over all the good workers $i \in \cG$ and using Assumptions \ref{as:hessian_variance} and \ref{as:hessian_variance_local} gives the result.
\end{proof}

\subsection{Proof of Theorem~\ref{thm:main_result_Byz-VR-MARINA_2}}

For the readers' convenience, we repeat the statement of the theorem.

\BYZVRM*

\begin{proof}
Let $C = \frac{1+s^{-1}}{p}$, $D = \frac{8c\delta\parens{1+s}}{p}$ for some $s>0$ and define
\begin{align*}
\psi_t &= C\norm{\overline{g}^t - \nabla f(x^{t})}^2 + D H^t, \\
\Phi_t &= f(x^t) - f^* + \frac{\gamma}{2}\psi_t.
\end{align*}
Using lemmas \ref{lemma:byz_BVMNS}, \ref{lemma:prel_1_MARINA} and \ref{lemma:prel_2_MARINA}, we have
\begin{eqnarray*}
    \EE{\Phi_{t+1}}
    &=&\EE{f(x^{t+1}) - f^* + \frac{\gamma}{2}\psi_{t+1}} \\
    &=&\underbrace{\EE{f(x^{t+1}) - f^*} }_\text{use \ref{lemma:byz_BVMNS}} + \frac{\gamma}{2} \parens{C\underbrace{\EE{\norm{\overline{g}^{t+1} - \nabla f(x^{t+1})}^2}}_\text{use \ref{lemma:prel_1_MARINA}}  
    + D\underbrace{\EE{H^{t+1}}}_\text{use \ref{lemma:prel_2_MARINA}}} \\
    &\leq& \EE{f(x^{t}) - f^*} + \frac{\gamma}{2}\parens{\underbrace{C\EE{\norm{\overline{g}^t - \nabla f\parens{x^{t}}}^2} + D\EE{H^t}}_\text{$\EE{\psi_t}$}}
    - \frac{\gamma\kappa}{2}\EE{\norm{\nabla f(x^t)}^2}
    \\
    && - \parens{\frac{1}{2\gamma} - \frac{L}{2} - \frac{\gamma\eta}{2}}\EE{R^t} + 4\gamma c\delta (1+s)\zeta^2 \\
    &=& \EE{\Phi_t} - \frac{\gamma\kappa}{2} \EE{\norm{\nabla f(x^t)}^2}
    - \parens{\frac{1}{2\gamma} - \frac{L}{2} - \frac{\gamma\eta}{2}}\EE{R^t} + 4\gamma c\delta (1+s)\zeta^2,
\end{eqnarray*}
where 
\begin{align*}
    \eta \eqdef & C\parens{1-p}\parens{\frac{\omega}{G}\parens{\frac{\cL_{\pm}^2}{b}+ L_{\pm}^2 + L^2} +\frac{\cL_{\pm}^2}{Gb}} + D\parens{1-p} \parens{\omega\parens{\frac{\cL_{\pm}^2}{b}+ L_{\pm}^2 + L^2} +\frac{\cL_{\pm}^2}{b}} \\
    = & \parens{1-p}\parens{\omega\parens{\frac{\cL_{\pm}^2}{b}+ L_{\pm}^2 + L^2} +\frac{\cL_{\pm}^2}{b}}\parens{\frac{C}{G} + D} \\
    = & \frac{1-p}{p} \parens{\omega\parens{\frac{\cL_{\pm}^2}{b}+ L_{\pm}^2 + L^2} +\frac{\cL_{\pm}^2}{b}}\parens{\frac{1+s^{-1}}{G} + 8c\delta\parens{1+s}}.
\end{align*}
Taking $0 < \gamma \leq \frac{1}{L + \sqrt{\eta}}$ and using Lemma \ref{lemma:step_lemma} gives
\begin{equation*}
    \frac{1}{2\gamma} - \frac{L}{2} - \frac{\gamma\eta}{2} \geq 0
\end{equation*}
and hence
\begin{equation*}
    \EE{\Phi_{t+1}} \leq
    \EE{\Phi_t} - \frac{\gamma\kappa}{2} \EE{\norm{\nabla f(x^t)}^2}
    + 4\gamma c\delta (1+s)\zeta^2.
\end{equation*}
Now, let us take $s=\frac{1}{\sqrt{8c\delta G}}$. Recalling that $\kappa = 1 - 8 B c\delta (1+s)$, our assumption on $\delta$ implies that $\kappa>0$ and hence, summing up the above inequality for $t = 0,1,\ldots,T-1$ and rearranging the terms, we get
\begin{align*}
	\frac{1}{T} \sum \limits_{t=0}^{T} \EE{\norm{\nabla f(x^t)}^2}
    & \leq \frac{2}{\gamma\kappa T}\parens{\EE{\Phi_0} - \EE{\Phi_T}} + \frac{8c\delta(1+s)\zeta^2}{\kappa} \\
    & \leq \frac{1}{\kappa} \parens{\frac{2}{\gamma T} \EE{\Phi_0} + 8c\delta(1+s) \zeta^2} \\
    &= \frac{1}{1 - \parens{8c\delta+\sqrt{\frac{8c\delta}{G}}}B}\parens{\frac{2\parens{f(x^0) - f^*}}{\gamma T} + \parens{8c\delta + \sqrt{\frac{8c\delta}{G}}}\zeta^2}
\end{align*}
as needed.
\end{proof}


\begin{corollary}
    Let the assumptions of Theorem \ref{thm:main_result_Byz-VR-MARINA_2} hold, $p = \min\{\nicefrac{1}{\omega}, \nicefrac{b}{m}\}$ and $\gamma = \parens{L + \sqrt{\eta}}^{-1}$,
    where $\eta = \max\{\omega, \nicefrac{m}{b}\} \parens{\omega\parens{\frac{\cL_+^2}{b}+ L_{\pm}^2 + L^2} +\frac{\cL_+^2}{b}}\parens{\sqrt{\frac{1}{G}} + \sqrt{8c\delta}}^2$. Then for all $T \geq 0$, $\EE{\norm{\nabla f(\hx^T)}^2}$ is of the order
    \begin{align*}
        \cO \parens{\parens{\frac{\parens{L + \sqrt{\max\{\omega^2, \nicefrac{m\omega}{b}\} \parens{\frac{\cL_+^2}{b}+ L_{\pm}^2 + L^2}}\parens{\sqrt{\frac{1}{G}} + \sqrt{c\delta}}} \delta^0}{T \parens{1 - \parens{c\delta+\sqrt{\frac{c\delta}{G}}}B}}
        + \frac{\parens{c\delta + \sqrt{\frac{c\delta}{G}}}\zeta^2}{1 - \parens{c\delta+\sqrt{\frac{c\delta}{G}}}B}}},
    \end{align*}
    where $\delta^0 = f(x^0) - f^*$  and $\hx^T$ is chosen uniformly at random from $x^0, x^1, \ldots, x^{T-1}$. Hence, to guarantee $\EE{\norm{\nabla f(\hx^T)}^2} \leq \varepsilon^2$ for $\varepsilon^2 > \parens{1 - \parens{8c\delta+\sqrt{\nicefrac{8c\delta}{G}}}B}^{-1} {\parens{8c\delta+\sqrt{\nicefrac{8c\delta}{G}}}\zeta^2}$, \algname{Byz-VR-MARINA 2.0} requires
    \begin{align*}
        \cO \parens{\frac{1}{\varepsilon^2}\parens{1+\sqrt{\max \left\{\omega^2, \frac{m\omega}{b} \right\}}\parens{\sqrt{\frac{1}{G}} + \sqrt{c\delta}}}}
    \end{align*}    
    communication rounds.
\end{corollary}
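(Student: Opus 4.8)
The corollary is a direct specialization of Theorem~\ref{thm:main_result_Byz-VR-MARINA_2}: the plan is to substitute the prescribed $p$ and $\gamma$ and then simplify, with no new analysis. First I would check that the stepsize is admissible. With $p=\min\{\nicefrac1\omega,\nicefrac bm\}$ one has $\frac{1-p}{p}\le\frac1p=\max\{\omega,\nicefrac mb\}$, so the $\eta$ appearing in Theorem~\ref{thm:main_result_Byz-VR-MARINA_2} is upper bounded by the $\eta$ written in the corollary; hence $\gamma=(L+\sqrt\eta)^{-1}$ with the corollary's (larger) $\eta$ still satisfies $0<\gamma\le(L+\sqrt{\eta_{\mathrm{thm}}})^{-1}$, and the theorem's conclusion applies. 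The constraint $\delta<((8c+4\sqrt c)B)^{-1}$ is identical in both statements and, via $\nicefrac1G\le 2\delta$, already guarantees $A=1-(8c\delta+\sqrt{8c\delta/G})B>0$.

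Next I would unfold the bound. Substituting $\nicefrac1\gamma=L+\sqrt\eta$ into Theorem~\ref{thm:main_result_Byz-VR-MARINA_2} and writing $\sqrt\eta=\sqrt{\max\{\omega,\nicefrac mb\}}\cdot\sqrt{\omega(\cL_{\pm}^2/b+L_{\pm}^2+L^2)+\cL_{\pm}^2/b}\cdot(\sqrt{\nicefrac1G}+\sqrt{8c\delta})$, I would use $\omega(\cL_{\pm}^2/b+L_{\pm}^2+L^2)+\cL_{\pm}^2/b=\cO(\max\{\omega,1\}(\cL_{\pm}^2/b+L_{\pm}^2+L^2))$ to collapse the first two factors into $\cO(\sqrt{\max\{\omega^2,\nicefrac{m\omega}b\}(\cL_{\pm}^2/b+L_{\pm}^2+L^2)})$. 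Combined with the $\nicefrac1A$ prefactor and the $\zeta^2$ term (numerical constants such as the $8$'s absorbed into $\cO$), this is exactly the first displayed estimate.

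Finally, for the communication complexity: the neighbourhood term $\frac1A(8c\delta+\sqrt{8c\delta/G})\zeta^2$ does not depend on $T$ and, by the assumed lower bound on $\varepsilon^2$, is strictly below $\varepsilon^2$, so it suffices to drive the first term $\frac{2\delta^0}{A\gamma T}$ below $\varepsilon^2$ minus that neighbourhood — a quantity of order $\varepsilon^2$ in the regime of interest. Thus $T=\cO\!\left(\frac{\delta^0(L+\sqrt\eta)}{A\varepsilon^2}\right)$ suffices; discarding the dependence on everything except $\omega,m,b,G,c,\delta$ (so $L,\cL_{\pm},L_{\pm},\delta^0$ and the now-constant $A$ are absorbed) gives $L+\sqrt\eta=\cO(1+\sqrt{\max\{\omega^2,\nicefrac{m\omega}b\}}\,(\sqrt{\nicefrac1G}+\sqrt{c\delta}))$, i.e.\ the claimed round count. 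There is no genuine obstacle here — it is bookkeeping; the only mildly delicate points are verifying admissibility of $\gamma$ after replacing $\frac{1-p}{p}$ by $\frac1p$, and noting that $\varepsilon^2-\frac1A(8c\delta+\sqrt{8c\delta/G})\zeta^2=\Theta(\varepsilon^2)$ so that the $\nicefrac1{\varepsilon^2}$ scaling is legitimate.
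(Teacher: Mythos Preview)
Your proposal is correct and is precisely the intended argument: the paper states this corollary immediately after the proof of Theorem~\ref{thm:main_result_Byz-VR-MARINA_2} with no separate proof, treating it as a direct specialization obtained by plugging in the prescribed $p$ and $\gamma$ and simplifying. Your care in verifying admissibility of $\gamma$ (via $\tfrac{1-p}{p}\le\tfrac1p$) and positivity of $A$ (via $\tfrac1G\le2\delta$) actually goes slightly beyond what the paper makes explicit.
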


\clearpage

\section{MISSING PROOFS FOR \algname{Byz-DASHA-PAGE}}\label{appendix:proofs_dasha}

In this section, we again use the notation $\overline{g}^{t} \eqdef \tfrac{1}{G}\sum_{i\in\cG}g_i^{t}$, $R^t \eqdef \norm{x^{t+1} - x^t}^2$, $H_i^t \eqdef \norm{g_i^t - \nabla f_i(x^t)}^2$, $H^t \eqdef \frac{1}{G}\sum_{i\in \cG} H_i^t$.

\subsection{Technical Lemmas}

\begin{lemma} [Descent Lemma]
\label{lemma:byz_descent_dp}
Suppose that Assumptions \ref{as:smoothness} and \ref{as:bounded_heterogeneity} hold. Then for all $s>0$ we have
\begin{eqnarray*}
  \EE{f(x^{t + 1})} &\leq& \EE{f(x^t)} - \frac{\gamma\kappa}{2} \EE{\norm{\nabla f(x^t)}^2} - \left(\frac{1}{2\gamma} - \frac{L}{2}\right)
  \EE{R^t}\\
  &&+ 8\gamma c\delta(1+s)\parens{\frac{1}{G}\sum \limits_{i\in \cG} \EE{\norm{g_i^t - h_i^t}^2}}
  + 8\gamma c\delta(1+s)\parens{\frac{1}{G}\sum \limits_{i\in \cG} \EE{\norm{h_i^t - \nabla f_i(x^t)}^2}} \\
  &&+ \gamma (1 + s^{-1})\EE{\norm{\overline{g}^t - h^{t}}^2} + \gamma (1 + s^{-1})\EE{\norm{h^t - \nabla f(x^t)}^2}
  + 4\gamma c\delta(1+s)\zeta^2,
\end{eqnarray*}
where $\kappa = 1 - 8 B c\delta (1+s)$.
\end{lemma}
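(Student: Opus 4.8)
The plan is to reduce everything to the PAGE-type descent inequality of \Cref{lemma:page}. Since $x^{t+1} = x^t - \gamma g^t$ and $f$ is $L$-smooth by \Cref{as:smoothness}, for every realization of $g^t$ that lemma gives
\[
f(x^{t+1}) \le f(x^t) - \frac{\gamma}{2}\norm{\nabla f(x^t)}^2 - \parens{\frac{1}{2\gamma} - \frac{L}{2}}R^t + \frac{\gamma}{2}\norm{g^t - \nabla f(x^t)}^2,
\]
so after taking total expectations the whole task is to control $\EE{\norm{g^t - \nabla f(x^t)}^2}$. This is the exact analogue of the route used for \Cref{lemma:byz_BVMNS}, the only difference being that the single residual $H^t$ appearing there is now split into a ``momentum'' part $g_i^t-h_i^t$ and a ``stochastic gradient'' part $h_i^t-\nabla f_i(x^t)$, which will later satisfy separate recursions; here $h^t \eqdef \tfrac1G\sum_{i\in\cG}h_i^t$, and note $\tfrac1G\sum_{i\in\cG}\nabla f_i(x^t) = \nabla f(x^t)$.

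First I would peel off the aggregation error with Young's inequality \eqref{eq:young} with parameter $s$:
\[
\norm{g^t - \nabla f(x^t)}^2 \le (1+s)\norm{g^t - \overline g^t}^2 + (1+s^{-1})\norm{\overline g^t - \nabla f(x^t)}^2 .
\]
For the first summand, since $g^t = \texttt{ARAgg}(g_1^t,\dots,g_n^t)$, the $(\delta,c)$-robust aggregator guarantee \eqref{eq:RAgg_def} yields $\EE{\norm{g^t - \overline g^t}^2} \le c\delta\,\sigma^2$ for any $\sigma^2$ dominating $\tfrac1{G(G-1)}\sum_{i,l\in\cG}\EE{\norm{g_i^t - g_l^t}^2}$. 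For the second summand I would write $\overline g^t - \nabla f(x^t) = (\overline g^t - h^t) + (h^t - \nabla f(x^t))$ and use $\norm{a+b}^2 \le 2\norm a^2 + 2\norm b^2$; this is precisely where the two $(1+s^{-1})$-terms of the statement originate.

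The remaining ingredient is the \algname{DASHA} counterpart of \Cref{lemma:dissimilarity}, i.e.\ a bound on $\sigma^2$. Writing $g_i^t - g_l^t$ as the sum of the pairwise differences of $g_i^t-h_i^t$, $h_i^t-\nabla f_i(x^t)$ and $\nabla f_i(x^t)-\nabla f(x^t)$, expanding the square via \eqref{eq:young}, and averaging over $i,l\in\cG$ so that same-index contributions coalesce, I obtain
\[
\tfrac1{G(G-1)}\sum_{i,l\in\cG}\EE{\norm{g_i^t - g_l^t}^2} \;\lesssim\; \tfrac1G\sum_{i\in\cG}\EE{\norm{g_i^t - h_i^t}^2} + \tfrac1G\sum_{i\in\cG}\EE{\norm{h_i^t - \nabla f_i(x^t)}^2} + B\,\EE{\norm{\nabla f(x^t)}^2} + \zeta^2,
\]
where the last two terms come from applying \Cref{as:bounded_heterogeneity} to $\tfrac1G\sum_{i\in\cG}\norm{\nabla f_i(x^t)-\nabla f(x^t)}^2$. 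Taking $\sigma^2$ to be a suitable constant multiple of this bound, substituting back into the Young split, and then into the expected version of \Cref{lemma:page}, I would finally group the coefficient of $\EE{\norm{\nabla f(x^t)}^2}$ into $-\tfrac{\gamma\kappa}{2}$ with $\kappa = 1 - 8Bc\delta(1+s)$, which delivers the claim.

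The hard part is not the structure but the constant accounting in the dissimilarity bound and in the subsequent absorption of the $B\norm{\nabla f(x^t)}^2$ term into $\kappa$: the nested Young's inequalities (and the choice of how to group the three-way difference $g_i^t-g_l^t$ and which Young parameters to use) must be tuned so that, after the factor $\tfrac12$ from \Cref{lemma:page}, the heterogeneity contribution is no larger than $8Bc\delta(1+s)\norm{\nabla f(x^t)}^2$ --- this is exactly what makes the stated $\kappa$, and hence the hypothesis $\delta < \bigl((8c+4\sqrt c)B\bigr)^{-1}$, sharp enough for the telescoping step in the proof of \Cref{thm:main_result_dp} to go through. No conditioning subtleties arise: \Cref{lemma:page} is a deterministic inequality valid for any direction $g^t$, so a single application of linearity of expectation suffices.
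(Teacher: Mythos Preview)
Your approach is correct and structurally the same as the paper's, but the paper is more modular: it invokes \Cref{lemma:byz_BVMNS} directly---which already packages the robust-aggregation bound and the dissimilarity estimate of \Cref{lemma:dissimilarity}---and then simply applies Young's inequality \eqref{eq:young} twice more to split $H_i^t=\norm{g_i^t-\nabla f_i(x^t)}^2$ through $h_i^t$ and $\norm{\overline g^t-\nabla f(x^t)}^2$ through $h^t$. You instead rederive \Cref{lemma:byz_BVMNS} from scratch inside the proof. One caveat on constants: the direct three-way split of $g_i^t-g_l^t$ you sketch, if carried out as a six-term Young inequality, yields factors of $12$ rather than $8$, so $\kappa$ would come out as $1-12Bc\delta(1+s)$. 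To hit the stated $\kappa=1-8Bc\delta(1+s)$ and the stated $8\gamma c\delta(1+s)$ coefficients exactly, one must first group $g_i^t-\nabla f_i(x^t)$ as a single block in the dissimilarity bound (as in \Cref{lemma:dissimilarity}, giving the factor $8$), and only afterwards split it via $h_i^t$---which is precisely the paper's order of operations.
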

\begin{proof}
    Using Lemma \ref{lemma:byz_BVMNS}, we get 
    \begin{eqnarray*}
        \EE{f(x^{t + 1})} &\leq& \EE{f(x^t)} - \frac{\gamma\kappa}{2} \EE{\norm{\nabla f(x^{t})}^{2}}
        - \left(\frac{1}{2\gamma} - \frac{L}{2}\right)
        \EE{R^t} \\
        &&+ 4\gamma c\delta (1+s) \parens{\frac{1}{G}\sum \limits_{i\in \cG} \EE{\norm{g_i^t - \nabla f_i(x^t)}^2}} \\
        &&+ \frac{\gamma}{2}(1 + s^{-1})\EE{\norm{\overline{g}^t - \nabla f(x^t)}^2}
        + 4\gamma c\delta (1+s)\zeta^2 \\
        &\overset{\eqref{eq:young}}{\leq}& \EE{f(x^t)} - \frac{\gamma\kappa}{2} \EE{\norm{\nabla f(x^{t})}^{2}}
        - \left(\frac{1}{2\gamma} - \frac{L}{2}\right)
        \EE{R^t} \\
        &&+ 8\gamma c\delta (1+s) \parens{\frac{1}{G}\sum \limits_{i\in \cG} \EE{\norm{g_i^t - h_i^t}^2}}
        + 8\gamma c\delta (1+s) \parens{\frac{1}{G}\sum \limits_{i\in \cG} \EE{\norm{h_i^t - \nabla f_i(x^t)}^2}} \\
        &&+ \gamma(1 + s^{-1})\EE{\norm{\overline{g}^t - h^t}^2}
        + \gamma(1 + s^{-1})\EE{\norm{h^t - \nabla f(x^t)}^2}
        + 4\gamma c\delta (1+s)\zeta^2.
    \end{eqnarray*}
\end{proof}

In what follows, we denote by $\EEC{\cdot}$ the expectation taken with respect to the randomness of the compressor, and by $\EEh{\cdot}$ the expectation taken with respect to the choice of $\{h_i\}_{i\in \cG}$.
\begin{lemma} \label{lemma: prel_prel_1}
    Suppose that Assumptions \ref{as:smoothness}, \ref{as:hessian_variance} and \ref{as:hessian_variance_local} hold. Then
        \begin{align*}
            \frac{1}{G}\sum \limits_{i \in \cG}
            \EE{\norm{h^{t+1}_i - h^{t}_i}^2} \leq \parens{\frac{(1 - p) \cL_{\pm}^{2}}{ b} + 2 \parens{L_{\pm}^2 + L^{2}}} \EE{R^t} + 2 p \parens{\frac{1}{G} \sum \limits_{i \in \cG}\EE{\norm{h^{t}_i - \nabla f_i(x^{t})}^2}}.
        \end{align*}
\end{lemma}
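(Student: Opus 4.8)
The plan is to bound $\tfrac{1}{G}\sum_{i\in\cG}\EE{\norm{h_i^{t+1}-h_i^t}^2}$ by conditioning on whether the synchronization flag $c^{t+1}$ equals $1$ or $0$, since the update of $h_i^{t+1}$ in \algname{Byz-DASHA-PAGE} is $\nabla f_i(x^{t+1})$ with probability $p$ and $h_i^t + \widehat{\Delta}_i(x^{t+1},x^t)$ with probability $1-p$. Writing $h_i^{t+1}-h_i^t$ in each case and taking the conditional expectation over the coin flip, I would get
\[
\EE{\norm{h_i^{t+1}-h_i^t}^2} = p\,\EE{\norm{\nabla f_i(x^{t+1})-h_i^t}^2} + (1-p)\,\EE{\norm{\widehat{\Delta}_i(x^{t+1},x^t)}^2}.
\]
For the first term I would insert $\pm\nabla f_i(x^t)$ and use Young's inequality \eqref{eq:young} with $s=1$ to split $\norm{\nabla f_i(x^{t+1})-h_i^t}^2 \le 2\norm{\nabla f_i(x^{t+1})-\nabla f_i(x^t)}^2 + 2\norm{\nabla f_i(x^t)-h_i^t}^2$; the second piece averages to $2p\cdot\frac1G\sum_i\EE{\norm{h_i^t-\nabla f_i(x^t)}^2}$, which is exactly the last term in the claimed bound. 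For the second term, I would use the bias-variance decomposition \eqref{eq:vardecomp} for the unbiased mini-batch estimator: $\EE{\norm{\widehat{\Delta}_i(x^{t+1},x^t)}^2} = \EE{\norm{\widehat{\Delta}_i(x^{t+1},x^t)-\Delta_i(x^{t+1},x^t)}^2} + \EE{\norm{\Delta_i(x^{t+1},x^t)}^2}$, where $\Delta_i(x^{t+1},x^t)=\nabla f_i(x^{t+1})-\nabla f_i(x^t)$.

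Next I would average these identities over $i\in\cG$ and apply the two Hessian-variance assumptions. Assumption~\ref{as:hessian_variance_local} \eqref{eq:hessian_variance_local} bounds $\frac1G\sum_i\EE{\norm{\widehat{\Delta}_i-\Delta_i}^2} \le \frac{\cL_\pm^2}{b}\EE{R^t}$, and this contributes the $\tfrac{(1-p)\cL_\pm^2}{b}\EE{R^t}$ term. For $\frac1G\sum_i\EE{\norm{\Delta_i(x^{t+1},x^t)}^2} = \frac1G\sum_i\EE{\norm{\nabla f_i(x^{t+1})-\nabla f_i(x^t)}^2}$, Assumption~\ref{as:hessian_variance} \eqref{eq:hessian_variance} gives the bound $\le (L_\pm^2+L^2)\EE{R^t}$ after also using $\norm{\nabla f(x^{t+1})-\nabla f(x^t)}^2 \le L^2\EE{R^t}$ from Assumption~\ref{as:smoothness}. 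The same $(L_\pm^2+L^2)\EE{R^t}$ bound applies to the $2\norm{\nabla f_i(x^{t+1})-\nabla f_i(x^t)}^2$ piece from the first term (this time carrying a factor $2p$ rather than $1-p$). Collecting the $\EE{R^t}$-coefficients: from the first term, $2p(L_\pm^2+L^2)$; from the second term, $(1-p)(L_\pm^2+L^2) + \tfrac{(1-p)\cL_\pm^2}{b}$. Since $2p + (1-p) = 1+p \le 2$, the combined $(L_\pm^2+L^2)$ coefficient is at most $2(L_\pm^2+L^2)$, which matches the stated bound, and the $\tfrac{(1-p)\cL_\pm^2}{b}\EE{R^t}$ term matches exactly.

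I expect the main subtlety — not an obstacle, but the place to be careful — to be the conditioning structure: one must condition on the sigma-algebra generated by $x^t, \{h_i^t\}$, then on the coin flip $c^{t+1}$, and only then take expectation over the mini-batch sampling, while keeping $x^{t+1}=x^t-\gamma g^t$ fixed (note $x^{t+1}$ is measurable with respect to the past). A second point to watch is that the factor in front of the ``residual'' term $\frac1G\sum_i\EE{\norm{h_i^t-\nabla f_i(x^t)}^2}$ comes out as exactly $2p$ with the choice $s=1$ in Young's inequality, so no slack is lost there. Once these bookkeeping details are handled, the result follows by assembling the pieces and using $1+p\le 2$; I would write it out linearly, first stating the per-$i$ decomposition, then averaging and invoking \eqref{eq:hessian_variance}, \eqref{eq:hessian_variance_local}, and $L$-smoothness in turn.
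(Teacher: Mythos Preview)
Your proposal is correct and follows essentially the same approach as the paper: split on the coin flip $c^{t+1}$, apply Young's inequality with $s=1$ to the $p$-branch, use the bias--variance decomposition \eqref{eq:vardecomp} on the $(1-p)$-branch, average over $i\in\cG$, and invoke Assumptions~\ref{as:smoothness}, \ref{as:hessian_variance}, \ref{as:hessian_variance_local} together with the bound $2p+(1-p)\le 2$. The paper's proof is organized identically, first working per worker under the conditional expectation $\EEh{\cdot}$ and then taking full expectation and averaging.
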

\begin{proof}
    First, we have
    \begin{eqnarray*}
        \EEh{\norm{h^{t+1}_i - h^{t}_i}^2}
        &=&p\norm{\nabla f_i(x^{t+1}) - h^{t}_i}^2 + (1 - p) \EEh{\norm{ \hdelta_{i}(x^{t+1}, x^{t})}^2} \\
        &\overset{\eqref{eq:vardecomp}}{=}& p\norm{\nabla f_i(x^{t+1}) - h^{t}_i}^2
        + (1 - p) \EEh{\norm{\hdelta_{i}(x^{t+1}, x^{t}) - \Delta_{i}(x^{t+1},x^{t})}^2}\\
        && + (1 - p) \norm{\nabla f_{i}(x^{t+1}) - \nabla f_{i}(x^{t})}^2\\
        &\overset{\eqref{eq:young}}{\leq}& 2p\norm{\nabla f_{i}(x^{t+1}) - \nabla f_{i}(x^{t})}^2 + 2p \norm{h_i^t - \nabla f_{i}(x^{t})}^2 \\
        && + (1 - p) \EEh{\norm{\hdelta_{i}(x^{t+1}, x^{t}) - \Delta_{i}(x^{t+1},x^{t})}^2} \\
        && + (1 - p) \norm{\nabla f_{i}(x^{t+1}) - \nabla f_{i}(x^{t})}^2\\
        &\leq& 2p \norm{h_i^t - \nabla f_{i}(x^{t})}^2 + (1 - p) \EEh{\norm{\hdelta_{i}(x^{t+1}, x^{t}) - \Delta_{i}(x^{t+1},x^{t})}^2} \\
        &&  + 2 \norm{\nabla f_{i}(x^{t+1}) - \nabla f_{i}(x^{t})}^2.
    \end{eqnarray*}
    Taking full expectation and averaging over all the good workers, we get
    \begin{eqnarray*}
        \frac{1}{G}\sum \limits_{i \in \cG}\EE{\norm{h^{t+1}_i - h^{t}_i}^2}
        &\leq& (1 - p) \frac{1}{G}\sum \limits_{i \in \cG}\EE{\norm{\hdelta_{i}(x^{t+1}, x^{t}) - \Delta_{i}(x^{t+1},x^{t})}^2}\\
        &&+ \frac{2}{G}\sum \limits_{i \in \cG}\EE{\norm{\nabla f_{i}(x^{t+1}) - \nabla f_{i}(x^{t})}}^2
        + \frac{2p}{G}\sum \limits_{i \in \cG}\EE{\norm{h_i^t - \nabla f_{i}(x^{t})}}^2.
    \end{eqnarray*}
    Using Assumptions \ref{as:smoothness}, \ref{as:hessian_variance} and \ref{as:hessian_variance_local}, we can conclude that
        \begin{align*}
            \frac{1}{G}\sum \limits_{i \in \cG}
            \EE{\norm{h^{t+1}_i - h^{t}_i}^2} \leq \parens{\frac{(1 - p) \cL_{\pm}^{2}}{ b} + 2 \parens{L_{\pm}^2 + L^{2}}} \EE{R^t} + 2 p \parens{\frac{1}{G} \sum \limits_{i \in \cG}\EE{\norm{h^{t}_i - \nabla f_i(x^{t})}^2}}.
        \end{align*}
\end{proof}

\begin{lemma}
    \label{lemma: pre1_dp}
    Suppose that Assumptions \ref{as:smoothness}, \ref{as:hessian_variance} and \ref{as:hessian_variance_local} hold. Then
\begin{align*}
    \EE{\norm{\overline{g}^{t+1} - h^{t+1}}^2}
    &\leq \left(1 - a\right)^2\EE{\norm{\overline{g}^t - h^{t}}^2}
    + \frac{2\omega}{G} \parens{\frac{(1 - p) \cL_{\pm}^{2}}{ b} + 2 \parens{L_{\pm}^2 + L^{2}}} \EE{R^t} \\  
    &\quad + \frac{4p\omega}{G}\parens{\frac{1}{G} \sum \limits_{i \in \cG}\EE{\norm{h^{t}_i - \nabla f_i(x^{t})}^2}}
    + \frac{2 a^2 \omega}{G}\parens{\frac{1}{G} \sum \limits_{i \in \cG}\EE{\norm{g_i^t - h_i^{t}}^2}}.
\end{align*}
\end{lemma}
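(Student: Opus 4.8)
The plan is to follow the template of the proof of Lemma~\ref{lemma:prel_1_MARINA}, with the role of $\nabla f_i(x^{t+1})$ played by $h_i^{t+1}$ and with the unbiasedness of the compressors $\{\cQ_i\}_{i\in\cG}$ doing the work. Abbreviate by $\Delta_i \eqdef h_i^{t+1} - h_i^t - a\parens{g_i^t - h_i^t}$ the vector that worker $i$ feeds into $\cQ_i$, so that the update of Algorithm~\ref{alg:main_algorithm} reads $g_i^{t+1} = g_i^t + \cQ_i(\Delta_i)$. First I would condition on all the randomness up to and including the Bernoulli draw $c^{t+1}$ and the mini-batch estimators $\hdelta_i(x^{t+1},x^t)$ -- so that $h^{t+1}$ and every $\Delta_i$ are fixed -- and take the expectation $\EEC{\cdot}$ over the compressors only. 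Since $\EEC{\cQ_i(\Delta_i)} = \Delta_i$, averaging over $i\in\cG$ gives $\EEC{\overline{g}^{t+1}} = \overline{g}^t + \frac{1}{G}\sum_{i\in\cG}\Delta_i = \overline{g}^t + h^{t+1} - h^t - a\parens{\overline{g}^t - h^t}$, hence $\EEC{\overline{g}^{t+1}} - h^{t+1} = (1-a)\parens{\overline{g}^t - h^t}$.

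Next I would apply the variance decomposition \eqref{eq:vardecomp} with $c = h^{t+1}$ (which is constant under $\EEC{\cdot}$):
\[
\EEC{\norm{\overline{g}^{t+1} - h^{t+1}}^2} = (1-a)^2\norm{\overline{g}^t - h^t}^2 + \EEC{\norm{\tfrac{1}{G}\sum_{i\in\cG}\parens{\cQ_i(\Delta_i) - \Delta_i}}^2}.
\]
Because the compressors are mutually independent and each $\cQ_i(\Delta_i) - \Delta_i$ has zero conditional mean, the cross terms in the last expectation vanish, and \eqref{eq:quantization_def} bounds it by $\tfrac{\omega}{G^2}\sum_{i\in\cG}\norm{\Delta_i}^2$. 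Applying Young's inequality \eqref{eq:young} with $s=1$ gives $\norm{\Delta_i}^2 \leq 2\norm{h_i^{t+1} - h_i^t}^2 + 2a^2\norm{g_i^t - h_i^t}^2$. Taking full expectations, writing $\tfrac{1}{G^2}\sum_{i\in\cG}(\cdot) = \tfrac{1}{G}\cdot\tfrac{1}{G}\sum_{i\in\cG}(\cdot)$, and substituting the bound on $\tfrac{1}{G}\sum_{i\in\cG}\EE{\norm{h_i^{t+1} - h_i^t}^2}$ from Lemma~\ref{lemma: prel_prel_1} then produces precisely the asserted inequality.

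No genuine obstacle is expected: the argument is structurally the same as in Lemma~\ref{lemma:prel_1_MARINA}. The only points requiring care are bookkeeping -- ensuring the variance decomposition is performed with respect to the compressor randomness alone, so that $\overline{g}^t$, $h^t$ and $h^{t+1}$ all behave as constants there, and keeping track of the factors $1/G$ versus $1/G^2$ when passing between $\sum_{i\in\cG}\norm{\Delta_i}^2$ and the per-worker averages that appear in the statement and in Lemma~\ref{lemma: prel_prel_1}.
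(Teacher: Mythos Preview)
Your proposal is correct and follows essentially the same approach as the paper's proof: condition on everything but the compressor randomness, apply the variance decomposition \eqref{eq:vardecomp} to isolate the $(1-a)^2\norm{\overline{g}^t-h^t}^2$ term, use independence and the unbiased-compressor bound to control the remaining variance, split each $\norm{\Delta_i}^2$ via Young's inequality, and finish with Lemma~\ref{lemma: prel_prel_1}. The only cosmetic difference is that you spell out the conditioning and the computation of $\EEC{\overline{g}^{t+1}}-h^{t+1}$ more explicitly than the paper does.
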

\begin{proof}
    First, we have
    \begin{align*}
        &\EEC{\norm{\overline{g}^{t+1} - h^{t+1}}^2}\\
        &\quad=\EEC{\norm{\overline{g}^t + \frac{1}{G}\sum\limits_{i \in \cG} \cC_i\left(h_i^{t+1} - h_i^{t} - a \left(g_i^t - h_i^{t}\right)\right) - h^{t+1}}^2} \\
        &\quad=\EEC{\norm{\frac{1}{G}\sum\limits_{i \in \cG} \cC_i\left(h_i^{t+1} - h_i^{t} - a \left(g_i^t - h_i^{t}\right)\right) - \frac{1}{G}\sum\limits_{i \in \cG} \left( h_i^{t+1} - h_i^{t} - a \left(g_i^t - h_i^{t}\right)\right)}^2} \\
        &\qquad+ \left(1 - a\right)^2\norm{\overline{g}^t - h^{t}}^2.
    \end{align*}
Using the independence of compressors, we get
\begin{eqnarray*}
    \EEC{\norm{\overline{g}^{t+1} - h^{t+1}}^2}
    &=&\frac{1}{G^2} \sum \limits_{i \in \cG}\EEC{\norm{\cC_i\left(h_i^{t+1} - h_i^{t} - a \left(g_i^t - h_i^{t}\right)\right) - \left(h_i^{t+1} - h_i^{t} - a \left(g_i^t - h_i^{t}\right)\right)}^2} \\
    &&\quad + \left(1 - a\right)^2\norm{\overline{g}^t - h^{t}}^2\\
    &\leq& \frac{\omega}{G^2} \sum \limits_{i \in \cG}\norm{h_i^{t+1} - h_i^{t} - a \left(g_i^t - h_i^{t}\right)}^2 + \left(1 - a\right)^2\norm{\overline{g}^t - h^{t}}^2 \\
    &\overset{\eqref{eq:young}}{\leq}& \frac{2\omega}{G^2} \sum \limits_{i \in \cG}\norm{h_i^{t+1} - h_i^{t}}^2 + \frac{2 a^2 \omega}{G^2} \sum \limits_{i \in \cG}\norm{g_i^t - h_i^{t}}^2 + \left(1 - a\right)^2\norm{\overline{g}^t - h^{t}}^2.
\end{eqnarray*}
Taking full expectation and using Lemma \ref{lemma: prel_prel_1} gives
\begin{eqnarray*}
    \EE{\norm{\overline{g}^{t+1} - h^{t+1}}^2} &\leq & \frac{2\omega}{G} \parens{\frac{(1 - p) \cL_{\pm}^{2}}{ b} + 2 \parens{L_{\pm}^2 + L^{2}}} \EE{R^t}\\  
    &&+ \frac{4p\omega}{G}\parens{\frac{1}{G} \sum \limits_{i \in \cG}\EE{\norm{h^{t}_i - \nabla f_i(x^{t})}^2}} \\  
    &&+ \frac{2 a^2 \omega}{G}\parens{\frac{1}{G} \sum \limits_{i \in \cG}\EE{\norm{g_i^t - h_i^{t}}^2}} 
    + \left(1 - a\right)^2\EE{\norm{\overline{g}^t - h^{t}}^2}
\end{eqnarray*}
as needed.
\end{proof}

\begin{lemma}
    \label{lemma: pre2_dp}
    Suppose that Assumptions \ref{as:smoothness}, \ref{as:hessian_variance} and \ref{as:hessian_variance_local} hold. Then
    \begin{align*}
    \frac{1}{G}\sum\limits_{i\in \cG}\EE{\norm{g^{t+1}_i - h_i^{t+1}}^2}
    &\leq \parens{2 a^2 \omega + \parens{1 - a}^2} \frac{1}{G}\sum\limits_{i\in \cG} \EE{\norm{g_i^t - h_i^{t}}^2}
    + 4p\omega \frac{1}{G}\sum\limits_{i\in \cG} \EE{\norm{h^{t}_i - \nabla f_i(x^{t})}^2} \\
    &\quad+ 2\omega \parens{\frac{(1 - p) \cL_{\pm}^{2}}{ b} + 2 \parens{L_{\pm}^2 + L^{2}}}\EE{R^t}.
    \end{align*}
\end{lemma}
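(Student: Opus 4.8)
The plan is to follow the same template as Lemma~\ref{lemma: pre1_dp}, but working with the per-worker quantities $g_i^{t+1}-h_i^{t+1}$ rather than the averaged vectors; this in fact makes the argument slightly simpler, since no independence of the compressors across workers is needed (we only ever take squared norms of single terms, not of a sum).

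First I would use the update rules of \algname{Byz-DASHA-PAGE} to rewrite the target quantity. Setting $z_i^t \eqdef h_i^{t+1} - h_i^t - a\parens{g_i^t - h_i^t}$, so that $g_i^{t+1} = g_i^t + \cC_i(z_i^t)$, a short computation (adding and subtracting $z_i^t$) gives the identity
\[
g_i^{t+1} - h_i^{t+1} = \parens{g_i^t - h_i^{t+1} + z_i^t} + \parens{\cC_i(z_i^t) - z_i^t} = (1-a)\parens{g_i^t - h_i^t} + \parens{\cC_i(z_i^t) - z_i^t}.
\]
Next, I would take the expectation with respect to the randomness of $\cC_i$ only, conditioning on everything else -- crucially, $h_i^{t+1}$, $h_i^t$ and $g_i^t$ are all determined before $\cC_i$ is applied, so $\cC_i$ is independent of $z_i^t$. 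Unbiasedness of $\cC_i$ makes the cross term vanish, and the variance decomposition~\eqref{eq:vardecomp} together with the compression bound~\eqref{eq:quantization_def} yields
\[
\EEC{\norm{g_i^{t+1} - h_i^{t+1}}^2} \leq (1-a)^2\norm{g_i^t - h_i^t}^2 + \omega\norm{z_i^t}^2.
\]
Then Young's inequality~\eqref{eq:young} with $s=1$ bounds $\norm{z_i^t}^2 \leq 2\norm{h_i^{t+1}-h_i^t}^2 + 2a^2\norm{g_i^t-h_i^t}^2$, so after merging the two $\norm{g_i^t-h_i^t}^2$ terms I obtain $\EEC{\norm{g_i^{t+1}-h_i^{t+1}}^2} \leq \parens{2a^2\omega+(1-a)^2}\norm{g_i^t-h_i^t}^2 + 2\omega\norm{h_i^{t+1}-h_i^t}^2$.

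Finally, I would take full expectation, average over $i\in\cG$, and invoke Lemma~\ref{lemma: prel_prel_1} to replace $\tfrac{1}{G}\sum_{i\in\cG}\EE{\norm{h_i^{t+1}-h_i^t}^2}$ by $\parens{\tfrac{(1-p)\cL_{\pm}^2}{b}+2\parens{L_{\pm}^2+L^2}}\EE{R^t} + 2p\cdot\tfrac{1}{G}\sum_{i\in\cG}\EE{\norm{h_i^t - \nabla f_i(x^t)}^2}$; multiplying through by $2\omega$ and collecting like terms gives precisely the claimed bound. I do not expect a genuine obstacle here: the only points that need care are getting the algebraic cancellation in the first display right (tracking the $+z_i^t$ and $-z_i^t$ terms) and respecting the conditioning order, so that the compressor expectation is taken first and the PAGE coin $c^{t+1}$ and mini-batch estimator $\hdelta_i$ are handled inside Lemma~\ref{lemma: prel_prel_1}.
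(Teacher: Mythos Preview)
Your proposal is correct and follows essentially the same route as the paper's proof: both use the identity $g_i^{t+1}-h_i^{t+1}=(1-a)(g_i^t-h_i^t)+(\cC_i(z_i^t)-z_i^t)$, apply the variance decomposition with the unbiased compressor to separate the two pieces, bound the compression error via~\eqref{eq:quantization_def}, then Young's inequality on $\norm{z_i^t}^2$, and finish by averaging over $i\in\cG$ and invoking Lemma~\ref{lemma: prel_prel_1}. There is no substantive difference between your argument and the paper's.
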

\begin{proof}
    First, we use the definition of compression operators to bound $\EEC{\norm{g^{t+1}_i - h_i^{t+1}}^2}$:
    \begin{eqnarray*}
        \EEC{\norm{g^{t+1}_i - h_i^{t+1}}^2}
        &=& \EEC{\norm{g_i^t + \cC_i\left(h_i^{t+1} - h_i^{t} - a \left(g_i^t - h_i^{t}\right)\right) - h_i^{t+1}}^2} \\
        &\overset{\eqref{eq:vardecomp}}{\leq}& \EEC{\norm{\cC_i\left(h_i^{t+1} - h_i^{t} - a \left(g_i^t - h_i^{t}\right)\right) - \left( h_i^{t+1} - h_i^{t} - a \left(g_i^t - h_i^{t}\right)\right)}^2} \\
        && + \left(1 - a\right)^2\norm{g_i^t - h_i^{t}}^2 \\
        &\leq& \omega \norm{h_i^{t+1} - h_i^{t} - a \left(g_i^t - h_i^{t}\right)}^2 + \left(1 - a\right)^2\norm{g_i^t - h_i^{t}}^2 \\
        &\overset{\eqref{eq:young}}{\leq}& 2 \omega \norm{h_i^{t+1} - h_i^{t}}^2 + 2 a^2 \omega \norm{g_i^t - h_i^{t}}^2 + \left(1 - a\right)^2\norm{g_i^t - h_i^{t}}^2 \\
        &=& 2 \omega \norm{h_i^{t+1} - h_i^{t}}^2 + \left(2 a^2 \omega + \left(1 - a\right)^2\right) \norm{g_i^t - h_i^{t}}^2.
    \end{eqnarray*}
    Taking full expectation and averaging over all $i \in \cG$, we get
    \begin{align*}
        \frac{1}{G}\sum \limits_{i\in\cG}\EE{\norm{g^{t+1}_i - h_i^{t+1}}^2}
        \leq 2 \omega \frac{1}{G}\sum \limits_{i\in\cG} \EE{\norm{h_i^{t+1} - h_i^{t}}^2} + \left(2 a^2 \omega + \left(1 - a\right)^2\right) \frac{1}{G}\sum \limits_{i\in\cG}\EE{\norm{g_i^t - h_i^{t}}^2}.
    \end{align*}
    Using Lemma \ref{lemma: prel_prel_1}, we can conclude that
    \begin{align*}
        \frac{1}{G}\sum\limits_{i\in \cG}\EE{\norm{g^{t+1}_i - h_i^{t+1}}^2}
        &\leq
        2\omega \parens{\frac{(1 - p) \cL_{\pm}^{2}}{ b} + 2 \parens{L_{\pm}^2 + L^{2}}}\EE{R^t} + 4p\omega \frac{1}{G}\sum\limits_{i\in \cG} \EE{\norm{h^{t}_i - \nabla f_i(x^{t})}^2}\\
        &\quad+ \parens{2 a^2 \omega + \parens{1 - a}^2} \frac{1}{G}\sum\limits_{i\in \cG} \EE{\norm{g_i^t - h_i^{t}}^2}.
    \end{align*}
\end{proof}

\begin{lemma}
    \label{lemma:pre3_dp}
    Suppose that Assumption \ref{as:hessian_variance_local}  holds. Then
    \begin{align*}
        \EE{\norm{h^{t+1} - \nabla f(x^{t+1})}^2}
        \leq \parens{1 - p} \EE{\norm{h^{t} - \nabla f(x^{t})}^2}
        + \frac{\parens{1 - p}\cL_{\pm}^2}{G b}\EE{R^t}.
    \end{align*}
\end{lemma}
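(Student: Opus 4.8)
The plan is to condition on the coin flip $c^{t+1}$ and handle its two outcomes separately. If $c^{t+1}=1$ (probability $p$), every regular worker sets $h_i^{t+1}=\nabla f_i(x^{t+1})$, so $h^{t+1}=\frac{1}{G}\sum_{i\in\cG}h_i^{t+1}=\nabla f(x^{t+1})$ and $\norm{h^{t+1}-\nabla f(x^{t+1})}^2=0$. If $c^{t+1}=0$ (probability $1-p$), then $h_i^{t+1}=h_i^t+\hdelta_i(x^{t+1},x^t)$; writing $\Delta_i(x^{t+1},x^t)=\nabla f_i(x^{t+1})-\nabla f_i(x^t)$ and using $\frac{1}{G}\sum_{i\in\cG}\Delta_i(x^{t+1},x^t)=\nabla f(x^{t+1})-\nabla f(x^t)$, I would rewrite
\[
h^{t+1}-\nabla f(x^{t+1})=\parens{h^t-\nabla f(x^t)}+\frac{1}{G}\sum_{i\in\cG}\parens{\hdelta_i(x^{t+1},x^t)-\Delta_i(x^{t+1},x^t)}.
\]

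Next, I would take expectation over the fresh minibatch randomness of the estimators, conditionally on the coin being $0$ and on the history, so that $x^{t+1}$, $x^t$, $\{h_i^t\}_{i\in\cG}$ and $\nabla f(x^t)$ are all fixed (recall $x^{t+1}=x^t-\gamma g^t$ is determined by the previous round, hence $R^t$ is measurable w.r.t.\ this conditioning and may be pulled out). Since each $\hdelta_i$ is unbiased for $\Delta_i$, the second term on the right above has conditional mean zero, so the variance decomposition \eqref{eq:vardecomp} removes the cross term and leaves $\norm{h^t-\nabla f(x^t)}^2$ plus the conditional variance of $\frac{1}{G}\sum_{i\in\cG}\parens{\hdelta_i(x^{t+1},x^t)-\Delta_i(x^{t+1},x^t)}$. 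Invoking the mutual independence of the local estimators $\{\hdelta_i(x^{t+1},x^t)\}_{i\in\cG}$ (the same independence used in Lemma~\ref{lemma:prel_1_MARINA}), this variance equals $\frac{1}{G^2}\sum_{i\in\cG}\EE{\norm{\hdelta_i(x^{t+1},x^t)-\Delta_i(x^{t+1},x^t)}^2}$ in conditional expectation, and Assumption~\ref{as:hessian_variance_local}, which gives $\frac{1}{G}\sum_{i\in\cG}\EE{\norm{\hdelta_i-\Delta_i}^2}\le\frac{\cL_{\pm}^2}{b}R^t$, bounds it by $\frac{\cL_{\pm}^2}{Gb}R^t$.

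Finally, I would reweight by the probabilities of the two branches — the $c^{t+1}=1$ branch contributes $0$, the $c^{t+1}=0$ branch contributes $1-p$ times the bound above — and take full expectation over the history to obtain
\[
\EE{\norm{h^{t+1}-\nabla f(x^{t+1})}^2}\le(1-p)\EE{\norm{h^t-\nabla f(x^t)}^2}+\frac{(1-p)\cL_{\pm}^2}{Gb}\EE{R^t},
\]
which is exactly the claim. No step is a genuine obstacle; the only points requiring care are keeping the conditioning consistent (so that $R^t$ leaves the estimator expectation cleanly) and the factor $1/G$ picked up when passing from Assumption~\ref{as:hessian_variance_local}, a bound on the \emph{averaged} per-worker estimator variance, to the variance of the \emph{average} of the centered estimators — which is precisely what produces the $\tfrac{1}{Gb}$ rather than $\tfrac{1}{b}$ in the statement.
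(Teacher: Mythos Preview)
Your proposal is correct and follows essentially the same argument as the paper: split on the coin $c^{t+1}$, note the $c^{t+1}=1$ branch contributes zero, rewrite the $c^{t+1}=0$ branch as $(h^t-\nabla f(x^t))+\frac{1}{G}\sum_{i\in\cG}(\hdelta_i-\Delta_i)$, apply the variance decomposition \eqref{eq:vardecomp}, use independence of the $\hdelta_i$ across $i\in\cG$, and then Assumption~\ref{as:hessian_variance_local} to get the $\frac{\cL_{\pm}^2}{Gb}$ factor. The only cosmetic difference is that the paper encodes the case split directly by writing the factor $(1-p)$ in front (using the notation $\EEh{\cdot}$), whereas you spell out both branches explicitly; the substance is identical.
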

\begin{proof}
    Using the definition of $h^{t+1}$, we obtain
    \begin{align*}
        &\EEh{\norm{h^{t+1} - \nabla f(x^{t+1})}^2} \\
        &\quad=\parens{1 - p}\EEh{\norm{h^t + \frac{1}{G}\sum \limits_{i \in \cG}\hdelta_{i}(x^{t+1}, x^{t}) - \nabla f(x^{t+1})}^2} \\
        &\quad=\parens{1 - p}\EEh{\norm{\parens{h^t - \nabla f(x^t)} + \frac{1}{G}\sum \limits_{i \in \cG} \parens{\hdelta_{i}(x^{t+1}, x^{t}) - \Delta_i(x^{t+1}, x^{t})}}^2} \\
        &\quad\overset{\eqref{eq:vardecomp}}{=} \parens{1 - p}\EEh{\norm{\frac{1}{G}\sum \limits_{i \in \cG} \parens{\hdelta_{i}(x^{t+1}, x^{t}) - \Delta_i(x^{t+1}, x^{t})}}^2}
        + \parens{1 - p} \norm{h^{t} - \nabla f(x^{t})}^2.
    \end{align*}
    From the unbiasedness and independence of mini-batch estimators, we get
    \begin{align*}
        &\EEh{\norm{h^{t+1} - \nabla f(x^{t+1})}^2} \\
        &\quad\leq\frac{\parens{1 - p}}{G^{2}}\sum \limits_{i \in \cG}\EEh{\norm{\hdelta_{i}(x^{t+1}, x^{t}) - \Delta_i(x^{t+1}, x^{t})}^2} + \parens{1 - p} \norm{h^{t} - \nabla f(x^{t})}^2 \\
        &\quad\overset{\eqref{as:hessian_variance_local}}{\leq}
        \frac{\parens{1 - p}\cL_{\pm}^2}{G b}\norm{x^{t+1} - x^t}^2 + \parens{1 - p} \norm{h^{t} - \nabla f(x^{t})}^2.
    \end{align*}
    Taking full expectation gives the result.
\end{proof}

\begin{lemma}\label{lemma:pre4_dp}
    Suppose that Assumption \ref{as:hessian_variance_local} holds. Then
    \begin{align*}
        \frac{1}{G}\sum\limits_{i\in \cG}\EE{\norm{h^{t+1}_i - \nabla f_i(x^{t+1})}^2}
        \leq \parens{1 - p}  \frac{1}{G}\sum\limits_{i\in \cG}\EE{\norm{h^{t}_i - \nabla f_i(x^{t})}^2}
        + \frac{\parens{1 - p}\cL_{\pm}^2}{ b}\EE{R^t}.
    \end{align*}
\end{lemma}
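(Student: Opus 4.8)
The plan is to mirror the argument used for Lemma~\ref{lemma:pre3_dp}, but carried out at the level of individual workers rather than after averaging inside the norm; this is exactly the reason why the factor $\nicefrac{1}{G}$ appearing in Lemma~\ref{lemma:pre3_dp} is absent from the present bound.

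First I would fix a worker $i \in \cG$ and condition on the update rule for $h_i^{t+1}$ used in \algname{Byz-DASHA-PAGE}: with probability $p$ we have $h_i^{t+1} = \nabla f_i(x^{t+1})$, which contributes zero to $\norm{h_i^{t+1} - \nabla f_i(x^{t+1})}^2$, and with probability $1-p$ we have $h_i^{t+1} = h_i^t + \hdelta_i(x^{t+1}, x^{t})$, where $\hdelta_i(x^{t+1},x^t)$ is the unbiased mini-batch estimator of $\Delta_i(x^{t+1},x^t) = \nabla f_i(x^{t+1}) - \nabla f_i(x^t)$. Writing $h_i^t + \hdelta_i(x^{t+1},x^t) - \nabla f_i(x^{t+1}) = \parens{h_i^t - \nabla f_i(x^t)} + \parens{\hdelta_i(x^{t+1},x^t) - \Delta_i(x^{t+1},x^t)}$ and using the conditional unbiasedness of $\hdelta_i$ together with the variance decomposition \eqref{eq:vardecomp}, I get
\[
    \EEh{\norm{h_i^{t+1} - \nabla f_i(x^{t+1})}^2}
    = \parens{1-p}\parens{\norm{h_i^t - \nabla f_i(x^t)}^2 + \EEh{\norm{\hdelta_i(x^{t+1},x^t) - \Delta_i(x^{t+1},x^t)}^2}}.
\]

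Next I would average this identity over all good workers $i \in \cG$ and invoke Assumption~\ref{as:hessian_variance_local} in the form \eqref{eq:hessian_variance_local} to bound $\frac{1}{G}\sum_{i\in\cG}\EEh{\norm{\hdelta_i(x^{t+1},x^t) - \Delta_i(x^{t+1},x^t)}^2} \leq \frac{\cL_{\pm}^2}{b}\norm{x^{t+1}-x^t}^2 = \frac{\cL_{\pm}^2}{b}R^t$; taking full expectation then yields the claimed recursion. I do not anticipate a genuine obstacle here — the computation is essentially a one-line adaptation of Lemma~\ref{lemma:pre3_dp}. The only subtlety worth flagging is that, unlike in Lemma~\ref{lemma:pre3_dp} where the average sits inside the norm (so that independence of the $\hdelta_i$'s produces the coefficient $\frac{1}{G^2}\sum_{i\in\cG} = \frac{1}{G}$ of the per-worker variance bound, giving $\nicefrac{\cL_{\pm}^2}{(Gb)}$), here each norm is taken worker-by-worker before averaging, so the mini-batch variance term enters with the full coefficient $\nicefrac{\cL_{\pm}^2}{b}$.
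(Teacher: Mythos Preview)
Your proposal is correct and follows essentially the same approach as the paper: condition on the two cases of the update rule, use the variance decomposition \eqref{eq:vardecomp} together with the unbiasedness of $\hdelta_i$, then average over $i\in\cG$ and apply Assumption~\ref{as:hessian_variance_local} before taking full expectation. Your explanatory remark about why the $\nicefrac{1}{G}$ factor of Lemma~\ref{lemma:pre3_dp} disappears here is also accurate.
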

\begin{proof}
     Using the same reasoning as in the proof of the previous lemma, we have
    \begin{align*}
        &\EEh{\norm{h^{t+1}_i - \nabla f_i(x^{t+1})}^2} \\
        &\quad=\parens{1 - p}\EEh{\norm{h^t_i + \hdelta_i(x^{t+1}, x^t) - \nabla f_i(x^{t+1})}^2} \\
        &\quad=\parens{1 - p}\EEh{\norm{h^t_i -\nabla f_i(x^{t}) + \hdelta_i(x^{t+1}, x^t) - \Delta_i(x^{t+1}, x^t)}^2} \\
        &\quad\overset{\eqref{eq:vardecomp}}{=}\parens{1 - p}\EEh{\norm{\hdelta_i(x^{t+1}, x^t) - \Delta_i(x^{t+1}, x^t)}^2} + \parens{1 - p} \norm{h^{t}_i - \nabla f_i(x^{t})}^2.
    \end{align*}
Taking full expectation and calculating the average over all $i \in \cG$, we get
    \begin{align*}
        &\frac{1}{G}\sum\limits_{i\in \cG}\EE{\norm{h^{t+1}_i - \nabla f_i(x^{t+1})}^2}\\
        &\quad= \parens{1 - p}\frac{1}{G}\sum\limits_{i\in \cG}\EE{\norm{\hdelta_i(x^{t+1}, x^t) - \Delta_i(x^{t+1}, x^t)}^2} + \parens{1 - p}  \frac{1}{G}\sum\limits_{i\in \cG}\EE{\norm{h^{t}_i - \nabla f_i(x^{t})}^2} \\
        &\quad\overset{\eqref{as:hessian_variance_local}}{\leq} \frac{\parens{1 - p}\cL_{\pm}^2}{b}\EE{\norm{x^{t+1}-x^t}^2} + \parens{1 - p}  \frac{1}{G}\sum\limits_{i\in \cG}\EE{\norm{h^{t}_i - \nabla f_i(x^{t})}^2}.
    \end{align*}
\end{proof}

The proof of the main result for \algname{Byz-DASHA-PAGE} will require us to solve the following system of equations:

\begin{lemma}[Calculations]\label{lemma:calc}
Assuming that $0<a<\frac{2}{2\omega + 1}$ and $C, D, E, F \geq 0$, the solution of the system of equations
\begin{align*}
\begin{cases}
    C = (1-a)^{2}C + 2 \parens{1+s^{-1}}\\
    D = \parens{2a^{2}\omega + \parens{1-a}^{2}}D + \frac{2a^{2}\omega}{G}C + 16c\delta\parens{1+s} \\
    E = \parens{1-p}E + 2\parens{1+s^{-1}}\\
    F = \parens{1-p}F + 4p\omega \parens{\frac{C}{G}+D} + 16c\delta\parens{1+s}
\end{cases}
\end{align*}
is
\begin{align}\label{eq:sys_bdp_sol}
\begin{cases}
    C = 2\frac{1+s^{-1}}{1-\parens{1-a}^2}\\
    D = \frac{1}{1-2\omega a^2 - (1-a)^2} \parens{16 \parens{1+s}c\delta + \frac{2\omega a^2}{G}C} \\
    E = 2\frac{1+s^{-1}}{p} \\
    F = 4\omega \parens{\frac{C}{G} + D} + \frac{16 c\delta}{p}\parens{1 + s}.
\end{cases}
\end{align}
Moreover,
\begin{align*}
\frac{C}{G} + D &= \frac{2}{1-2\omega a^2 - (1-a)^2}\parens{\frac{1+s^{-1}}{G} + \parens{1+s}8c\delta}\\
\frac{E}{G} + F 
& = \parens{\frac{8\omega}{1-2\omega a^2 - (1-a)^2} + \frac{2}{p}}\parens{\frac{1+s^{-1}}{G} + \parens{1+s}8c\delta}.
\end{align*}
\end{lemma}
\begin{proof}
The fact that that this choice of $C$, $D$, $E$, $F$ satisfies the equations can easily be verified by direct substitution of the values in \eqref{eq:sys_bdp_sol}.
\end{proof}

\subsection{Proof of Theorem~\ref{thm:main_result_dp}}

For the readers' convenience, we repeat the statement of the theorem.
\BYZDP*
\begin{proof}
Let
\begin{align*}
    C &= \frac{2(1+s^{-1})}{1-\parens{1-a}^2}\\
    D &= \frac{1}{1-2\omega a^2 - (1-a)^2} \parens{16 \parens{1+s}c\delta + \frac{2\omega a^2}{G}C} \\
    E &= \frac{2}{p}(1+s^{-1}) \\
    F &= 4\omega \parens{\frac{C}{G} + D} + \frac{16 c\delta}{p}\parens{1 + s}
\end{align*}
and
\begin{align*}
    \psi_t &= C\norm{\overline{g}^t - h^{t}}^2 + D\parens{\frac{1}{G}\sum \limits_{i \in \cG}\norm{g_i^t - h_i^t}^2} + E \norm{h^{t}-\nabla f(x^{t})}^{2} + F\parens{\frac{1}{G}\sum \limits_{i \in \cG}\norm{h_i^t - \nabla f_i(x^t)}^2}, \\
    \Phi_t &= f(x^t) - f^* + \frac{\gamma}{2}\psi_t.
\end{align*}
Using Lemmas \ref{lemma:byz_descent_dp}, \ref{lemma: pre1_dp}, \ref{lemma: pre2_dp}, \ref{lemma:pre3_dp} and \ref{lemma:pre4_dp}, we have
\begin{align*}
    &\EE{\Phi_{t+1}}\\
    &=\EE{f(x^{t+1}) - f^* + \frac{\gamma}{2}\psi_{t+1}}\\
    &=\underbrace{\EE{f(x^{t+1}) - f^*} }_\text{use \ref{lemma:byz_descent_dp}} + \frac{\gamma}{2}C\underbrace{\EE{\norm{\overline{g}^{t+1} - h^{t+1}}^2}}_\text{use \ref{lemma: pre1_dp}}
    + \frac{\gamma}{2}D\underbrace{\parens{\frac{1}{G}\sum \limits_{i \in \cG}\EE{\norm{g_i^{t+1} - h_i^{t+1}}^2}}}_\text{use \ref{lemma: pre2_dp}}  \\
    & + \frac{\gamma}{2}E\underbrace{\EE{\norm{h^{t+1} - \nabla f(x^{t+1})}^2}}_\text{use \ref{lemma:pre3_dp}}  
    + \frac{\gamma}{2}F\underbrace{\parens{\frac{1}{G}\sum \limits_{i \in \cG}\EE{\norm{h_i^{t+1} - \nabla f_i(x^{t+1})}^2}}}_\text{use \ref{lemma:pre4_dp}}   \\
    &\leq \EE{f(x^{t}) - f^*} -\frac{\gamma\kappa}{2} \EE{\norm{\nabla f(x^t)}^2} 
    - \parens{\frac{1}{2\gamma} - \frac{L}{2} - \frac{\gamma\eta}{2}}\EE{R^t} + 4\gamma c\delta(1+s)\zeta^2\\ 
    & + \frac{\gamma}{2}\parens{\underbrace{C\EE{\norm{\overline{g}_t - h^{t}}^2} + D\parens{\frac{1}{G}\sum \limits_{i \in \cG}\EE{\norm{g_i^t - h_i^t}^2}} + E \EE{\norm{h^{t}-\nabla f(x^{t})}^{2}} + F\parens{\frac{1}{G}\sum \limits_{i \in \cG}\EE{\norm{h_i^t - \nabla f_i(x^t)}^2}}}_\text{$\EE{\psi_t}$}} \\
    &= \EE{f(x^{t}) - f^*} + \frac{\gamma}{2}\EE{\psi_t} - \frac{\gamma\kappa}{2}\EE{\norm{\nabla f(x^t)}^2} - \parens{\frac{1}{2\gamma} - \frac{L}{2} - \frac{\gamma\eta}{2}}\EE{R^t} + 4\gamma c\delta(1+s)\zeta^2,
\end{align*}
where the last inequality follows from Lemma \ref{lemma:calc} and
\begin{align*}
    \eta \eqdef &2\omega \parens{\frac{1-p}{b}\cL_{\pm}^{2} + 2 \parens{L_{\pm}^2 + L^2}}\parens{\frac{C}{G} + D} + \frac{1-p}{b}\cL_{\pm}^{2}\parens{\frac{E}{G} + F} \\
    = & \parens{\frac{8\omega \parens{L_{\pm}^2 + L^2}}{1-2\omega a^2 - (1-a)^2} + \frac{1-p}{b}\cL_{\pm}^{2} \parens{\frac{12\omega}{1-2\omega a^2 - (1-a)^2} + \frac{2}{p}}}\parens{\frac{1+s^{-1}}{G} + \parens{1+s}8c\delta}.
\end{align*}
Taking $0 < \gamma \leq \frac{1}{L + \sqrt{\eta}}$ and using Lemma \ref{lemma:step_lemma} gives
\begin{equation*}
    \frac{1}{2\gamma} - \frac{L}{2} - \frac{\gamma\eta}{2} \geq 0
\end{equation*}
and hence
\begin{eqnarray*}
    \EE{\Phi_{t+1}}
    \leq \EE{\Phi_{t}} - \frac{\gamma\kappa}{2}\EE{\norm{\nabla f(x^t)}^2} 
    + 4\gamma c\delta(1+s)\zeta^2.
\end{eqnarray*}
Therefore, summing up the above inequality for $t = 0,1,\ldots,T-1$ and rearranging the terms, we get
\begin{align*}
	\frac{1}{T} \sum \limits_{t=0}^{T} \EE{\norm{\nabla f(x^t)}^2}
    &\leq \frac{1}{\kappa}\parens{\frac{2}{\gamma T}\parens{\EE{\Phi_0} - \EE{\Phi_T}} +8c\delta(1+s)\zeta^2} \\
    &\leq \frac{1}{\kappa}\parens{\frac{2}{\gamma T}\parens{\EE{\Phi_0}} + 8c\delta(1+s) \zeta^2} \\
    &= \frac{1}{\kappa}\parens{\frac{2\Phi_0 }{\gamma T}+ 8c\delta(1+s)\zeta^2}.
\end{align*}
Letting $s = \sqrt{\frac{1}{8c\delta G}}$ gives the result.
\end{proof}


\begin{corollary}
    Let the assumptions of Theorem \ref{thm:main_result_dp} hold, $p = \nicefrac{b}{m}$ and $\gamma = \parens{L + \sqrt{\eta}}^{-1}$,
    where $\eta = \parens{8\omega(2\omega +1) \parens{L_{\pm}^2 + L^2} + \frac{1}{b} \parens{12\omega\parens{2\omega+1} + \frac{2}{p}}\cL_{\pm}^{2}}\parens{\sqrt{\frac{1}{G}} + \sqrt{8c\delta}}^2$. Then for all $T \geq 0$, $\EE{\norm{\nabla f(\hx^T)}^2}$ is of the order
    \begin{align*}
        \cO \parens{\parens{\frac{\parens{L + \sqrt{\omega(\omega +1) \parens{L_{\pm}^2 + L^2} + \parens{\frac{\omega\parens{\omega+1}}{b} + \frac{m}{b^2}} \cL_{\pm}^{2}}\parens{\sqrt{\frac{1}{G}} + \sqrt{c\delta}}} \delta^0}{T \parens{1 - \parens{c\delta+\sqrt{\frac{c\delta}{G}}}B}}
        + \frac{\parens{c\delta + \sqrt{\frac{c\delta}{G}}}\zeta^2}{1 - \parens{c\delta+\sqrt{\frac{c\delta}{G}}}B}}},
    \end{align*}
    where $\delta^0 = f(x^0) - f^*$  and $\hx^T$ is chosen uniformly at random from $x^0, x^1, \ldots, x^{T-1}$. Hence, to guarantee $\EE{\norm{\nabla f(\hx^T)}^2} \leq \varepsilon^2$ for $\varepsilon^2 > \parens{1 - \parens{8c\delta+\sqrt{\nicefrac{8c\delta}{G}}}B}^{-1} {\parens{8c\delta+\sqrt{\nicefrac{8c\delta}{G}}}\zeta^2}$, \algname{Byz-DASHA-PAGE} requires
    \begin{align*}
        \cO \parens{\frac{1}{\varepsilon^2}\parens{1+\parens{\omega + \frac{\sqrt{m}}{b}}\parens{\sqrt{\frac{1}{G}} + \sqrt{c\delta}}}}
    \end{align*}    
    communication rounds.
\end{corollary}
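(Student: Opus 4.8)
The plan is to derive this corollary directly from Theorem~\ref{thm:main_result_dp}: no new analysis is needed, only the substitution of the specified parameters together with routine big-$\cO$ bookkeeping. First I would check that the prescribed stepsize is admissible. Write $\eta_{\mathrm{thm}}$ for the quantity appearing in Theorem~\ref{thm:main_result_dp} and $\eta$ for the one in the corollary; they differ only in that the $\cL_{\pm}^2$ term carries the factor $\tfrac{1-p}{b}$ in the former and $\tfrac{1}{b}$ in the latter, so $\eta\ge\eta_{\mathrm{thm}}$ and hence $\gamma=(L+\sqrt{\eta})^{-1}\le(L+\sqrt{\eta_{\mathrm{thm}}})^{-1}$. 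Thus overestimating $\eta$ only shrinks the stepsize, so the hypothesis $0<\gamma\le(L+\sqrt{\eta_{\mathrm{thm}}})^{-1}$ of the theorem is met, and the assumed bound on $\delta$ is exactly the one the theorem requires. Applying Theorem~\ref{thm:main_result_dp} with $p=\nicefrac{b}{m}$ and this $\gamma$, and using $\tfrac{2\delta^0}{\gamma T}=\tfrac{2\delta^0(L+\sqrt\eta)}{T}$, gives
\[
\EE{\norm{\nabla f(\hx^T)}^2}\le \frac{1}{A}\parens{\frac{2\delta^0(L+\sqrt{\eta})}{T} + \parens{8c\delta+\sqrt{\tfrac{8c\delta}{G}}}\zeta^2},\qquad A=1-\parens{8c\delta+\sqrt{\tfrac{8c\delta}{G}}}B.
\]

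Next I would simplify $\sqrt{\eta}$. Since the factor $\parens{\sqrt{\nicefrac{1}{G}}+\sqrt{8c\delta}}^2$ pulls out of the square root and $\tfrac{2}{p}=\tfrac{2m}{b}$, we have $\sqrt{\eta}=\sqrt{8\omega(2\omega+1)(L_{\pm}^2+L^2)+\tfrac{1}{b}\parens{12\omega(2\omega+1)+\tfrac{2m}{b}}\cL_{\pm}^2}\,\parens{\sqrt{\nicefrac{1}{G}}+\sqrt{8c\delta}}$. Using $\sqrt{a+b}=\Theta(\sqrt a+\sqrt b)$, $\omega(2\omega+1)=\Theta(\omega(\omega+1))$, and $\sqrt{8c\delta}=\Theta(\sqrt{c\delta})$, the numerical constants are absorbed, and substituting the resulting expression for $L+\sqrt\eta$ into the displayed bound yields (up to absorbed constants) the first displayed $\cO(\cdot)$ estimate of the corollary.

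Finally, for the round complexity: since the hypothesis on $\varepsilon^2$ says precisely that $\varepsilon^2$ exceeds the neighborhood radius $N:=\tfrac1A\parens{8c\delta+\sqrt{\nicefrac{8c\delta}{G}}}\zeta^2$, one can take $T$ large enough that $\tfrac{2\delta^0(L+\sqrt\eta)}{AT}\le\varepsilon^2-N$; holding $\delta^0$ and $A$ fixed and treating $\varepsilon^2-N$ as $\Theta(\varepsilon^2)$ (the standard convention), this needs $T=\Theta\parens{\tfrac{L+\sqrt\eta}{\varepsilon^2}}$ rounds. Plugging in the simplified $L+\sqrt\eta$ and using $\sqrt{\omega(\omega+1)}=\cO(\omega+1)$ and $\sqrt{\nicefrac{m}{b^2}}=\nicefrac{\sqrt m}{b}$, and folding lower-order terms into the leading $1$, gives the claimed $\cO\parens{\varepsilon^{-2}\parens{1+\parens{\omega+\nicefrac{\sqrt m}{b}}\parens{\sqrt{\nicefrac1G}+\sqrt{c\delta}}}}$. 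The derivation is entirely mechanical; the only thing requiring a bit of care — and thus the ``main obstacle'' such as it is — is being consistent about which quantities ($L$, $L_{\pm}$, $\cL_{\pm}$, $c$, $\delta^0$, the $\tfrac1A$ factor, the accuracy gap) are treated as fixed when forming the $\cO$, since the corollary's statement suppresses all of them.
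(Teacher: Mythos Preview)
Your proposal is correct and matches the paper's (implicit) approach: the paper states this corollary immediately after the proof of Theorem~\ref{thm:main_result_dp} without a separate proof, so the intended argument is exactly the direct substitution-and-simplification you describe. Your observation that the corollary's $\eta$ slightly overestimates the theorem's $\eta$ (replacing $\tfrac{1-p}{b}$ by $\tfrac{1}{b}$), and hence yields an admissible stepsize, is the only point that requires any care, and you handle it correctly.
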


\clearpage

\section{MISSING PROOFS FOR \algname{Byz-EF21} AND \algname{Byz-EF21-BC}}

For the sake of clarity, we again adopt the notation $R^t \eqdef \norm{x^{t+1} - x^t}^2$, $H_i^t \eqdef \norm{g_i^t - \nabla f_i(x^t)}^2$, $H^t \eqdef \frac{1}{G}\sum_{i\in \cG} H_i^t$, and additionally denote $G^t \eqdef \norm{x^t-w^t}^2$.

\subsection{Technical Lemmas}

\begin{lemma}[Descent Lemma]\label{lemma:descent_byz_ef21}
    Suppose that Assumptions \ref{as:smoothness} and \ref{as:bounded_heterogeneity} hold. Then
    \begin{eqnarray*}
        \EE{f(x^{t + 1})} &\leq& \EE{f(x^t)} - \frac{\gamma\kappa}{2} \EE{\norm{\nabla f(x^{t})}^{2}}
        - \left(\frac{1}{2\gamma} - \frac{L}{2}\right)
        \EE{R^t}\\
        &&\qquad+ \frac{\gamma}{2} \parens{1 + \sqrt{8c\delta}}^2 \EE{H^t}
        + 4\gamma c\delta \left(1+\frac{1}{\sqrt{8c\delta}} \right)\zeta^2,
    \end{eqnarray*}
    where $\kappa = 1 - 8 B c\delta \left(1+\frac{1}{\sqrt{8c\delta}} \right)$.
\end{lemma}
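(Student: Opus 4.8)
The plan is to mimic the structure of the proof of Lemma \ref{lemma:byz_BVMNS}, since Byz-EF21 shares the same high-level update $x^{t+1} = x^t - \gamma g^t$ with $g^t = \texttt{ARAgg}(g_1^{t+1},\dots,g_n^{t+1})$. First I would invoke Lemma \ref{lemma:page} (the PAGE-type descent inequality for $L$-smooth $f$) and take expectations, yielding
\[
\EE{f(x^{t+1})} \leq \EE{f(x^t)} - \frac{\gamma}{2}\EE{\norm{\nabla f(x^t)}^2} - \left(\frac{1}{2\gamma}-\frac{L}{2}\right)\EE{R^t} + \frac{\gamma}{2}\EE{\norm{g^t - \nabla f(x^t)}^2}.
\]
The whole task then reduces to bounding $\EE{\norm{g^t-\nabla f(x^t)}^2}$ in terms of $\EE{H^t}$, $\EE{\norm{\nabla f(x^t)}^2}$ and $\zeta^2$. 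Here the specific choice $s = 1/\sqrt{8c\delta}$ already appears in the statement, so I would apply Young's inequality \eqref{eq:young} with this $s$ rather than a free parameter: $\norm{g^t-\nabla f(x^t)}^2 \leq (1+s)\norm{g^t - \overline{g}^t}^2 + (1+s^{-1})\norm{\overline{g}^t - \nabla f(x^t)}^2$, where $\overline{g}^t = \frac{1}{G}\sum_{i\in\cG}g_i^t$.

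The key point distinguishing this proof from the Byz-VR-MARINA 2.0 one is that in Byz-EF21 the local estimators $g_i^t$ are \emph{deterministic} given the past (no compression randomness remains once $g_i^t$ is formed, and there is no mini-batch noise since full gradients are computed), so in fact $\overline{g}^t = \frac{1}{G}\sum_{i\in\cG}g_i^t$ need not equal $\nabla f(x^t)$ — but the bias is controlled directly by $H^t$: by Jensen, $\norm{\overline{g}^t - \nabla f(x^t)}^2 = \norm{\frac1G\sum_{i\in\cG}(g_i^t-\nabla f_i(x^t))}^2 \leq \frac1G\sum_{i\in\cG}\norm{g_i^t-\nabla f_i(x^t)}^2 = H^t$. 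For the first term I would apply the $(\delta,c)$-Robust Aggregator property \eqref{eq:RAgg_def} together with Lemma \ref{lemma:dissimilarity} (which only needs Assumption \ref{as:bounded_heterogeneity}): $\norm{g^t - \overline{g}^t}^2 \leq c\delta \cdot \frac{1}{G(G-1)}\sum_{i,l\in\cG}\norm{g_i^t-g_l^t}^2 \leq c\delta(8H^t + 8B\norm{\nabla f(x^t)}^2 + 8\zeta^2)$. Combining these two bounds gives
\[
\EE{\norm{g^t-\nabla f(x^t)}^2} \leq \bigl(8c\delta(1+s) + (1+s^{-1})\bigr)\EE{H^t} + 8Bc\delta(1+s)\EE{\norm{\nabla f(x^t)}^2} + 8c\delta(1+s)\zeta^2.
\]
Plugging $s = 1/\sqrt{8c\delta}$ makes $8c\delta(1+s) + (1+s^{-1}) = 8c\delta + \sqrt{8c\delta} + 1 + \sqrt{8c\delta} = (1+\sqrt{8c\delta})^2$, which is exactly the coefficient of $\EE{H^t}$ in the claimed bound; the $B$-term becomes $8Bc\delta(1+1/\sqrt{8c\delta})$, absorbed into $\kappa = 1 - 8Bc\delta(1+1/\sqrt{8c\delta})$ via the $-\frac{\gamma}{2}\EE{\norm{\nabla f(x^t)}^2}$ already present; and the $\zeta^2$-term becomes $8c\delta(1+1/\sqrt{8c\delta})\zeta^2$, giving $4\gamma c\delta(1+1/\sqrt{8c\delta})\zeta^2$ after the factor $\gamma/2$. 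Substituting everything back into the Lemma \ref{lemma:page} inequality and collecting the $\norm{\nabla f(x^t)}^2$ terms yields the stated result.

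I do not anticipate a genuine obstacle here — the proof is essentially a specialization of Lemma \ref{lemma:byz_BVMNS} with the parameter $s$ fixed — but the one place requiring care is bookkeeping the constant: verifying that $8c\delta(1+s)+(1+s^{-1})$ collapses to $(1+\sqrt{8c\delta})^2$ when $s=1/\sqrt{8c\delta}$, and checking that the $\zeta^2$ and $B$ coefficients are written in the asymmetric form $(1+1/\sqrt{8c\delta})$ rather than $(1+s)$. I would also note explicitly that no local Hessian-variance assumption (Assumption \ref{as:hessian_variance_local}) is needed for this descent lemma, since Byz-EF21 uses full gradients — consistent with the hypotheses listed (only Assumptions \ref{as:smoothness} and \ref{as:bounded_heterogeneity}). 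The recursion controlling $\EE{H^{t+1}}$ in terms of $\EE{R^t}$ and $\EE{G^t}$ (using the contractivity of $\cC_i^D$, $\cC^P$ and Assumption \ref{as:hessian_variance}) is a separate lemma and is not part of this statement.
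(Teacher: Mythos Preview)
Your proposal is correct and follows essentially the same approach as the paper: the paper simply invokes Lemma~\ref{lemma:byz_BVMNS} (which already packages the Lemma~\ref{lemma:page} step, Young's inequality, the robust-aggregator bound, and Lemma~\ref{lemma:dissimilarity}), then bounds $\EE{\norm{\overline{g}^t-\nabla f(x^t)}^2}\leq \EE{H^t}$ via Jensen and substitutes $s=1/\sqrt{8c\delta}$ to turn $8c\delta(1+s)+(1+s^{-1})$ into $(1+\sqrt{8c\delta})^2$. You unpack the ingredients of Lemma~\ref{lemma:byz_BVMNS} explicitly instead of citing it, but the argument is the same.
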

\begin{proof}

    First, from Lemma \ref{lemma:byz_BVMNS}, for any $s>0$ we have
    \begin{eqnarray*}
        \EE{f(x^{t + 1})} &\leq& \EE{f(x^t)} - \frac{\gamma\kappa}{2} \EE{\norm{\nabla f(x^{t})}^{2}}
        - \left(\frac{1}{2\gamma} - \frac{L}{2}\right)
        \EE{R^t}
        + 4\gamma c\delta (1+s) \EE{H^t}\\
        &&\qquad+ \frac{\gamma}{2}(1 + s^{-1})\EE{\norm{\overline{g}^t - \nabla f(x^t)}^2}
        + 4\gamma c\delta (1+s)\zeta^2.
    \end{eqnarray*}
    Moreover, since by Jensen's inequality
    \begin{eqnarray*}
        \EE{\norm{\overline{g}^t - \nabla f(x^t)}^2} &=& \EE{\norm{\frac{1}{G}\sum_{i\in \cG} \parens{g_i^t - \nabla f_i(x^t)}}^2} \\
        &\stackrel{\eqref{eq:jensen}}{\leq}& \frac{1}{G}\sum \limits_{i\in \cG} \EE{\norm{g_i^t - \nabla f_i(x^t)}^2}
        = \EE{H^t},
    \end{eqnarray*}
    we get
    \begin{eqnarray*}
        \EE{f(x^{t + 1})}
        &\leq& \EE{f(x^t)} - \frac{\gamma\kappa}{2} \EE{\norm{\nabla f(x^{t})}^{2}}
        - \left(\frac{1}{2\gamma} - \frac{L}{2}\right) \EE{R^t}\\
        &&\qquad+ \frac{\gamma}{2} \parens{8(1+s)c\delta + 1 + s^{-1}} \EE{H^t}
        + 4\gamma c\delta (1+s)\zeta^2.
    \end{eqnarray*}
    Choosing $s = \frac{1}{\sqrt{8c\delta}}$ to minimize $8(1+s)c\delta + 1 + s^{-1}$ gives the result.
\end{proof}

\begin{lemma}\label{lemma:gf_rec_byzef21bc}
    Let Assumptions \ref{as:smoothness} and \ref{as:hessian_variance} hold. Then
    \begin{eqnarray}\label{eq:gf_rec_byzef21bc}
        \EE{H^{t+1}}
        &\leq& \left(1-\frac{\alpha_D}{4}\right) \EE{H^t}
        + \frac{8}{\alpha_D} \parens{L_{\pm}^2 + L^2} \EE{R^t} + \frac{10}{\alpha_D} \parens{L_{\pm}^2 + L^2} \EE{G^{t+1}}.
    \end{eqnarray}
\end{lemma}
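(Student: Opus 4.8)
The plan is to unfold one round of the \algname{Byz-EF21-BC} estimator update, apply the contractive property of $\cC_i^D$ to obtain a recursion around the \emph{shifted} target $\nabla f_i(w^{t+1})$, and then convert it into a recursion around $\nabla f_i(x^{t+1})$ (the quantity appearing in $H^{t+1}$) by absorbing the ``model drift'' vectors $\nabla f_i(w^{t+1})-\nabla f_i(x^{t+1})$ and $\nabla f_i(w^{t+1})-\nabla f_i(x^t)$ via Young's inequality and the smoothness assumptions. Concretely, since $g_i^{t+1} = g_i^t + \cC_i^D\!\parens{\nabla f_i(w^{t+1}) - g_i^t}$, we have $g_i^{t+1} - \nabla f_i(w^{t+1}) = \cC_i^D(y_i) - y_i$ with $y_i \eqdef \nabla f_i(w^{t+1}) - g_i^t$. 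Conditioning on the iterates and the downlink message at round $t$ (so that $y_i$ is fixed), Definition~\ref{def:contractive_compr} and the tower property \eqref{eq:tower} give
\[
\EE{\norm{g_i^{t+1} - \nabla f_i(w^{t+1})}^2} \leq (1-\alpha_D)\,\EE{\norm{\nabla f_i(w^{t+1}) - g_i^t}^2}.
\]

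Next I would peel off the drift terms. Applying \eqref{eq:young} with a parameter $\theta_1 \sim \alpha_D$ to $g_i^{t+1}-\nabla f_i(x^{t+1}) = \parens{g_i^{t+1}-\nabla f_i(w^{t+1})} + \parens{\nabla f_i(w^{t+1})-\nabla f_i(x^{t+1})}$, then \eqref{eq:young} again with a parameter $\theta_2 \sim \alpha_D$ to $\nabla f_i(w^{t+1}) - g_i^t = \parens{\nabla f_i(x^t) - g_i^t} + \parens{\nabla f_i(w^{t+1}) - \nabla f_i(x^t)}$, and averaging over $i \in \cG$, I obtain
\[
\EE{H^{t+1}} \leq (1+\theta_1)(1-\alpha_D)(1+\theta_2)\,\EE{H^t} + \parens{\text{gradient-difference terms}}.
\]
Every gradient-difference term has the form $\tfrac{1}{G}\sum_{i\in\cG}\EE{\norm{\nabla f_i(u)-\nabla f_i(v)}^2}$, which I bound by $(L_{\pm}^2+L^2)\EE{\norm{u-v}^2}$ by combining Assumption~\ref{as:hessian_variance} with the $L$-smoothness of $f$ from Assumption~\ref{as:smoothness}. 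The relevant pairs are $(u,v)=(w^{t+1},x^{t+1})$, contributing a $G^{t+1}$ term, and $(u,v)=(w^{t+1},x^t)$, for which I additionally use $\norm{w^{t+1}-x^t}^2 \leq 2\norm{w^{t+1}-x^{t+1}}^2 + 2\norm{x^{t+1}-x^t}^2 = 2G^{t+1}+2R^t$.

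Finally, I would fix the Young parameters (e.g. $\theta_1=\alpha_D/4$ and $\theta_2=\alpha_D/2$) and use the elementary inequalities \eqref{eq:ineq1}--\eqref{eq:ineq2} together with $\alpha_D \leq 1$ to verify that $(1+\theta_1)(1-\alpha_D)(1+\theta_2) \leq 1-\tfrac{\alpha_D}{4}$ and that the accumulated coefficients of $R^t$ and $G^{t+1}$ stay at most $\tfrac{8}{\alpha_D}(L_{\pm}^2+L^2)$ and $\tfrac{10}{\alpha_D}(L_{\pm}^2+L^2)$ respectively, which yields \eqref{eq:gf_rec_byzef21bc}. The main obstacle is precisely this last bookkeeping step: the two Young parameters must be chosen asymmetrically so that the $H^t$-coefficient contracts by exactly $1-\alpha_D/4$ while the perturbation coefficients retain their $1/\alpha_D$ scaling, and it is the bound $(1-\alpha)(1+2/\alpha)\leq 2/\alpha$ from \eqref{eq:ineq2} that prevents the $1/\alpha_D$ factors from blowing up. For \algname{Byz-EF21} the term $G^{t+1}$ vanishes identically (there $w^{t+1}=x^{t+1}$), so the argument collapses to a single application of \eqref{eq:young} after the contraction step.
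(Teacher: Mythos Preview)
Your proposal is correct and follows essentially the same approach as the paper: the same two-step decomposition via $\nabla f_i(w^{t+1})$, the contractive bound on $\cC_i^D$, Assumptions~\ref{as:smoothness}--\ref{as:hessian_variance} to convert gradient differences into $(L_\pm^2+L^2)\norm{\cdot}^2$, and the split $\norm{w^{t+1}-x^t}^2\le 2G^{t+1}+2R^t$. The only cosmetic difference is that the paper picks the Young parameters in the opposite order ($\alpha_D/2$ for the outer split and $\alpha_D/4$ for the inner one), but both orderings reduce to $1-\alpha_D/4$ on $H^t$ and to $8/\alpha_D$, $10/\alpha_D$ on $R^t$, $G^{t+1}$ after applying \eqref{eq:ineq1}--\eqref{eq:ineq2} and $\alpha_D\le 1$.
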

\begin{proof}
    First, the update rule of the algorithm implies that
    \begin{eqnarray}
    \label{eq:gkef21pd}
        \EE{H^{t+1}}
        &=& \frac{1}{G}\sum\limits_{i\in \cG} \EE{\norm{g_i^{t+1} - \nabla f_i(x^{t+1})}^2} \nonumber \\
        &=& \frac{1}{G}\sum\limits_{i\in \cG} \EE{\norm{ g_i^t + \mathcal{C}_i^D(\nabla f_i(w^{t+1}) - g_i^t) - \nabla f_i(x^{t+1})}^2} \nonumber \\
        &\overset{\eqref{eq:young}}{\leq}& \left(1+\frac{\alpha_D}{2}\right) \frac{1}{G}\sum\limits_{i\in \cG} \EE{\norm{ \mathcal{C}_i^D(\nabla f_i(w^{t+1}) - g_i^t) - (\nabla f_i(w^{t+1}) - g_i^t)}^2} \nonumber \\
        && + \left(1+\frac{2}{\alpha_D}\right) \frac{1}{G}\sum\limits_{i\in \cG} \EE{\norm{ \nabla f_i(w^{t+1}) - \nabla f_i(x^{t+1})}^2} \nonumber \\
        &\overset{\eqref{as:hessian_variance}}{\leq}& \left(1+\frac{\alpha_D}{2}\right)\left(1-\alpha_D\right) \frac{1}{G}\sum\limits_{i\in \cG} \EE{\norm{g_i^t - \nabla f_i(w^{t+1})}^2} \nonumber \\
        && + \left(1+\frac{2}{\alpha_D}\right) \parens{L_{\pm}^2 + L^2} \EE{\norm{w^{t+1} - x^{t+1}}^{2}} \nonumber \\
        &\overset{\eqref{eq:ineq1}}{\leq}& \left(1-\frac{\alpha_D}{2}\right) \frac{1}{G}\sum\limits_{i\in \cG} \EE{\norm{g_i^t - \nabla f_i(w^{t+1})}^2}
        + \left(1+\frac{2}{\alpha_D}\right) \parens{L_{\pm}^2 + L^2} \EE{G^{t+1}}.
    \end{eqnarray}
    The first term on the right hand side of \eqref{eq:gkef21pd} can be bounded as
    \begin{align*}
        \frac{1}{G}&\sum_{i\in \cG} \EE{\norm{g_i^t - \nabla f_i(w^{t+1})}^2} \\
        &\overset{\eqref{eq:young}}{\leq} \left(1+\frac{\alpha_D}{4}\right) \frac{1}{G}\sum\limits_{i\in \cG} \EE{\norm{ g_i^t - \nabla f_i(x^t) }^2} \\
        &\quad + \left(1+\frac{4}{\alpha_D}\right) \frac{1}{G}\sum\limits_{i\in \cG} \EE{\norm{ \nabla f_i(x^t) - \nabla f_i(w^{t+1}) }^2} \\
        &\overset{\eqref{as:hessian_variance}}{\leq} \left(1+\frac{\alpha_D}{4}\right) \frac{1}{G}\sum\limits_{i\in \cG} \EE{\norm{ g_i^t - \nabla f_i(x^t) }^2}
        + \left(1+\frac{4}{\alpha_D}\right) \parens{L_{\pm}^2 + L^2} \EE{\norm{ x^t-w^{t+1} }^2} \\
        &\overset{\eqref{eq:young}}{\leq} \left(1+\frac{\alpha_D}{4}\right) \frac{1}{G}\sum\limits_{i\in \cG} \EE{\norm{ g_i^t - \nabla f_i(x^t) }^2}
        + 2 \left(1+\frac{4}{\alpha_D}\right) \parens{L_{\pm}^2 + L^2} \EE{\norm{ x^t-x^{t+1} }^2} \\
        &\quad+ 2 \left(1+\frac{4}{\alpha_D}\right) \parens{L_{\pm}^2 + L^2} \EE{\norm{ x^{t+1}-w^{t+1} }^2} \\
        &= \left(1+\frac{\alpha_D}{4}\right) \EE{H^t}
        + 2 \left(1+\frac{4}{\alpha_D}\right) \parens{L_{\pm}^2 + L^2} \EE{R^t}\nonumber \\
        &\quad + 2 \left(1+\frac{4}{\alpha_D}\right) \parens{L_{\pm}^2 + L^2} \EE{G^{t+1}}.
    \end{align*}
    Applying the bound above in \eqref{eq:gkef21pd}, we obtain
    \begin{eqnarray*}
        \EE{H^{t+1}}
        &\overset{\eqref{eq:gkef21pd}}{\leq}& \left(1-\frac{\alpha_D}{2}\right) \frac{1}{G}\sum\limits_{i\in \cG} \EE{\norm{g_i^t - \nabla f_i(w^{t+1})}^2}
        + \left(1+\frac{2}{\alpha_D}\right) \parens{L_{\pm}^2 + L^2} \EE{G^{t+1}} \\
        &\leq& \left(1-\frac{\alpha_D}{2}\right) \left(1+\frac{\alpha_D}{4}\right) \EE{H^t} 
        + 2 \left(1-\frac{\alpha_D}{2}\right) \left(1+\frac{4}{\alpha_D}\right) \parens{L_{\pm}^2 + L^2} \EE{R^t} \\
        &&+ 2 \left(1-\frac{\alpha_D}{2}\right) \left(1+\frac{4}{\alpha_D}\right) \parens{L_{\pm}^2 + L^2} \EE{G^{t+1}} \\
        && + \left(1+\frac{2}{\alpha_D}\right) \parens{L_{\pm}^2 + L^2} \EE{G^{t+1}} \\
        &\overset{\eqref{eq:ineq1}}{\leq}& \left(1-\frac{\alpha_D}{4}\right) \EE{H^t}
        + 2 \left(\frac{4}{\alpha_D} - \frac{\alpha_D}{2} - 1\right) \parens{L_{\pm}^2 + L^2} \EE{R^t} \\
        &&+ \parens{
        2 \parens{\frac{4}{\alpha_D} - \frac{\alpha_D}{2} - 1} + 1 + \frac{2}{\alpha_D}} \parens{L_{\pm}^2 + L^2} \EE{G^{t+1}} \\
        &\leq& \left(1-\frac{\alpha_D}{4}\right) \EE{H^t}
        + \frac{8}{\alpha_D} \parens{L_{\pm}^2 + L^2} \EE{R^t}
        + \frac{10}{\alpha_D} \parens{L_{\pm}^2 + L^2} \EE{G^{t+1}}
    \end{eqnarray*}
    as required.
\end{proof}

\begin{lemma}\label{lemma:g_byzef21bc}
    Let $\cC^P$ be a contractive compressor. Then
    \begin{eqnarray}\label{eq:g_byzef21bc}
        \EE{G^{t+1}}
        \leq \left(1 - \frac{\alpha_P}{2}\right) \EE{G^t}
        + \frac{2}{\alpha_P} \EE{R^t}.
    \end{eqnarray}
\end{lemma}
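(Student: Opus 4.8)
\textbf{Proof proposal for Lemma~\ref{lemma:g_byzef21bc}.} The plan is to exploit the fact that the error $x^{t+1}-w^{t+1}$ is exactly the compression residual of $\cC^P$ applied to $x^{t+1}-w^t$, and then absorb the step $\norm{x^{t+1}-x^t}^2$ via Young's inequality. First I would use the server-side update rules $s^{t+1} = \cC^P(x^{t+1}-w^t)$ and $w^{t+1} = w^t + s^{t+1}$ to write
\[
    x^{t+1} - w^{t+1} = (x^{t+1}-w^t) - \cC^P(x^{t+1}-w^t).
\]
Conditioning on everything generated up to and including $x^{t+1}$ and $w^t$ (both of which are fixed before $\cC^P$ is sampled at iteration $t$), and denoting by $\EEC{\cdot}$ the expectation over the randomness of $\cC^P$, the contractive property~\eqref{eq:contractive_compr} gives
\[
    \EEC{G^{t+1}} = \EEC{\norm{(x^{t+1}-w^t) - \cC^P(x^{t+1}-w^t)}^2} \leq (1-\alpha_P)\norm{x^{t+1}-w^t}^2.
\]

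Next I would split $x^{t+1}-w^t = (x^t - w^t) + (x^{t+1}-x^t)$ and apply Young's inequality~\eqref{eq:young} with parameter $s = \alpha_P/2$:
\[
    \norm{x^{t+1}-w^t}^2 \leq \left(1+\tfrac{\alpha_P}{2}\right)\norm{x^t-w^t}^2 + \left(1+\tfrac{2}{\alpha_P}\right)\norm{x^{t+1}-x^t}^2 = \left(1+\tfrac{\alpha_P}{2}\right)G^t + \left(1+\tfrac{2}{\alpha_P}\right)R^t.
\]
Multiplying through by $(1-\alpha_P)$ and invoking the two elementary bounds $(1-\alpha_P)(1+\tfrac{\alpha_P}{2}) \leq 1-\tfrac{\alpha_P}{2}$ from~\eqref{eq:ineq1} and $(1-\alpha_P)(1+\tfrac{2}{\alpha_P}) \leq \tfrac{2}{\alpha_P}$ from~\eqref{eq:ineq2} yields
\[
    \EEC{G^{t+1}} \leq \left(1-\tfrac{\alpha_P}{2}\right)G^t + \tfrac{2}{\alpha_P}R^t.
\]
Taking the full expectation (tower property~\eqref{eq:tower}) over the remaining randomness gives the claim.

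There is essentially no hard obstacle here: this is a standard \algname{EF21}-type contraction inequality, and the only point requiring a little care is the order of conditioning — making sure that when applying~\eqref{eq:contractive_compr} to $\cC^P(x^{t+1}-w^t)$, both $x^{t+1}$ (computed from the previously aggregated $g^t$) and $w^t$ are already determined, so that $\EEC{\cdot}$ acts only on the fresh randomness of the downlink compressor. Everything else is routine application of~\eqref{eq:young}, \eqref{eq:ineq1} and~\eqref{eq:ineq2} with the convenient choice $s=\alpha_P/2$.
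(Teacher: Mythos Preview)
Your proof is correct and follows essentially the same route as the paper's own argument: use the update rule to identify $x^{t+1}-w^{t+1}$ as the compression residual, apply the contractive property~\eqref{eq:contractive_compr}, split $x^{t+1}-w^t$ via Young's inequality with $s=\alpha_P/2$, and simplify with~\eqref{eq:ineq1} and~\eqref{eq:ineq2}. The only cosmetic difference is that you make the conditioning step explicit before taking the full expectation, whereas the paper works directly in full expectation throughout.
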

\begin{proof}
    Using the update rule of $w^t$, we obtain
    \begin{eqnarray*}
        \EE{G^{t+1}}
        &=& \EE{\norm{w^{t+1} - x^{t+1}}^2} \\
        &=& \EE{\norm{w^{t} + \cC^{P}\left(x^{t+1} - w^t\right) - x^{t+1}}^2} \\
        &\leq& (1-\alpha_P) \EE{\norm{x^{t+1} - w^t}^2} \\
        &\stackrel{\eqref{eq:young}}{\leq}& (1-\alpha_P) \left(1 + \frac{\alpha_P}{2}\right) \EE{\norm{x^{t} - w^t}^2}
        + (1-\alpha_P) \left(1 + \frac{2}{\alpha_P}\right) \EE{\norm{x^{t+1} - x^t}^2} \\
        &\stackrel{\eqref{eq:ineq1}, \eqref{eq:ineq2}}{\leq}& \left(1-\frac{\alpha_P}{2}\right) \EE{\norm{x^{t} - w^t}^2}
        + \frac{2}{\alpha_P} \EE{\norm{x^{t+1} - x^t}^2}.
    \end{eqnarray*}
\end{proof}

\subsection{Proof of Theorem~\ref{thm:Byz_EF21_BC}}

We can now prove the main result.

\BYZEFTW*
\begin{proof}
    We start with Lemma \ref{lemma:descent_byz_ef21}:
    \begin{eqnarray*}
        \EE{f(x^{t + 1})} &\leq& \EE{f(x^t)} - \frac{\gamma \kappa}{2} \EE{\norm{\nabla f(x^{t})}^{2}}
        - \left(\frac{1}{2\gamma} - \frac{L}{2}\right) \EE{R^t}\\
        &&\qquad+ \frac{\gamma}{2} \parens{1 + \sqrt{8c\delta}}^2 \EE{H^t}
        + 4\gamma c\delta \left(1+\frac{1}{\sqrt{8c\delta}} \right)\zeta^2.
    \end{eqnarray*}
    Adding a $\frac{2\gamma}{\alpha_D} \parens{1 + \sqrt{8c\delta}}^2$ multiple of \eqref{eq:gf_rec_byzef21bc} gives
    \begin{align*}
        \EE{f(x^{t + 1})} &+ \frac{2\gamma}{\alpha_D} \parens{1 + \sqrt{8c\delta}}^2 \EE{H^{t+1}}
        \leq \EE{f(x^t)} - \frac{\gamma \kappa}{2} \EE{\norm{\nabla f(x^{t})}^{2}} \\
        &\qquad - \left(\frac{1}{2\gamma} - \frac{L}{2}\right) \EE{R^t}
        + \frac{\gamma}{2} \parens{1 + \sqrt{8c\delta}}^2 \EE{H^t}
        + 4\gamma c\delta \left(1+\frac{1}{\sqrt{8c\delta}} \right)\zeta^2 \\
        &\qquad + \frac{2\gamma}{\alpha_D} \parens{1 + \sqrt{8c\delta}}^2 \left(1-\frac{\alpha_D}{4}\right) \EE{H^t} \\
        &\qquad + \underbrace{\frac{2\gamma}{\alpha_D} \parens{1 + \sqrt{8c\delta}}^2 \frac{8}{\alpha_D} \parens{L_{\pm}^2 + L^2}}_{\frac{\gamma \nu}{2}} \EE{R^t} \\
        &\qquad + \underbrace{\frac{2\gamma}{\alpha_D} \parens{1 + \sqrt{8c\delta}}^2 \frac{10}{\alpha_D} \parens{L_{\pm}^2 + L^2}}_{\frac{5 \gamma \nu}{8}} \EE{G^{t+1}} \\
        &= \EE{f(x^t)}
        + \frac{2\gamma}{\alpha_D} \parens{1 + \sqrt{8c\delta}}^2 \EE{H^t} - \frac{\gamma \kappa}{2} \EE{\norm{\nabla f(x^{t})}^{2}} \\
        &\qquad - \left(\frac{1}{2\gamma} - \frac{L}{2} - \frac{\gamma \nu}{2}\right) \EE{R^t}
        + 4\gamma c\delta \left(1+\frac{1}{\sqrt{8c\delta}} \right)\zeta^2 
        + \frac{5\gamma \nu}{8} \EE{G^{t+1}},
    \end{align*}
    where $\nu \eqdef \frac{32}{\alpha_D^2} \parens{1 + \sqrt{8c\delta}}^2 \parens{L_{\pm}^2 + L^2}$.
    Next, adding a $\frac{5\gamma \nu}{4\alpha_P}$ multiple of \eqref{eq:g_byzef21bc}, we obtain
    \begin{align*}
        \EE{f(x^{t + 1})} + \frac{2\gamma}{\alpha_D} &\parens{1 + \sqrt{8c\delta}}^2 \EE{H^{t+1}}
        + \frac{5\gamma \nu}{4\alpha_P}\EE{G^{t+1}} \\
        &\leq \EE{f(x^t)}
        + \frac{2\gamma}{\alpha} \parens{1 + \sqrt{8c\delta}}^2 \EE{H^t}
        - \frac{\gamma \kappa}{2} \EE{\norm{\nabla f(x^{t})}^{2}} \\
        &\qquad - \left(\frac{1}{2\gamma} - \frac{L}{2} - \frac{\gamma \nu}{2}\right) \EE{R^t}
        + 4\gamma c\delta \left(1+\frac{1}{\sqrt{8c\delta}} \right)\zeta^2 \\
        &\qquad + \frac{5\gamma \nu}{8} \EE{G^{t+1}}
        + \frac{5\gamma \nu}{4\alpha_P}\left(1 - \frac{\alpha_P}{2}\right) \EE{G^t}
        + \frac{5\gamma \nu}{4\alpha_P}\frac{2}{\alpha_P} \EE{R^t} \\
        &= \EE{f(x^t)}
        + \frac{2\gamma}{\alpha} \parens{1 + \sqrt{8c\delta}}^2 \EE{H^t}
        - \frac{\gamma \kappa}{2} \EE{\norm{\nabla f(x^{t})}^{2}} \\
        &\qquad - \left(\frac{1}{2\gamma} - \frac{L}{2} - \frac{\gamma \eta}{2} \right) \EE{R^t}
        + 4\gamma c\delta \left(1+\frac{1}{\sqrt{8c\delta}} \right)\zeta^2 
        + \frac{5\gamma \nu}{8} \EE{G^{t+1}} \\
        &\qquad + \frac{5\gamma \nu}{4\alpha_P}\left(1 - \frac{\alpha_P}{2}\right) \EE{G^t},
    \end{align*}
    where $\eta \eqdef \nu + \frac{5\nu}{\alpha_P^2}$.
    Rearranging the above inequality, we get
    \begin{align*}
        \EE{f(x^{t + 1})} + \frac{2\gamma}{\alpha_D}& \parens{1 + \sqrt{8c\delta}}^2 \EE{H^{t+1}}
        + \frac{5\gamma \nu}{4\alpha_P}\left(1 - \frac{\alpha_P}{2}\right) \EE{G^{t+1}} \\
        &\leq \EE{f(x^t)}
        + \frac{2\gamma}{\alpha} \parens{1 + \sqrt{8c\delta}}^2 \EE{H^t}
        - \frac{\gamma \kappa}{2} \EE{\norm{\nabla f(x^{t})}^{2}} \\
        &\qquad - \left(\frac{1}{2\gamma} - \frac{L}{2} - \frac{\gamma \eta}{2} \right) \EE{R^t}
        + \frac{5\gamma \nu}{4\alpha_P}\left(1 - \frac{\alpha_P}{2}\right) \EE{G^t}
        + 4\gamma c\delta \left(1+\frac{1}{\sqrt{8c\delta}} \right)\zeta^2 \\
        &\leq \EE{f(x^t)}
        + \frac{2\gamma}{\alpha} \parens{1 + \sqrt{8c\delta}}^2 \EE{H^t}
        - \frac{\gamma \kappa}{2} \EE{\norm{\nabla f(x^{t})}^{2}} \\
        &\qquad + \frac{5\gamma \nu}{4\alpha_P}\left(1 - \frac{\alpha_P}{2}\right) \EE{G^t}
        + 4\gamma c\delta \left(1+\frac{1}{\sqrt{8c\delta}} \right)\zeta^2,
    \end{align*}
    where the last inequality follows from the fact that by Lemma \ref{lemma:step_lemma} and our assumption on the stepsize we have $\frac{1}{2\gamma} - \frac{L}{2} - \frac{\gamma \eta}{2} > 0$. Hence, for $\Psi^t \eqdef f(x^{t}) - f^* + \frac{2\gamma}{\alpha_D} \parens{1 + \sqrt{8c\delta}}^2 H^{t} + \frac{5\gamma \nu}{4\alpha_P}\left(1 - \frac{\alpha_P}{2}\right) G^{t}$, we obtain
    \begin{align*}
        \EE{\Psi^{t + 1}}
        &\leq \EE{\Psi^t}
        - \frac{\gamma \kappa}{2} \EE{\norm{\nabla f(x^{t})}^{2}}
        + 4\gamma c\delta \left(1+\frac{1}{\sqrt{8c\delta}} \right)\zeta^2,
    \end{align*}
    Summing the terms for $t=0,\ldots,T-1$ gives
    \begin{align*}
        \EE{\Psi^{T}}
        &\leq \EE{\Psi^0}
        - \frac{\gamma \kappa}{2} \sum_{t=0}^{T-1} \EE{\norm{\nabla f(x^{t})}^{2}}
        + 4\gamma c\delta \left(1+\frac{1}{\sqrt{8c\delta}} \right) T \zeta^2.
    \end{align*}
    Since from our assumption on $\delta$ it follows that $\kappa > 0$, rearranging the terms and dividing by $T$, we obtain
    \begin{align*}
        \sum_{t=0}^{T-1} \frac{1}{T} \EE{\norm{\nabla f(x^{t})}^{2}}
        &\leq \frac{2}{\gamma \kappa T} \EE{\Psi^0} - \frac{2}{\gamma \kappa T} \EE{\Psi^{T}}
        + \frac{8 c\delta}{\kappa} \left(1+\frac{1}{\sqrt{8c\delta}} \right) \zeta^2 \\
        &\leq \frac{2 \Psi^0}{\gamma \kappa T}
        + \frac{8 c\delta}{\kappa} \left(1+\frac{1}{\sqrt{8c\delta}} \right) \zeta^2,
    \end{align*}
    which proves the result.
\end{proof}

\begin{corollary}\label{cor:main_result_BC_general}
    Let the assumptions of Theorem \ref{thm:Byz_EF21_BC} hold and $\gamma = \parens{L + \sqrt{\eta}}^{-1}$, where $\eta = \frac{32}{\alpha_D^2} \parens{1 + \frac{5}{\alpha_P^2}} \parens{1 + \sqrt{8c\delta}}^2 \parens{L_{\pm}^2 + L^2}$. Then for all $T \geq 0$, $\EE{\norm{\nabla f(\hx^T)}^2}$ with $\hx^T$ chosen uniformly at random from $x^0, x^1, \ldots, x^{T-1}$ is of order
    \begin{align*}
        \cO\parens{\frac{\parens{L + \frac{1+\sqrt{c\delta}}{\alpha_D} \sqrt{\parens{1 + \frac{1}{\alpha_P^2}} \parens{L_{\pm}^2 + L^2}}}\delta^0}{T \parens{1 - \parens{c\delta+\sqrt{c\delta}}B}}
        + \frac{\left(c\delta + \sqrt{c\delta} \right) \zeta^2}{1 - \parens{c\delta+\sqrt{c\delta}}B}},
    \end{align*}
    where $\delta^0 = f(x^0) - f^*$.
    
    Hence, to guarantee $\EE{\norm{\nabla f(\hx^T)}^2} \leq \varepsilon^2$ for $\varepsilon^2 > {8 c\delta} \parens{1 - 8 B c\delta \left(1+\frac{1}{\sqrt{8c\delta}} \right)}^{-1} \left(1+\frac{1}{\sqrt{8c\delta}} \right) \zeta^2$, \algname{Byz-EF21}/\algname{Byz-EF21-BC} requires
    \begin{align*}
	    \cO\parens{\frac{1+\sqrt{c\delta}}{\alpha_D\alpha_P\varepsilon^2}}
    \end{align*}
    communication rounds.
\end{corollary}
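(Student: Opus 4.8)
The plan is to derive the corollary directly from Theorem~\ref{thm:Byz_EF21_BC} by plugging in the prescribed stepsize $\gamma=(L+\sqrt\eta)^{-1}$ and then collapsing the explicit bound into $\cO(\cdot)$, keeping (as is customary in this paper) only the dependence on $L,L_\pm,\alpha_D,\alpha_P,c\delta,B,\zeta^2$ and $\delta^0$. First note that $\gamma=(L+\sqrt\eta)^{-1}$ trivially meets the admissibility condition $0<\gamma\le(L+\sqrt\eta)^{-1}$, and the hypothesis on $\delta$ is inherited verbatim, so Theorem~\ref{thm:Byz_EF21_BC} applies and gives
\[
\EE{\norm{\nabla f(\hx^T)}^2}\le\frac1A\left(\frac{2\delta^0(L+\sqrt\eta)}{T}+\left(8c\delta+\sqrt{8c\delta}\right)\zeta^2\right),\qquad A=1-B\left(8c\delta+\sqrt{8c\delta}\right).
\]

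The next step is to simplify the pieces. From $\eta=\tfrac{32}{\alpha_D^2}\bigl(1+\tfrac5{\alpha_P^2}\bigr)\bigl(1+\sqrt{8c\delta}\bigr)^2\bigl(L_\pm^2+L^2\bigr)$ we have $\sqrt\eta=\tfrac{\sqrt{32}\,(1+\sqrt{8c\delta})}{\alpha_D}\sqrt{\bigl(1+\tfrac5{\alpha_P^2}\bigr)\bigl(L_\pm^2+L^2\bigr)}$, and using $1+\sqrt{8c\delta}\asymp 1+\sqrt{c\delta}$ together with $1+\tfrac5{\alpha_P^2}\asymp 1+\tfrac1{\alpha_P^2}$ this becomes $\cO\bigl(\tfrac{1+\sqrt{c\delta}}{\alpha_D}\sqrt{(1+\tfrac1{\alpha_P^2})(L_\pm^2+L^2)}\bigr)$. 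Similarly $8c\delta+\sqrt{8c\delta}\asymp c\delta+\sqrt{c\delta}$, so $A^{-1}=\cO\bigl((1-B(c\delta+\sqrt{c\delta}))^{-1}\bigr)$, the denominator being strictly positive by the assumption $\delta<(8c(\sqrt B+B)^2)^{-1}$ (equivalently $A=\kappa>0$, exactly as in the proof of the theorem). Substituting these estimates into the displayed inequality and absorbing the numerical constant $2$ gives the first claim of the corollary.

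For the communication-round count I would split the bound into its two parts. The neighborhood term $\tfrac1A(8c\delta+\sqrt{8c\delta})\zeta^2$ is independent of $T$; under the stated hypothesis $\varepsilon^2>\tfrac1A(8c\delta+\sqrt{8c\delta})\zeta^2$ it therefore suffices to force the first term below $\varepsilon^2$, i.e.\ to take $T\ge\tfrac{2\delta^0(L+\sqrt\eta)}{A\varepsilon^2}$ (the residual gap being treated as $\Theta(\varepsilon^2)$, as in the analogous corollaries for \algname{Byz-VR-MARINA 2.0} and \algname{Byz-DASHA-PAGE}). Treating $L,L_\pm,B,\delta^0$ — hence the positive constant $A$ — as $\cO(1)$, and using $\alpha_D,\alpha_P\in(0,1]$ so that $\sqrt{1+1/\alpha_P^2}=\cO(1/\alpha_P)$ and $\tfrac{1+\sqrt{c\delta}}{\alpha_D\alpha_P}\ge1$, one gets $L+\sqrt\eta=\cO\bigl(\tfrac{1+\sqrt{c\delta}}{\alpha_D\alpha_P}\bigr)$ and hence the claimed $\cO\bigl(\tfrac{1+\sqrt{c\delta}}{\alpha_D\alpha_P\varepsilon^2}\bigr)$ rounds; the \algname{Byz-EF21} bound $\cO\bigl(\tfrac{1+\sqrt{c\delta}}{\alpha_D\varepsilon^2}\bigr)$ of Table~\ref{tab:comparison_of_rates_minimization} then follows by taking $\cC^P$ to be the identity, i.e.\ $\alpha_P=1$.

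I do not expect any genuine obstacle — the whole argument is bookkeeping on top of Theorem~\ref{thm:Byz_EF21_BC}. The only points requiring a little care are the order-equivalences $1+\sqrt{8c\delta}\asymp1+\sqrt{c\delta}$, $8c\delta+\sqrt{8c\delta}\asymp c\delta+\sqrt{c\delta}$ and $\sqrt{1+1/\alpha_P^2}\asymp1/\alpha_P$ (the last relying on $\alpha_P\le1$), and being consistent about which problem parameters are absorbed into the $\cO(\cdot)$ when passing from the exact bound of the theorem to the asymptotic statement of the corollary.
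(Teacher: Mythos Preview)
Your proposal is correct and follows the same approach the paper implicitly relies on: the corollary is stated without proof in the paper, being an immediate consequence of Theorem~\ref{thm:Byz_EF21_BC} obtained by substituting the prescribed stepsize and collapsing constants into $\cO(\cdot)$, exactly as you describe. The order-equivalences you flag are the only points of bookkeeping, and you handle them correctly.
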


\clearpage

\section{CONVERGENCE FOR POLYAK-{\L}OJASIEWICZ FUNCTIONS}\label{appendix:PL}

The $\cO(\nicefrac{1}{T})$ rate obtained for general smooth non-convex objective functions can be improved to a fast linear rate without assuming strong convexity of $f$ upon employing a weaker Polyak-Łojasiewicz condition.

\begin{assumption}[Polyak-Łojasiewicz condition]\label{as:pl}
    The function $f$ satisfies Polyak-Łojasiewicz (PŁ) condition with parameter $\mu$, i.e., for all $x \in \R^d$ there exists $x^*\in\arg\min_{x\in\R^d} f(x)$ such that
    \begin{align}
        2\mu\parens{f(x) - f(x^*)} \leq \norm{\nabla f(x)}^2.
    \end{align}
\end{assumption}
Below we prove the convergence results of our methods under the above assumption.

\subsection{\algname{Byz-VR-MARINA 2.0}}

\begin{lemma}[Descent lemma]\label{lemma:byz_BVMNS_pl}
    Suppose that Assumptions \ref{as:smoothness}, \ref{as:bounded_heterogeneity} and \ref{as:pl} hold. Then for all $s>0$ we have
    \begin{eqnarray*}
        \EE{f(x^{t + 1}) - f^*} &\leq& \parens{1-\gamma \mu \kappa} \EE{f(x^t) - f^*}
        - \left(\frac{1}{2\gamma} - \frac{L}{2}\right)
        \EE{R^t}
        + 4\gamma c\delta (1+s) \EE{H^t}\\
        &&\qquad+ \frac{\gamma}{2}(1 + s^{-1})\EE{\norm{\overline{g}^t - \nabla f(x^t)}^2}
        + 4\gamma c\delta (1+s)\zeta^2,
    \end{eqnarray*}
    where $\kappa = 1 - 8 B c\delta (1+s)$.
\end{lemma}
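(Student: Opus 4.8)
The plan is to obtain this PŁ descent lemma as an immediate corollary of the already-proved descent Lemma~\ref{lemma:byz_BVMNS}, by trading the $-\tfrac{\gamma\kappa}{2}\EE{\norm{\nabla f(x^t)}^2}$ term for a contraction factor on the functional suboptimality $\EE{f(x^t)-f^*}$. No new bound on the compression/robust-aggregation terms is required, since the terms $4\gamma c\delta(1+s)\EE{H^t}$, $\tfrac{\gamma}{2}(1+s^{-1})\EE{\norm{\overline g^t-\nabla f(x^t)}^2}$ and $4\gamma c\delta(1+s)\zeta^2$, as well as the $-(\tfrac{1}{2\gamma}-\tfrac{L}{2})\EE{R^t}$ term, are carried over verbatim.

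Concretely, I would proceed as follows. First, invoke Lemma~\ref{lemma:byz_BVMNS} (which needs only Assumptions~\ref{as:smoothness} and~\ref{as:bounded_heterogeneity}) and subtract $f^*$ from both $\EE{f(x^{t+1})}$ and $\EE{f(x^t)}$, writing it in the form
\begin{align*}
\EE{f(x^{t+1})-f^*} &\leq \EE{f(x^t)-f^*} - \frac{\gamma\kappa}{2}\EE{\norm{\nabla f(x^t)}^2} - \left(\frac{1}{2\gamma}-\frac{L}{2}\right)\EE{R^t} \\
&\qquad + 4\gamma c\delta(1+s)\EE{H^t} + \frac{\gamma}{2}(1+s^{-1})\EE{\norm{\overline g^t-\nabla f(x^t)}^2} + 4\gamma c\delta(1+s)\zeta^2 .
\end{align*}
Next, apply Assumption~\ref{as:pl}: since the $x^*$ guaranteed there lies in $\arg\min_x f(x)$ we have $f(x^*)=f_*=f^*$, so $2\mu(f(x^t)-f^*)\leq \norm{\nabla f(x^t)}^2$. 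The Byzantine-fraction constraint of the ambient theorem ($\delta<((8c+4\sqrt c)B)^{-1}$, evaluated at the relevant choice of $s$) ensures $\kappa = 1-8Bc\delta(1+s)\geq 0$, so multiplying the PŁ inequality by $-\tfrac{\gamma\kappa}{2}\leq 0$ preserves direction and gives $-\tfrac{\gamma\kappa}{2}\EE{\norm{\nabla f(x^t)}^2}\leq -\gamma\mu\kappa\,\EE{f(x^t)-f^*}$. Substituting this into the displayed inequality and collecting the two $\EE{f(x^t)-f^*}$ terms yields the coefficient $(1-\gamma\mu\kappa)$ and hence exactly the claimed bound.

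The only point requiring care — and the closest thing to an obstacle — is the sign issue: the PŁ inequality is a \emph{lower} bound on $\norm{\nabla f(x^t)}^2$, so it may only be used to bound the $-\tfrac{\gamma\kappa}{2}\norm{\nabla f(x^t)}^2$ term in the desired direction when $\kappa\geq 0$; this is why one flags the dependence on the assumption $\delta<((8c+4\sqrt c)B)^{-1}$. Everything else is a one-line substitution, so the lemma follows directly from Lemma~\ref{lemma:byz_BVMNS} and Assumption~\ref{as:pl}.
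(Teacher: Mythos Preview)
Your proposal is correct and matches the paper's own proof, which is a one-liner: ``The result follows from combining Lemma~\ref{lemma:byz_BVMNS} with Assumption~\ref{as:pl}.'' Your added remark about needing $\kappa\geq 0$ for the PŁ substitution to go in the right direction is a valid caveat that the paper leaves implicit (it is guaranteed by the $\delta$-constraint in the theorems where the lemma is invoked).
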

\begin{proof}
    The result follows from combining Lemma \ref{lemma:byz_BVMNS} with Assumption \ref{as:pl}.
\end{proof}

\begin{theorem}\label{thm:general_pl_MARINA}
	Let Assumptions \ref{as:smoothness}, \ref{as:hessian_variance}, \ref{as:hessian_variance_local}, \ref{as:bounded_heterogeneity}, and \ref{as:pl} hold. Suppose $g_i^0 = \nabla f_i(x^0)$ for all $i\in \cG$ and
    \begin{align*}
    0 < \gamma \leq \min\left\{\frac{1}{L + \sqrt{\eta}}, \frac{p}{2\mu} \right\},\qquad
    0<\delta<\frac{1}{\parens{8c+4\sqrt{c}}B},
    \end{align*}
    where $\eta = 2\frac{1-p}{p} \parens{\omega\parens{\frac{\cL_{\pm}^2}{b}+ L_{\pm}^2 + L^2} +\frac{\cL_{\pm}^2}{b}}\parens{\sqrt{\frac{1}{G}} + \sqrt{8c\delta}}^2$. Then for all $T \geq 0$ the iterates produced by \algname{Byz-VR-MARINA 2.0} satisfy
\begin{eqnarray*}
    \EE{f(x^T) - f^*}
    &\leq& \parens{1-\gamma \mu \parens{1 - \parens{8 c\delta+\sqrt{\frac{8 c\delta}{G}}}B}}^{T}\EE{f(x^0) - f^*} \\
    &&+ \frac{\parens{4c\delta+\sqrt{\frac{2c\delta}{G}}}\zeta^2}{\mu\parens{1 - \parens{8 c\delta+\sqrt{\frac{8 c\delta}{G}}}B}}.
\end{eqnarray*}
where $\Phi_0 = f(x^0) - f^* + \gamma \parens{\frac{1+s^{-1}}{p} \norm{\overline{g}^0 - \nabla f\parens{x^0}}^2 + \frac{8c\delta\parens{1+s}}{p} H^t}$.
\end{theorem}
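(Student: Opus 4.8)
The plan is to replay the Lyapunov-potential argument behind the proof of Theorem~\ref{thm:main_result_Byz-VR-MARINA_2}, but with the potential's weights re-tuned so that the \emph{whole} potential, not just the function gap, contracts geometrically, and then to unroll the resulting linear recursion. Concretely, I would fix $s = \sqrt{\nicefrac{1}{8c\delta G}}$, exactly as in the non-convex proof, and set
\[
\psi_t = C\norm{\overline{g}^t - \nabla f(x^t)}^2 + D\,H^t, \qquad \Phi_t = f(x^t) - f^* + \frac{\gamma}{2}\psi_t,
\]
with $C = \nicefrac{2(1+s^{-1})}{p}$ and $D = \nicefrac{16c\delta(1+s)}{p}$, i.e.\ twice the constants used for the non-convex rate; this doubling is precisely what produces the extra factor of $2$ in the $\eta$ appearing in the statement.

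First I would combine Lemma~\ref{lemma:byz_BVMNS_pl} (the descent step, which already folds in Assumption~\ref{as:pl} and turns $-\tfrac{\gamma\kappa}{2}\norm{\nabla f(x^t)}^2$ into $-\gamma\mu\kappa(f(x^t)-f^*)$) with the one-step variance recursions of Lemmas~\ref{lemma:prel_1_MARINA} and \ref{lemma:prel_2_MARINA}: substituting the latter two into $\tfrac\gamma2\EE{\psi_{t+1}}$ and adding to the descent bound, I collect the coefficients of $\EE{\norm{\overline{g}^t - \nabla f(x^t)}^2}$, $\EE{H^t}$ and $\EE{R^t}$. For the first two, the target is $(\text{new coefficient}) \le (1-\gamma\mu\kappa)\cdot(\text{its coefficient inside } \tfrac{\gamma}{2}\psi_t)$, which after cancellation reduces to $1+s^{-1} \le C(p-\gamma\mu\kappa)$ and $8c\delta(1+s) \le D(p-\gamma\mu\kappa)$; since $\kappa\le 1$, the second stepsize cap $\gamma \le \nicefrac{p}{2\mu}$ yields $p-\gamma\mu\kappa \ge \nicefrac p2$, and the chosen $C,D$ satisfy both. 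For the $\EE{R^t}$ term the net coefficient is $-\parens{\tfrac{1}{2\gamma}-\tfrac L2-\tfrac\gamma2\eta}$ with $\eta = (1-p)\parens{\omega(\tfrac{\cL_{\pm}^2}{b}+L_{\pm}^2+L^2)+\tfrac{\cL_{\pm}^2}{b}}\parens{\tfrac CG+D}$; substituting $C,D$ gives $\tfrac CG+D = \tfrac2p\parens{\tfrac{1+s^{-1}}{G}+8c\delta(1+s)}$, and $s=\sqrt{\nicefrac{1}{8c\delta G}}$ collapses the bracket to $\parens{\sqrt{\nicefrac1G}+\sqrt{8c\delta}}^2$, matching the statement. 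By Lemma~\ref{lemma:step_lemma}, the hypothesis $\gamma\le\nicefrac{1}{L+\sqrt\eta}$ makes $\tfrac{1}{2\gamma}-\tfrac L2-\tfrac\gamma2\eta\ge 0$, so the $\EE{R^t}$ term is nonpositive and is dropped. This leaves the clean recursion $\EE{\Phi_{t+1}} \le (1-\gamma\mu\kappa)\EE{\Phi_t} + 4\gamma c\delta(1+s)\zeta^2$.

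Finally I would unroll the recursion. The hypothesis $\delta<\parens{(8c+4\sqrt c)B}^{-1}$ ensures $\kappa>0$ — with the chosen $s$, $\kappa = 1 - \parens{8c\delta+\sqrt{\nicefrac{8c\delta}{G}}}B$ — and $\gamma\mu\kappa \le \nicefrac p2 < 1$, so summing the geometric series gives
\[
\EE{\Phi_T} \le (1-\gamma\mu\kappa)^T\Phi_0 + \frac{4c\delta(1+s)\zeta^2}{\mu\kappa}.
\]
It then remains to read off the claim using $\Phi_T \ge f(x^T)-f^*$ (because $\psi_T \ge 0$), the fact that the initialization $g_i^0 = \nabla f_i(x^0)$ for all $i\in\cG$ forces $H^0 = 0$ and $\overline{g}^0 = \nabla f(x^0)$, hence $\psi_0 = 0$ and $\Phi_0 = f(x^0)-f^*$, and the identities $4c\delta(1+s) = 4c\delta+\sqrt{\nicefrac{2c\delta}{G}}$ and $\kappa = 1-\parens{8c\delta+\sqrt{\nicefrac{8c\delta}{G}}}B$. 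The main obstacle is not any single step but the simultaneous bookkeeping: one must pick a single pair $C,D$ (equivalently a single $\eta$) that makes the $\psi$-part decay at the \emph{same} rate $1-\gamma\mu\kappa$ as the function gap while keeping the $\EE{R^t}$ coefficient nonnegative, and the extra cap $\gamma \le \nicefrac{p}{2\mu}$ is exactly what reconciles those two demands.
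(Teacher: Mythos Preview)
Your proposal is correct and follows essentially the same route as the paper's proof: the same potential $\Phi_t$ with doubled weights $C = \tfrac{2(1+s^{-1})}{p}$, $D = \tfrac{16c\delta(1+s)}{p}$, the same three lemmas (\ref{lemma:byz_BVMNS_pl}, \ref{lemma:prel_1_MARINA}, \ref{lemma:prel_2_MARINA}), the same use of $\gamma \le \nicefrac{p}{2\mu}$ to ensure the $\psi$-part contracts at rate $1-\gamma\mu\kappa$, and the same choice $s=\sqrt{\nicefrac{1}{8c\delta G}}$. The only cosmetic difference is that you fix $s$ upfront while the paper keeps it free until the final step.
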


\begin{proof}
Let $C = \frac{2}{p}(1+s^{-1})$ and $D = \frac{16}{p}c\delta\parens{1+s}$ for some $s >0$ and denote
\begin{align*}
\psi_t &= C\norm{\overline{g}^t - \nabla f(x^{t})}^2 + D H^t, \\
\Phi_t &= f(x^t) - f^* + \frac{\gamma}{2}\psi_t.
\end{align*}
Using lemmas \ref{lemma:byz_BVMNS_pl}, \ref{lemma:prel_1_MARINA} and \ref{lemma:prel_2_MARINA}, we have:
\begin{eqnarray*}
    \EE{\Phi_{t+1}}
    &=&\EE{f(x^{t+1}) - f^* + \frac{\gamma}{2}\psi_{t+1}}\\
    &=&\underbrace{\EE{f(x^{t+1}) - f^*} }_\text{use \ref{lemma:byz_BVMNS_pl}} + \frac{\gamma}{2} \parens{C\underbrace{\EE{\norm{\overline{g}^{t+1} - \nabla f(x^{t+1})}^2}}_\text{use \ref{lemma:prel_1_MARINA}}  
    + D\underbrace{\EE{H^{t+1}}}_\text{use \ref{lemma:prel_2_MARINA}}}  \\
    &\leq& \parens{1-\gamma \mu\kappa}\EE{f(x^t)-f^*}\\ 
    && + \frac{\gamma}{2}\parens{1-\gamma \mu \kappa} \parens{\underbrace{C\EE{\norm{\overline{g}^t - \nabla f\parens{x^{t}}}^2} + D\EE{H^t} }_\text{$\EE{\psi_t}$}}
    \\
    && - \parens{\frac{1}{2\gamma} - \frac{L}{2} - \frac{\gamma\eta}{2}}\EE{R^t} + 4\gamma c\delta(1+s)\zeta^2 \\
    &=& \parens{1-\gamma \mu \kappa}\EE{\Phi_t} - \parens{\frac{1}{2\gamma} - \frac{L}{2} - \frac{\gamma\eta}{2}}\EE{R^t} + 4\gamma c\delta (1+s)\zeta^2,
\end{eqnarray*}
where 
\begin{eqnarray*}
    \eta &\eqdef & C\parens{1-p}\parens{\frac{\omega}{G}\parens{\frac{\cL_{\pm}^2}{b}+ L_{\pm}^2 + L^2} +\frac{\cL_{\pm}^2}{Gb}} + D\parens{1-p} \parens{\omega\parens{\frac{\cL_{\pm}^2}{b}+ L_{\pm}^2 + L^2} +\frac{\cL_{\pm}^2}{b}} \\
    &= & 2\frac{1-p}{p} \parens{\omega\parens{\frac{\cL_{\pm}^2}{b}+ L_{\pm}^2 + L^2} +\frac{\cL_{\pm}^2}{b}}\parens{\frac{1+s^{-1}}{G} + 8c\delta\parens{1+s}}.
\end{eqnarray*}
Taking $0 < \gamma \leq \frac{1}{L + \sqrt{\eta}}$ and using Lemma \ref{lemma:step_lemma} gives
\begin{equation*}
    \frac{1}{2\gamma} - \frac{L}{2} - \frac{\gamma\eta}{2} \geq 0,
\end{equation*}
and hence
\begin{equation*}
    \EE{\Phi_{t+1}} \leq \parens{1-\gamma \mu\kappa}\EE{\Phi_t} + 4\gamma c\delta(1+s)\zeta^2.
\end{equation*}
Similarly as in the proof of Theorem \ref{thm:main_result_Byz-VR-MARINA_2}, taking $s=\frac{1}{\sqrt{8c\delta G}}$, unrolling the recurrence and rearranging the terms, we get
\begin{eqnarray*}
    \EE{\Phi_{T}}
    &\leq & \parens{1-\gamma \mu\kappa}^{T}\EE{\Phi_0} + 4c\delta\gamma(1+s)\zeta^2\sum \limits_{t=0}^{T}\parens{1-\gamma \mu \kappa}^{t} \\
    &\leq & \parens{1-\gamma \mu \kappa}^{T}\EE{\Phi_0} + 4c\delta\gamma(1+s)\zeta^2\sum \limits_{t=0}^{\infty}\parens{1-\gamma \mu \kappa}^{t} \\
    &= &\parens{1-\gamma \mu \kappa}^{T}\EE{\Phi_0} + \frac{4c\delta(1+s)\zeta^2}{\mu\kappa} \\
    &= &\parens{1-\gamma \mu \parens{1 - \parens{8 c\delta+\sqrt{\frac{8 c\delta}{G}}}B}}^{T}\EE{f(x^0) - f^*}\\
    &&+ \frac{\parens{4c\delta+\sqrt{\frac{2c\delta}{G}}}\zeta^2}{\mu\parens{1 - \parens{8 c\delta+\sqrt{\frac{8 c\delta}{G}}}B}}.
\end{eqnarray*}
The result follows from the fact that $\Phi_T \geq f(x^T) - f^*$.
\end{proof}

\subsection{\algname{Byz-DASHA-PAGE}}

\begin{lemma} [Descent lemma]
\label{lemma:descent_dp_pl}
Suppose that Assumptions \ref{as:smoothness}, \ref{as:bounded_heterogeneity} and \ref{as:pl} hold. Then for all $s>0$ we have
\begin{eqnarray*}
  &&\EE{f(x^{t + 1}) - f^*} \leq \parens{1-\gamma \mu \kappa} \EE{f(x^t) - f^*}
  - \left(\frac{1}{2\gamma} - \frac{L}{2}\right)
  \EE{R^t}\\
  &&\qquad+ 8\gamma c\delta(1+s)\parens{\frac{1}{G}\sum \limits_{i\in \cG} \EE{\norm{g_i^t - h_i^t}^2}}
  + 8\gamma c\delta(1+s)\parens{\frac{1}{G}\sum \limits_{i\in \cG} \EE{\norm{h_i^t - \nabla f_i(x^t)}^2}} \\
  &&\qquad+ \gamma (1 + s^{-1})\EE{\norm{\overline{g}^t - h^{t}}^2} + \gamma (1 + s^{-1})\EE{\norm{h^t - \nabla f(x^t)}^2}
  + 4\gamma c\delta(1+s)\zeta^2,
\end{eqnarray*}
where $\kappa = 1 - 8 B c\delta (1+s)$.
\end{lemma}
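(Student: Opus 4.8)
The statement is identical to the non-PŁ descent lemma for \algname{Byz-DASHA-PAGE} (Lemma~\ref{lemma:byz_descent_dp}) except that the two occurrences of the function value have been shifted by the constant $f^*$ and the stationarity term $-\tfrac{\gamma\kappa}{2}\EE{\norm{\nabla f(x^t)}^2}$ has been folded into a contraction factor on the function gap. So the proof will be a one-step consequence of Lemma~\ref{lemma:byz_descent_dp} combined with Assumption~\ref{as:pl}, exactly mirroring how Lemma~\ref{lemma:byz_BVMNS_pl} is obtained from Lemma~\ref{lemma:byz_BVMNS}.

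First I would invoke Lemma~\ref{lemma:byz_descent_dp}, which gives
\[
  \EE{f(x^{t+1})} \leq \EE{f(x^t)} - \frac{\gamma\kappa}{2}\EE{\norm{\nabla f(x^t)}^2} + (\text{the remaining right-hand-side terms}),
\]
with $\kappa = 1 - 8Bc\delta(1+s)$. Since $f^*$ is a constant, subtracting it from both $\EE{f(x^{t+1})}$ and $\EE{f(x^t)}$ leaves the inequality unchanged, so it suffices to control the term $-\tfrac{\gamma\kappa}{2}\EE{\norm{\nabla f(x^t)}^2}$. Here I would use that under the standing assumption on $\delta$ we have $\kappa > 0$, hence $-\tfrac{\gamma\kappa}{2}$ is negative, and then apply the PŁ inequality from Assumption~\ref{as:pl}, $\norm{\nabla f(x^t)}^2 \geq 2\mu\bigl(f(x^t) - f^*\bigr)$, to deduce
\[
  -\frac{\gamma\kappa}{2}\EE{\norm{\nabla f(x^t)}^2} \leq -\gamma\mu\kappa\,\EE{f(x^t) - f^*}.
\]
Substituting this into the descent inequality and combining with the unchanged term $\EE{f(x^t) - f^*}$ gives the factor $(1 - \gamma\mu\kappa)$ in front of $\EE{f(x^t) - f^*}$, while the terms $-\bigl(\tfrac{1}{2\gamma} - \tfrac{L}{2}\bigr)\EE{R^t}$, the $g_i^t - h_i^t$ and $h_i^t - \nabla f_i(x^t)$ contributions, the $\overline{g}^t - h^t$ and $h^t - \nabla f(x^t)$ contributions, and the $\zeta^2$ term are carried over verbatim from Lemma~\ref{lemma:byz_descent_dp}. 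This is precisely the claimed bound.

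There is essentially no obstacle here: the only point requiring a word of care is the sign manipulation, i.e.\ that applying the lower bound on $\norm{\nabla f(x^t)}^2$ is legitimate only because the coefficient $-\tfrac{\gamma\kappa}{2}$ has the right sign, which is guaranteed by $\kappa>0$ (equivalently $\delta < \bigl((8c+4\sqrt{c})B\bigr)^{-1}$, the hypothesis under which the lemma is used). Everything else is bookkeeping, so the proof is a single line: \emph{combine Lemma~\ref{lemma:byz_descent_dp} with Assumption~\ref{as:pl}.}
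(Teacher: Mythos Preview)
Your proposal is correct and matches the paper's proof exactly: the paper's argument is the single line ``The result follows from combining Lemma~\ref{lemma:byz_descent_dp} with Assumption~\ref{as:pl}.'' Your additional remark about needing $\kappa>0$ to justify the sign when applying the PŁ lower bound is a valid point of care that the paper leaves implicit.
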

\begin{proof}
    The result follows from combining Lemma \ref{lemma:byz_descent_dp} with Assumption \ref{as:pl}.
\end{proof}

\begin{lemma}[Calculations]\label{lemma:system_1}
Let $\kappa = 1 - 8 B c\delta (1+s)$ and assume that
\begin{align*}
    0& < a \leq \frac{1}{2\omega +1}, \\
    0& < \gamma < \min \left\{\frac{p}{2\mu \kappa}\text{, } \frac{a}{2\mu \kappa} \right\}.
\end{align*}
The inequalities
\begin{align}\label{system_1}
\begin{cases}
    C, D, E, F\geq 0\\
     \parens{1-\gamma \mu \kappa}C \geq (1-a)^{2}C + 2 \parens{1+s^{-1}}\\
     \parens{1-\gamma \mu \kappa}D \geq \parens{2a^{2}\omega + \parens{1-a}^{2}}D + \frac{2a^{2}\omega}{G}C + 16c\delta\parens{1+s} \\
     \parens{1-\gamma \mu \kappa}E \geq \parens{1-p}E + 2\parens{1+s^{-1}}\\
     \parens{1-\gamma \mu \kappa}F \geq \parens{1-p}F + 4p\omega \parens{\frac{C}{G}+D} + 16c\delta\parens{1+s}
\end{cases}
\end{align}
are satisfied when
$$\begin{cases}
    C = \frac{2(1+s^{-1})}{1-\parens{1-a}^2 - \frac{a}{2}}\\
    D = \frac{1}{1-2\omega a^2 - (1-a)^2 - \frac{a}{2}} \parens{16 \parens{1+s}c\delta + \frac{2\omega a^2}{G}C} \\
    E = \frac{4(1+s^{-1})}{p} \\
    F  = 8\omega \parens{\frac{C}{G} + D} + \frac{32 c\delta}{p}\parens{1 + s}.
\end{cases}$$
Further, for this choice of $C$, $D$, $E$, $F$, we have
\begin{align}\label{eq:cdefeqbdp}
    \frac{C}{G} + D & = \frac{2}{1-2\omega a^2 - (1-a)^2 - \frac{a}{2}}\parens{\frac{1+s^{-1}}{G} + \parens{1+s}8c\delta} \nonumber \\
    \frac{E}{G} + F & = \parens{\frac{16\omega}{1-2\omega a^2 - (1-a)^2-\frac{a}{2}} + \frac{4}{p}}\parens{\frac{1+s^{-1}}{G} + \parens{1+s}8c\delta}.
\end{align}
\end{lemma}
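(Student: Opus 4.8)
The plan is to verify the five lines of \eqref{system_1} one at a time by direct substitution of the stated $C,D,E,F$, using the two stepsize restrictions to turn each inequality into something I can check by a short algebraic identity. Concretely, from $0<\gamma<\min\{\tfrac{p}{2\mu\kappa},\tfrac{a}{2\mu\kappa}\}$ I get $\gamma\mu\kappa<\tfrac a2$ and $\gamma\mu\kappa<\tfrac p2$, hence $1-\gamma\mu\kappa>1-\tfrac a2$ and $1-\gamma\mu\kappa>1-\tfrac p2$; substituting these smaller numbers for $1-\gamma\mu\kappa$ only strengthens each of the four ``$\ge$'' lines, so it suffices to prove them with $1-\tfrac a2$ (lines for $C,D$) and $1-\tfrac p2$ (lines for $E,F$) in place of $1-\gamma\mu\kappa$.

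Before that, I would record the two elementary positivity facts that everything rests on. Writing $\beta\eqdef 1-2\omega a^2-(1-a)^2-\tfrac a2$, a direct expansion gives $1-(1-a)^2-\tfrac a2=\tfrac{3a}{2}-a^2=a(\tfrac32-a)$ and $\beta=a\bigl(\tfrac32-a(1+2\omega)\bigr)$; since $0<a\le(2\omega+1)^{-1}$ forces $a(1+2\omega)\le 1<\tfrac32$, both quantities are strictly positive, and in particular $\beta>0$. Positivity of $\beta$, of $1-(1-a)^2-\tfrac a2$, and of $p$ makes the four denominators in the definitions of $C,D,E,F$ positive, and the numerators are visibly nonnegative, so the first line $C,D,E,F\ge 0$ holds. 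Then the $C$-line reduces to $\bigl(\tfrac{3a}{2}-a^2\bigr)C\ge 2(1+s^{-1})$, which holds with equality by the definition of $C$; the $E$-line reduces to $\tfrac p2 E\ge 2(1+s^{-1})$, again an equality; the $D$-line, after moving $(2a^2\omega+(1-a)^2)D$ to the left and using $1-\gamma\mu\kappa>1-\tfrac a2$, reduces to $\beta D\ge \tfrac{2a^2\omega}{G}C+16c\delta(1+s)$, which is precisely the defining relation of $D$; and the $F$-line reduces (via $1-\gamma\mu\kappa>1-\tfrac p2$) to $\tfrac p2 F\ge 4p\omega\bigl(\tfrac CG+D\bigr)+16c\delta(1+s)$, i.e. $F\ge 8\omega\bigl(\tfrac CG+D\bigr)+\tfrac{32c\delta(1+s)}{p}$, which is the defining relation of $F$.

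For the two identities \eqref{eq:cdefeqbdp}: from $\beta D=16(1+s)c\delta+\tfrac{2\omega a^2}{G}C$ I get $\tfrac CG+D=\tfrac CG\cdot\tfrac{\beta+2\omega a^2}{\beta}+\tfrac{16(1+s)c\delta}{\beta}$; since $\beta+2\omega a^2=\tfrac{3a}{2}-a^2$ and the definition of $C$ gives $C(\tfrac{3a}{2}-a^2)=2(1+s^{-1})$, the first term is $\tfrac{2(1+s^{-1})}{G\beta}$, so $\tfrac CG+D=\tfrac2\beta\bigl(\tfrac{1+s^{-1}}{G}+8(1+s)c\delta\bigr)$, the claimed formula. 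Substituting this, $E=\tfrac{4(1+s^{-1})}{p}$, and $F=8\omega\bigl(\tfrac CG+D\bigr)+\tfrac{32c\delta(1+s)}{p}$ into $\tfrac EG+F$ and factoring out $\tfrac{1+s^{-1}}{G}+8(1+s)c\delta$ yields $\bigl(\tfrac{16\omega}{\beta}+\tfrac4p\bigr)\bigl(\tfrac{1+s^{-1}}{G}+8(1+s)c\delta\bigr)$, the second claimed formula. There is no genuinely hard step; the only point needing care is the positivity of $\beta$ and of $1-(1-a)^2-\tfrac a2$ from $a\le(2\omega+1)^{-1}$, since without these bounds the ``it suffices to prove'' reductions would flip direction. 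Everything else is bookkeeping.
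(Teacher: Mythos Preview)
Your proposal is correct and follows essentially the same approach as the paper: reduce via $\gamma\mu\kappa<\tfrac{a}{2}$ and $\gamma\mu\kappa<\tfrac{p}{2}$, use $0<a\le(2\omega+1)^{-1}$ to ensure the denominator $\beta$ is positive, and then verify each line and the two identities by direct substitution. The paper's own proof is terser (it stops at ``verify by direct substitution''), whereas you actually carry out the substitutions; there is no substantive difference in strategy.
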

\begin{proof}
Let $w = \gamma \mu  \kappa$. Under the assumption that $0< \gamma< \min \{\frac{p}{2\mu \kappa}\text{, } \frac{a}{2\mu \kappa}\}$, we have that $0<w< \min \{\frac{p}{2}\text{, }\frac{a}{2}\}$. Furthermore, \eqref{system_1} implies that
\begin{align*}\begin{cases}
    C \geq 2\frac{1+s^{-1}}{1-\parens{1-a}^2 - w}\\
    D \geq \frac{1}{1-2\omega a^2 - (1-a)^2 - w} \parens{16 \parens{1+s}c\delta + \frac{2\omega a^2}{G}C} \\
    E \geq 2\frac{1+s^{-1}}{p-w} \\
    F \geq \frac{4\omega p}{p-w} \parens{\frac{C}{G} + D} + \frac{16 c\delta}{p-w}\parens{1 + s}
\end{cases}
\end{align*}
The assumption that $0<a \leq \frac{1}{2\omega +1}$ ensures that $1-2\omega a^2 - (1-a)^2 - \frac{a}{2} > 0$.
Since $0<w< \min \{\frac{p}{2}\text{, }\frac{a}{2}\}$, one can take
\begin{align*}\begin{cases}
    C = 2\frac{1+s^{-1}}{1-\parens{1-a}^2 - \frac{a}{2}}\\
    D = \frac{1}{1-2\omega a^2 - (1-a)^2 - \frac{a}{2}} \parens{16 \parens{1+s}c\delta + \frac{2\omega a^2}{G}C} \\
    E = 4\frac{1+s^{-1}}{p} \\
    F  = 8\omega \parens{\frac{C}{G} + D} + \frac{32 c\delta}{p}\parens{1 + s}.
\end{cases}
\end{align*}
It remains to verify \eqref{eq:cdefeqbdp} by direct substitution of the values above.
\end{proof}

\begin{theorem}[General Convergence]\label{thm:general_pl_DASHA}
	Let Assumptions \ref{as:smoothness}, \ref{as:hessian_variance}, \ref{as:bounded_heterogeneity}, \ref{as:hessian_variance_local}, and \ref{as:pl} hold. Let
$$\begin{cases}
    C = \frac{2(1+\sqrt{8c\delta G})}{1-\parens{1-a}^2 - \frac{a}{2}}\\
    D = \frac{1}{1-2\omega a^2 - (1-a)^2 - \frac{a}{2}} \parens{16 \parens{1+\frac{1}{\sqrt{8c\delta G}}}c\delta + \frac{2\omega a^2}{G}C} \\
    E = \frac{4(1+\sqrt{8c\delta G})}{p} \\
    F  = 8\omega \parens{\frac{C}{G} + D} + \frac{32 c\delta}{p}\parens{1 + \frac{1}{\sqrt{8c\delta G}}}
\end{cases}$$
and assume that
\begin{align*}
	0 < \gamma &\leq \min \left\{\frac{1}{L + \sqrt{\eta}}, \frac{p}{2\mu\kappa}\text{, } \frac{a}{2\mu\kappa} \right\}, \qquad
    0 < a \leq \frac{1}{2\omega +1}, \qquad
    0<\delta <\frac{1}{8c\parens{1+\frac{1}{\sqrt{8c\delta G}}}B},
\end{align*}
where $\eta = \parens{\frac{8\omega \parens{L_{\pm}^2 + L^2}}{1-2\omega a^2 - (1-a)^2 - \frac{a}{2}} + \frac{1-p}{b}\cL_{\pm}^{2} \parens{\frac{20\omega}{1-2\omega a^2 - (1-a)^2 - \frac{a}{2}} + \frac{4}{p}}}\parens{\frac{1+\sqrt{8c\delta G}}{G} + \parens{1+\frac{1}{\sqrt{8c\delta G}}}8c\delta}$ and $\kappa = 1 - 8 B c\delta \parens{1+\frac{1}{\sqrt{8c\delta G}}}$. Then for all $T \geq 0$ the iterates produced by \algname{Byz-DASHA-PAGE} satisfy
\begin{align*}
	\EE{f(x^T) - f^*} &\leq \parens{1-\gamma \mu \parens{1 - \parens{8 c\delta+\sqrt{\frac{8 c\delta}{G}}}B}}^{T}\EE{f(x^0) - f^*}\\
    &\quad+ \frac{\parens{4c\delta+\sqrt{\frac{2c\delta}{G}}}\zeta^2}{\mu \parens{1 - \parens{8 c\delta+\sqrt{\frac{8 c\delta}{G}}}B}}.
\end{align*}
\end{theorem}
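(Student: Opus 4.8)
The plan is to reproduce the proof of Theorem~\ref{thm:main_result_dp} almost verbatim, making two substitutions: replace the non-convex descent lemma (Lemma~\ref{lemma:byz_descent_dp}) with its Polyak-{\L}ojasiewicz counterpart (Lemma~\ref{lemma:descent_dp_pl}), and replace the coefficient computation (Lemma~\ref{lemma:calc}) with Lemma~\ref{lemma:system_1}; at the end, sum a geometric series instead of a plain telescoping sum. Concretely, I would first fix $s = 1/\sqrt{8c\delta G}$, so that $\kappa = 1 - 8Bc\delta(1+s) = 1 - \parens{8c\delta + \sqrt{8c\delta/G}}B$, which is strictly positive by the assumption $\delta < \parens{8c(1+1/\sqrt{8c\delta G})B}^{-1}$, and take $C,D,E,F$ exactly as in the statement (this is the choice prescribed by Lemma~\ref{lemma:system_1} for this $s$). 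Then I define the Lyapunov function $\psi_t = C\norm{\overline{g}^t - h^{t}}^2 + D\parens{\tfrac1G\sum_{i\in\cG}\norm{g_i^t - h_i^t}^2} + E\norm{h^t - \nabla f(x^t)}^2 + F\parens{\tfrac1G\sum_{i\in\cG}\norm{h_i^t - \nabla f_i(x^t)}^2}$ and $\Phi_t = f(x^t) - f^* + \tfrac{\gamma}{2}\psi_t$.

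Next I would expand $\EE{\Phi_{t+1}}$ by applying Lemma~\ref{lemma:descent_dp_pl} to $\EE{f(x^{t+1}) - f^*}$, Lemma~\ref{lemma: pre1_dp} to the $C$-term, Lemma~\ref{lemma: pre2_dp} to the $D$-term, Lemma~\ref{lemma:pre3_dp} to the $E$-term, and Lemma~\ref{lemma:pre4_dp} to the $F$-term. Collecting the resulting terms, the five inequalities of Lemma~\ref{lemma:system_1} are exactly what is needed to show that the aggregate of all $\psi$-type quantities on the right-hand side is at most $(1-\gamma\mu\kappa)\tfrac{\gamma}{2}\EE{\psi_t}$; combined with the PL descent this yields $\EE{\Phi_{t+1}} \leq (1-\gamma\mu\kappa)\EE{\Phi_t} - \parens{\tfrac{1}{2\gamma} - \tfrac{L}{2} - \tfrac{\gamma\eta}{2}}\EE{R^t} + 4\gamma c\delta(1+s)\zeta^2$, where $\eta$ comes out to be exactly the expression in the statement after substituting the formulas \eqref{eq:cdefeqbdp} for $\tfrac{C}{G}+D$ and $\tfrac{E}{G}+F$ into $\eta = 2\omega\parens{\tfrac{1-p}{b}\cL_{\pm}^2 + 2(L_{\pm}^2+L^2)}\parens{\tfrac{C}{G}+D} + \tfrac{1-p}{b}\cL_{\pm}^2\parens{\tfrac{E}{G}+F}$, as in the proof of Theorem~\ref{thm:main_result_dp}. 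Since $\gamma \leq (L+\sqrt{\eta})^{-1}$, Lemma~\ref{lemma:step_lemma} gives $\tfrac{1}{2\gamma} - \tfrac{L}{2} - \tfrac{\gamma\eta}{2} \geq 0$, so the $\EE{R^t}$ term drops and $\EE{\Phi_{t+1}} \leq (1-\gamma\mu\kappa)\EE{\Phi_t} + 4\gamma c\delta(1+s)\zeta^2$.

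Finally I would unroll the recursion and bound the geometric series: $\EE{\Phi_T} \leq (1-\gamma\mu\kappa)^T\Phi_0 + 4\gamma c\delta(1+s)\zeta^2\sum_{t=0}^{\infty}(1-\gamma\mu\kappa)^t = (1-\gamma\mu\kappa)^T\Phi_0 + \tfrac{4c\delta(1+s)\zeta^2}{\mu\kappa}$. Using the initialization (which gives $g_i^0 = h_i^0 = \nabla f_i(x^0)$, hence $\psi_0 = 0$ and $\Phi_0 = f(x^0) - f^*$), the bound $\Phi_T \geq f(x^T) - f^*$, and the substitution $s = 1/\sqrt{8c\delta G}$ — so that $1-\gamma\mu\kappa = 1 - \gamma\mu\parens{1 - \parens{8c\delta + \sqrt{8c\delta/G}}B}$ and $4c\delta(1+s) = 4c\delta + \sqrt{2c\delta/G}$ — yields precisely the claimed inequality. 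The main obstacle is the bookkeeping in Lemma~\ref{lemma:system_1}: one must check that, under the stepsize constraints $\gamma \leq \min\{p/(2\mu\kappa),\, a/(2\mu\kappa)\}$ and the momentum constraint $a \leq (2\omega+1)^{-1}$, the four contraction factors $(1-a)^2$, $2a^2\omega + (1-a)^2$, $1-p$, $1-p$ leave enough slack to absorb the cross-terms (the $\tfrac{2\omega a^2}{G}C$ feeding into the $D$-equation and the $4p\omega\parens{\tfrac{C}{G}+D}$ feeding into the $F$-equation) while still contracting at rate $1-\gamma\mu\kappa$; this verification, married carefully to the $\EE{R^t}$-coefficient computation that produces the stated $\eta$, is the only delicate part, and the rest is routine.
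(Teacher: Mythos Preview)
Your proposal is correct and follows essentially the same route as the paper's proof: define the same Lyapunov function $\Phi_t$, combine Lemma~\ref{lemma:descent_dp_pl} with Lemmas~\ref{lemma: pre1_dp}--\ref{lemma:pre4_dp}, invoke Lemma~\ref{lemma:system_1} to obtain the contraction $\EE{\Phi_{t+1}} \leq (1-\gamma\mu\kappa)\EE{\Phi_t} + 4\gamma c\delta(1+s)\zeta^2$ after dropping the $\EE{R^t}$ term via Lemma~\ref{lemma:step_lemma}, then unroll and sum the geometric series. The only cosmetic difference is that the paper substitutes $s = 1/\sqrt{8c\delta G}$ at the very end rather than the beginning.
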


\begin{proof}
Let
\begin{align*}
\psi_t &= C\norm{\overline{g}_t - h^{t}}^2 + D\parens{\frac{1}{G}\sum \limits_{i \in \cG}\norm{g_i^t - h_i^t}^2} + E \norm{h^{t}-\nabla f(x^{t})}^{2} \\
&\quad+ F\parens{\frac{1}{G}\sum \limits_{i \in \cG}\norm{h_i^t - \nabla f_i(x^t)}^2}, \\
\Phi_t &= f(x^t) - f^* + \frac{\gamma}{2}\psi_t.
\end{align*}

Using lemmas \ref{lemma:descent_dp_pl}, \ref{lemma: pre1_dp}, \ref{lemma: pre2_dp}, \ref{lemma:pre3_dp} and \ref{lemma:pre4_dp}, we have
\begin{align*}
    \EE{\Phi_{t+1}}
    &=\EE{f(x^{t+1}) - f^* + \frac{\gamma}{2}\psi_{t+1}}\\
    &=\underbrace{\EE{f(x^{t+1}) - f^*} }_\text{use \ref{lemma:descent_dp_pl}} + \frac{\gamma}{2} C\underbrace{\EE{\norm{\overline{g}^{t+1} - h^{t+1}}^2}}_\text{use \ref{lemma: pre1_dp}}  
    + \frac{\gamma}{2} D\underbrace{\parens{\frac{1}{G}\sum \limits_{i \in \cG}\EE{\norm{g_i^{t+1} - h_i^{t+1}}^2}}}_\text{use \ref{lemma: pre2_dp}}  \\ & + \frac{\gamma}{2} E\underbrace{\EE{\norm{h^{t+1} - \nabla f(x^{t+1})}^2}}_\text{use \ref{lemma:pre3_dp}}  
    + \frac{\gamma}{2} F\underbrace{\parens{\frac{1}{G}\sum \limits_{i \in \cG}\EE{\norm{h_i^{t+1} - \nabla f_i(x^{t+1})}^2}}}_\text{use \ref{lemma:pre4_dp}}   \\
    &\leq \parens{1-\gamma \mu\kappa}\EE{f(x^{t}) - f^*}\\
    &\quad+ \parens{1-\gamma \mu\kappa} \frac{\gamma}{2}\left(C\EE{\norm{\overline{g}_t - h^{t}}^2} + D\parens{\frac{1}{G}\sum \limits_{i \in \cG}\EE{\norm{g_i^t - h_i^t}^2}} \right. \\ 
    &\quad\left.+ E \EE{\norm{h^{t}-\nabla f(x^{t})}^{2}}
    + F\parens{\frac{1}{G}\sum \limits_{i \in \cG}\EE{\norm{h_i^t - \nabla f_i(x^t)}^2}}\right) \\
    &\quad- \parens{\frac{1}{2\gamma} - \frac{L}{2} - \frac{\gamma\eta}{2}}\EE{R^t} + 4\gamma c\delta(1+s)\zeta^2 \\
    &= \parens{1-\gamma \mu\kappa}\EE{\Phi_t} - \parens{\frac{1}{2\gamma} - \frac{L}{2} - \frac{\gamma\eta}{2}}\EE{R^t} + 4\gamma c\delta(1+s)\zeta^2,
\end{align*}
where the inequality holds by Lemma \ref{lemma:system_1} and
\begin{align*}
    \eta &= 2\omega \parens{\frac{1-p}{b}\cL_{\pm}^{2} + 2 \parens{L_{\pm}^2 + L^2}}\parens{\frac{C}{G} + D} + \frac{1-p}{B}\cL_{\pm}^{2}\parens{\frac{E}{G} + F} \\
    &= \parens{\frac{8\omega \parens{L_{\pm}^2 + L^2}}{1-2\omega a^2 - (1-a)^2 - \frac{a}{2}} + \frac{1-p}{b}\cL_{\pm}^{2} \parens{\frac{20\omega}{1-2\omega a^2 - (1-a)^2 - \frac{a}{2}} + \frac{4}{p}}}\\
    &\quad\times\parens{\frac{1+s^{-1}}{G} + \parens{1+s}8c\delta}.
\end{align*}
Using the assumption on the stepsize and Lemma \ref{lemma:step_lemma}, we have
\begin{equation*}
    \frac{1}{2\gamma} - \frac{L}{2} - \frac{\gamma\eta}{2} \geq 0
\end{equation*}
and hence
\begin{equation*}
    \EE{\Phi_{t+1}} \leq \parens{1-\gamma \mu\kappa}\EE{\Phi_t} + 4\gamma c\delta(1+s)\zeta^2.
\end{equation*}
Unrolling the recurrence and rearranging the terms, we get
\begin{align*}
	\EE{\Phi_{T}} &\leq \parens{1-\gamma \mu\kappa}^{T}\EE{\Phi_0} + 4c\delta\gamma(1+s)\zeta^2\sum \limits_{t=0}^{T}\parens{1-\gamma \mu\kappa}^{t} \\
    &\leq \parens{1-\gamma \mu\kappa}^{T}\EE{\Phi_0} + 4c\delta\gamma(1+s)\zeta^2\sum \limits_{t=0}^{\infty}\parens{1-\gamma \mu\kappa}^{t} \\
    &= \parens{1-\gamma \mu\kappa}^{T}\EE{\Phi_0} + \frac{4c\delta(1+s)\zeta^2}{ \mu\kappa}.
\end{align*}
Noting that $\Phi_T \geq f(x^T) - f^*$ and letting $s=\frac{1}{\sqrt{8c\delta G}}$ gives the result.
\end{proof}

\subsection{\algname{Byz-EF21} and \algname{Byz-EF21-BC}}

\begin{theorem}\label{thm:main_result_BC_PL}
    Let Assumptions \ref{as:smoothness}, \ref{as:hessian_variance}, \ref{as:bounded_heterogeneity} and \ref{as:pl} hold and suppose that
    \begin{align}\label{eq:gamma_pl_bc}
	0 < \gamma \leq \min \left\{ \frac{1}{L + \sqrt{\eta}}, \frac{\alpha_D}{8\kappa\mu}, \frac{\alpha_P}{4\kappa\mu} \right\}, \qquad \delta < \frac{1}{8c(\sqrt{B}+B)^2},
    \end{align}
    where $\eta = \frac{64}{\alpha_D^2} \parens{1 + \frac{10}{\alpha_P^2}\parens{1 - \frac{\alpha_P}{4}}} \parens{1 + \sqrt{8c\delta}}^2 \parens{L_{\pm}^2 + L^2}$ and $\kappa = 1 - 8 B c\delta \left(1+\frac{1}{\sqrt{8c\delta}} \right)$. Then for all $T \geq 0$ the iterates of \algname{Byz-EF21-BC} satisfy
    \begin{eqnarray*}
        \EE{f(x^{T}) - f^*}
        &\leq& \parens{1 - \gamma\mu\parens{1 - \parens{8 c\delta+\sqrt{8c\delta}}B}}^T \Psi^{0}
        + \frac{\parens{4 c\delta+\sqrt{2c\delta}} \zeta^2}{\parens{1 - \parens{8 c\delta+\sqrt{8c\delta}}B}\mu},
    \end{eqnarray*}
    where $\Psi^0 \eqdef f(x^{t}) - f^* + \frac{4\gamma}{\alpha_D} \parens{1 + \sqrt{8c\delta}}^2 H^{0} + \frac{320 \gamma}{2\alpha_P \alpha_D^2} \parens{1 - \frac{\alpha_P}{2}} \parens{1 + \sqrt{8c\delta}}^2 \parens{L_{\pm}^2 + L^2} G^{0}$.  
\end{theorem}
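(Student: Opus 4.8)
The argument follows the blueprint of the proof of Theorem~\ref{thm:Byz_EF21_BC}, the only structural change being that the descent inequality is converted into a geometrically contracting one via the P{\L} condition, exactly as Lemma~\ref{lemma:byz_BVMNS_pl} does for \algname{Byz-VR-MARINA 2.0} and Lemma~\ref{lemma:descent_dp_pl} for \algname{Byz-DASHA-PAGE}. So the first step is to record the P{\L} version of Lemma~\ref{lemma:descent_byz_ef21}: substituting $\norm{\nabla f(x^t)}^2 \ge 2\mu\parens{f(x^t)-f^*}$ into the $-\tfrac{\gamma\kappa}{2}\EE{\norm{\nabla f(x^t)}^2}$ term of that lemma yields
\[
\EE{f(x^{t+1})-f^*} \le \parens{1-\gamma\mu\kappa}\EE{f(x^t)-f^*} - \parens{\tfrac{1}{2\gamma}-\tfrac{L}{2}}\EE{R^t} + \tfrac{\gamma}{2}\parens{1+\sqrt{8c\delta}}^2\EE{H^t} + 4\gamma c\delta\parens{1+\tfrac{1}{\sqrt{8c\delta}}}\zeta^2,
\]
with $\kappa = 1-8Bc\delta\parens{1+\tfrac{1}{\sqrt{8c\delta}}}$.

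\textbf{Building the contracting potential.} Next I would combine this with the two recursions from the non-P{\L} analysis: add an $\tfrac{4\gamma}{\alpha_D}\parens{1+\sqrt{8c\delta}}^2$ multiple of \eqref{eq:gf_rec_byzef21bc} (twice the multiplier used in Theorem~\ref{thm:Byz_EF21_BC}, leaving room for the extra $-\gamma\mu\kappa$ decay), then a suitable multiple of \eqref{eq:g_byzef21bc} to reabsorb the $\EE{G^{t+1}}$ cross-term generated by \eqref{eq:gf_rec_byzef21bc}. The target is the potential $\Psi^t$ from the theorem statement (whose $H^t$- and $G^t$-coefficients are exactly the resulting multipliers), for which the combined inequality should read $\EE{\Psi^{t+1}} \le \parens{1-\gamma\mu\kappa}\EE{\Psi^t} - \parens{\tfrac{1}{2\gamma}-\tfrac{L}{2}-\tfrac{\gamma\eta}{2}}\EE{R^t} + 4\gamma c\delta\parens{1+\tfrac{1}{\sqrt{8c\delta}}}\zeta^2$. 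Crucially, each of the three pieces of $\Psi^t$ must now decay at the common rate $1-\gamma\mu\kappa$; since \eqref{eq:gf_rec_byzef21bc} and \eqref{eq:g_byzef21bc} only contract at rates $1-\tfrac{\alpha_D}{4}$ and $1-\tfrac{\alpha_P}{2}$, this is possible only if $\gamma\mu\kappa \le \tfrac{\alpha_D}{8}$ and $\gamma\mu\kappa \le \tfrac{\alpha_P}{4}$, i.e.\ precisely the two new stepsize caps $\gamma \le \tfrac{\alpha_D}{8\kappa\mu}$ and $\gamma \le \tfrac{\alpha_P}{4\kappa\mu}$. Collecting all the $\EE{R^t}$ contributions (from the descent lemma, from \eqref{eq:gf_rec_byzef21bc} via its $\tfrac{8}{\alpha_D}\parens{L_{\pm}^2+L^2}$ term, and from \eqref{eq:g_byzef21bc} via its $\tfrac{2}{\alpha_P}$ term) produces the stated $\eta = \tfrac{64}{\alpha_D^2}\parens{1+\tfrac{10}{\alpha_P^2}(1-\tfrac{\alpha_P}{4})}\parens{1+\sqrt{8c\delta}}^2\parens{L_{\pm}^2+L^2}$, and then $\gamma \le \parens{L+\sqrt{\eta}}^{-1}$ together with Lemma~\ref{lemma:step_lemma} gives $\tfrac{1}{2\gamma}-\tfrac{L}{2}-\tfrac{\gamma\eta}{2}\ge 0$, so the $\EE{R^t}$ term drops out.

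\textbf{Finishing.} It remains to check positivity of $\kappa$ and to unroll. For the former, $8Bc\delta\parens{1+\tfrac{1}{\sqrt{8c\delta}}} = B\parens{8c\delta + \sqrt{8c\delta}}$, so $\kappa = 1-\parens{8c\delta+\sqrt{8c\delta}}B$; writing $u = \sqrt{8c\delta}$, the hypothesis $\delta < \parens{8c(\sqrt{B}+B)^2}^{-1}$ is equivalent to $u < \parens{\sqrt B + B}^{-1}$, which a short computation shows forces $B\parens{u^2+u} < 1$, i.e.\ $\kappa > 0$. With $\kappa > 0$, iterating $\EE{\Psi^{t+1}} \le \parens{1-\gamma\mu\kappa}\EE{\Psi^t} + 4\gamma c\delta\parens{1+\tfrac{1}{\sqrt{8c\delta}}}\zeta^2$ for $t=0,\dots,T-1$ and summing the geometric tail $\sum_{t\ge 0}\parens{1-\gamma\mu\kappa}^t = \parens{\gamma\mu\kappa}^{-1}$ gives $\EE{\Psi^T} \le \parens{1-\gamma\mu\kappa}^T\Psi^0 + \tfrac{4c\delta\parens{1+1/\sqrt{8c\delta}}}{\mu\kappa}\zeta^2$. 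Using $4c\delta\parens{1+\tfrac{1}{\sqrt{8c\delta}}} = 4c\delta+\sqrt{2c\delta}$, the value of $\kappa$ above, and $\Psi^T \ge f(x^T)-f^*$ yields the claimed estimate.

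\textbf{Main obstacle.} The genuinely delicate part is the simultaneous choice of the two Lyapunov coefficients and the two recursion multipliers so that (i) the $\EE{G^{t+1}}$ term produced by \eqref{eq:gf_rec_byzef21bc} is exactly cancelled after substituting \eqref{eq:g_byzef21bc}, (ii) all three components of $\Psi^t$ contract at the single rate $1-\gamma\mu\kappa$ rather than at their own faster per-block rates, and (iii) the residual $\EE{R^t}$ coefficient collapses to $\tfrac{1}{2\gamma}-\tfrac{L}{2}-\tfrac{\gamma\eta}{2}$ with the announced $\eta$ (this is what pins down both the extra $\alpha_D$- and $\alpha_P$-dependent stepsize bounds and the $\parens{1+\tfrac{10}{\alpha_P^2}(1-\tfrac{\alpha_P}{4})}$ factor in $\eta$). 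Everything else — the P{\L} upgrade of the descent lemma, the sign check on $\kappa$, and the geometric summation — is routine.
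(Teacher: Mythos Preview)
Your proposal is correct and follows essentially the same approach as the paper's proof. The only cosmetic difference is the order of operations: you apply the P{\L} inequality to the descent lemma upfront and then combine with the $H^t$- and $G^t$-recursions, whereas the paper first builds the full Lyapunov inequality (adding the $\tfrac{4\gamma}{\alpha_D}(1+\sqrt{8c\delta})^2$ multiple of \eqref{eq:gf_rec_byzef21bc} and then the $\tfrac{5\gamma\nu}{2\alpha_P}(1-\tfrac{\alpha_P}{4})$ multiple of \eqref{eq:g_byzef21bc}) and only afterwards invokes P{\L}; both orderings are equivalent since the P{\L} step touches only the $\norm{\nabla f(x^t)}^2$ term. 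Your identification of the doubled multiplier for the $H^t$-recursion, the resulting stepsize caps $\gamma\mu\kappa\le \tfrac{\alpha_D}{8}$ and $\gamma\mu\kappa\le \tfrac{\alpha_P}{4}$, the expression for $\eta$, and the geometric summation all match the paper exactly.
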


\begin{proof}
    Adding a $\frac{4\gamma}{\alpha_D} \parens{1 + \sqrt{8c\delta}}^2$ multiple of \eqref{eq:gf_rec_byzef21bc} to the inequality from Lemma \ref{lemma:descent_byz_ef21} gives
    \begin{align*}
        \EE{f(x^{t+1})} &+ \frac{4\gamma}{\alpha_D} \parens{1 + \sqrt{8c\delta}}^2 \EE{H^{t+1}} \\
        &\leq \EE{f(x^t)} - \frac{\gamma\kappa}{2} \EE{\norm{\nabla f(x^{t})}^{2}}
        - \left(\frac{1}{2\gamma} - \frac{L}{2}\right)
        \EE{R^t}\\
        &\qquad+ \frac{\gamma}{2} \parens{1 + \sqrt{8c\delta}}^2 \EE{H^t}
        + 4\gamma c\delta \left(1+\frac{1}{\sqrt{8c\delta}} \right)\zeta^2 \\
        &\qquad + \frac{4\gamma}{\alpha_D} \parens{1 + \sqrt{8c\delta}}^2 \left(1-\frac{\alpha_D}{4}\right) \EE{H^t} \\
        &\qquad + \underbrace{\frac{4\gamma}{\alpha_D} \parens{1 + \sqrt{8c\delta}}^2 \frac{8}{\alpha_D} \parens{L_{\pm}^2 + L^2}}_{\frac{\gamma\nu}{2}} \EE{R^t} \\
        &\qquad + \underbrace{\frac{4\gamma}{\alpha_D} \parens{1 + \sqrt{8c\delta}}^2 \frac{10}{\alpha_D} \parens{L_{\pm}^2 + L^2}}_{\frac{5\gamma\nu}{8}} \EE{G^{t+1}} \\
        &= \EE{f(x^t)} + \frac{4\gamma}{\alpha_D} \parens{1 + \sqrt{8c\delta}}^2 \parens{1 - \frac{\alpha_D}{8}} \EE{H^t}
        - \frac{\gamma\kappa}{2} \EE{\norm{\nabla f(x^{t})}^{2}} \\
        &\qquad - \left(\frac{1}{2\gamma} - \frac{L}{2} - \frac{\gamma\nu}{2}\right)
        \EE{R^t}
        + 4\gamma c\delta \left(1+\frac{1}{\sqrt{8c\delta}} \right)\zeta^2
        + \frac{5\gamma\nu}{8} \EE{G^{t+1}},
    \end{align*}
    where $\nu \eqdef \frac{64}{\alpha_D^2} \parens{1 + \sqrt{8c\delta}}^2 \parens{L_{\pm}^2 + L^2}$.
    Adding $\frac{5\gamma\nu}{2\alpha_P}\parens{1 - \frac{\alpha_P}{4}}$ multiple of \eqref{eq:g_byzef21bc}, we obtain
    \begin{align*}
        \EE{f(x^{t+1})} &+ \frac{4\gamma}{\alpha_D} \parens{1 + \sqrt{8c\delta}}^2 \EE{H^{t+1}}
        + \frac{5\gamma\nu}{2\alpha_P}\parens{1 - \frac{\alpha_P}{4}} \EE{G^{t+1}} \\
        &\leq \EE{f(x^t)} + \frac{4\gamma}{\alpha_D} \parens{1 + \sqrt{8c\delta}}^2 \parens{1 - \frac{\alpha_D}{8}} \EE{H^t}
        - \frac{\gamma\kappa}{2} \EE{\norm{\nabla f(x^{t})}^{2}} \\
        &\qquad - \left(\frac{1}{2\gamma} - \frac{L}{2} - \frac{\gamma\nu}{2}\right)
        \EE{R^t}
        + 4\gamma c\delta \left(1+\frac{1}{\sqrt{8c\delta}} \right)\zeta^2
        + \frac{5\gamma\nu}{8} \EE{G^{t+1}} \\
        &\qquad+ \frac{5\gamma\nu}{2\alpha_P}\parens{1 - \frac{\alpha_P}{4}} \left(1 - \frac{\alpha_P}{2}\right) \EE{G^t}
        + \frac{5\gamma\nu}{2\alpha_P}\parens{1 - \frac{\alpha_P}{4}} \frac{2}{\alpha_P} \EE{R^t} \\
        &= \EE{f(x^t)} + \frac{4\gamma}{\alpha_D} \parens{1 + \sqrt{8c\delta}}^2 \parens{1 - \frac{\alpha_D}{8}} \EE{H^t}
        - \frac{\gamma\kappa}{2} \EE{\norm{\nabla f(x^{t})}^{2}} \\
        &\qquad - \left(\frac{1}{2\gamma} - \frac{L}{2} - \frac{\gamma\eta}{2}\right)
        \EE{R^t}
        + 4\gamma c\delta \left(1+\frac{1}{\sqrt{8c\delta}} \right)\zeta^2
        + \frac{5\gamma\nu}{8} \EE{G^{t+1}} \\
        &\qquad+ \frac{5\gamma\nu}{2\alpha_P}\parens{1 - \frac{\alpha_P}{4}} \left(1 - \frac{\alpha_P}{2}\right) \EE{G^t},
    \end{align*}
    where $\eta \eqdef \nu + \frac{10\nu}{\alpha_P^2}\parens{1 - \frac{\alpha_P}{4}}$.
    Rearranging the above inequality gives
    \begin{align*}
        \EE{f(x^{t+1})} &+ \frac{4\gamma}{\alpha_D} \parens{1 + \sqrt{8c\delta}}^2 \EE{H^{t+1}}
        + \frac{5\gamma\nu}{2\alpha_P}\parens{1 - \frac{\alpha_P}{2}} \EE{G^{t+1}} \\
        &\leq \EE{f(x^t)} + \frac{4\gamma}{\alpha_D} \parens{1 + \sqrt{8c\delta}}^2 \parens{1 - \frac{\alpha_D}{8}} \EE{H^t}
        - \frac{\gamma\kappa}{2} \EE{\norm{\nabla f(x^{t})}^{2}} \\
        &\qquad - \left(\frac{1}{2\gamma} - \frac{L}{2} - \frac{\gamma\eta}{2}\right)
        \EE{R^t}
        + 4\gamma c\delta \left(1+\frac{1}{\sqrt{8c\delta}} \right)\zeta^2 \\
        &\qquad+ \frac{5\gamma\nu}{2\alpha_P} \parens{1 - \frac{\alpha_P}{2}} \parens{1 - \frac{\alpha_P}{4}} \EE{G^t} \\
        &\leq \EE{f(x^t)} + \frac{4\gamma}{\alpha_D} \parens{1 + \sqrt{8c\delta}}^2 \parens{1 - \frac{\alpha_D}{8}} \EE{H^t}
        - \frac{\gamma\kappa}{2} \EE{\norm{\nabla f(x^{t})}^{2}} \\
        &\qquad + 4\gamma c\delta \left(1+\frac{1}{\sqrt{8c\delta}} \right)\zeta^2
        + \frac{5\gamma\nu}{2\alpha_P} \parens{1 - \frac{\alpha_P}{2}} \parens{1 - \frac{\alpha_P}{4}} \EE{G^t},
    \end{align*}
    where in the last inequality we use the fact that $\frac{1}{2\gamma} - \frac{L}{2} - \frac{\gamma\eta}{2} \geq 0$ by Lemma \ref{lemma:step_lemma} and our assumption on the stepsize.
    Now, let us define $\Psi^t = f(x^{t}) - f^* + \frac{4\gamma}{\alpha_D} \parens{1 + \sqrt{8c\delta}}^2 H^{t} + \frac{5\gamma\nu}{2\alpha_P}\parens{1 - \frac{\alpha_P}{2}} G^{t}$. Subtracting $f^*$ from both sides of the above inequality and noting that our assumption on $\delta$ implies that $\kappa > 0$, the PŁ inequality (Assumption \ref{as:pl}) gives
    \begin{eqnarray*}
        \EE{\Psi^{t+1}}
        &=& \EE{f(x^{t+1}) - f^*} + \frac{4\gamma}{\alpha_D} \parens{1 + \sqrt{8c\delta}}^2 \EE{H^{t+1}}
        + \frac{5\gamma\nu}{2\alpha_P}\parens{1 - \frac{\alpha_P}{2}} \EE{G^{t+1}} \\
        &\stackrel{\eqref{as:pl}}{\leq}& \EE{f(x^t) - f^*} + \frac{4\gamma}{\alpha_D} \parens{1 + \sqrt{8c\delta}}^2 \parens{1 - \frac{\alpha_D}{8}} \EE{H^t}
        - \gamma\kappa\mu \EE{f(x^t) - f^*} \\
        &&\qquad + 4\gamma c\delta \left(1+\frac{1}{\sqrt{8c\delta}} \right)\zeta^2
        + \frac{5\gamma\nu}{2\alpha_P} \parens{1 - \frac{\alpha_P}{2}} \parens{1 - \frac{\alpha_P}{4}} \EE{G^t} \\
        &=& \parens{1 - \gamma\kappa\mu} \EE{f(x^t) - f^*} + \frac{4\gamma}{\alpha_D} \parens{1 + \sqrt{8c\delta}}^2 \parens{1 - \frac{\alpha_D}{8}} \EE{H^t} \\
        &&\qquad + \frac{5\gamma\nu}{2\alpha_P} \parens{1 - \frac{\alpha_P}{2}} \parens{1 - \frac{\alpha_P}{4}} \EE{G^t}
        + 4\gamma c\delta \left(1+\frac{1}{\sqrt{8c\delta}} \right)\zeta^2 \\
        &\stackrel{\eqref{eq:gamma_pl_bc}}{\leq}& \parens{1 - \gamma\kappa\mu} \EE{\Psi^t}
        + 4\gamma c\delta \left(1+\frac{1}{\sqrt{8c\delta}} \right)\zeta^2,
    \end{eqnarray*}
    where in the last step we use the assumption on $\gamma$ to establish that $1 - \frac{\alpha_D}{8} \leq 1 - \gamma\kappa\mu$ and $1 - \frac{\alpha_P}{4} \leq 1 - \gamma\kappa\mu$. Applying the inequality iteratively gives
    \begin{eqnarray*}
        \EE{\Psi^{T}}
        &\leq& \parens{1 - \gamma\kappa\mu}^T \EE{\Psi^{0}}
        + 4\gamma c\delta \left(1+\frac{1}{\sqrt{8c\delta}} \right)\zeta^2 \sum_{t=0}^{T-1} \parens{1 - \gamma\kappa\mu}^t \\
        &\leq& \parens{1 - \gamma\kappa\mu}^T \EE{\Psi^{0}}
        + 4\gamma c\delta \left(1+\frac{1}{\sqrt{8c\delta}} \right)\zeta^2 \sum_{t=0}^{\infty} \parens{1 - \gamma\kappa\mu}^t \\
        &=& \parens{1 - \gamma\kappa\mu}^T \EE{\Psi^{0}}
        + \frac{4 c\delta}{\kappa\mu} \left(1+\frac{1}{\sqrt{8c\delta}} \right)\zeta^2.
    \end{eqnarray*}
    Noting that $\EE{\Psi^{T}} \geq \EE{f(x^{T}) - f^*}$, we finish the proof.   
\end{proof}

\clearpage

\section{NUMERICAL EXPERIMENTS: ADDITIONAL DETAILS AND RESULTS}\label{appendix:extra_exp_details}

\subsection{Logistic regression experiments}

\paragraph{Objective:}
We consider solving the following logistic regression problem with regularization $r$:
\begin{align*}
    \min_{x \in \R^d} \brac{f(x) = \frac{1}{N} \sum_{i=1}^N \log\left( 1 + \exp\left( -y_i a_i^Tx \right) \right) + \frac{\lambda}{2} r(x)},
\end{align*}
where $d$ is the dimension of the model, $N$ is the total number of samples, $a_i^{T}$ is the $i$th row of the design matrix $A \in \mathbb{R}^{N\times d}$, $y_i \in \{-1, 1\}$ are the corresponding labels and $\lambda > 0$ is the regularization parameter. We choose $\lambda = 0.1$ in all experiments.
The regularizer is chosen depending on the nature of the problem we are treating:
\begin{itemize}
    \item \textbf{Non-convex case:} a non-convex regularizer $r:$ $x \mapsto \sum_{j=1}^{d} \frac{x_j^2}{1+x_j^2}$
    \item \textbf{P{\L} case:} the ridge regularization $r:$ $x \mapsto \norm{x}^{2}$. It can be shown that for this chice of $r$, for $\lambda >0$, the objective function is strongly convex and therefore satisfies the PŁ condition.
\end{itemize}

\paragraph{Datasets:}

We consider two \texttt{LibSVM} datasets \citep{chang2011libsvm} (summarised in Table \ref{table:2}). In each case, the data is distributed among $n=16$ workers, out of which $3$ are Byzantine.
\begin{table}[H]
\centering
\caption{Overview of the \texttt{LibSVM} datasets used.}
\label{table:2}
\begin{tabular}{c c c} 
 \hline
 Dataset & N (\text{\# samples}) & d (\text{\# features}) \\
 \hline
 \texttt{phishing} & 11,055 & 68 \\
 \texttt{w8a} & 49,749 & 300 \\
 \hline
\end{tabular}
\end{table}

\noindent As for the data distribution among the workers, we consider two scenarios:
\begin{itemize}
    \item \textbf{Homogeneous setting:} all the workers (regular and Byzantine) have access to the entire dataset (hence the gradients calculated by the good workers are all equal).
    \item \textbf{Heterogeneous setting:} the data is evenly distributed among good workers, without any uniform sampling or shuffling, so that each worker has access to approximately the same amount of data and there is no overlap. Since we make no assumptions on the behaviour of the bad workers, they have access to the full dataset.
\end{itemize}

\paragraph{Byzantine attacks and aggregation rule:}

In each setting, we consider $4$ different byzantine attacks:
\begin{itemize}
    \item \textbf{Bit Flipping (BF)}: Byzantine workers compute $-\nabla f(x)$ and send it to the server.
    \item \textbf{Label Flipping (LF)}: Byzantine workers compute their gradients using poisoned labels  (i.e., $y_i \rightarrow - y_i$)
    \item \textbf{A Little Is Enough (ALIE)} \citep{baruch2019little}: Byzantine workers compute empirical mean $\mu_G$ and standard deviation $\sigma_G$ of $\{g_i^t\}_{i\in G}$ and send $\mu_G - z\sigma_G$ to the server, where $z$ is a constant that controls the strength of the attack.
    \item \textbf{Inner Product Manipulation (IPM)} \citep{xie2020fall}: Byzantine workers send $-\frac{z}{G} \sum_{i\in \cG} \nabla f_i(x)$ to the server, where $z > 0$ is a constant that controls the strength of the attack.
\end{itemize}
The aggregation rule is the Coordinate-wise Median (CM) aggregator \citep{yin2018byzantine} with bucketing \citep{karimireddy2020byzantine} (see Appendix \ref{appendix:robust_aggr_and_compr}).

\paragraph{Parameters choice:}

For each method, we finetune the stepsize within the set $\{2^{k} \times \gamma_{th}\}_{k \in \mathbb{N}}$, where $\gamma_{th}$ is the theoretical stepsize of the algorithm under consideration (indicated by $1\times, 2\times, 4\times, \ldots$ in the plots). The momentum parameter $a$ and probability $p$ are chosen to be the optimal values predicted by theory, and the value of the constant $c$  from Definition \ref{def:RAgg_def} is determined using the formula from \citet{karimireddy2020byzantine}.


In algorithms with stochastic gradients, uniform sampling with no replacement and batch size $b=0.01m$ is used.
In experiments comparing methods using unbiased compressors, we use the Rand$K$ compressor with $K=0.1d$. In error feedback experiments (Appendix  \ref{appendix:ef_exp}), \algname{Byz-VR-MARINA} and \algname{Byz-EF21} employ the Rand$K$ and Top$K$ sparsifiers, respectively, with $K=1$.

\subsubsection{Extra dataset}

\begin{figure}[H]
\centering
\begin{subfigure}{.24\textwidth}
  \centering
  \includegraphics[width=1\linewidth]{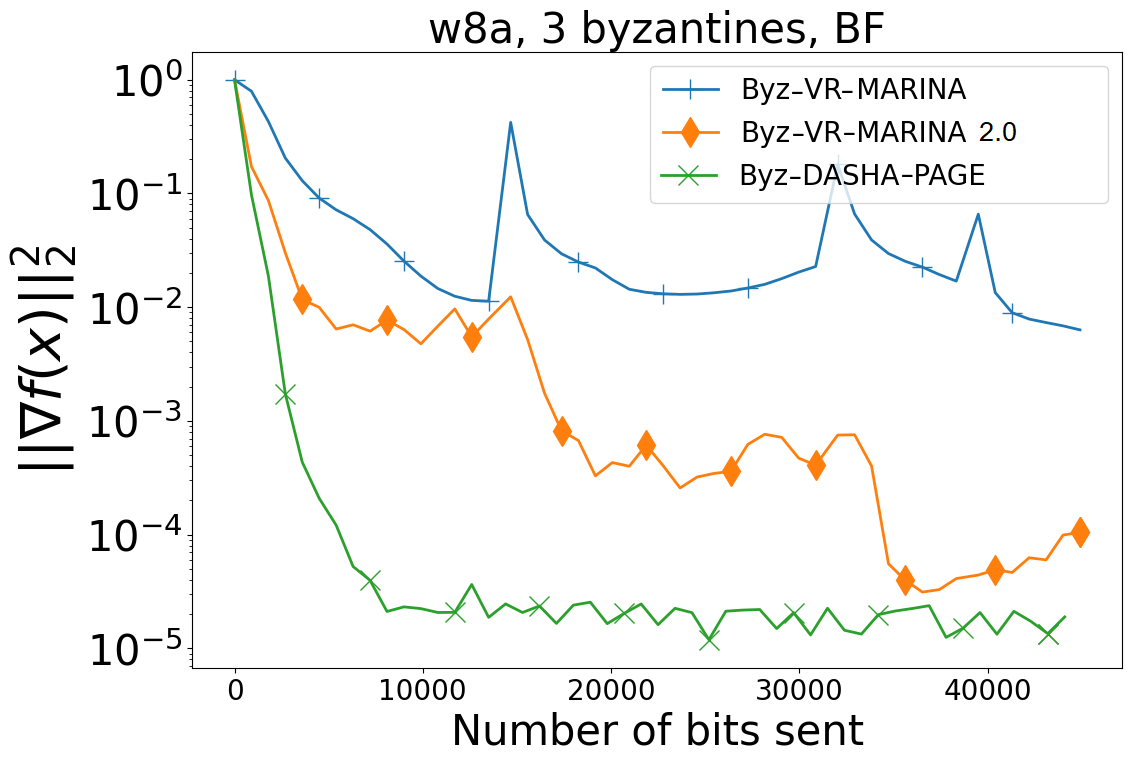}
  \caption{BF attack}
\end{subfigure}%
\begin{subfigure}{.24\textwidth}
  \centering
  \includegraphics[width=1\linewidth]{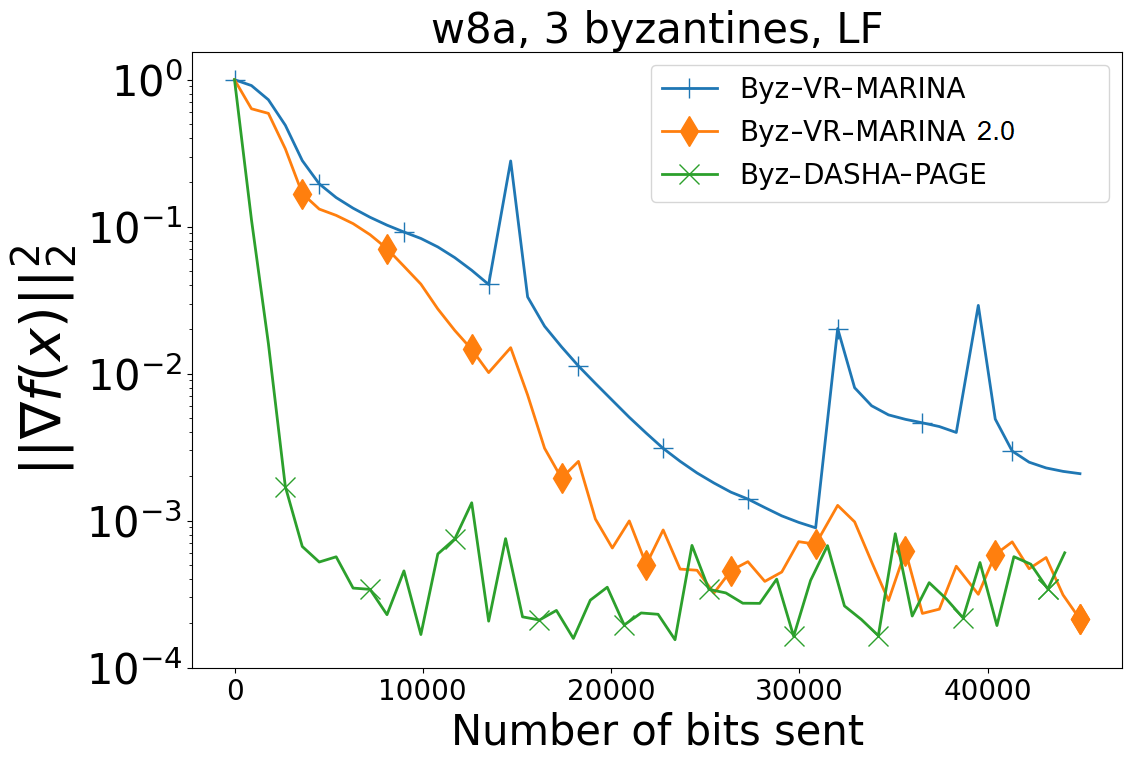}
  \caption{LF attack}
\end{subfigure}
\begin{subfigure}{.24\textwidth}
  \centering
  \includegraphics[width=1\linewidth]{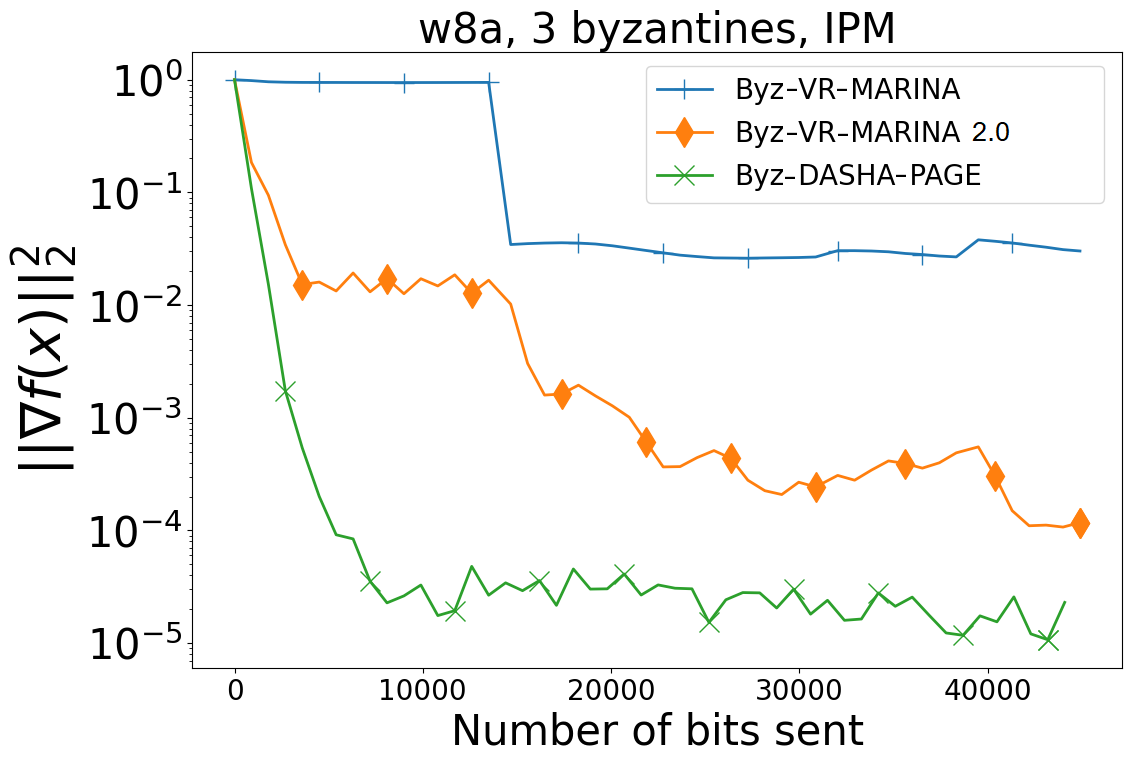}
  \caption{IPM attack}
\end{subfigure}
\begin{subfigure}{.24\textwidth}
  \centering
  \includegraphics[width=1\linewidth]{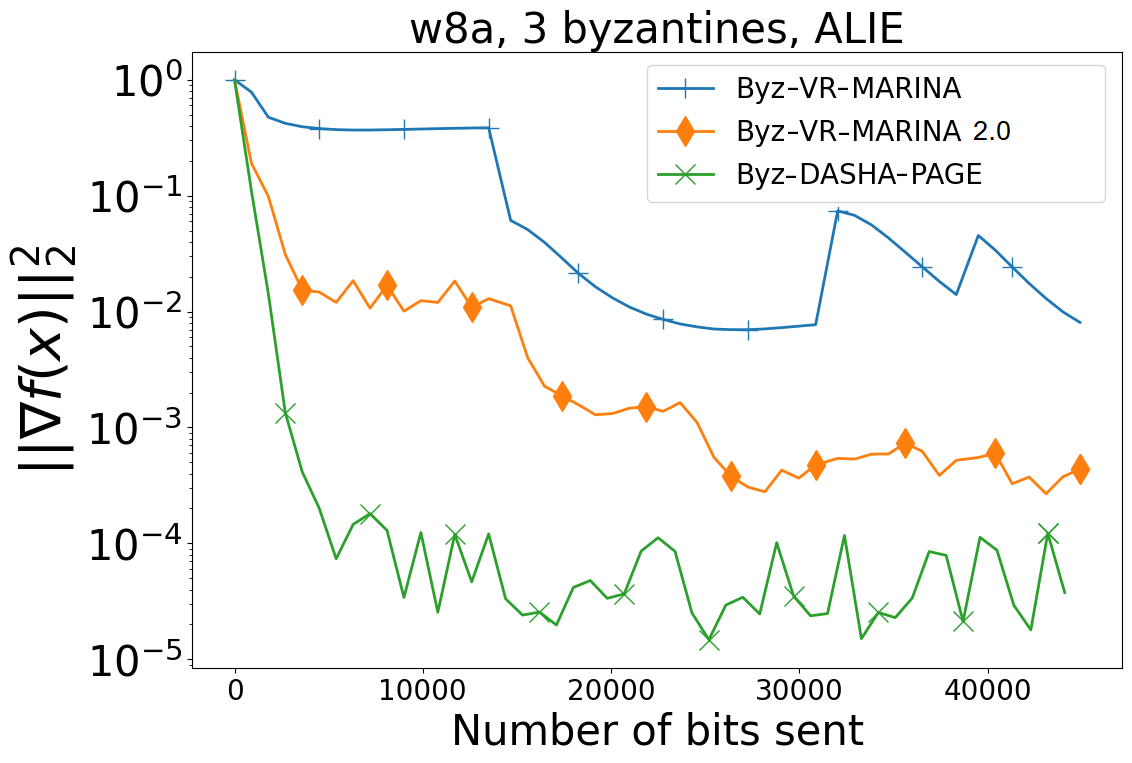}
  \caption{ALIE attack}
\end{subfigure}
\caption{
Communication complexity comparison in the heterogeneous non-convex setting on the \texttt{w8a} dataset.}
\label{fig:NC_w8a}
\end{figure}

Similarly to the results obtained with the \texttt{phishing} dataset, both \algname{Byz-VR-MARINA 2.0} and \algname{Byz-DASHA-PAGE} have superior performance compared to \algname{Byz-VR-MARINA} when tested on the \texttt{w8a} dataset (Figure~\ref{fig:NC_w8a}), across all types of attacks. Our methods converge faster and achieve better accuracy.

\subsubsection{Convergence under Polyak-Łojasiewicz condition}

\begin{figure}[H]
\centering
\begin{subfigure}{.24\textwidth}
  \centering
  \includegraphics[width=1\linewidth]{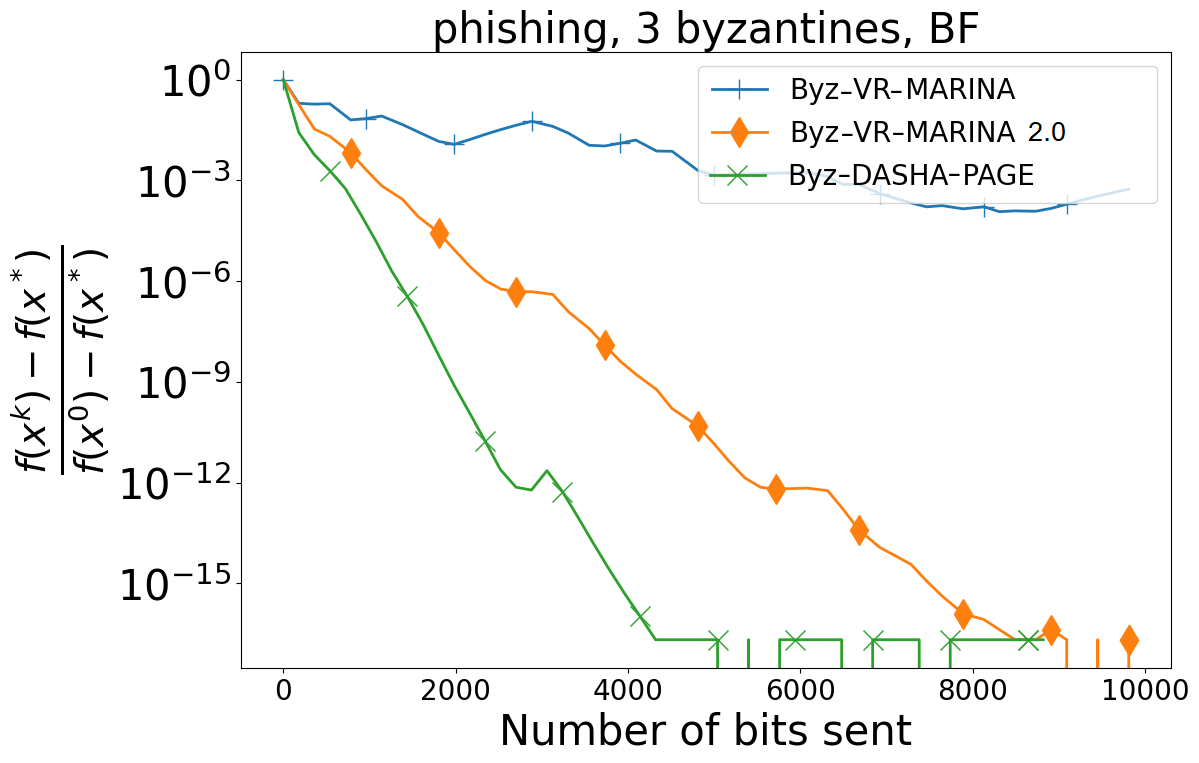}
  \caption{BF attack}
\end{subfigure}%
\begin{subfigure}{.24\textwidth}
  \centering
  \includegraphics[width=1\linewidth]{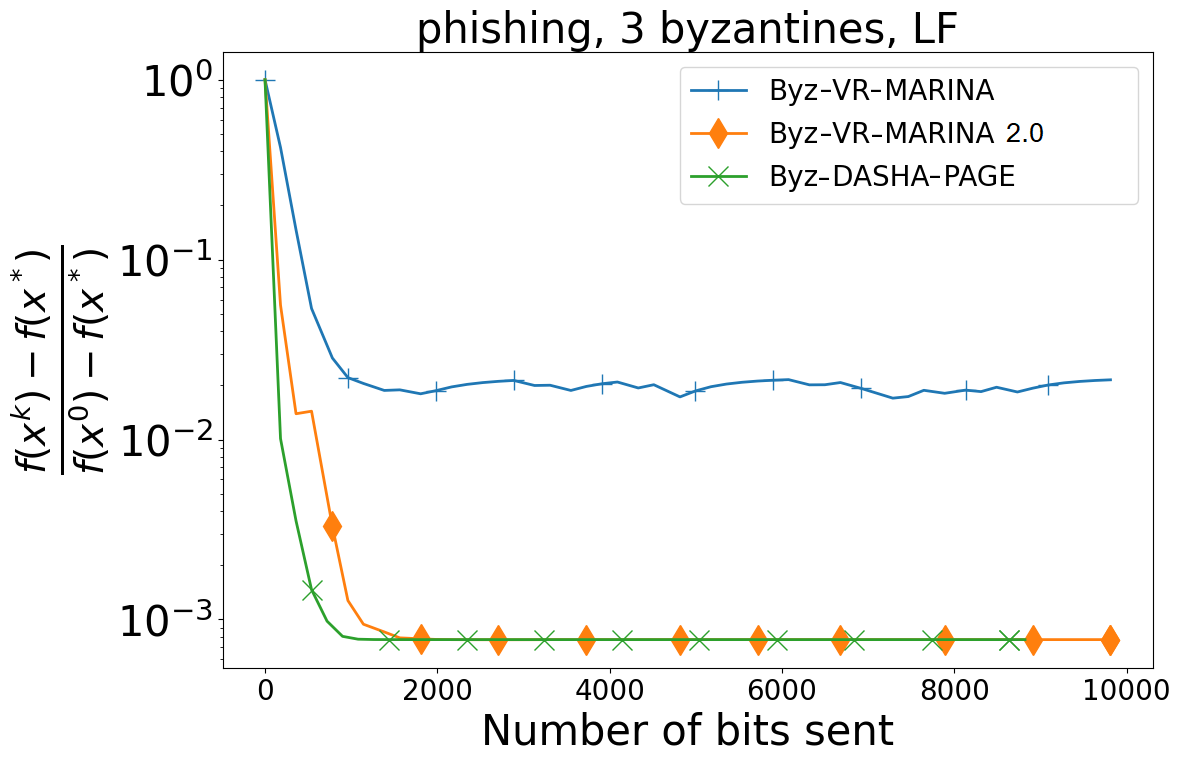}
  \caption{LF attack}
\end{subfigure}
\begin{subfigure}{.24\textwidth}
  \centering
  \includegraphics[width=1\linewidth]{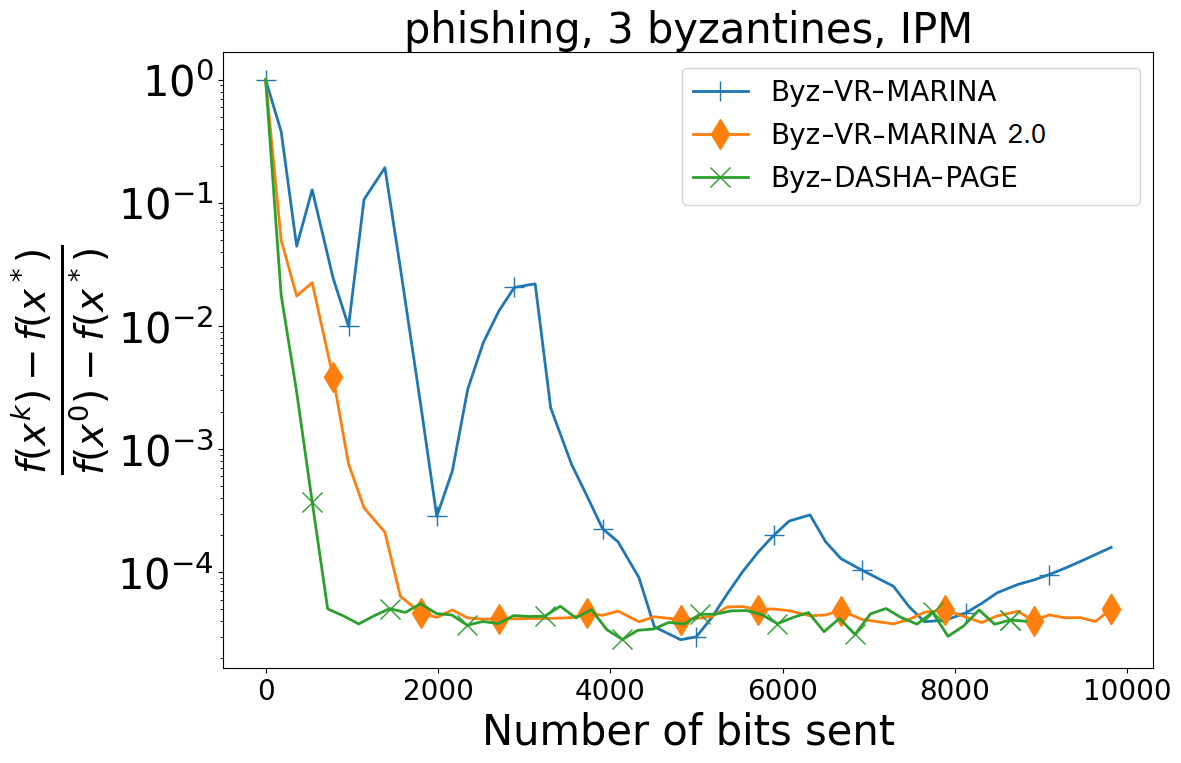}
  \caption{IPM attack}
\end{subfigure}
\begin{subfigure}{.24\textwidth}
  \centering
  \includegraphics[width=1\linewidth]{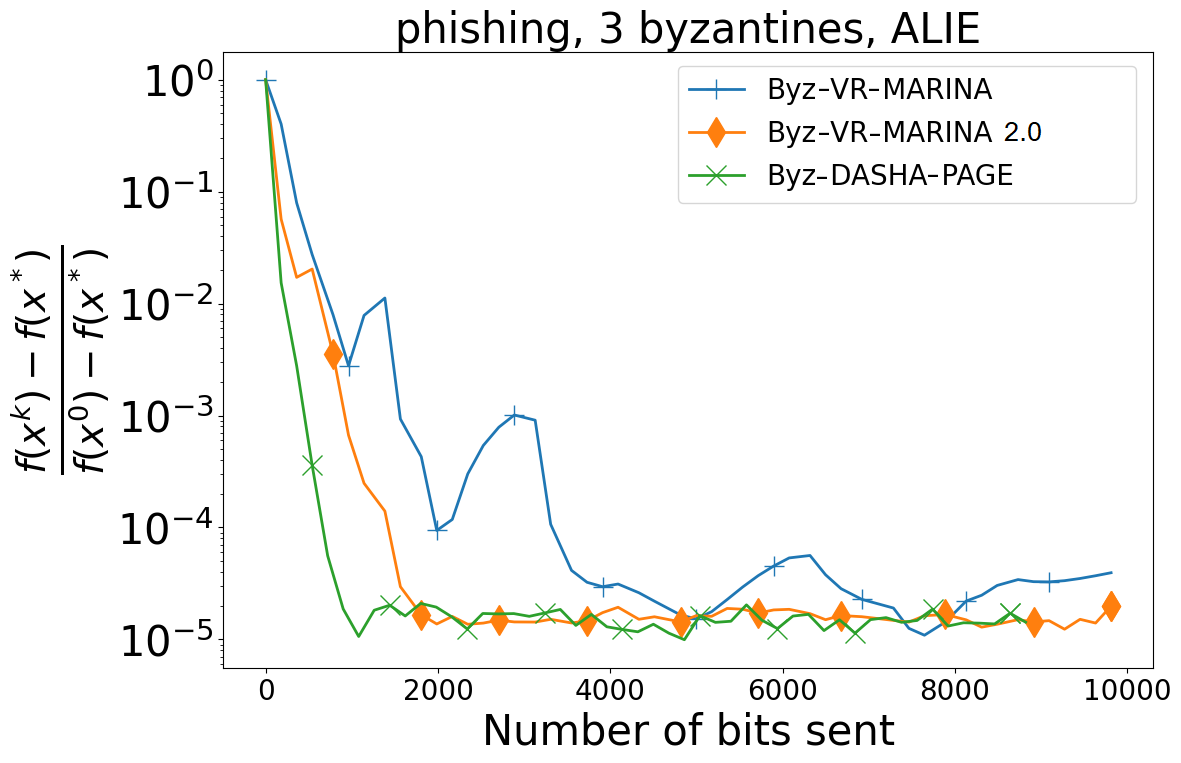}
  \caption{ALIE attack}
\end{subfigure}
\caption{
Communication complexity comparison in the heterogeneous strongly convex setting on the \texttt{phishing} dataset.}
\label{fig:pl1}
\end{figure}

\begin{figure}[H]
\centering
\begin{subfigure}{.24\textwidth}
  \centering
  \includegraphics[width=1\linewidth]{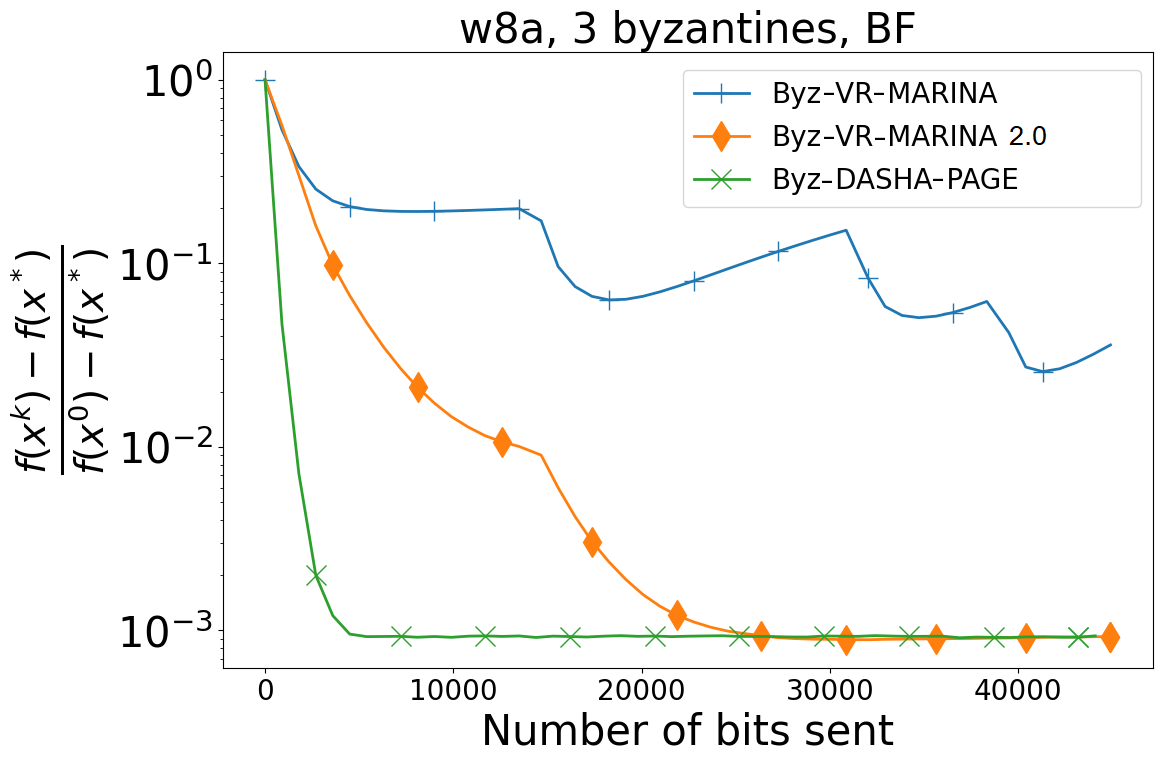}
  \caption{BF attack}
\end{subfigure}%
\begin{subfigure}{.24\textwidth}
  \centering
  \includegraphics[width=1\linewidth]{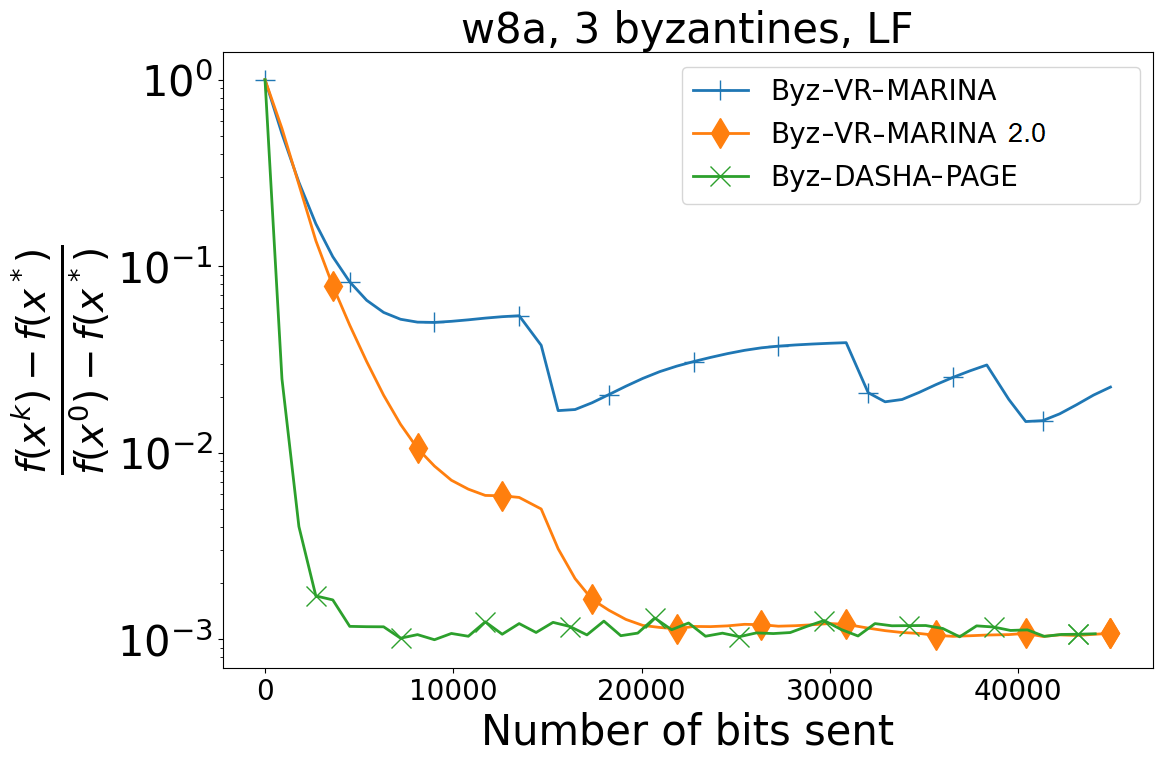}
  \caption{LF attack}
\end{subfigure}
\begin{subfigure}{.24\textwidth}
  \centering
  \includegraphics[width=1\linewidth]{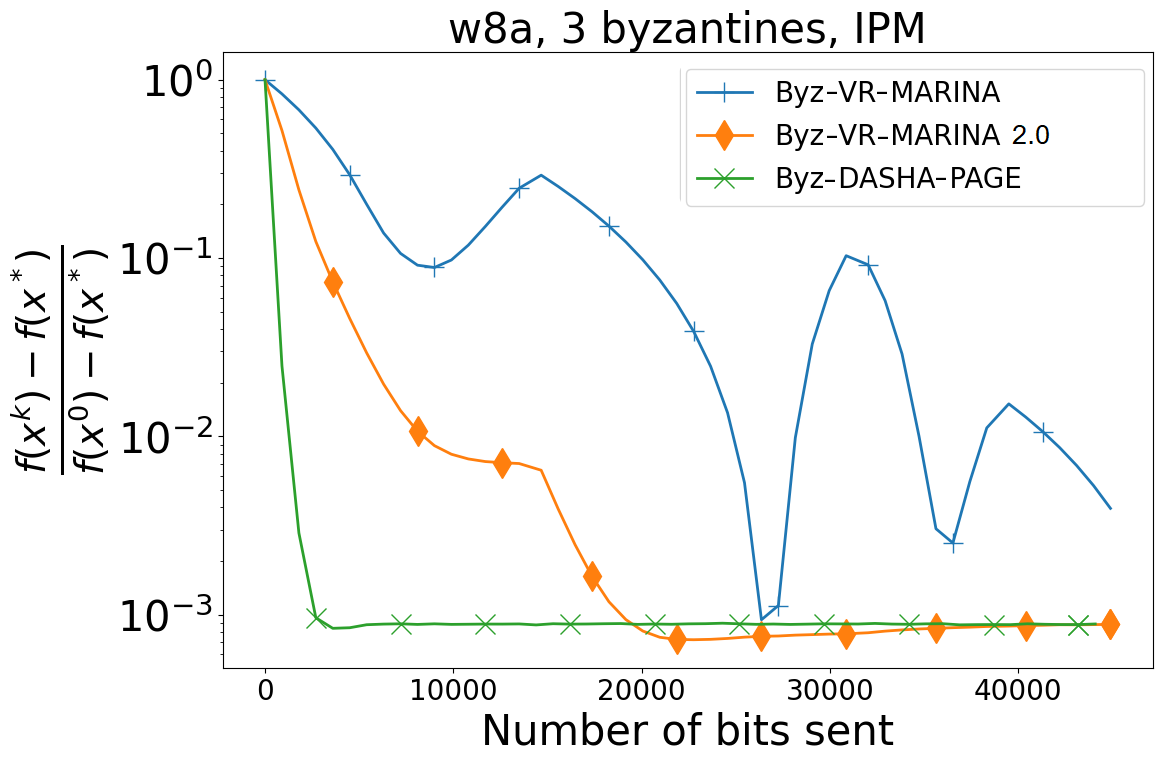}
  \caption{IPM attack}
\end{subfigure}
\begin{subfigure}{.24\textwidth}
  \centering
  \includegraphics[width=1\linewidth]{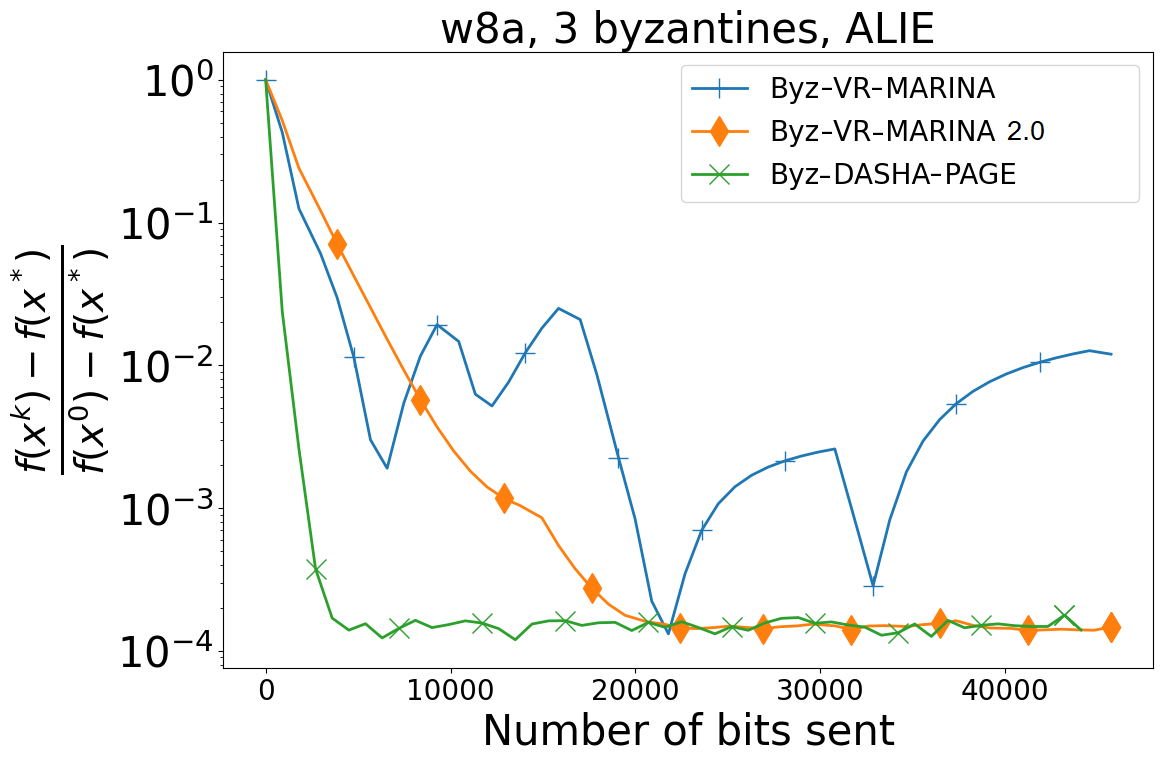}
  \caption{ALIE attack}
\end{subfigure}
\caption{
Communication complexity comparison in the heterogeneous strongly convex setting on the \texttt{w8a} dataset.}
\label{fig:pl2}
\end{figure}

As indicated by Theorems \ref{thm:general_pl_MARINA} and \ref{thm:general_pl_DASHA}, the experiments in the heterogeneous Polyak-{\L}ojasiewicz setting (Figures \ref{fig:pl1} and \ref{fig:pl2}) confirm that our methods outperform \algname{Byz-VR-MARINA}, converging faster, being more stable and achieving higher accuracy. The improvement is most pronounced in the case of BF and LF attacks.

\subsubsection{Error Feedback experiments}\label{appendix:ef_exp}

We next compare the empirical performance of \algname{Byz-EF21} and \algname{Byz-VR-MARINA} in the heterogeneous non-convex setting. To ensure fair comparison, full gradients are calculated.

\begin{figure}[H]
\centering
\begin{subfigure}{.24\textwidth}
  \centering
  \includegraphics[width=1\linewidth]{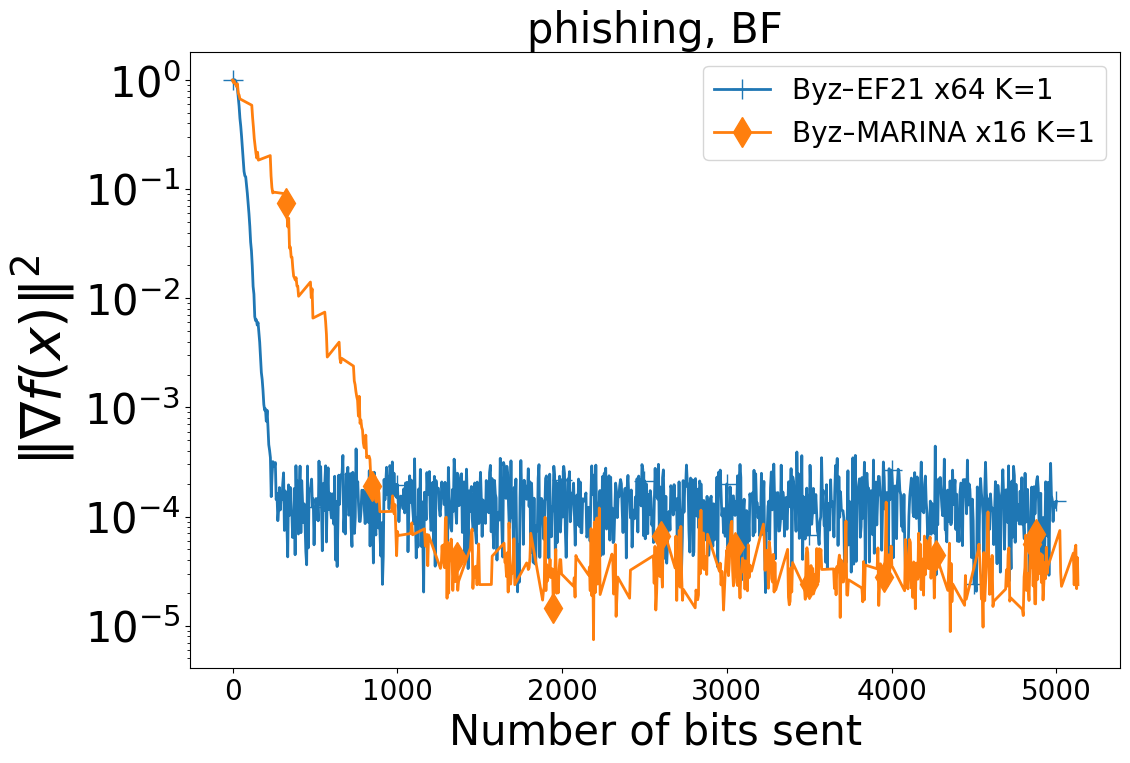}
  \caption{BF attack}
\end{subfigure}%
\begin{subfigure}{.24\textwidth}
  \centering
  \includegraphics[width=1\linewidth]{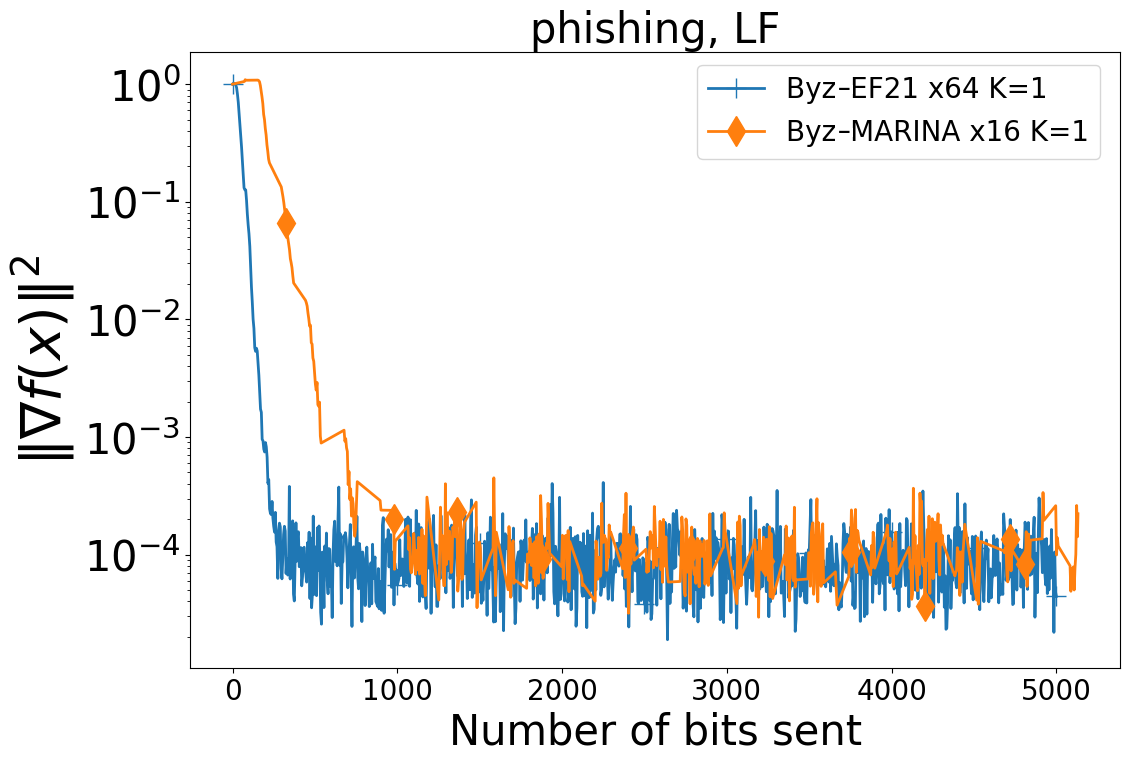}
  \caption{LF attack}
\end{subfigure}
\begin{subfigure}{.24\textwidth}
  \centering
  \includegraphics[width=1\linewidth]{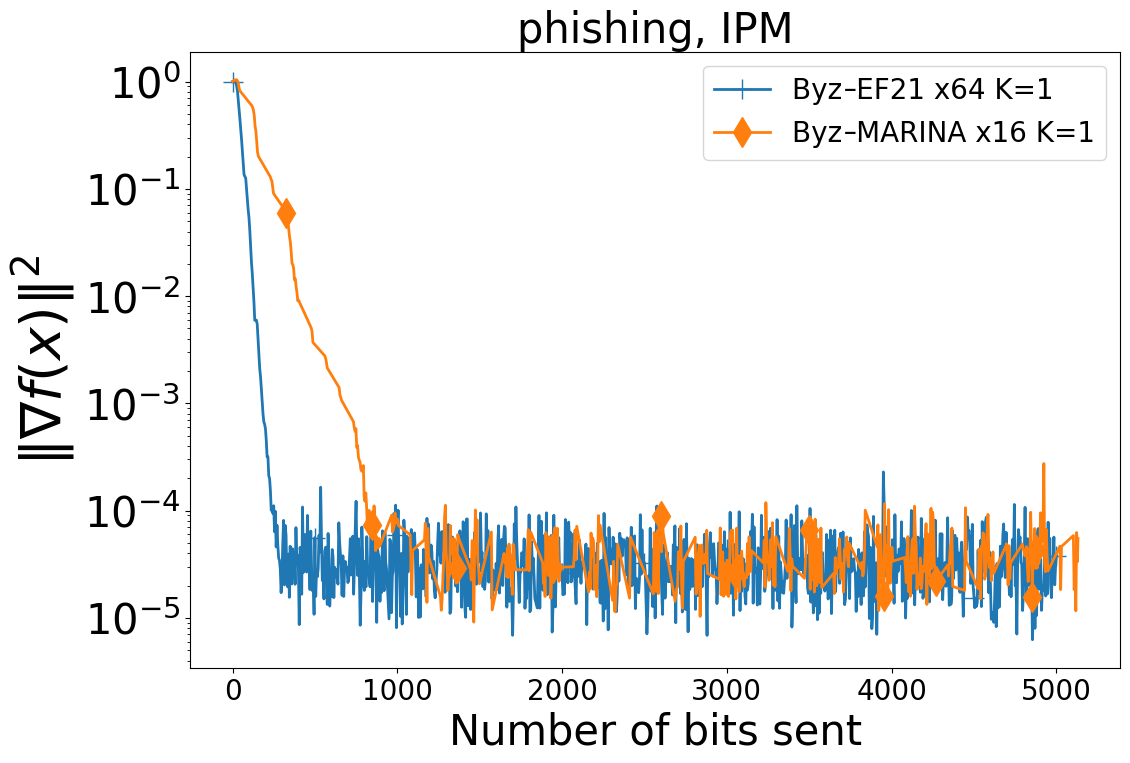}
  \caption{IPM attack}
\end{subfigure}
\begin{subfigure}{.24\textwidth}
  \centering
  \includegraphics[width=1\linewidth]{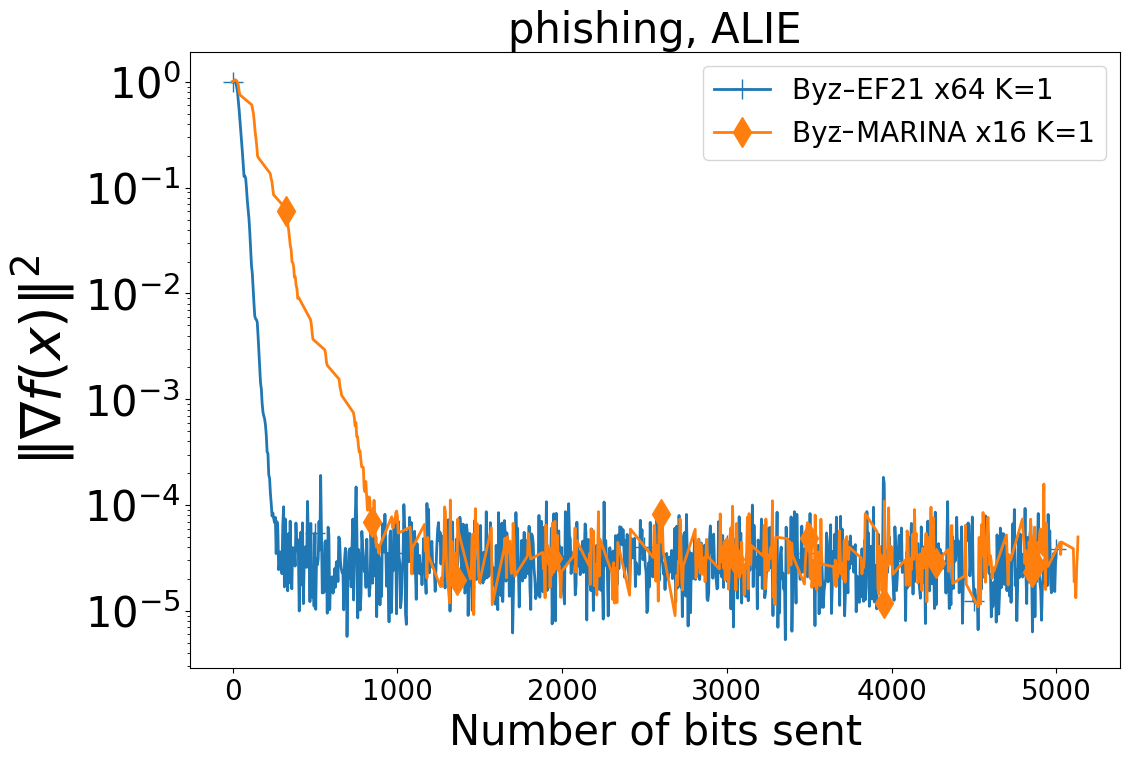}
  \caption{ALIE attack}
\end{subfigure}
\caption{
Communication complexity comparison in the heterogeneous non-convex setting on the \texttt{phishing} dataset.}
\label{fig:comm_EF21_phishing}
\end{figure}

\begin{figure}[H]
\centering
\begin{subfigure}{.24\textwidth}
  \centering
  \includegraphics[width=1\linewidth]{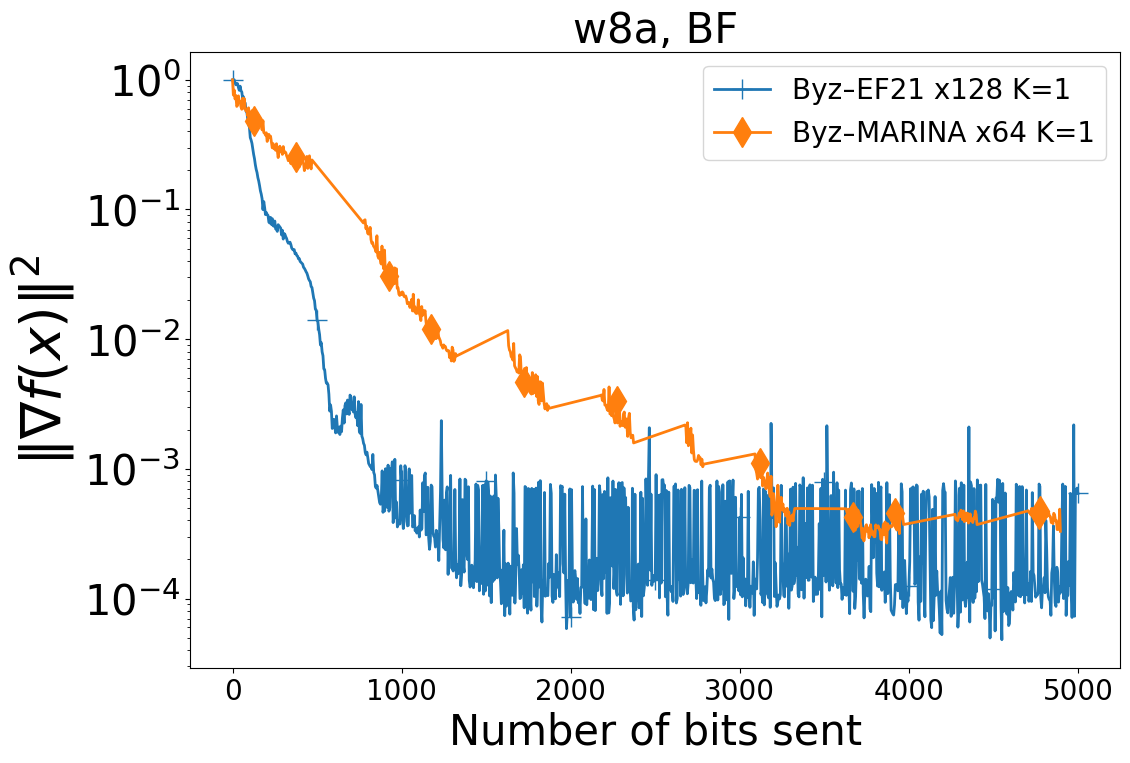}
  \caption{BF attack}
\end{subfigure}%
\begin{subfigure}{.24\textwidth}
  \centering
  \includegraphics[width=1\linewidth]{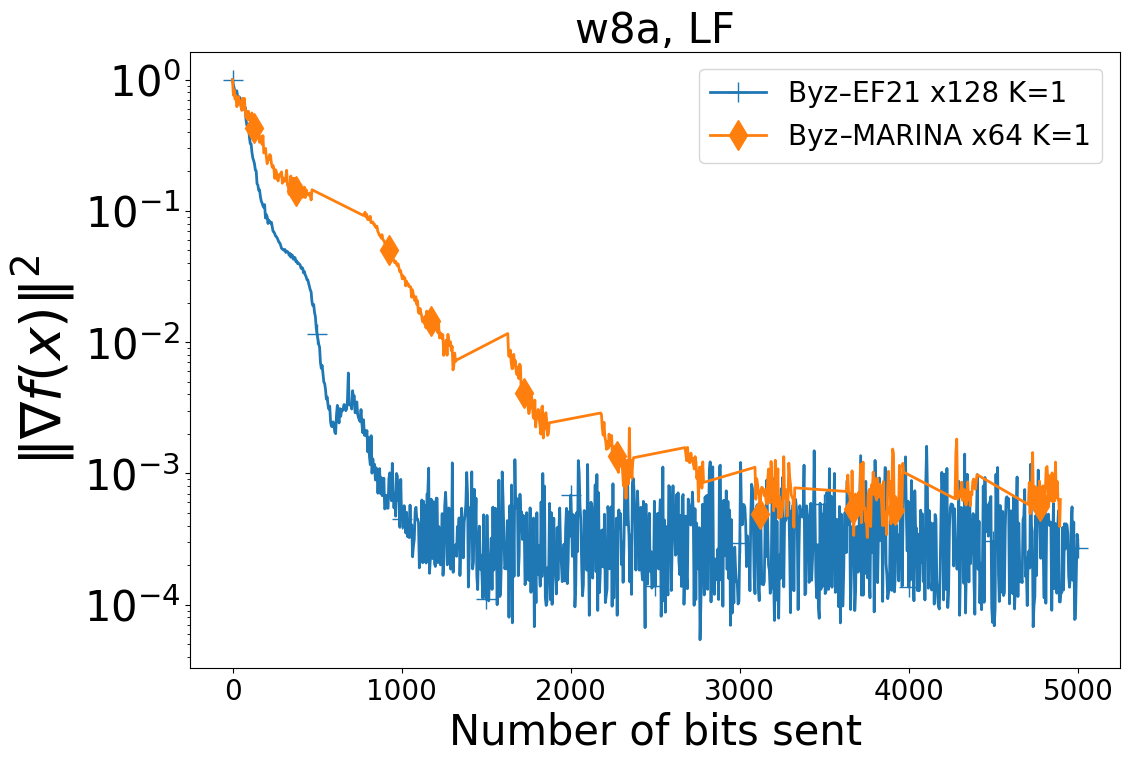}
  \caption{LF attack}
\end{subfigure}
\begin{subfigure}{.24\textwidth}
  \centering
  \includegraphics[width=1\linewidth]{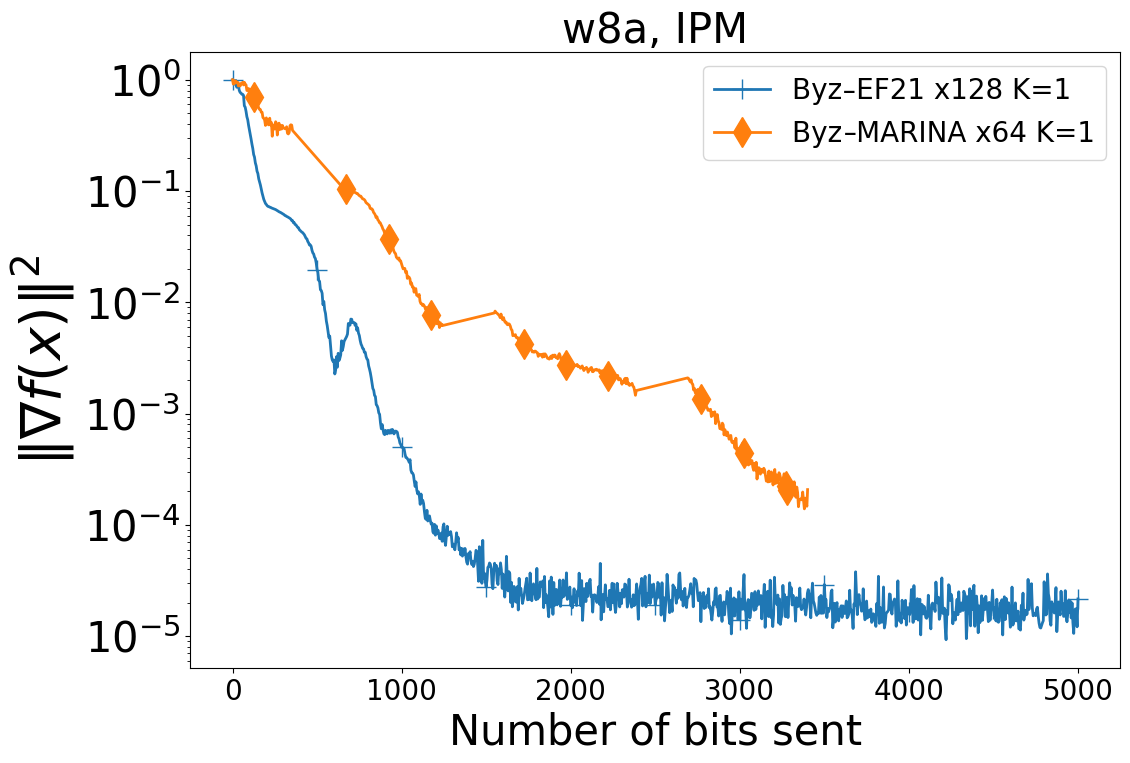}
  \caption{IPM attack}
\end{subfigure}
\begin{subfigure}{.24\textwidth}
  \centering
  \includegraphics[width=1\linewidth]{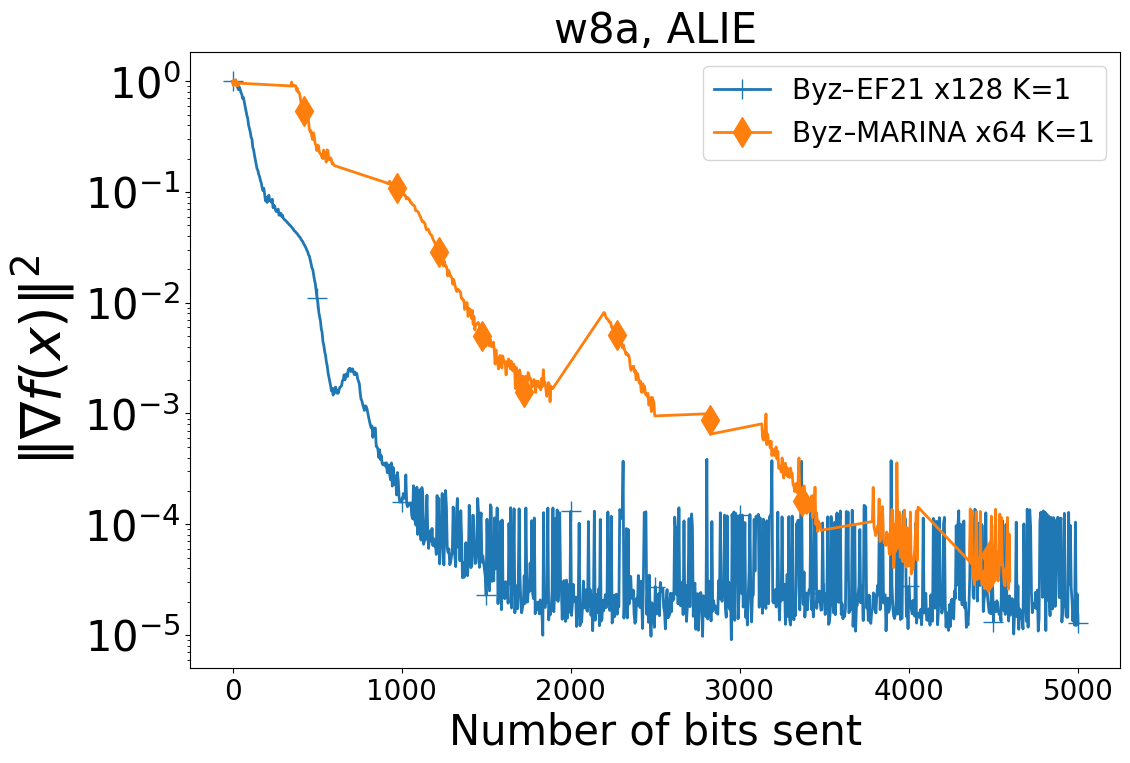}
  \caption{ALIE attack}
\end{subfigure}
\caption{
Communication complexity comparison in the heterogeneous non-convex setting on the \texttt{w8a} dataset.}
\label{fig:comm_EF21_w8a}
\end{figure}

The experiments unveil promising potential for communication improvement. As shown in Figures~\ref{fig:comm_EF21_phishing} and \ref{fig:comm_EF21_w8a}, \algname{Byz-EF21} converges faster than \algname{Byz-VR-MARINA}, before both reach a point of stagnation.

\subsection{Neighborhood size}

We next compare the accuracy of \algname{Byz-DASHA-PAGE} and \algname{Byz-VR-MARINA}. The data is distributed among $28$ clients, out of which $8$ are Byzantine. The $20$ good clients are divided into two groups, $\cG_1$ and $\cG_2$, of equal size. The local functions calculated by the clients are $f_i(x) = \frac{1}{2}\norm{x}^2 + \langle \zeta_1, x \rangle$ for $i\in \cG_1 $ and $f_i(x) = \frac{1}{2}\norm{x}^2 + \langle \zeta_2, x \rangle$ for $i\in \cG_2 $ where $\zeta_1, \zeta_2 \in \R^{d}$ with $d=100$. The Byzantines mimic the behavior of group $\cG_1$. $3$ different aggregation rules are considered: standard averaging, CM aggregator and GM aggregator (see Appendix \ref{appendix:robust_aggr_and_compr}).
In both algorithms, we use the Rand$K$ compressor with $K=5$ and stepsize equal to $10$ times the theoretical one. The results, presented in Figure \ref{fig:neighbourhood_exp}, not only show that \algname{Byz-DASHA-PAGE} converges faster, but it is also more stable, converging to a smaller neighborhood than \algname{Byz-VR-MARINA}.

\begin{figure}[H]
\centering
\begin{subfigure}{.33\textwidth}
  \centering
  \includegraphics[width=1\linewidth]{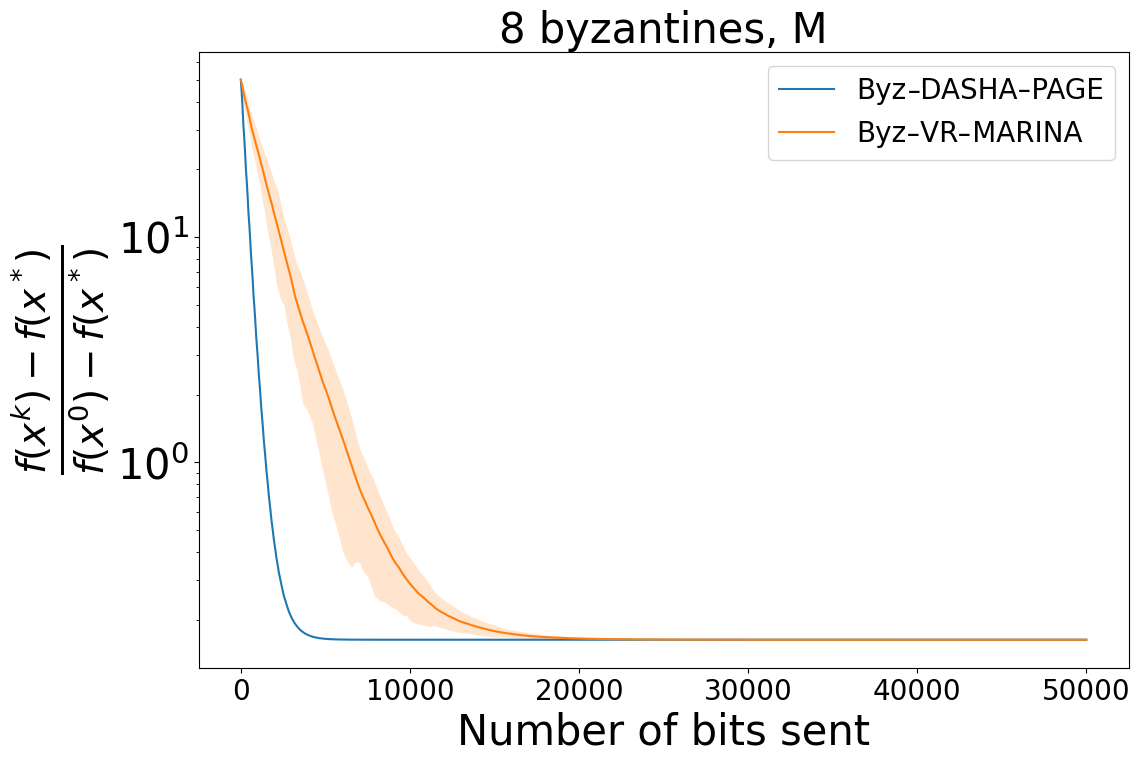}
  \caption{Mean aggregator}
\end{subfigure}%
\begin{subfigure}{.33\textwidth}
  \centering
  \includegraphics[width=1\linewidth]{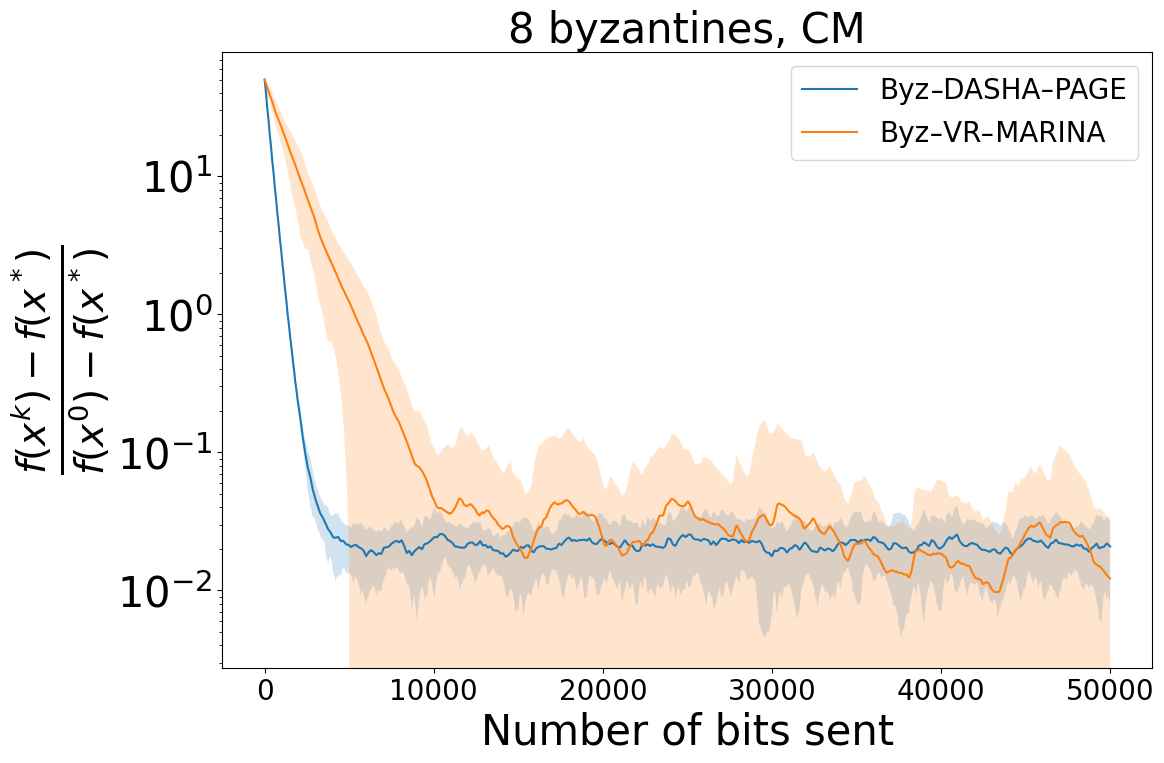}
  \caption{CM aggregator}
\end{subfigure}
\begin{subfigure}{.33\textwidth}
  \centering
  \includegraphics[width=1\linewidth]{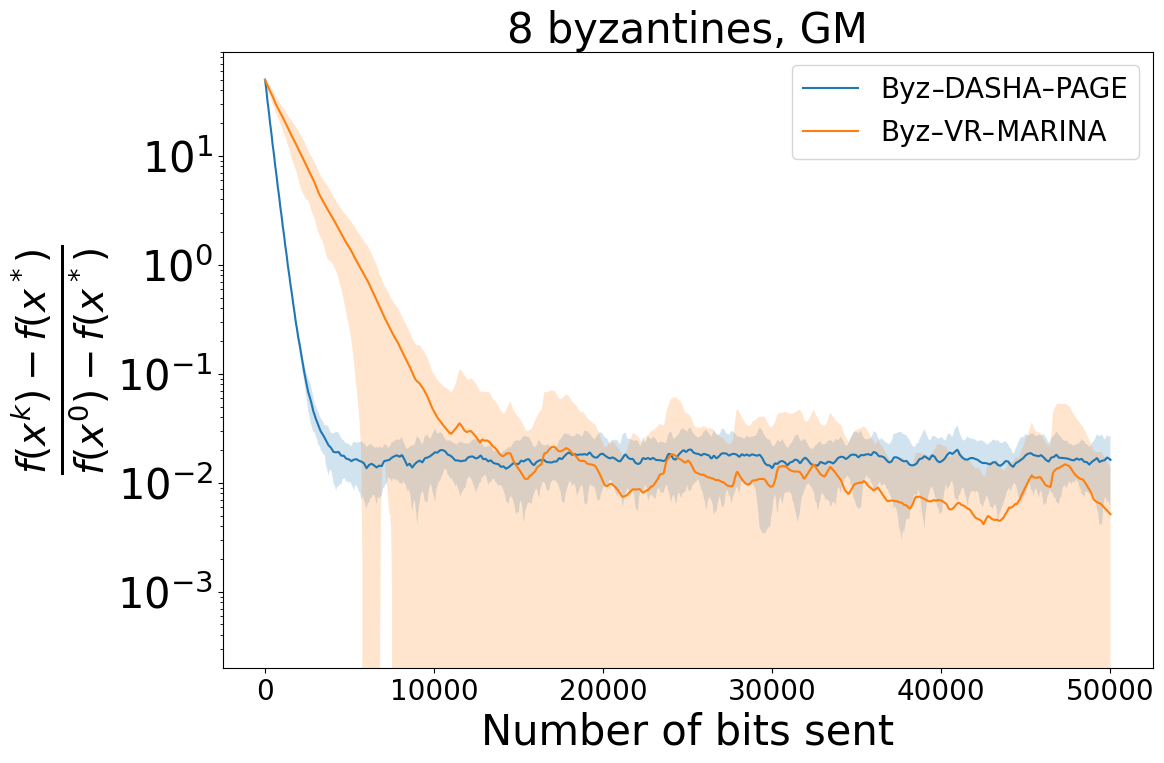}
  \caption{GM aggregator}
\end{subfigure}
\caption{
The mean optimality gap of \algname{Byz-DASHA-PAGE} and \algname{Byz-VR-MARINA} for $3$ aggregation rules calculated from $15$ runs of both algorithms. The shadowed area corresponds to one standard deviation.}
\label{fig:neighbourhood_exp}
\end{figure}

\subsection{Numerical Comparison of Theoretical Stepsizes}

Table \ref{table:stepsize_comparison} gives the comparison of theoretical stepsizes calculated for the \texttt{phishing} dataset.
\begin{table}[H]
\centering
\caption{Comparison of theoretical stepsizes for \texttt{phishing} dataset.}
\label{table:stepsize_comparison}
\begin{tabular}{c c c} 
 \hline
 \algname{Byz-VR-MARINA} & \algname{Byz-VR-MARINA 2.0} & \algname{Byz-DASHA-PAGE} \\
 \hline
 4e-4 & 2e-2 & 1.2e-2 \\
 \hline
\end{tabular}
\end{table}

The theory behind \algname{Byz-VR-MARINA 2.0} and \algname{Byz-DASHA-PAGE} allows for significantly larger stepsize than \algname{Byz-VR-MARINA}, as a result of which our methods require fewer communication rounds. 
This improvement also shows in experiments, where \algname{Byz-VR-MARINA 2.0} and \algname{Byz-DASHA-PAGE} tolerate much larger stepsizes, which makes them more efficient in practice.

\subsection{Convergence under Additional Attacks}
In this subsection, we compare the convergence of \algname{Byz-VR-MARINA}, \algname{Byz-VR-MARINA 2.0}, and \algname{Byz-DASHA-PAGE} on the non-convex logistic regression problem under two additional attacks: Local Model Poisoning (LMP) attack \citep{fang2020local} and Relocated Orthogonal Perturbation (ROP) attack \citep{ozfatura2023byzantines} adjusted to the considered methods. The results are given in Figure~\ref{fig:NC_additional_attacks}. Similarly to the results under previously considered attacks, \algname{Byz-VR-MARINA 2.0} and \algname{Byz-DASHA-PAGE} outperform \algname{Byz-VR-MARINA} under LMP and ROP attacks.

\begin{figure}[H]
\centering
\begin{subfigure}{.24\textwidth}
  \centering
  \includegraphics[width=1\linewidth]{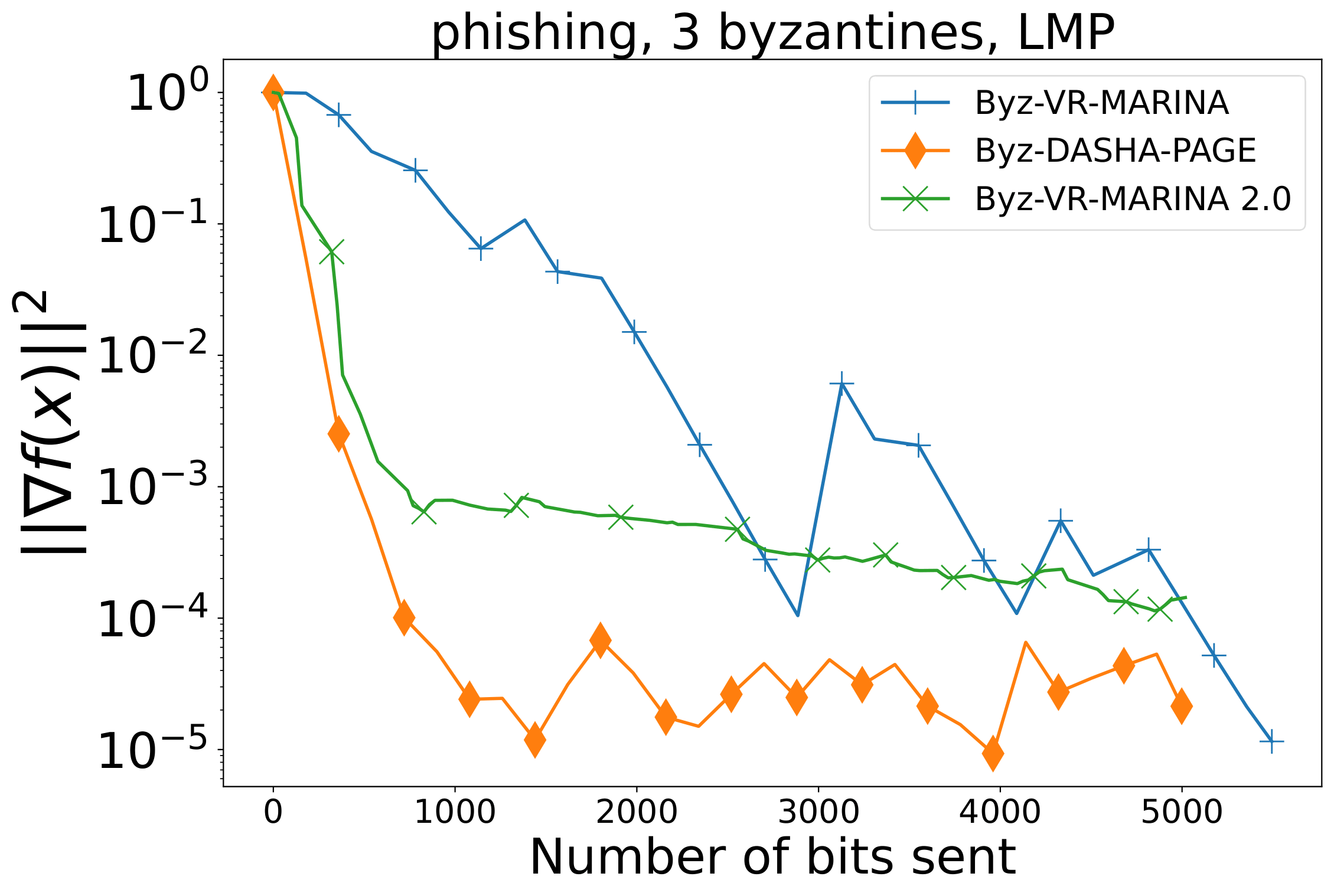}
  \caption{LMP attack, \texttt{phishing}}
\end{subfigure}%
\begin{subfigure}{.24\textwidth}
  \centering
  \includegraphics[width=1\linewidth]{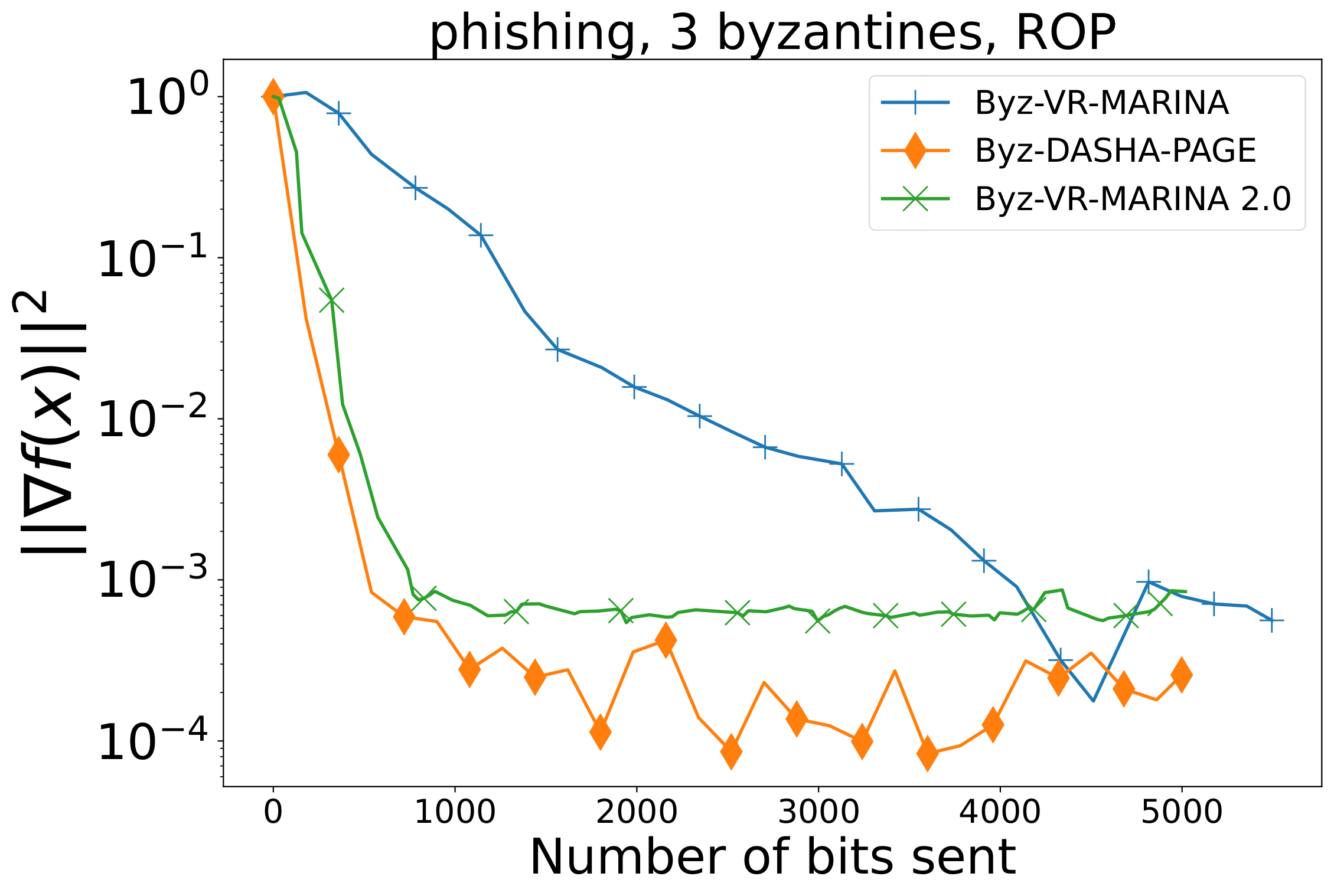}
  \caption{ROP attack, \texttt{phishing}}
\end{subfigure}
\begin{subfigure}{.24\textwidth}
  \centering
  \includegraphics[width=1\linewidth]{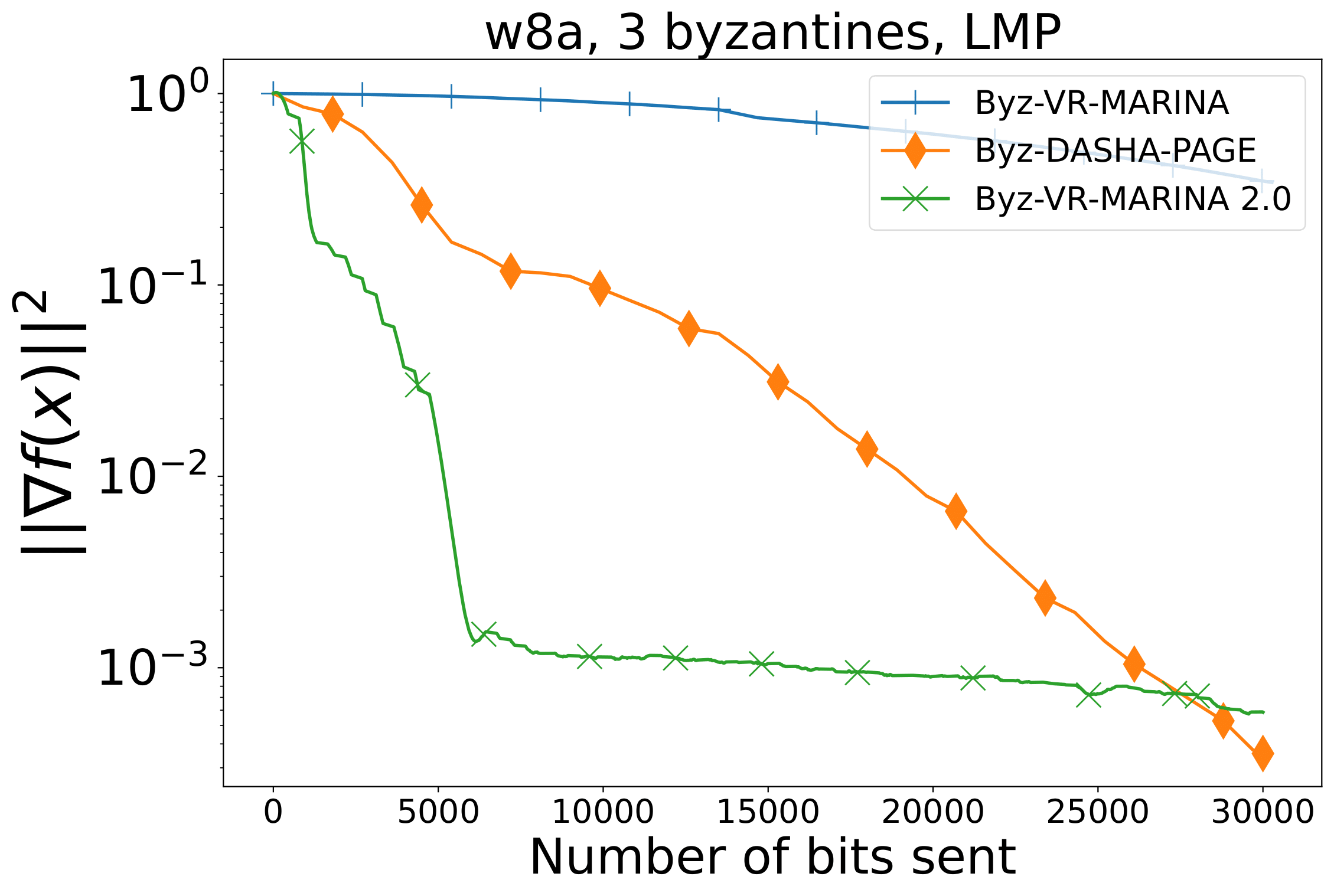}
  \caption{LMP attack, \texttt{phishing}}
\end{subfigure}%
\begin{subfigure}{.24\textwidth}
  \centering
  \includegraphics[width=1\linewidth]{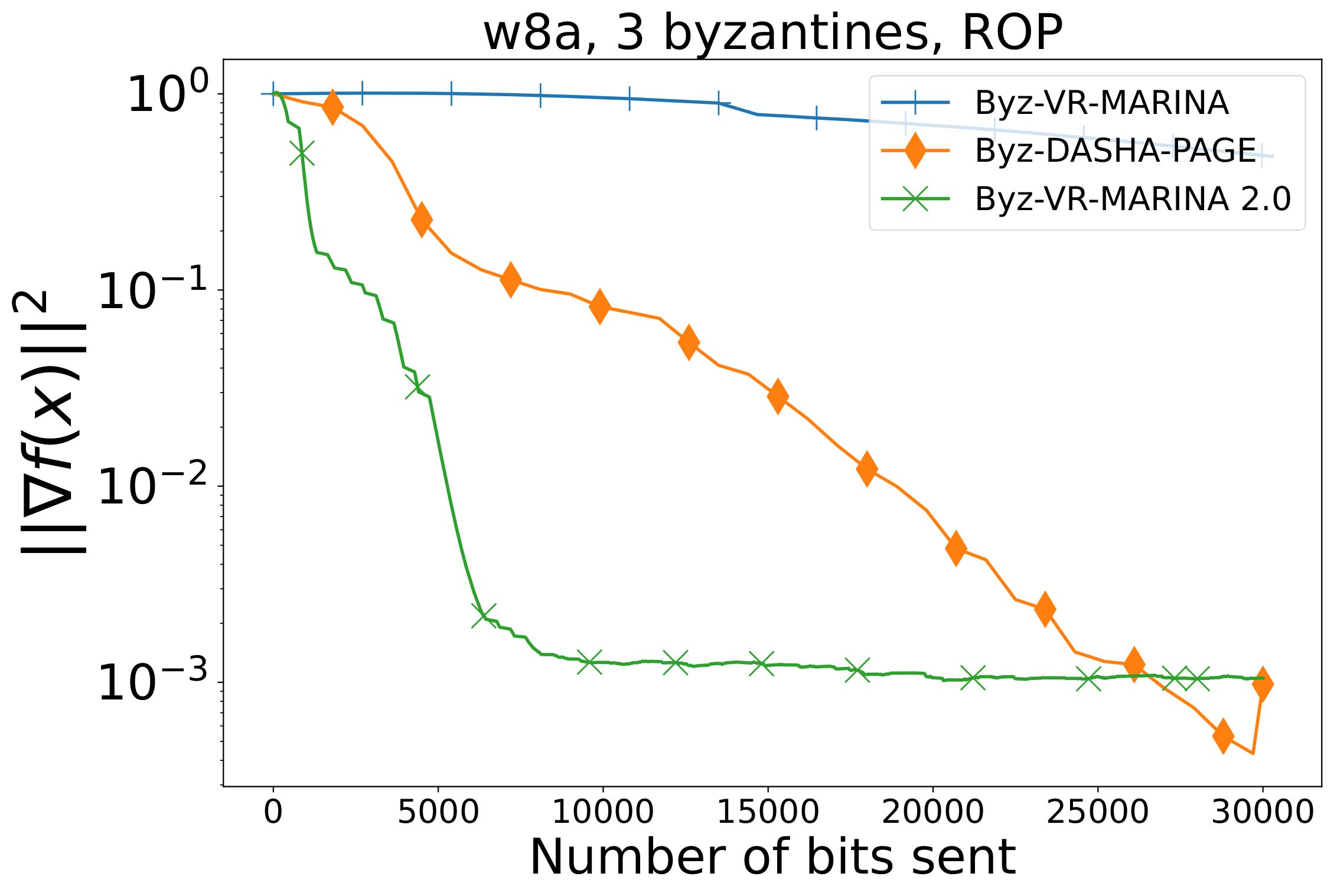}
  \caption{ROP attack, \texttt{phishing}}
\end{subfigure}
\caption{
Communication complexity comparison in the heterogeneous non-convex setting on the \texttt{phishing} and \texttt{w8a} datasets under LMP and ROP attacks.}
\label{fig:NC_additional_attacks}
\end{figure}

\subsection{Biased Compression vs Unbiased Compression}

In this subsection, we compare \algname{Byz-EF21} and \algname{Byz-VR-MARINA}, \algname{Byz-VR-MARINA 2.0}, \algname{Byz-DASHA-PAGE} with full gradient computations, i.e., we provide the comparison of the behavior of the methods with biased and unbiased compression. As in the previous subsection, we consider logistic regression problem with non-convex regularization. The results are given in Figure~\ref{fig:NC_biased_vs_unbiased}. In general, we see that the new methods are more robust than \algname{Byz-VR-MARINA}. However, in the conducted experiments, there is no clear ``champion'', e.g., under IPM, LMP and ROP attacks for \texttt{phishing} dataset \algname{Byz-EF21} outperforms \algname{Byz-DASHA-PAGE} and \algname{Byz-VR-MARINA 2.0}, but \algname{Byz-DASHA-PAGE} works noticeably better than \algname{Byz-EF21} under all attacks for \texttt{w8a} dataset.

\begin{figure}[H]
\centering
\begin{subfigure}{.33\textwidth}
  \centering
  \includegraphics[width=1\linewidth]{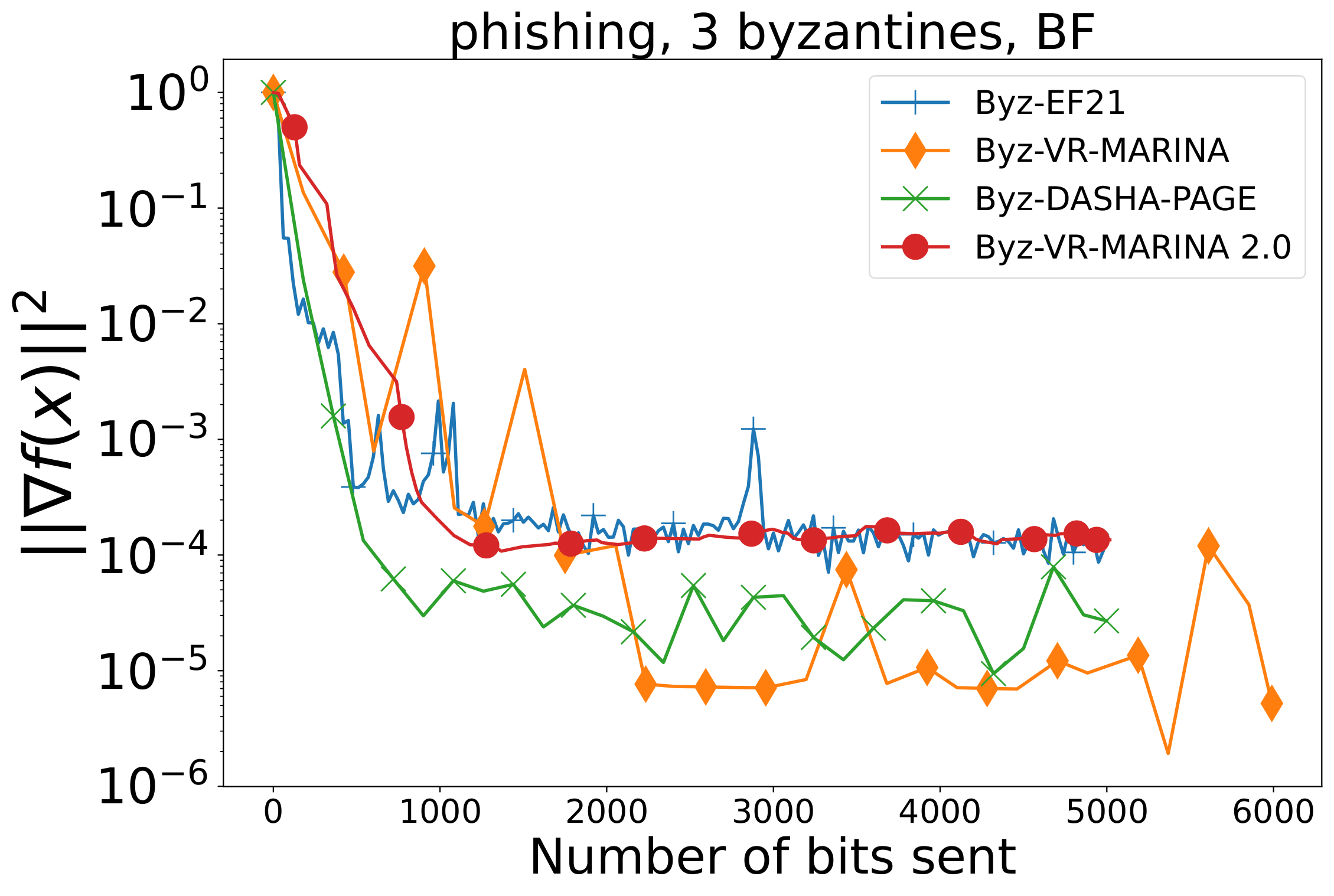}
  \caption{BF attack, \texttt{phishing}}
\end{subfigure}%
\begin{subfigure}{.33\textwidth}
  \centering
  \includegraphics[width=1\linewidth]{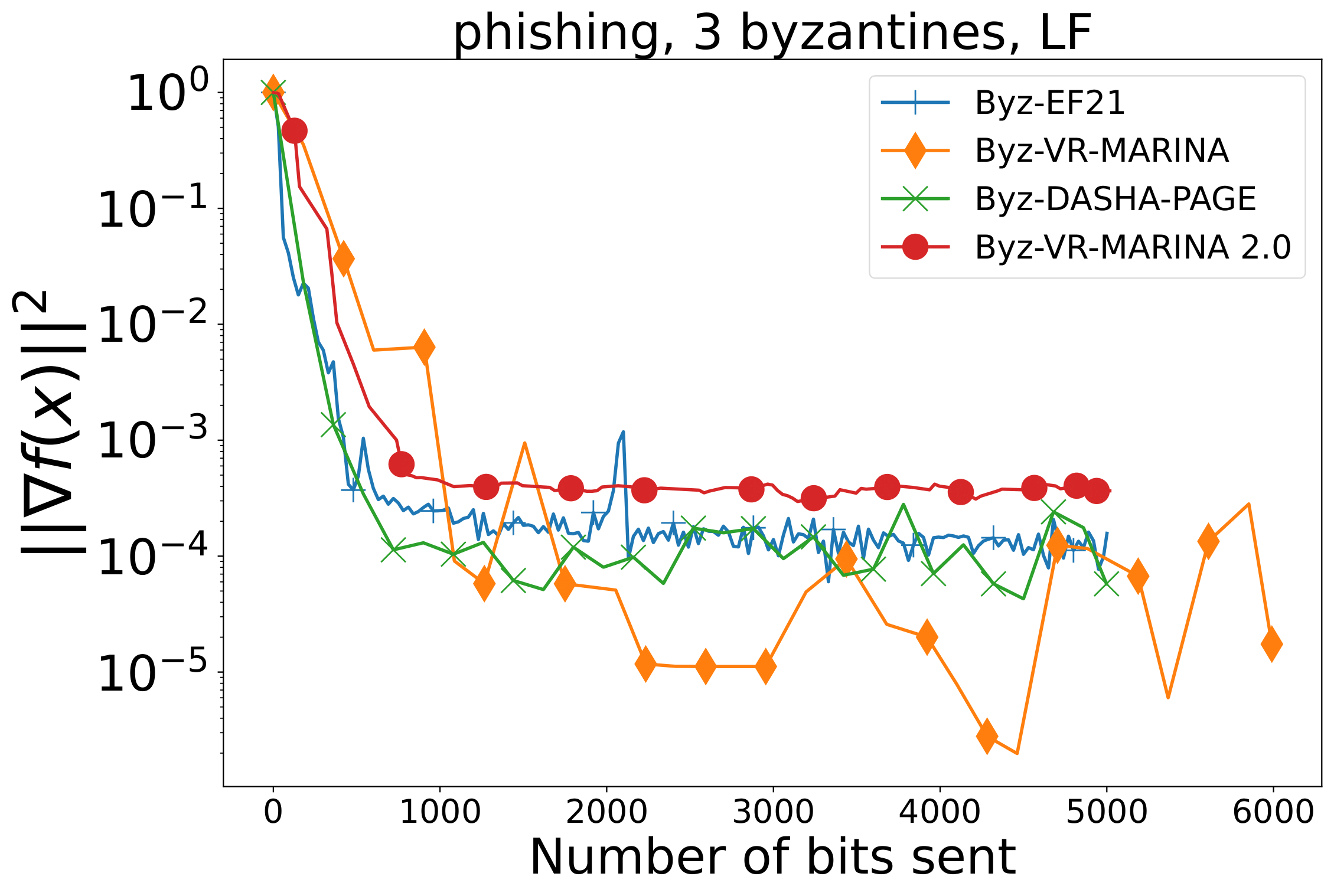}
  \caption{LF attack, \texttt{phishing}}
\end{subfigure}%
\begin{subfigure}{.33\textwidth}
  \centering
  \includegraphics[width=1\linewidth]{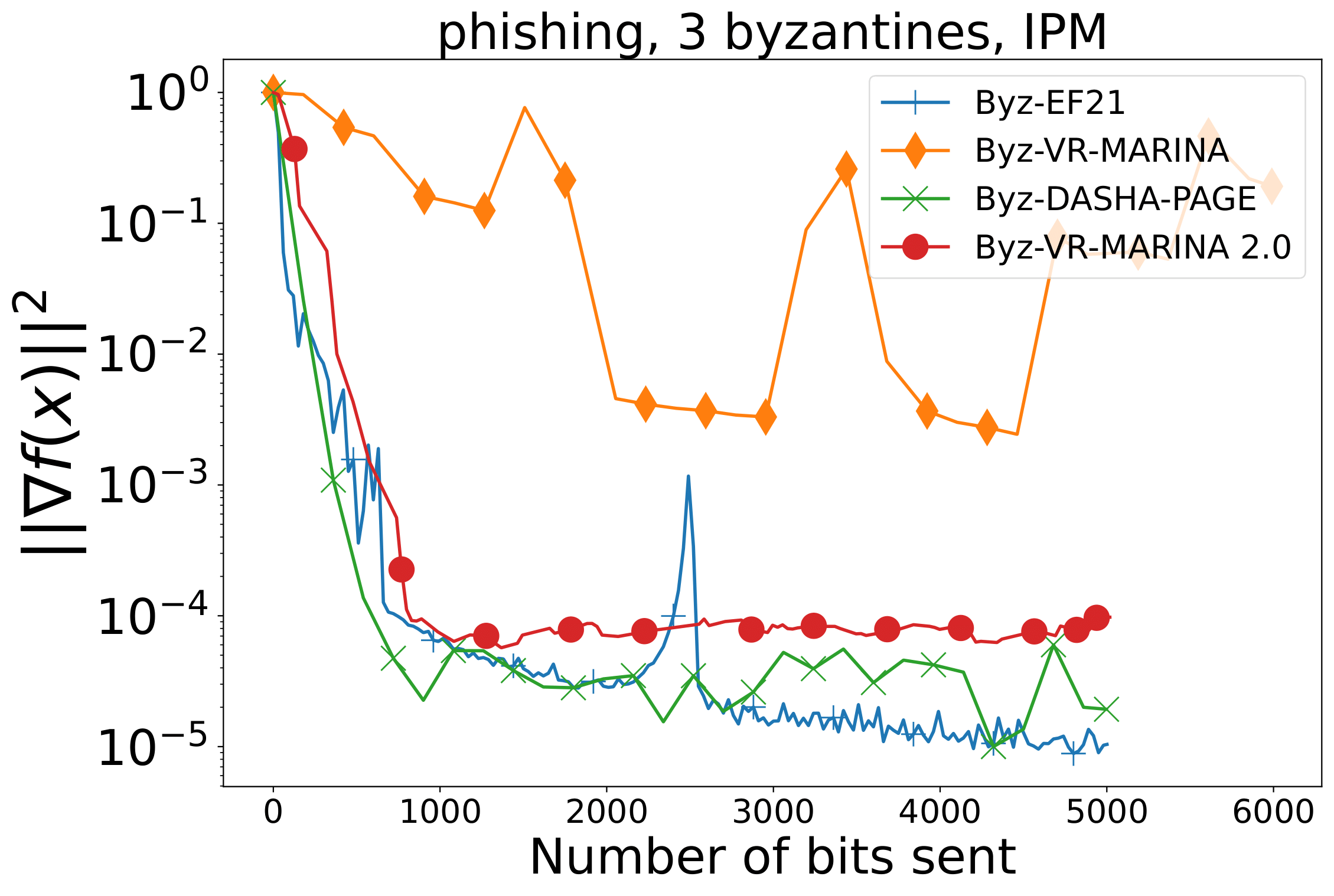}
  \caption{IPM attack, \texttt{phishing}}
\end{subfigure}\\
\begin{subfigure}{.33\textwidth}
  \centering
  \includegraphics[width=1\linewidth]{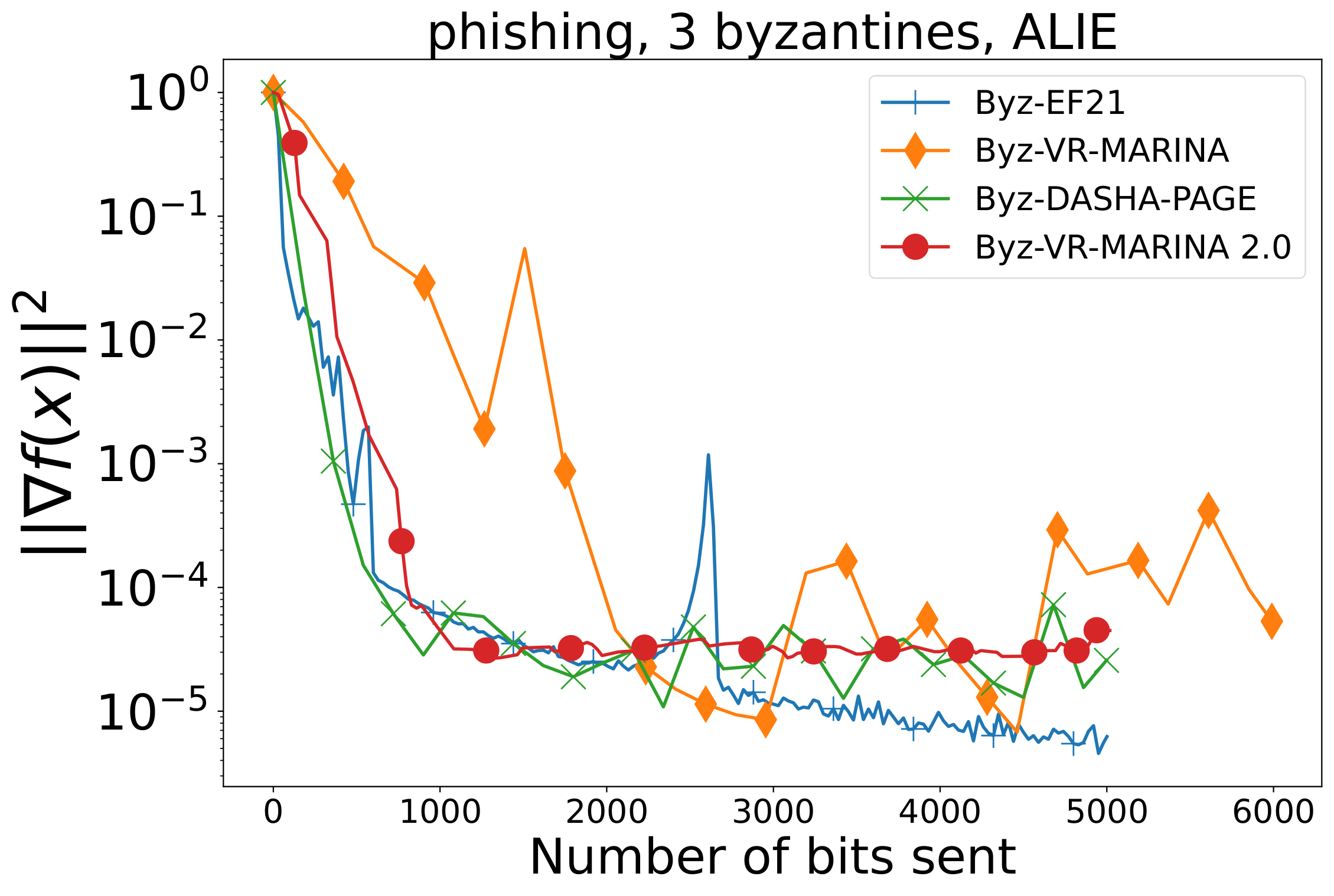}
  \caption{ALIE attack, \texttt{phishing}}
\end{subfigure}%
\begin{subfigure}{.33\textwidth}
  \centering
  \includegraphics[width=1\linewidth]{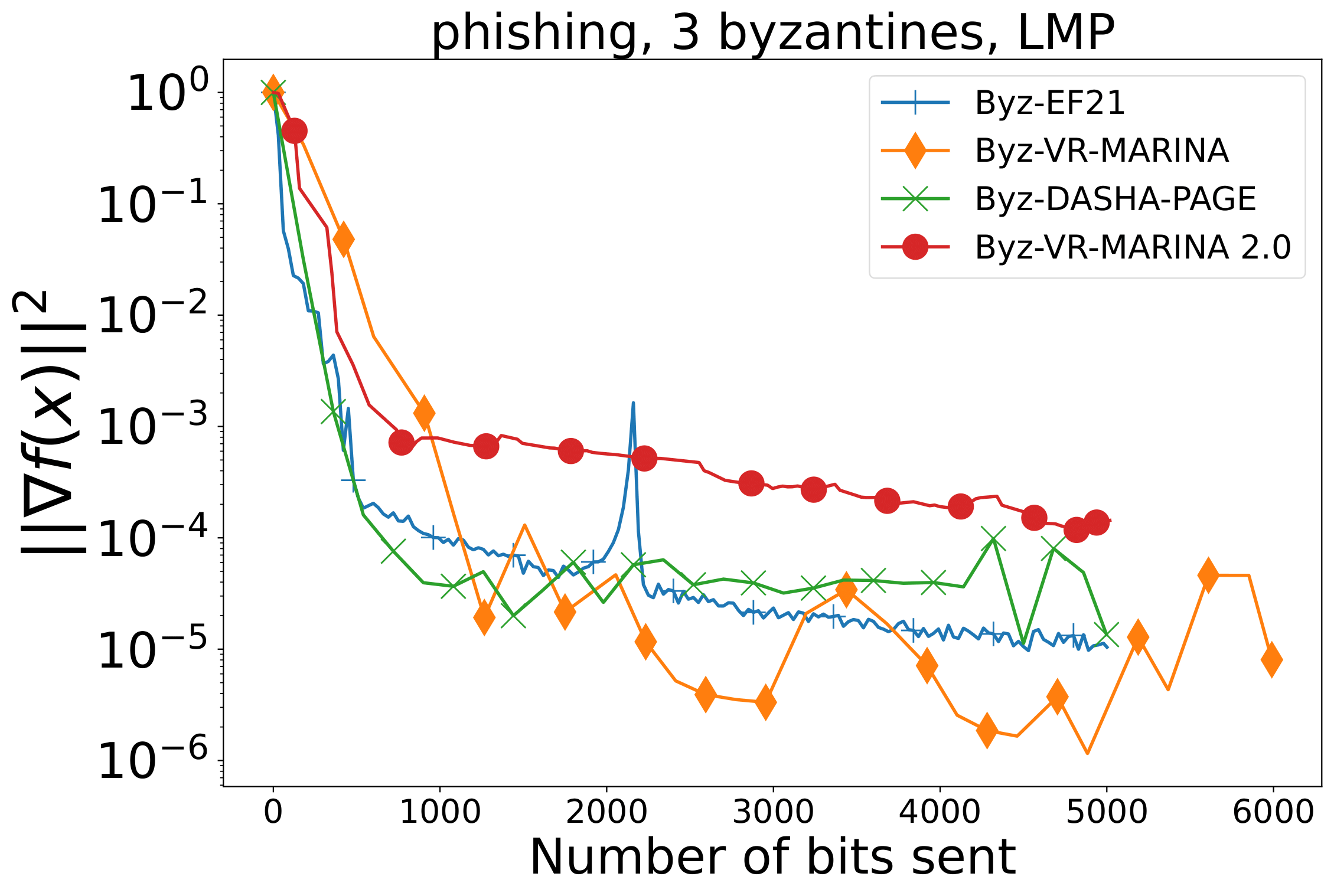}
  \caption{LMP attack, \texttt{phishing}}
\end{subfigure}%
\begin{subfigure}{.33\textwidth}
  \centering
  \includegraphics[width=1\linewidth]{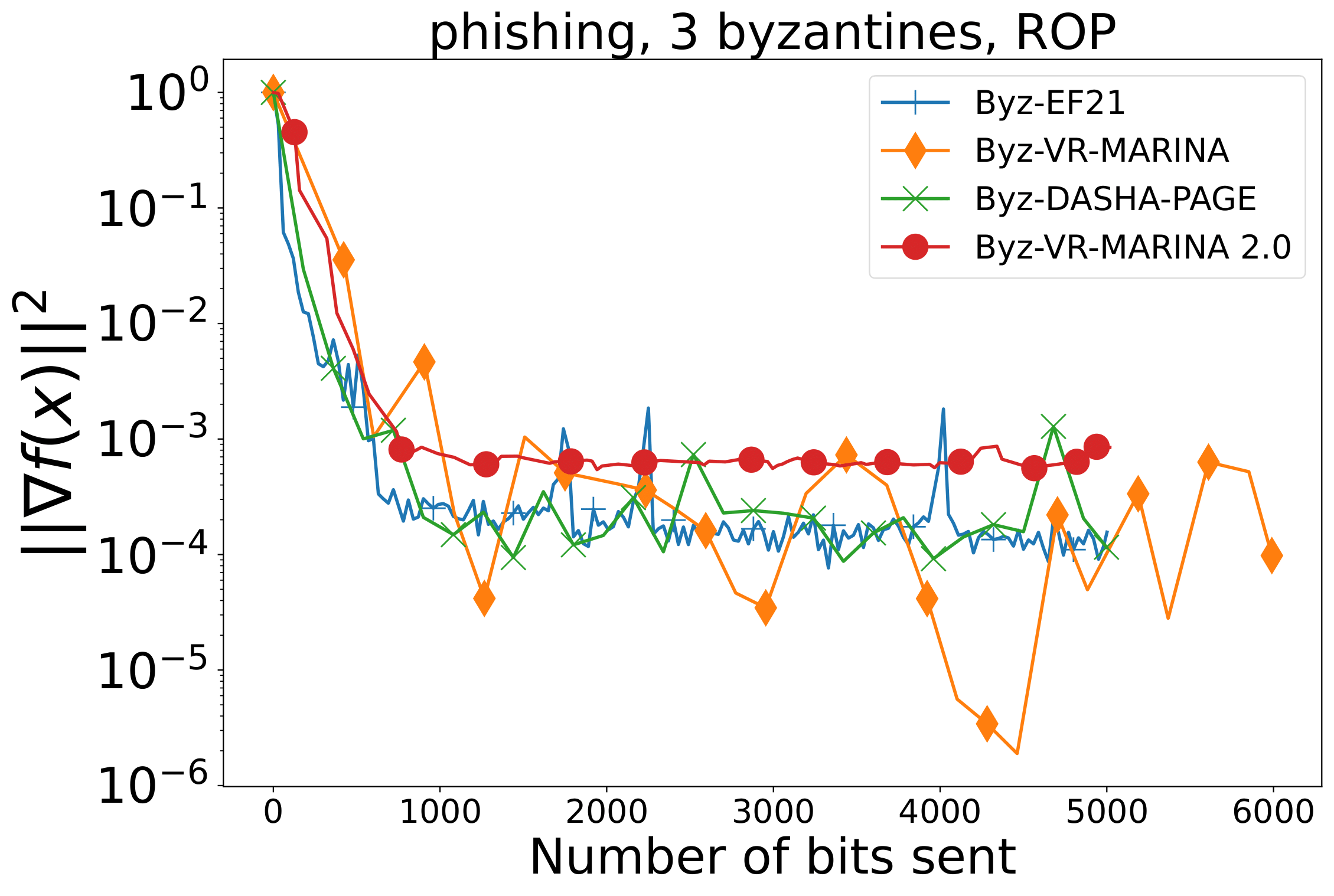}
  \caption{ROP attack, \texttt{phishing}}
\end{subfigure}\\
\begin{subfigure}{.33\textwidth}
  \centering
  \includegraphics[width=1\linewidth]{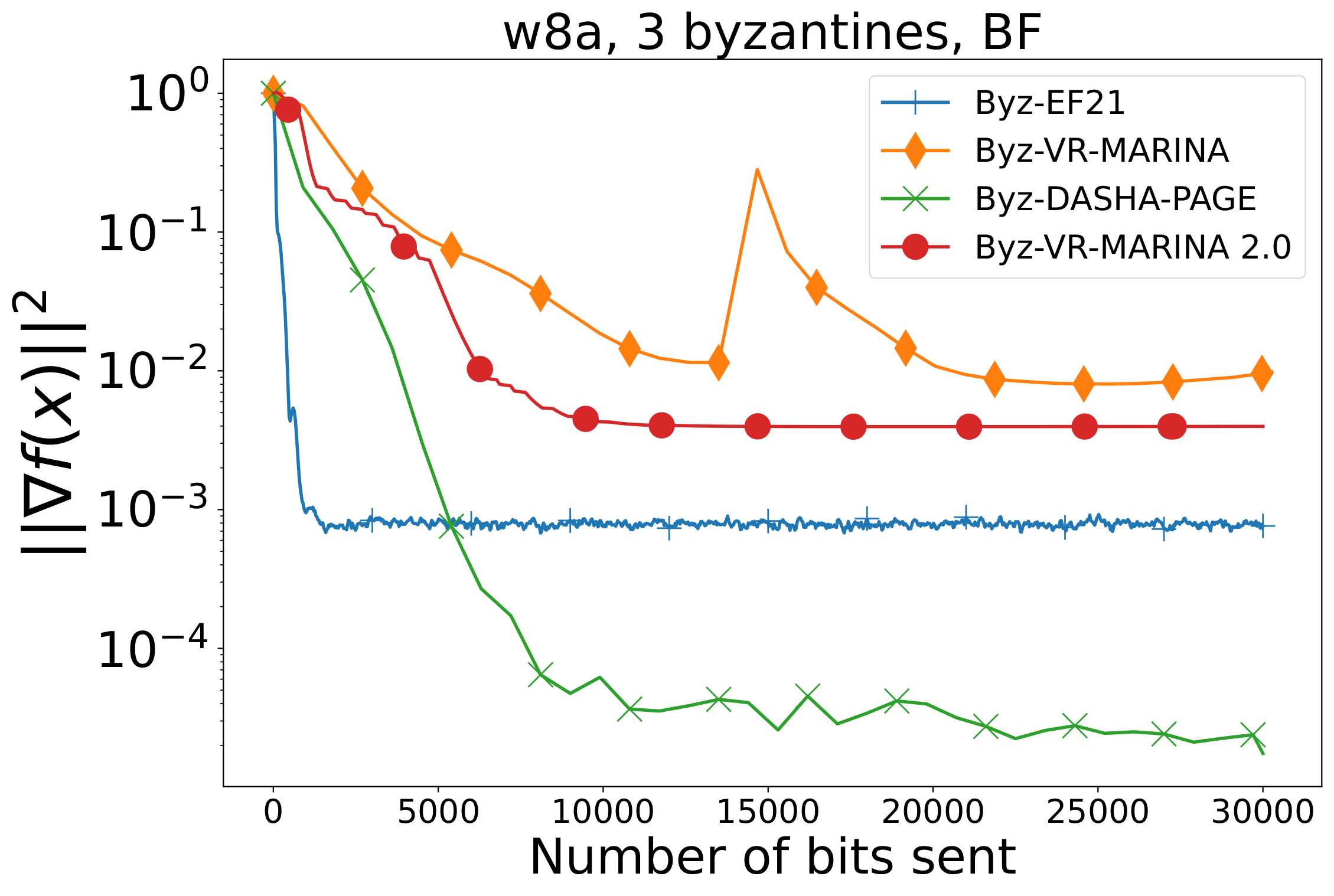}
  \caption{BF attack, \texttt{phishing}}
\end{subfigure}%
\begin{subfigure}{.33\textwidth}
  \centering
  \includegraphics[width=1\linewidth]{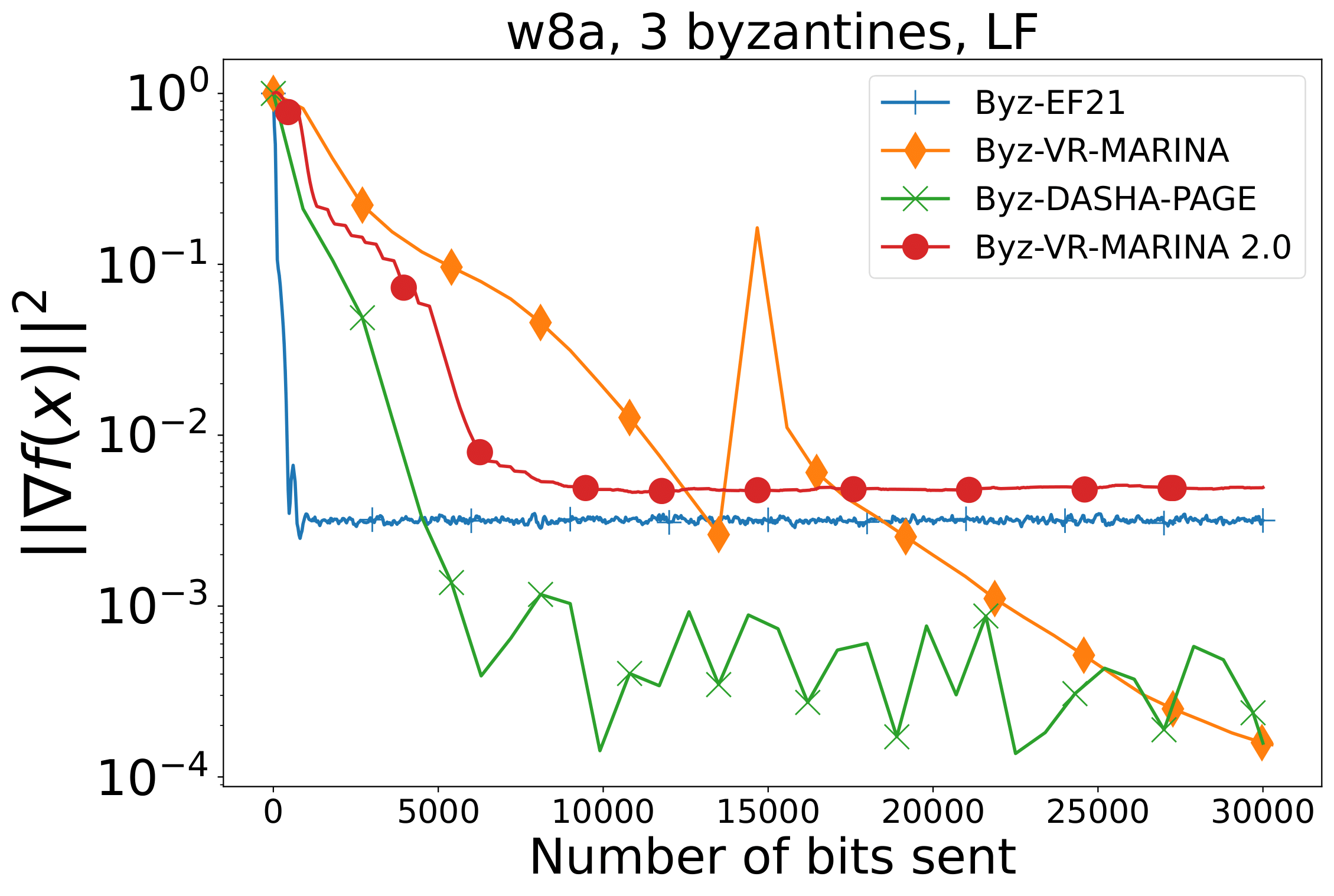}
  \caption{LF attack, \texttt{phishing}}
\end{subfigure}%
\begin{subfigure}{.33\textwidth}
  \centering
  \includegraphics[width=1\linewidth]{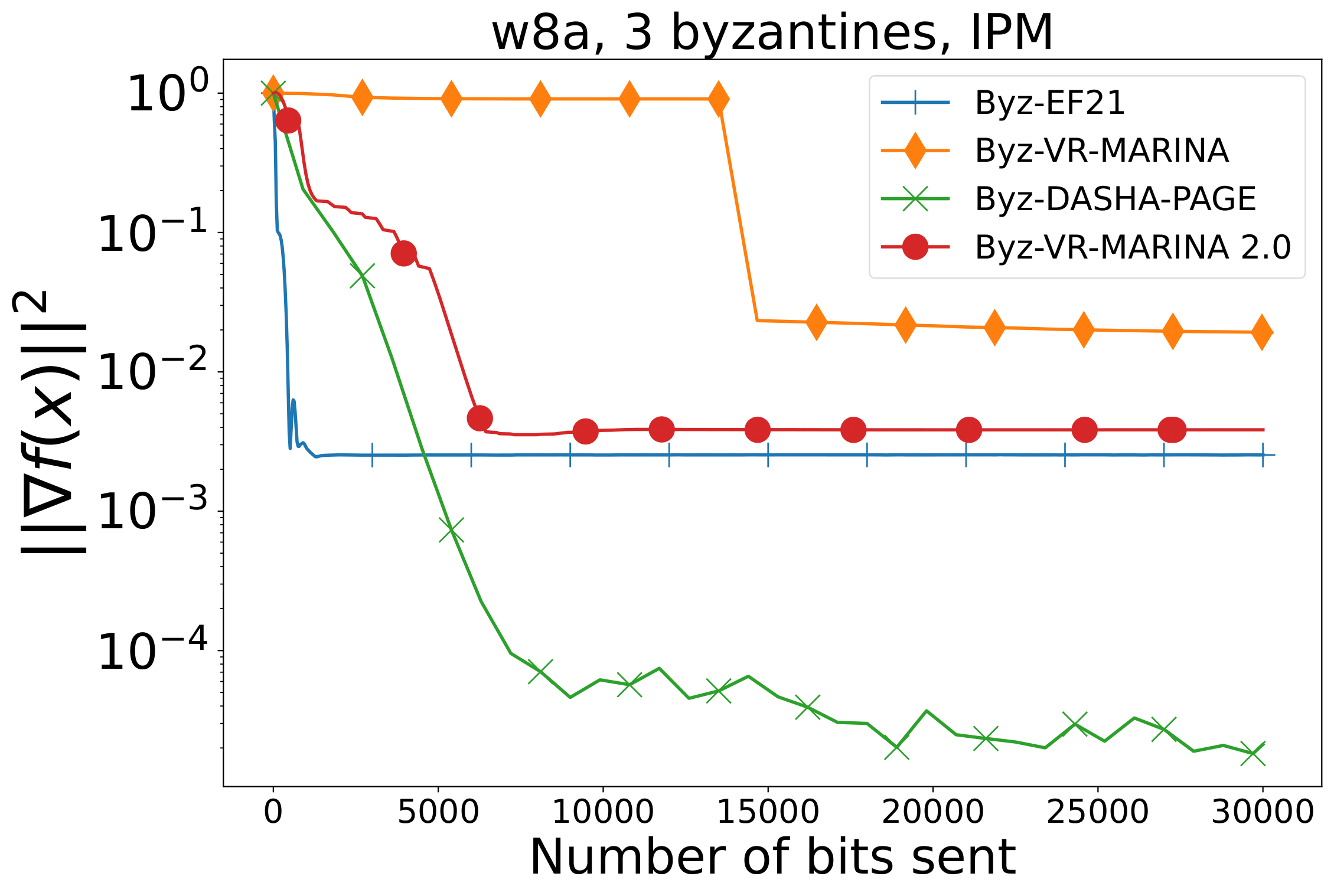}
  \caption{IPM attack, \texttt{phishing}}
\end{subfigure}\\
\begin{subfigure}{.33\textwidth}
  \centering
  \includegraphics[width=1\linewidth]{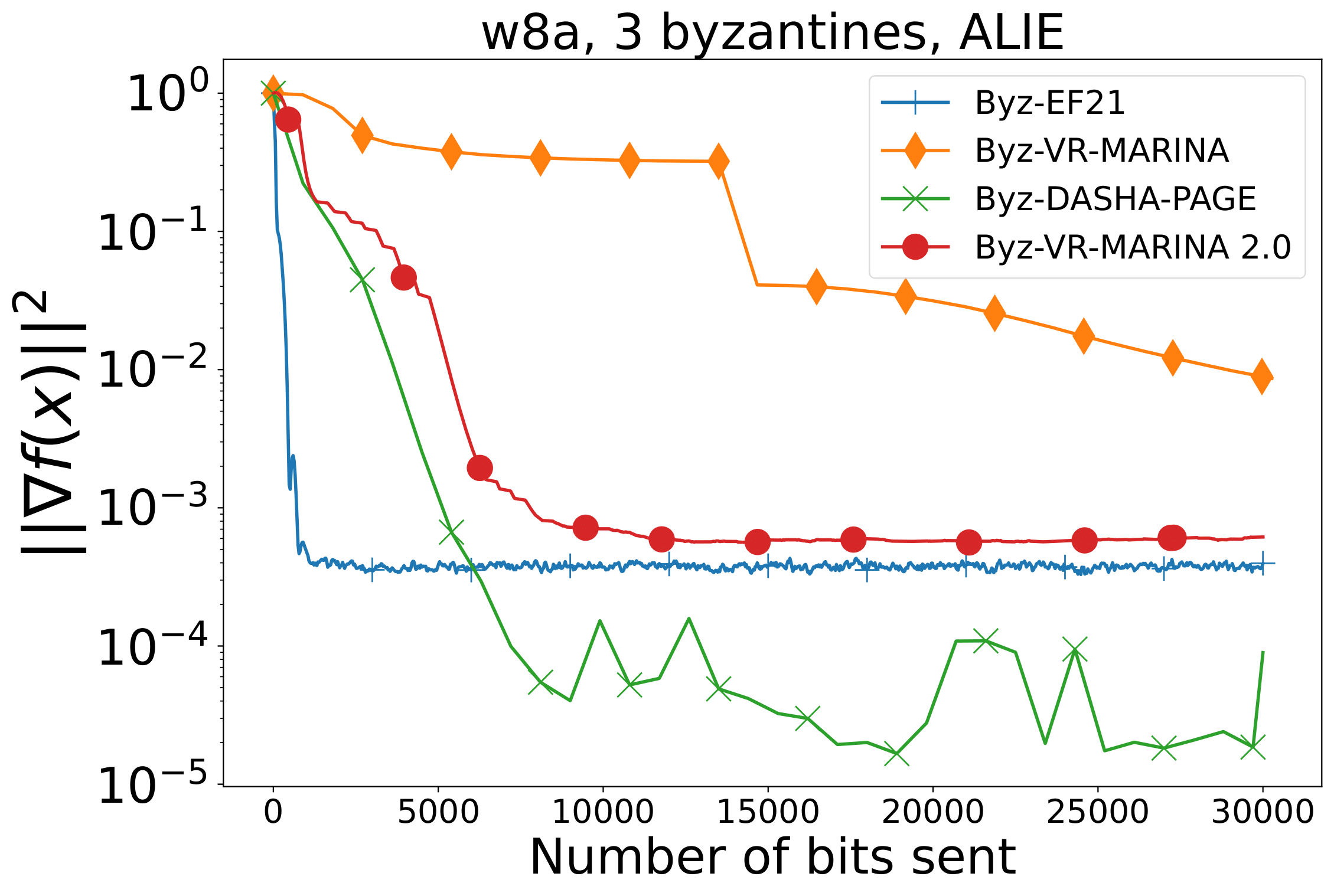}
  \caption{ALIE attack, \texttt{phishing}}
\end{subfigure}%
\begin{subfigure}{.33\textwidth}
  \centering
  \includegraphics[width=1\linewidth]{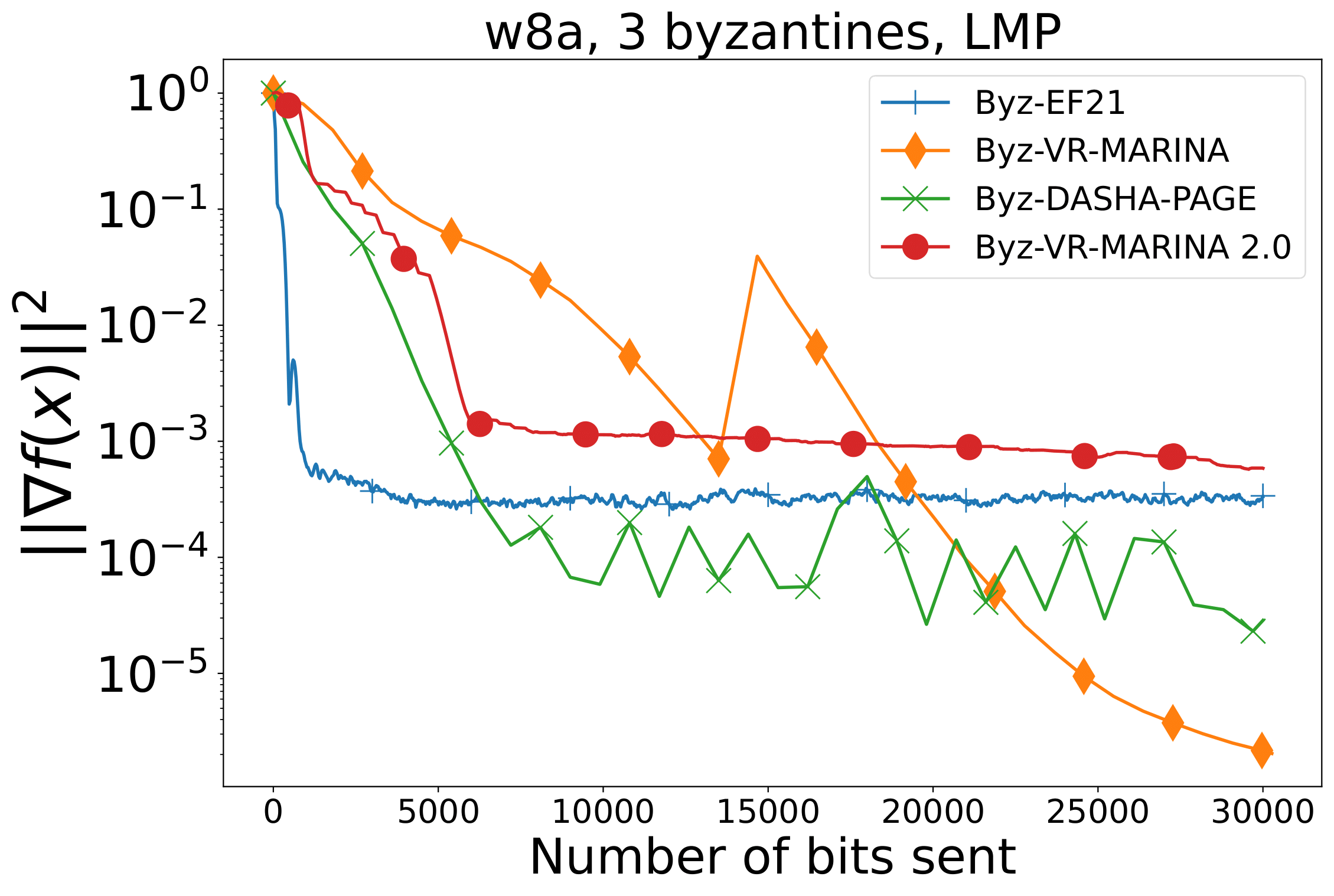}
  \caption{LMP attack, \texttt{phishing}}
\end{subfigure}%
\begin{subfigure}{.33\textwidth}
  \centering
  \includegraphics[width=1\linewidth]{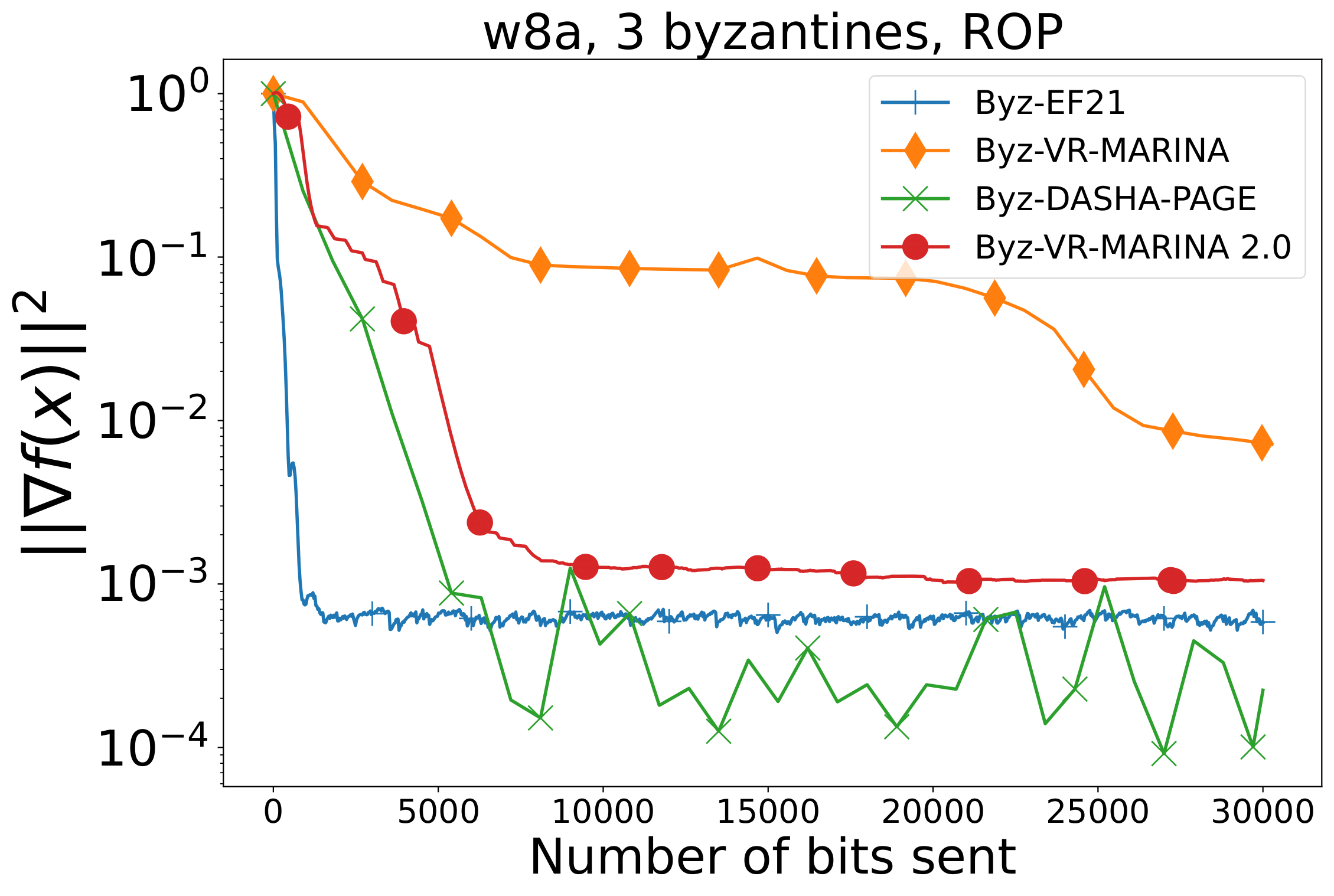}
  \caption{ROP attack, \texttt{phishing}}
\end{subfigure}%
\caption{
Communication complexity comparison in the heterogeneous non-convex setting on the \texttt{phishing} and \texttt{w8a} datasets for the methods with biased and unbiased compression.}
\label{fig:NC_biased_vs_unbiased}
\end{figure}


\end{document}